    \definecolor{gray}{gray}{0.5}
\begin{document}
\baselineskip=.25in\parindent=30pt

\newtheorem{theorem}{Theorem}[section]
\newtheorem{prop}[theorem]{Proposition}
\newtheorem{lemma}[theorem]{Lemma}
\newtheorem{cor}[theorem]{Corollary}
\newtheorem{dfn}[theorem]{Definition}
\newtheorem{remark}[theorem]{Remark}
\newtheorem{example}[theorem]{Example}
\newtheorem{assumption}[theorem]{Assumption}

\def\bs{\vskip24pt}
\def\ms{\vskip12pt}
\def\ss{\vskip6pt}

\def\cl{\centerline}
\def\nt{\noindent}

\newcommand{\fn}{\footnote}
\newcommand{\co}{c^{\,\text{com}}}
\newcommand{\Lcom}{\Lambda}
\newcommand\col{\operatorname{col}}
\newcommand\sgn{\operatorname{sgn}}
\newcommand{\Out}{\operatorname{Out}}
\newcommand\eps{\varepsilon}
\newcommand\supp{\operatorname{supp}}
\newcommand\diag{\operatorname{diag}}
\newcommand\rank{\operatorname{rank}}
\newcommand\gt{\theta}
\newcommand\gvf{\varphi}
\newcommand\cg{{\mathcal G}}
\newcommand\gve{\varepsilon}
\newcommand\go{\omega}
\newcommand\gO{\Omega}
\newcommand\tr{\operatorname{tr}}
\newcommand\gs{\sigma}
\newcommand\ga{\alpha}
\newcommand\gd{\delta}
\newcommand\gD{\Delta}
\newcommand\D{{\mathbb D}}
\newcommand\Cyc{\operatorname{Cyc}}
\renewcommand\mod{\operatorname{mod}}
\newcommand\Llr{\Longleftrightarrow}
\newcommand\esssup{\operatorname{esssup}}
\newcommand\essinf{\operatorname{essinf}}
\newcommand\gS{\Sigma}
\newcommand{\gL}{\Lambda}
\newcommand{\gl}{\lambda}
\newcommand\cd{{\mathcal D}}
\newcommand\C{{\mathbb C}}
\newcommand\R{{\mathbb R}}
\newcommand\N{{\mathbb N}}
\newcommand\wt{\widetilde}
\newcommand{\ca}{{\mathfrak A}}
\newcommand{\cb}{{\mathcal B}}
\newcommand{\cD}{{\mathcal D}}
\newcommand{\ce}{{\mathfrak E}}
\newcommand{\cs}{{\mathcal S}}
\newcommand{\ch}{{\mathcal H}}
\newcommand{\ck}{{\mathcal K}}
\newcommand{\cm}{{\mathcal M}}
\newcommand{\cy}{{\mathfrak Y}}
\newcommand{\cx}{{\mathcal X}}
\newcommand{\cc}{{\mathfrak C}}
\newcommand{\cf}{{\mathcal F}}
\newcommand\cv{{\mathfrak V}}
\newcommand\bz{{\bf z}}
\newcommand\by{{\bf y}}
\newcommand{\bk}{{\bf K}}
\newcommand{\EQPR}{\operatorname{\rm Eqpr}}
\newcommand{\bq}{{\bf q}}
\newcommand{\grad}{\operatorname{grad}}
\newcommand{\vol}{\operatorname{vol}}
\newcommand{\br}{{\bf r}}
\newcommand{\cP}{{\mathcal P}}
\newcommand{\cR}{{\mathcal R}}
\newcommand{\bd}{{\bf D}}
\newcommand{\bx}{{\bf x}}
\newcommand{\bX}{{\bf X}}
\newcommand{\bY}{{\bf Y}}
\newcommand{\bb}{{\bf b}}
\newcommand{\bc}{{\bf c}}
\newcommand{\be}{{\bf e}}
\newcommand{\bm}{{\bf m}}
\newcommand{\bN}{{\bf N}}
\newcommand{\bA}{{\bf A}}
\newcommand{\Jac}{{\operatorname{Jac}}}
\newcommand{\ERC}{{\operatorname{ERC}}}
\newcommand{\REC}{{\operatorname{REC}}}
\newcommand{\Var}{{\operatorname{\rm Var}}}
\newcommand{\Cov}{{\operatorname{\rm Cov}}}
\newcommand{\Corr}{{\operatorname{\rm Corr}}}
\newcommand{\const}{\operatorname{\rm const}}
\renewcommand{\span}{\operatorname{\rm span}}
\newcommand{\ba}{{\bf a}}
\newcommand{\bL}{{\bf L}}
\newcommand{\bP}{{\bf P}}
\newcommand{\bG}{{\bf G}}
\newcommand{\bM}{{\bf M}}
\newcommand{\bT}{{\bf T}}
\newcommand{\bW}{{\bf W}}
\newcommand{\bZ}{{\bf Z}}
\newcommand{\bw}{{\bf w}}
\newcommand{\bfc}{{\bf c}}
\newcommand{\Z}{{\mathbb Z}}
\newcommand{\ones}{{\bf 1}}
\newcommand{\T}{{\mathbb T}}
\newcommand{\wM}{{\widetilde M}}
\newcommand{\pF}{{\rm P}_{\hskip-2pt\cf}}
\newcommand{\bpF}{{\bf P}_{\hskip-2pt\cf}}
\newcommand{\pG}{{\rm P}_{\hskip-1pt\cg}}
\newcommand{\bpG}{{\bf P}_{\hskip-2pt\cg}}
\newcommand{\pH}{{\rm P}_{\hskip-2pt\ch}}
\newcommand{\bpH}{{\bf P}_{\hskip-2pt\ch}}
\newcommand{\pQ}{{\rm Q}}
\newcommand{\bpQ}{{\bf Q}}
\newcommand{\Dis}{\operatorname{Distr}}
\newcommand{\Prob}{\operatorname{Prob}}
\newcommand{\sce}{{\scriptstyle \ce}}
\newcommand{\scc}{{\scriptstyle \cc}}
\newcommand{\bsc}{{\scriptstyle \bf C}}
\newcommand{\scy}{{\scriptstyle \cy}}
\newcommand{\scv}{{\scriptstyle \cv}}
\newcommand{\scrv}{{\scriptstyle V}}
\newcommand{\sct}{{\scriptstyle T}}
\newcommand{\ru}{{\rm u}}
\newcommand{\rp}{{\rm p}}
\newcommand\com{{\rm cm}}
\newcommand\comt{{\rm cm}_{\,\hskip.3pt t}}
\newcommand{\bcm}{{\bf cm}}
\newcommand{\cM}{\mathcal M}

\def\qed{\quad\vrule height4pt width4pt depth0pt}
\def\argmax{\operatornamewithlimits{arg\,max \quad}}

\def\title #1{\begin{center}
{\Large {\bf #1}}
\end{center}}
\def\author #1{\begin{center} {\large #1}
\end{center}}

\def\date #1{\centerline {\large #1}}
\def\place #1{\begin{center}{\large #1}
\end{center}}
\let\Large=\large
\let\large=\normalsize

\newenvironment{proof}[1][Proof]{\noindent\textbf{#1.} }{\ \rule{0.5em}{0.5em}}

\begin{titlepage}
\def\thefootnote{\fnsymbol{footnote}}
\vspace*{1.1in}

\title{{\Large Information Percolation with Equilibrium Search Dynamics}\footnote{Duffie
is at the Graduate School of Business, Stanford University, a member of NBER, and acknowledges support while visiting
the Swiss Finance Institute at The University of Lausanne. Malamud is at the Department of
Mathematics, ETH Zurich and a member of the Swiss Finance
Institute. Manso is at the Sloan School of Business, MIT. We are
grateful for a conversation with Alain-Sol
Sznitman.}}

\vskip .75em

\author{Darrell Duffie, Semyon Malamud,  and Gustavo Manso}

\date{\today}
\vskip .75em
\vskip 1.90em
\baselineskip=.15in

\begin{abstract}
We solve for the equilibrium dynamics of information sharing in
a large population. Each agent is endowed
with signals regarding the likely outcome of a random variable
of common concern.  Individuals  choose the effort with which they search for others from whom
they can gather additional information.
When two agents meet, they share their information. The information gathered is
further shared at subsequent meetings, and so on.
Equilibria exist in which agents search maximally until they acquire
sufficient information precision, and then minimally.
A tax whose proceeds are used to subsidize the costs of search
improves information sharing and can in some cases increase
welfare. On the other hand, endowing agents with public signals
reduces information sharing and can in some cases decrease welfare.
\end{abstract}

\end{titlepage}

\setcounter{footnote}{0}

\section{Introduction}

We characterize the equilibrium dynamics of
information sharing in a large population.
An agent's optimal current
effort to search for information sharing opportunities depends on
that agent's current level of information and on the
cross-sectional distribution of information quality and search efforts
of other agents.  Under stated conditions, in equilibrium, agents search maximally
until their information quality reaches a trigger level, and then minimally.
In general, it is not the case that
 raising the search-effort policies of all agents causes an improvement
 in information sharing. This monotonicity property does, however, apply
 to trigger strategies, and enables a fixed-point algorithm for equilibria.

In our model, each member
of the population is endowed with signals regarding the likely
outcome of a Gaussian random variable $Y$ of common concern.  The ultimate
utility of each agent is increasing in the agent's conditional precision of $Y$.
Individuals therefore seek out others from whom they can gather additional
information about $Y$. When agents meet, they share their information.
The information gathered is then further shared at subsequent
meetings, and so on.  Agents meet according to a technology
for search and random matching, versions of which are
common in the economics literatures covering labor markets, monetary
theory, and financial  asset markets. A distinction is that the
search intensities in our model vary cross-sectionally
in a manner that depends  on the endogenously chosen efforts
of agents.

Going beyond prior work in this setting, we capture
implications of the incentive to search more intensively
whenever there is greater expected utility to be gained from the
associated improvement in the information arrival process.
Of course, the amount of information that can be
gained from others depends on the efforts that the
others themselves have made to search in the past. Moreover, the
current expected rate at which a given agent meets others
depends not only on the search efforts of that agent,
but also on the current search efforts of the others.
We assume complementarity in search efforts.
Specifically, we suppose that the intensity of arrival of matches
by a given agent increases in proportion to the current
search effort of that agent, given the search efforts
of the other agents. Each agent is modeled as fully
rational, in a sub-game-perfect Bayes-Nash equilibrium.

The existence and characterization of an equilibrium
involves incentive consistency conditions on the
jointly determined search efforts of all members of the
population simultaneously.
We find conditions for a stationary equilibrium, in which each agent's search
effort at a given time depends only on that agent's current level of
precision regarding the random variable $Y$ of common concern.
Each agent's life-time search intensity process is
the solution of a stochastic control problem, whose rewards
depend on the optimal search intensity processes of all other agents.

We show that if the cost of search is increasing and convex
in effort, then, taking as given the cross-sectional
distribution of  other agents'  information quality and search
efforts, the optimal search effort of any given agent is declining in the
current information precision of that agent. This property holds, even out of
equilibrium, because the marginal valuation of additional information for each
agent declines as that agent gathers additional information.
With proportional search costs, this property leads to equilibria with trigger
policies that reduce search efforts to a minimal level once a sufficient
amount of information is obtained.  Our proof of existence relies
on a monotonicity result: Raising the assumed trigger level at which all agents
reduce their search efforts leads to a first-order dominant cross-sectional distribution
of information arrivals.

We show by counterexample, however, that for general forms
of search-effort policies it is not generally true that the adoption of
more intensive search policies leads to an improvement
in population-wide information sharing.
Just the opposite can occur.  More
intensive search at given levels of information can in some cases advance the onset
of a reduction of the search efforts of agents who may be a rich source of
information to others. This can lower access to richly informed agents in such a manner
that, in some cases, information sharing is actually poorer.

The paper ends with an analysis of the welfare effects of policy interventions.
 First, we analyze welfare gains that can be achieved with a lump-sum tax whose proceeds are used to subsidize the costs of search efforts. Under stated conditions, we show that this promotes positive search
 externalities that would not otherwise arise in equilibrium.  Finally, we
show that, with proportional search costs, additional public information leads in equilibrium to an
unambiguous reduction in the sharing of private information, to the extent that there is in some cases a net negative welfare effect.

\section{Related Literature}

Previous research in economics has investigated the issue of
information aggregation. A large literature has focused on the
aggregation of information through prices. For example,
Grossman (1981) proposed the concept of rational-expectations
equilibrium to capture the idea that prices aggregate information
that is initially dispersed across investors. Wilson (1977),
Milgrom (1981), Pesendorfer and Swinkels (1997), and Reny
and Perry (2006) provide strategic foundations for the
rational-expectations equilibrium concept in centralized
markets.

In many situations, however, information aggregation occurs
through local interactions rather than through common observation
of  market prices. For example, in decentralized markets,
such as those for real estate and over-the-counter securities, agents
learn from the bids of other agents in private auctions or bargaining
sessions. Wolinsky (1990) and Blouin and Serrano (2002) study information
percolation in decentralized markets. In the literature on
social learning, agents communicate with each other and
choose actions based on information received from others.
Banerjee and Fudenberg (2004), for example, study
information aggregation in a social-learning context.

Previous literature has shown that some forms of information
externalities may slow down or prevent information aggregation.
For example, Vives (1993) and Amador and Weill (2007) show that
information aggregation may be slowed when agents
base their actions on public signals (price) rather
than on private signals, making inference noisier.
Bikhchandani, Hirshleifer, and Welch (1992) and Banerjee (1992)
show that agents may rely on publicly observed actions, ignoring
their private signals, giving rise to informational
cascades that prevent social learning.

Our paper studies information aggregation in a social learning
context. In contrast to previous studies, we analyze the equilibria
of a game in which agents seek out other agents from whom they can
gather information. This introduces a new source of information
externality. If an agent chooses a high search intensity, he
produces an indirect benefit to other agents by increasing both the
mean arrival rate at which the other agents will be matched and
receive additional information, as well as the amount of information
that the given agent is able to share when matched. We show that
because agents do not take this externality into account when
choosing their search intensities, social learning may be relatively
inefficient or even collapse.

In addition to the information externality problem, our paper shows
that coordination problems may be important in information
aggregation problems. In our model, there are multiple equilibria
that are Pareto-ranked in terms of the search intensity employed by
the agents. If agents believe that other agents are searching with
lower intensity, agents will also search with lower intensity,
producing an equilibrium with slower social learning.
Pareto-dominant equilibria, in which all agents search with higher
intensity, may be possible, but it is not clear how agents will
coordinate to achieve such equilibria.

Our technology of search and matching is similar to that used in
search-theoretic models that have provided foundations for
competitive general equilibrium and in models of equilibrium in
markets for labor, money, and financial assets.\footnote{Examples of
theoretical work using random matching to provide foundations for
competitive equilibrium include that of Rubinstein and Wolinsky
(1985) and Gale (1987). Examples in labor economics include
Pissarides(1985) and Mortensen (1986); examples in monetary theory
include Kiyotaki and Wright (1993) and Trejos and Wright (1995);
examples in finance include Duffie, G\^arleanu, and Pedersen (2005)
and Weill (2004).} Unlike these prior studies, we allow for
information asymmetry about a common-value component, with learning
from matching and with endogenously chosen search efforts.

Our model is related to that of Duffie and Manso (2007)
and Duffie, Giroux, and Manso (2008), which provide
an explicit solution for the evolution of posterior
beliefs when agents are randomly matched in groups
over time, exchanging their information with each
other when matched. In contrast to these prior studies,
however,  we model the endogenous choice of search intensities.
Moreover, we deal with
Gaussian uncertainty, as opposed to the case of
binary uncertainty that is the focus of these prior two papers.
Further, we allow for the entry and exit of agents,
and analyze the resulting stationary equilibria.

\section{Model Primitives}

A probability space $(\Omega, {\cal F},P)$ and a non-atomic measure
space $(A, {\cal A},\alpha)$ of agents are fixed.
We  rely  throughout on applications of the
exact law of large numbers (LLN) for a continuum of
random variables. A suitably precise version can
be found in Sun (2006), based on technical conditions
on the measurable subsets of $\Omega\times A$.  As
in the related literature, we also rely
formally on a continuous-time LLN for random search
and matching that has only been rigorously justified
in discrete-time settings.\footnote{See Duffie and Sun (2007).}
An alternative, which we avoid for simplicity, would be to
describe limiting results for a sequence of models with discrete time
periods or finitely many agents, as the lengths of time periods shrink
or as the number of agents gets large.

All agents benefit, in a manner to be explained, from information
about a particular random variable $Y$.
Agents are endowed with signals from a space ${\cal S}$.
The signals are jointly Gaussian with $Y$. Conditional on $Y$, the signals are pairwise
independent. We assume that $Y$ and all of the signals in ${\cal S}$
have zero mean and unit variance, which is without loss of generality
because they play purely informational roles.

Agent $i$  enters the market with a random number $N_{i0}$ of signals
that is independent of $Y$ and ${\cal S}$. The probability
distribution $\pi$ of $N_{i0}$ does not depend on $i$.  For
almost every pair $(i,j)$ of agents, $N_{i0}$ and $N_{j0}$ are
independent, and their signal sets are disjoint.

When present in the market, agents meet other agents according to
endogenous search and random matching dynamics to be described. Under these dynamics,
for almost every pair $(i,j)$ of agents, conditional on meeting at a given time $t$,
there is zero probability that they meet at any other time, and zero
probability that the set of agents that $i$ has met before $t$
overlaps with the set of
agents that $j$ has met before $t$.

Whenever two agents meet, they
share with each other enough information to reveal their respective
current conditional distributions of $Y$. Although we do not model
any strict incentive for matched agents to share their information,
they have no reason not to do so.  We could add to the model
a joint production decision that would provide a
strict incentive for agents to reveal their information when matched,
and have avoided this for simplicity.

By the joint Gaussian assumption, and by induction in the number of prior meetings that each has had,
it is enough that each of two matched agents tells the other his or her immediately prior
conditional mean and variance of $Y$.
The conditional variance
 of $Y$ given
any $n$ signals is
\[v(n)=
\frac{1-\rho^2}{1+\rho^2(n-1)},\]
where $\rho$ is the correlation between $Y$ and any signal. Thus,
 it is equivalent for the purpose of updating the agents'
conditional distributions of $Y$ that
agent $i$ tells his counterparty at any meeting at time $t$
his or her current conditional mean $X_{it}$ of $Y$
and the total number $N_{it}$ of signals that played a role
in calculating the agent's current conditional
distribution of $Y$. This number of signals is initially
the endowed number $N_{i0}$, and is then incremented at each
meeting by the number $N_{jt}$ of signals that similarly influenced
the information about $Y$ that had been gathered by his counterparty
$j$ at time $t$

Because the precision $1/v(N_{it})$ of the conditional
distribution of $Y$ given the information set ${\cal F}_{it}$ of
agent $i$ at time $t$ is strictly monotone in $N_{it}$,
we speak of  ``precision''  and $N_{it}$ interchangeably.

Agents remain in the market for exponentially distributed
times that are independent (pairwise) across agents, with parameter $\eta'$.
If exiting at time $t$,  agent $i$ chooses an action $A$, measurable
with respect to his current information ${\cal F}_{it}$, with cost $(Y-A)^2$.
Thus, in order to minimize the expectation of this cost, agent $i$ optimally chooses the action $A=E(Y\,\vert\, {\cal F}_{it})$, and incurs an optimal expected exit cost equal
to the ${\cal F}_{it}$-conditional variance $\sigma^2_{it}$ of $Y$.
Thus, while in the market, the agent has an incentive to gather information about $Y$
in order to reduce the expected exit cost. We will shortly explain how
search for other agents according to a costly effort process $\phi$
influences the current mean rate of arrival of matches, and thus the information filtration $\{{\cal F}_{it}: t\geq 0\}$.
Given a discount rate $r$, the agent's lifetime utility (measuring
time from the point of that agent's market entrance) is
\[U(\phi)=E\left(-e^{-r\tau} \sigma^2_{i\tau}-\int_0^\tau e^{-rt}K(\phi_t)\, dt\right),\]
where $\tau$ is the exit time and $K(c)$ is the cost rate for search effort level $c$, which
is chosen at each time from some interval $[c_L,c_H]\subset \R_+$.
We take the cost function $K$ to be bounded and measurable, so $U(\phi)$ is bounded
above and finite.

As we will show, essentially any exit utility formulation that is
concave and decreasing in $\sigma^2_{i\tau}$ would result
in precisely the same characterization of equilibrium that we shall provide.

The agent is randomly matched at a stochastic intensity that is proportional to
the current effort of the agent, given the efforts of other agents.
This proportionality assumption means that an agent who exerts search
effort $c$ at time $t$ has a current intensity (conditional mean
arrival rate) of $cbq_b$ of being matched to some agent from the set of
agents currently using effort level $b$ at time $t$, where $q_b$ is the current fraction of
the population using effort $b$. More generally, if the current cross-sectional
distribution of effort by other agents is given by a measure
$\epsilon$, then the intensity of a match with agents whose
current effort levels are in a set $B$ is $c\int_B b\, d\epsilon(b)$.
The particular pairings of counterparties are randomly chosen,
in the sense of the law of large numbers for pairwise random matching
of Duffie and Sun (2007).

Agents enter the market at a rate proportional to the current
mass $q_t$ of agents in the market, for some proportional ``birth rate'' $\eta>0$.
Because agents exit the market pairwise independently at intensity
$\eta'$, the law of large numbers implies that
the total quantity $q_t$ of agents in the market at time $t$ is $q_t=q_0e^{(\eta-\eta')t}$ almost surely.

The cross-sectional distribution $\mu_t$ of information precision at time $t$
is defined, at any set $B$ of positive integers, as the
fraction $\mu_t(B)=\alpha(\{i:N_{it}\in B\})/q_t$ of agents
whose precisions are currently in the set $B$.
We sometimes abuse notation by writing
$\mu_t(n)$ for the fraction of agents with precision $n$.

In the equilibria that we shall demonstrate, each agent
chooses an effort level at time $t$ that depends only on
that agent's current precision, according to a policy
$C:{\mathbb N}\rightarrow [c_L,c_H]$ used by all agents.
Assuming that such a search effort policy $C$ is used
by all agents, the cross-sectional precision distribution
satisfies (almost surely) the differential equation
\begin{equation}
\frac{d}{dt}\mu_t=\eta(\pi-\mu_t) + \mu^C_t*\mu^C_t-\mu^C_t\,\mu^C_t(\N),
\label{dynamic}
\end{equation}
where $\mu^C_t(n)=C_n\mu_t(n)$ is the effort-weighted
measure, $\mu*\nu$ denotes the convolution
of two measures $\mu$ and $\nu$, and
\[
\mu^C_t(\N)\ =\ \sum_{n=1}^\infty\,C_n\,\mu_t(n)
\]
is the cross-sectional average search effort.
The mean exit rate $\eta'$ plays no role in (\ref{dynamic}) because exit removes agents with a cross-sectional
distribution that is the same as the current population
cross-sectional distribution.
The first term on the right-hand side of (\ref{dynamic})
represents the replacement of agents with newly entering
agents. The convolution term $\mu^C_t*\mu^C_t$ represents
the gross rate at which new agents of a given precision
are created through matching and information sharing.
For example, agents of a given posterior precision $n$
can be created by pairing agents of prior respective
precisions $k$ and $n-k$, for any $k<n$, so the total gross rate of increase
of agents with precision $n$ from this source is
\[(\mu^C_t*\mu^C_t)(n)=\sum_{k=1}^{n-1}\mu_t(k)C(k)C(n-k)\mu_t(n-k).\]
The final term of (\ref{dynamic})
captures the rate $\mu^C_t(n)\,\mu_t^C(\N)\,$ of replacement of  agents with prior precision $n$
with those of some new posterior precision that  is obtained through matching and information sharing.

We  anticipate that, in each state of the world $\omega$ and at each time
$t$, the joint cross-sectional population distribution of precisions
and posterior means of $Y$ has a density $f_t$ on
${\mathbb N}\times {\mathbb R}$, with evaluation $f_t(n,x)$
at precision $n$ and posterior mean $x$ of $Y$. This means that
the fraction of agents whose conditional precision-mean pair $(n,x)$ is in
a given measurable set $B\subset  {\mathbb N}\times {\mathbb R}$
is $\sum_n\int_{-\infty}^{+\infty} f_t(n,x)1_{\{(n,x)\in B\}} \, dx.$
When it is important to clarify the dependence of this density on  the state
of world $\omega\in\Omega$, we write $f_t(n,x,\omega)$.

\begin{prop} For any search-effort policy function $C$, the
cross-sectional distribution $f_t$ of precisions and posterior means of the agents is
almost surely given by
\begin{equation}
\label{solution}
f_t(n,x,\omega)=\mu_t(n)\, p_n(x\, \vert\, Y(\omega)),
\end{equation}
where $\mu_t$ is the unique solution of the differential
equation (\ref{dynamic}) and $p_n(\,\cdot\, \vert\, Y)$ is the
$Y$-conditional Gaussian density of $E(Y\,\vert\, X_1,\ldots,X_n)$, for any $n$ signals
$X_1,\ldots,X_n$. This density has conditional mean
\[\frac{n\rho^2Y}{1+\rho^2(n-1)}\] and conditional variance
\begin{equation}
\label{sigman}
\sigma_n^2=
\frac{n\rho^2(1-\rho^2)}{(1+\rho^2(n-1))^2}.
\end{equation}
\label{ft}
\end{prop}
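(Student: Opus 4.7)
My plan is to split the proof into two independent pieces: first, derive the scalar evolution equation \eqref{dynamic} for the marginal precision distribution $\mu_t$, and second, identify the $Y$-conditional law of the posterior mean $X_{it}$ for any agent with precision $n$ as $p_n(\,\cdot\,|\,Y)$. The product form $f_t(n,x,\omega) = \mu_t(n)\,p_n(x\,|\,Y(\omega))$ will then follow from a cross-sectional LLN argument using the independence structure on agents' signal pools.

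For the first piece, I would carry out an infinitesimal balance on the fraction of agents with precision $n$ over $[t,t+dt]$, accumulating four contributions: (i) a gain $\eta\,\pi(n)\,dt$ from new entrants carrying the endowed distribution $\pi$; (ii) a loss $\eta\,\mu_t(n)\,dt$ from the normalization effect (exits at rate $\eta'$ remove a cross-section with the same shape as $\mu_t$ and are absorbed in normalization, leaving only $\eta\mu_t(n)$ on the outflow side of the entry/normalization balance); (iii) a gain $(\mu^C_t*\mu^C_t)(n)\,dt$ from matching two agents whose prior precisions $k$ and $n-k$ sum to $n$, where the convolution structure comes from the proportional-effort matching intensity together with the pairwise-matching LLN of Duffie--Sun (2007); and (iv) a loss $\mu^C_t(n)\,\mu^C_t(\N)\,dt$ because an agent at precision $n$ who meets anyone strictly increases his precision. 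Passing to the limit gives \eqref{dynamic}, and existence/uniqueness follows because the right-hand side is locally Lipschitz on the simplex of probability measures (the quadratic matching terms are controlled since $\mu_t(\N)=1$ is preserved).

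For the second piece, I would argue by induction on the number of prior meetings of a given agent $i$: by construction and by the Gaussian/conditional-independence assumption on $\mathcal{S}$, the posterior distribution of $Y$ after $k$ meetings depends only on the union of signals pooled so far, and the conditional mean is $E[Y\mid X_{j_1},\ldots,X_{j_n}]$ for some distinct signals $X_{j_1},\ldots,X_{j_n}\in\mathcal{S}$. The standing disjointness hypotheses—signal sets disjoint across pairs of agents at entry, independence of $N_{i0}$ from $(Y,\mathcal{S})$, and the almost-sure non-overlap of pre-meeting signal pools—ensure that these $n$ signals are almost surely distinct elements of $\mathcal{S}$, hence jointly Gaussian with $Y$ and conditionally i.i.d.\ $\mathcal{N}(\rho Y,1-\rho^2)$ given $Y$. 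Standard Gaussian updating with prior $Y\sim\mathcal{N}(0,1)$ yields $E[Y\mid X_{j_1},\ldots,X_{j_n}] = \rho\sum_k X_{j_k}/(1+(n-1)\rho^2)$, whose $Y$-conditional mean is $n\rho^2 Y/(1+\rho^2(n-1))$ and $Y$-conditional variance is $\sigma_n^2$ as in \eqref{sigman}; this is the density $p_n(\,\cdot\,|\,Y)$.

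To assemble the two pieces into the product form, I would apply the exact LLN for a continuum of random variables (Sun, 2006): conditional on the realization of $Y(\omega)$, the collection of residuals $\{X_{it}-E[X_{it}\mid N_{it},Y]\}_i$ indexed across agents with precision $n$ is pairwise independent, because each residual depends only on the particular signals in that agent's pool and those pools are almost surely disjoint across agents. Therefore the empirical joint cross-section of $(N_{it},X_{it})$ factors, at the point $(n,x)$, as $\mu_t(n)\,p_n(x\mid Y(\omega))$, which is the claimed formula. The main obstacle is not the Gaussian bookkeeping, which is routine, but the rigorous continuum-time LLN at the heart of the factorization: one must justify simultaneously that the empirical distribution coincides with the conditional law, that the required independence structure is preserved through the random matching dynamics over $[0,t]$, and that the signal-pool disjointness holds almost surely along each sample path. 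The paper effectively delegates this to the frameworks of Sun (2006) and Duffie--Sun (2007), and a careful proof would either cite those directly or sketch the discrete-time approximation limit alluded to in Section 3.
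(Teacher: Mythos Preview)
Your proposal is correct and matches the paper's primary argument: the paper likewise fixes $(t,\omega)$, uses the disjointness of signal pools (established by induction on prior meetings) to get $Y$-conditional pairwise independence of the posterior means across agents with precision $n$, computes the Gaussian conditional density $p_n(\cdot\mid Y)$ with the stated mean and variance, and then invokes the exact LLN to conclude that the cross-sectional distribution over that precision class has density $p_n(\cdot\mid Y(\omega))$. The only addition in the paper is a second, independent verification: it writes down the full evolution equation for $f_t(n,x)$ (with the convolution kernel coming from the Gaussian combination rule of Lemma~\ref{first}), performs a Gaussian convolution computation, and checks directly that the product ansatz $\mu_t(n)\,p_n(x\mid Y(\omega))$ solves that PDE if and only if $\mu_t$ solves \eqref{dynamic}. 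This second route is supplementary and your LLN argument alone suffices.
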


The appendix provides a proof based on a formal application of the law of large numbers,
and an independent proof by direct solution of the differential
equation for $f_t$ that arises from matching and information sharing.
As $n$ goes to infinity, the measure with density $p_n(\,\cdot\,
\vert\, Y)$ converges, $\omega$ by $\omega$ (almost surely), to a Dirac measure at $Y(\omega)$. In other
words, those agents that have collected a large number of signals have
posterior means that cluster (cross-sectionally) close to $Y$.

\section{Stationary Measure}
\label{measure}

In our eventual equilibrium, all agents adopt an optimal search effort policy function $C$, taking as given the presumption that
all other agents adopt the same policy $C$, and taking as given a stationary cross-sectional distribution
of posterior conditional distributions of $Y$.
Proposition \ref{ft} implies that this  cross-sectional distribution is determined
by the cross-sectional precision distribution $\mu_t$.
In a stationary setting, from (\ref{dynamic}), this precision distribution $\mu$
 solves
\begin{equation}
0=\eta(\pi-\mu)+\mu^C*\mu^C-\mu^C\,\mu^C(\N),
\label{stationary}
\end{equation}
which can be viewed as a form of algebraic Riccati equation.
We consider only solutions that have the correct total mass $\mu(\N)$ of 1.
For brevity, we use the notation $\mu_i\ =\ \mu(i)$ and $C_i\ =\ C(i)\,.$

\begin{lemma}\label{stat_mes} Given a policy $C$, there is a unique measure $\mu$
satisfying the stationary-measure equation (\ref{stationary}). This measure $\mu$ is characterized as follows. For any
$\bar C\in [c_L,c_H]$, construct a measure $\bar\mu(\bar C)$  by the algorithm:
\[
\bar \mu_1(\bar C)\ =\ \frac{\eta\,\pi_1}{\eta\ +\ C_1\,\bar C}
\]
and then, inductively,
\[
\bar \mu_k(\bar C)\ =\ \frac{\eta\,\pi_k\ +\ \sum_{l=1}^{k-1}\,C_l\,C_{k-l}\,\bar\mu_l(\bar C)\,\bar \mu_{k-l}(\bar C)}{\eta\ +\ C_k\,\bar C}.
\]
There is a unique solution $\bar C$ to the equation $\bar C=\sum_{n=1}^\infty \bar\mu_n(\bar C)\bar C$.
Given such a $\bar C$, we have $\mu=\bar \mu(\bar C)$.
\end{lemma}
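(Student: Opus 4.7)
The plan is to exploit the triangular structure of (\ref{stationary}). Setting $\bar C := \mu^C(\N) = \sum_m C_m \mu_m$, the $n$-th component of (\ref{stationary}) rearranges to
\[
(\eta + C_n \bar C)\,\mu_n \ = \ \eta \pi_n \ + \ \sum_{l=1}^{n-1} C_l\,C_{n-l}\,\mu_l\,\mu_{n-l}.
\]
Since $\eta + C_n \bar C > 0$, this determines $\mu_n$ uniquely from $\mu_1,\ldots,\mu_{n-1}$ and $\bar C$, and the resulting recursion is exactly the one defining $\bar\mu_n(\bar C)$ in the statement. Consequently, any stationary $\mu$ must equal $\bar\mu(\bar C)$ for the choice of $\bar C$ given by its own effort-weighted mass, and the whole problem reduces to pinning down the scalar $\bar C$.

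Next, I would identify the consistency condition on $\bar C$. Summing the recursion over $n\leq N$ and passing to the limit $N\to\infty$ gives
\[
\eta\,S(\bar C) \ + \ \bar C\,T(\bar C) \ = \ \eta \ + \ T(\bar C)^2,
\]
where $S(\bar C) := \sum_n \bar\mu_n(\bar C)$ and $T(\bar C) := \sum_n C_n \bar\mu_n(\bar C)$, using the identity $(\bar\mu^C * \bar\mu^C)(\N) = T(\bar C)^2$. Rewriting as $\eta\,(S(\bar C)-1) = T(\bar C)\,(T(\bar C) - \bar C)$ and noting $T(\bar C)\geq c_L S(\bar C) > 0$, one sees that $S(\bar C)=1$ is equivalent to $T(\bar C)=\bar C$. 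Either condition is exactly the lemma's fixed-point equation for $\bar C$, and it is exactly the requirement that $\bar\mu(\bar C)$ be a probability measure self-consistent with the chosen $\bar C$.

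For existence and uniqueness of such a $\bar C$, I would use monotonicity. An induction on $n$ shows each $\bar\mu_n(\bar C)$ is positive, continuous, and strictly decreasing in $\bar C\geq 0$: the base case $\bar\mu_1(\bar C) = \eta\pi_1/(\eta + C_1\bar C)$ is immediate, and in the inductive step the numerator is (weakly) decreasing while the denominator is strictly increasing. Hence $S(\bar C)$ is continuous and strictly decreasing. At $\bar C=0$ the recursion gives $\bar\mu_n(0)\geq \pi_n$, so $S(0)\geq 1$, while for large $\bar C$ every $\bar\mu_n(\bar C)\to 0$ pointwise and $S(\bar C)$ drops below $1$. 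The intermediate value theorem then produces a unique $\bar C^*$ with $S(\bar C^*)=1$, and the enclosure $\bar C^*\in[c_L,c_H]$ is automatic from $\bar C^* = T(\bar C^*) = \sum_n C_n \bar\mu_n(\bar C^*)$ together with $C_n\in[c_L,c_H]$.

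The main obstacle I anticipate is justifying the passage to infinite sums: one must know that $S(\bar C)$ is finite and that the finite convolutions genuinely tend to $T(\bar C)^2$, in order for the limiting identity above to be valid. I would handle this by first working with the partial-sum inequality $\eta\,S_N + \bar C\,T_N \leq \eta + T_{N-1}^2$, combined with the sandwich $c_L S_N \leq T_N \leq c_H S_N$, to bootstrap an $N$-uniform ceiling on $S_N$ for $\bar C$ in the operative range; dominated convergence then legitimizes both the monotonicity-in-$\bar C$ arguments and the limiting identity that powers the fixed-point characterization.
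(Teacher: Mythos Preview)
Your approach is correct and essentially identical to the paper's: both exploit the triangular recursion to reduce the problem to a scalar fixed point, establish that each $\bar\mu_k(\bar C)$ is decreasing in $\bar C$, and then invoke monotonicity together with the intermediate value theorem. The paper handles the summability obstacle you anticipate slightly more directly, showing by induction on partial sums that $\sum_{k}\bar\mu_k(\bar C)\le 1$ whenever $\bar C\ge c_H$, which immediately yields $f(c_H)\le c_H$ and hence the required sign change of $g(\bar C)=\bar C-f(\bar C)$ on $[C_{\min},c_H]$ without an asymptotic argument.
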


  An important
question is stability. That is, if the initial condition $\mu_0$ is not
sufficiently near the stationary measure, will the solution path $\{\mu_t: t\geq 0\}$ converge to the
stationary measure?
The dynamic equation (\ref{dynamic}) is an
 infinite-dimensional non-linear dynamical system that could in principle have
potentially complicated oscillatory behavior.
In fact, a technical condition on the tail
behavior of the effort policy function $C(\,\cdot\,)$ implies that the stationary distribution is globally
attractive: From any initial condition, $\mu_t$ converges to the unique stationary
distribution.

\begin{prop}
Suppose that there is some integer $N$ such that  $C_n=C_N$
for $n\geq N$ and that $\eta\geq c_HC_N$.
Then the unique solution
$\mu_t$ of (\ref{dynamic}) converges pointwise  to the unique stationary measure $\mu$.
\label{combined}
\end{prop}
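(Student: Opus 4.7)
The plan is to leverage the tail condition $C_n=C_N$ (for $n\ge N$) to reduce (\ref{dynamic}) to a closed finite-dimensional autonomous ODE on the first $N-1$ coordinates, establish convergence of this subsystem via a $\liminf/\limsup$ bracketing driven by the recursion of Lemma~\ref{stat_mes}, and then complete the argument by induction on $n\ge N$.

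First, summing (\ref{dynamic}) over $n$ gives $\tfrac{d}{dt}\mu_t(\N)=\eta(1-\mu_t(\N))$ (the quadratic terms cancel via $(\mu_t^C*\mu_t^C)(\N)=(\mu_t^C(\N))^2$), while a standard barrier argument ($\dot\mu_t(n)\ge\eta\pi_n>0$ whenever $\mu_t(n)=0$) preserves non-negativity, so $\mu_t$ remains a probability measure and $\bar C_t\in[c_L,c_H]$. Using $\mu_t(\N)=1$ and the tail condition one rewrites $\bar C_t=C_N+\sum_{n<N}(C_n-C_N)\mu_t(n)$; since the convolution source in the equation for $\mu_t(n)$ with $n<N$ involves only $\mu_t(k)$ with $k<N$, the vector $y_t=(\mu_t(1),\ldots,\mu_t(N-1))$ satisfies a closed autonomous polynomial ODE on the compact invariant set $\{y\ge 0,\sum y_n\le 1\}$, whose unique fixed point is $y^*=(\mu(1),\ldots,\mu(N-1))$ by Lemma~\ref{stat_mes}.

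For the main step, let $l_n,u_n$ and $L,U$ denote the $\liminf,\limsup$ of $\mu_t(n)$ and $\bar C_t$. Applying the standard envelope estimate for $\dot y_t+b_ty_t=a_t$ with $b_t\ge\eta>0$ inductively to the ODE $\dot\mu_t(n)+(\eta+C_n\bar C_t)\mu_t(n)=\eta\pi_n+\sum_{k<n}C_kC_{n-k}\mu_t(k)\mu_t(n-k)$ yields $u_n\le\bar\mu_n(L)$ and $l_n\ge\bar\mu_n(U)$, where $\bar\mu_n(\cdot)$ is strictly decreasing (immediately from its recursion). Fatou's lemma on the nonnegative series gives $L\ge\sum_n C_n l_n\ge T(U)$ where $T(c)=\sum_n C_n\bar\mu_n(c)$; the matching upper bound $U\le T(L)$ follows from a careful analysis of $\bar C_t$ via the finite expression $C_N+\sum_{n<N}(C_n-C_N)\mu_t(n)$ together with the monotonicity of $\bar\mu_n$. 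The condition $\eta\ge c_HC_N$ is what makes $T$ a contraction at its fixed point $c^*=\bar C$: differentiating the recursion of Lemma~\ref{stat_mes} gives $(\eta+C_nc)(-\bar\mu_n'(c))=C_n\bar\mu_n(c)+2\sum_{k<n}C_kC_{n-k}\bar\mu_k(c)(-\bar\mu_{n-k}'(c))$, whose tail ($n\ge N$) becomes geometric with ratio at most $c_HC_N/\eta\le 1$. The bracketing $T(U)\le L\le U\le T(L)$ then collapses to $L=U=c^*$, giving $l_n=u_n=\mu(n)$ for every $n<N$ and hence also $\bar C_t\to\bar C$.

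Once $\bar C_t\to\bar C$ and $\mu_t(k)\to\mu(k)$ for $k<N$, a second induction on $n\ge N$ finishes the proof: the scalar ODE for $\mu_t(n)$ has decay rate converging to $\eta+C_n\bar C>0$ and source converging (by inductive hypothesis) to $\eta\pi_n+\sum_{k<n}C_kC_{n-k}\mu(k)\mu(n-k)$, so $\mu_t(n)\to\bar\mu_n(\bar C)=\mu(n)$ by standard ODE asymptotic theory. The main obstacle is the middle step — in particular, establishing the upper bracketing $U\le T(L)$ when the coefficients $C_n-C_N$ in the finite representation of $\bar C_t$ carry mixed signs, and verifying that $T$ has Lipschitz constant strictly less than one at $c^*$. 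Both issues are controlled by the tail assumption (which makes the relevant sensitivity series geometric from index $N$ onward) together with the damping $\eta\ge c_HC_N$ (which bounds the geometric ratio by $1$), so the two hypotheses of the proposition play precisely complementary roles.
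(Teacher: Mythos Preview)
Your finite-dimensional reduction is correct and attractive: writing $\bar C_t = C_N + \sum_{n<N}(C_n-C_N)\mu_t(n)$ (using $\mu_t(\N)=1$) does close the system for the first $N-1$ coordinates, and the envelope bounds $l_n\ge\bar\mu_n(U)$, $u_n\le\bar\mu_n(L)$ follow cleanly by induction. The gap is exactly where you locate it, and it is not filled. First, the bracketing your finite expression actually produces is with respect to $\tilde T(c)=C_N+\sum_{n<N}(C_n-C_N)\bar\mu_n(c)$, not $T(c)=\sum_nC_n\bar\mu_n(c)$; the inequality $U\le\tilde T(L)$ needs either $C_n\ge C_N$ for all $n<N$ (not assumed) or $\sum_n\bar\mu_n(L)\ge 1$, which by the identity $\sum_n\bar\mu_n(c)=1+T(c)(T(c)-c)/\eta$ is equivalent to $L\le c^*$ --- something you have no a priori handle on. Second, the collapse $L=U$ from $\tilde T(U)\le L\le U\le\tilde T(L)$ needs $|\tilde T'|<1$ on $[L,U]$; otherwise $(L,U)$ could be a $2$-cycle of $\tilde T$. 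Your ``geometric tail'' remark concerns $T$, not $\tilde T$ (which is a \emph{finite} sum with no tail), and a geometric ratio of at most $1$ is in any case not a contraction. I do not see how $\eta\ge c_HC_N$ yields $|\tilde T'(c^*)|<1$; a direct computation in small cases reveals no such mechanism.

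The paper's argument is entirely different and does not pass through the finite reduction. It writes $f_i=c_H-C_i\ge 0$ and expands $\mu_k=\sum_{j\ge 0}\mu_{k,j}$ as a formal series in the $f_i$'s, with each $\mu_{k,j}$ solving a linear ODE whose convergence as $t\to\infty$ can be established termwise (the only nonlinear piece, a scalar Riccati equation for $\sum_{i\ge N}\mu_{i,0}$, is handled directly). A perturbed equation furnishes a uniform geometric domination $\mu_{k,j}(t)\le K(1+\delta)^{-j}$ that justifies interchanging the limit in $t$ with the series in $j$. This yields $\mu_\infty(k)=\lim_t\mu_t(k)$, but not yet that $\mu_\infty$ is the stationary measure: mass may have escaped to infinity, so that $\bar C=\lim_t\sum_nC_n\mu_t(n)$ strictly exceeds $\tilde C=\sum_nC_n\mu_\infty(n)$. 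The hypothesis $\eta\ge c_HC_N$ enters \emph{only} at this last step, to rule out mass loss (one computes that if $M=\mu_\infty(\N)<1$ then $M=1+(\eta-C_N\bar C)/C_N^2\ge 1$, a contradiction). So the two hypotheses play roles quite different from those you assign: the flat tail enables the series expansion and its termwise analysis, while $\eta\ge c_HC_N$ prevents mass leakage in the limit --- not a contraction of any fixed-point map.
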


\noindent The proof, given in the appendix, is complicated by
the factor $\mu^C_t(\N)$, which is non-local and involves
$\mu_t(n)$ for each $n.$ The proof takes the approach
of representing the solution as a series $\{\mu_t(1),\mu_t(2),\ldots\}$, each term
of which solves an equation similar to (\ref{dynamic}), but
without the factor $\mu^C_t(\N).$ Convergence is proved for
each term of the series. A tail estimate completes
the proof.  The convergence of $\mu_t$
does not guarantee that the limit measure is in fact the unique stationary measure $\mu.$
The appendix includes a demonstration of this, based on Proposition \ref{nolostmass}.
As we later show in Proposition \ref{flattail},
the assumption that $C_n=C_N$ for all $n$ larger than some integer $N$
is implied merely by individual agent optimality, under a mild condition on
search costs.

Our eventual equilibrium will in fact be in the form of a trigger policy $C^N$, which for some integer
$N\geq 1$ is defined by
\begin{eqnarray*}
C^N_n&=& c_H, \quad n<N,\\
&=& c_L, \quad n\geq N.
\end{eqnarray*}
In other words, a trigger policy exerts maximal search effort until sufficient information precision is reached,
and then minimal search effort thereafter. A trigger policy automatically satisfies the ``flat tail''
condition of Proposition \ref{combined}.

A key issue is whether search policies that exert more effort at each precision
level actually generate more information sharing. This is an interesting question in its
own right, and also plays a role in obtaining a fixed-point proof of
existence of equilibria.  For a given
agent, access to information from others is entirely determined by
the weighted measure $\mu^C$, because if the given agent searches at some
rate $c$, then the arrival rate of agents that offer $n$ units of
additional precision is $cC_n\mu_n=c\mu^C_n.$ Thus, a first-order stochastically dominant (FOSD)
shift in the measure $\mu^C$ is an unambiguous improvement in the opportunity
of any agent to gather information. (A measure $\nu$ has the FOSD
dominant property relative to a measure $\theta$ if,  for any nonnegative
bounded increasing sequence $f$, we have $\sum_n f_n\nu_n\ge \sum_n f_n\theta_n.$)

 The next result states that, at least when comparing trigger policies, a more intensive search policy
 results in an improvement in information sharing opportunities.

\begin{prop} \label{M>N}
Let $\mu^M$ and $\nu^N$ be the unique stationary measures corresponding to trigger
policies $C^M$ and $C^N$ respectively. Let $\mu^{C,N}_n=\mu^N_nC^N_n$ denote the associated
search-effort-weighted measure. If $N>M$,
then $\mu^{C,N}$ has the first-order dominance property
over $\mu^{C,M}.$
\label{fosd}
\end{prop}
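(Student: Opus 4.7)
The conclusion is equivalent to $\sum_{n\geq k}C^N_n\mu^N_n\geq\sum_{n\geq k}C^M_n\mu^M_n$ for every $k\geq 1$. My plan is to realize both stationary measures as $t\to\infty$ limits of the dynamics (\ref{dynamic}) started from the common initial distribution $\mu_0=\pi$, show that the effort-weighted tail inequality is preserved along the flow, and then pass to the limit via Proposition~\ref{combined} (whose flat-tail hypothesis is satisfied by every trigger policy under the standing assumption $\eta\geq c_Hc_L$).

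For $J\in\{N,M\}$ let $\mu^J_t$ denote the solution of (\ref{dynamic}) with policy $C^J$ and set
\[
T^J_t(k)\ =\ \sum_{n\geq k}C^J_n\,\mu^J_t(n),\qquad D_t(k)\ =\ T^N_t(k)-T^M_t(k).
\]
Pointwise inequality $C^N_n\geq C^M_n$ gives $D_0(k)\geq 0$ for every $k$. Weighting (\ref{dynamic}) by $C^J_n$ and summing over $n\geq k$ yields
\[
\frac{d}{dt}T^J_t(k)\ =\ \eta\bigl(\Pi^J(k)-T^J_t(k)\bigr)\ +\ B^J_t(k)\ -\ \bar C^J_t\,S^J_t(k),
\]
where $\Pi^J(k)=\sum_{n\geq k}C^J_n\pi_n$, $\bar C^J_t=T^J_t(1)$ is the average effort, $B^J_t(k)=\sum_{n\geq k}C^J_n\,(\mu^{C,J}_t*\mu^{C,J}_t)(n)$ is the birth contribution, and $S^J_t(k)=\sum_{n\geq k}(C^J_n)^2\mu^J_t(n)$ is a pseudo-second-moment tail. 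I then want to prove $D_t(k)\geq 0$ for every $t\geq 0$ and $k\geq 1$ by a first-violation argument: let $t^\star$ be the first time some $D_t(k)$ crosses zero, pick a witness $k^\star$ with $D_{t^\star}(k^\star)=0$ and $D_{t^\star}(\cdot)\geq 0$, and derive a contradiction by establishing $\frac{d^+}{dt}D_{t^\star}(k^\star)\geq 0$.

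Of the three contributions at $(t^\star,k^\star)$, the endowment term $\eta(\Pi^N(k)-\Pi^M(k))\geq 0$ is immediate from $C^N\geq C^M$. Abel summation rewrites the birth term $B^J_t(k)$ as a bilinear form in the tails $T^J_t(\cdot)$ with coefficients nondecreasing in the policy, so $B^N_{t^\star}(k^\star)\geq B^M_{t^\star}(k^\star)$ whenever $D_{t^\star}(\cdot)\geq 0$. The main obstacle is the quadratic ``death'' discrepancy $\bar C^N_t S^N_t(k)-\bar C^M_t S^M_t(k)$: the factor $\bar C^N_t\geq \bar C^M_t$ (which follows from $D_t(1)\geq 0$) points the wrong way. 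To tame it I would use the decomposition $S^J_t(k)=c_H T^J_t(k)-(c_H-c_L)T^J_t(\max(N,k))$, which is immediate from $C^J_n\in\{c_L,c_H\}$, and bootstrap $k$ from above: for $k\geq N$ both policies coincide on the tail, so at the equality $T^N=T^M$ the death difference reduces to $c_L T(k^\star)(\bar C^N_{t^\star}-\bar C^M_{t^\star})$, and this must be absorbed by the strictly positive birth excess generated by the ``extra mass'' that $C^N$ injects through the interval $[M,N)$; for $k^\star<N$, the decomposition expresses the death discrepancy in terms of the already-controlled $k\geq N$ quantities plus an endowment excess that is strictly positive when $k^\star<N$. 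Once $D_t(k)\geq 0$ has been propagated in $t$, a geometric tail bound coming from the flat-tail hypothesis $\eta\geq c_Hc_L$ justifies interchanging $\lim_{t\to\infty}$ with $\sum_{n\geq k}$, and Proposition~\ref{combined} delivers the stationary FOSD conclusion.
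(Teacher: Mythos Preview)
Your dynamic-comparison strategy is genuinely different from the paper's, which works entirely at the level of the stationary equation: the paper parametrises $\nu_k=C_k\bar\mu_k$ as a function of $(C_1,\ldots,C_k,\bar C)$, derives a closed formula for the generating function $m(x)=\sum_k\nu_k x^k$ by solving a quadratic (Lemma~\ref{lem(x)}), computes $\partial\nu_j/\partial C_{N-1}$ and $-\partial\nu_j/\partial\bar C$ explicitly as Taylor coefficients of a single auxiliary function $\gamma(x)$ (Lemma~\ref{dernu}), and compares these derivatives termwise (Lemmas~\ref{mainlem}, \ref{fosdpr}, \ref{min_dom}). In particular the paper never touches the dynamics, so it does not need $\eta\ge c_Hc_L$; your route imports that as an extra hypothesis, which Proposition~\ref{M>N} does not carry.

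The substantive gap is the step you yourself flag as ``the main obstacle.'' Take $k^\star\ge N$, write $\nu^J=\mu^{C,J}_{t^\star}$ and $T^J(\cdot)$ for its tails, and suppose $T^N(k^\star)=T^M(k^\star)$ with $T^N(\cdot)\ge T^M(\cdot)$. Using
\[
\sum_{n\ge k}(\nu*\nu)(n)=\sum_{l=1}^{k-1}\nu_l\,T_\nu(k-l)+T_\nu(k)\,T_\nu(1),
\]
the piece $T_\nu(k)T_\nu(1)$ in the birth difference cancels the death discrepancy $c_LT(k^\star)\bigl(\bar C^N-\bar C^M\bigr)$ exactly, and what remains to be shown is
\[
\sum_{l=1}^{k^\star-1}\nu^N_l\,T^N(k^\star-l)\ \ge\ \sum_{l=1}^{k^\star-1}\nu^M_l\,T^M(k^\star-l).
\]
This is \emph{not} a consequence of $T^N(\cdot)\ge T^M(\cdot)$: the point masses $\nu^J_l$ appear, and the bilinear form $\sum_{l\le k-1}\nu_l T_\nu(k-l)$ is not monotone for the first-order order (moving one unit of mass from level $k-1$ to level $k$ decreases it by $T_\nu(1)$). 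So the assertion that the birth excess ``must absorb'' the death discrepancy is precisely the hard inequality, and your sketch gives no mechanism for it. Example~\ref{counterexample} shows the analogous comparison fails for non-trigger policies, so the trigger structure has to be used in a sharp way; the paper does this through the generating-function identities, and it is not clear your ODE comparison can substitute for that. The decomposition $S^J_t(k)=c_HT^J_t(k)-(c_H-c_L)T^J_t(\max(J,k))$ (note the trigger should be $J$, not $N$, when $J=M$) does not by itself supply the missing inequality either.

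A secondary difficulty: $D_0(k)=0$ for every $k\ge N$, so there is no strictly positive buffer at $t=0$. A genuine ``first violation'' time does not exist; you would have to recast the argument as forward invariance of the cone $\{D(\cdot)\ge0\}$ in $\ell^1$, checking the tangent condition on every face where $D(k)=0$ simultaneously, including the infinite family $k\ge N$. That is a real technical step beyond what is written.
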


Superficially, this result may seem obvious. It says merely
that if all agents extend their high-intensity search to a higher level of precision, then there will be an unambiguous upward shift in the
cross-sectional distribution of information transmission rates.  Our proof, shown in the appendix,
is not simple. Indeed, we provide a counterexample below
to the similarly ``obvious'' conjecture that any increase in the common search-effort policy function leads to a first-order-dominant improvement in information sharing.

The issue of whether higher search efforts at each given
level of precision improves information sharing
involves two competing forces.
The direct effect is that higher search efforts at a given precision increases
the speed of information sharing, holding constant the precision distribution $\mu$.
The opposing effect is that if agents search harder, then they may earlier reach a
level of precision at which they reduce their search efforts,
which could in principle cause a downward shift in the cross-sectional average rate
of information arrival.
 In order to make precise these competing effects, we return
to the construction of the measure $\bar \mu(\bar C)$ in Lemma \ref{stat_mes},
and write $\bar \mu(C,\bar C)$ to show the dependence of this
candidate measure on the conjectured average search effort $\bar C$ as well as the given policy $C$.
We emphasize that $\mu$ is the stationary measure for $C$ provided $\mu=\bar\mu(C,\bar C)$ {\it and}
$\bar C=\sum_nC_n\mu_n$. From the algorithm stated by Lemma \ref{stat_mes},
 $\bar\mu_k(C,\bar C)$ is increasing in $C$ and
{\it decreasing} in $\bar C$, for all $k$. (A proof, by induction in $k$, is given in the appendix.)
Now the relevant question is: What effect does increasing $C$ have
on the stationary average search effort, $\bar C$, solving the equation $\bar C=
\sum_n\bar\mu_n(C,\bar C)C_n$? The following proposition shows that increasing
$C$ has a positive effect on $\bar C$, and thus through this channel, a negative effect on $\bar\mu_k(C,\bar C)$.

\begin{prop}\label{barc} Let $\mu$ and $\nu$ be the stationary measures associated with
policies $C$ and $D$. If $D\geq C$, then $\sum_nD_n\nu_n\geq \sum_nC_n\mu_n$.
That is, any increase in search policy increases the equilibrium average search effort.
\end{prop}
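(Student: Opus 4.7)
The plan is to argue by contradiction, chaining together the two monotonicity properties of the candidate measure $\bar\mu(C,\bar C)$ that the authors have just recorded: $\bar\mu_k(C,\bar C)$ is increasing in the policy argument $C$ and decreasing in the average-effort argument $\bar C$, for each $k$. Denote by $\bar C = \sum_n C_n \mu_n$ and $\bar D = \sum_n D_n \nu_n$ the equilibrium average search efforts under $C$ and $D$. By the fixed-point characterization in Lemma \ref{stat_mes}, $\mu = \bar\mu(C,\bar C)$ and $\nu = \bar\mu(D,\bar D)$, and both are probability measures, so $\sum_n \mu_n = \sum_n \nu_n = 1$.

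Assume, for contradiction, that $\bar D < \bar C$. Then, for every $k \in \N$,
\[
\nu_k \;=\; \bar\mu_k(D,\bar D) \;\geq\; \bar\mu_k(D,\bar C) \;\geq\; \bar\mu_k(C,\bar C) \;=\; \mu_k,
\]
where the first inequality uses the stated decreasing monotonicity in the second argument together with $\bar D < \bar C$, and the second uses the increasing monotonicity in the first argument together with $D \geq C$. Since $\sum_k \mu_k = \sum_k \nu_k = 1$ and $\nu_k \geq \mu_k$ termwise, no slack is possible, so $\nu_k = \mu_k$ for every $k$, i.e., $\nu = \mu$. But then, using only $D \geq C$ pointwise and $\mu_n \geq 0$,
\[
\bar D \;=\; \sum_n D_n \nu_n \;=\; \sum_n D_n \mu_n \;\geq\; \sum_n C_n \mu_n \;=\; \bar C,
\]
contradicting the assumption $\bar D < \bar C$. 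Hence $\bar D \geq \bar C$, which is exactly the claim.

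The contradiction argument itself is just a few lines; the real work is carried by the monotonicity of $\bar\mu_k(C,\bar C)$ in its first argument. I expect that to be the main obstacle, since the recursion $\bar\mu_k = [\eta\pi_k + \sum_{l=1}^{k-1} C_l C_{k-l}\bar\mu_l\bar\mu_{k-l}]/[\eta + C_k\bar C]$ places $C_k$ in the denominator while the lower-order $C_l$'s enter the numerator, so raising $C$ has competing effects at each level; the inductive proof deferred to the appendix has to track these carefully. Once the two monotonicities are granted, the normalization $\sum_n\mu_n = \sum_n\nu_n = 1$ does the decisive work by forcing the termwise inequality $\nu_k \geq \mu_k$ to collapse into equality, after which the inequality $\sum_n D_n \mu_n \geq \sum_n C_n \mu_n$ is immediate from $D \geq C$.
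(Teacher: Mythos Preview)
Your contradiction argument has a genuine gap in the second inequality of the chain
\[
\nu_k \;=\; \bar\mu_k(D,\bar D) \;\geq\; \bar\mu_k(D,\bar C) \;\geq\; \bar\mu_k(C,\bar C) \;=\; \mu_k.
\]
The claim that $\bar\mu_k(\,\cdot\,,\bar C)$ is increasing in the policy argument is false. Already for $k=1$,
\[
\bar\mu_1(C,\bar C) \;=\; \frac{\eta\,\pi_1}{\eta + C_1\,\bar C}
\]
is \emph{strictly decreasing} in $C_1$ whenever $\bar C>0$. You yourself flag the danger, noting that $C_k$ sits in the denominator; the competing effects you anticipate do not in fact cancel. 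The sentence in the body of the paper that you quote is imprecisely worded: what the appendix actually proves (Lemma~\ref{monoton1}) is that the \emph{effort-weighted} quantity $C_k\,\bar\mu_k$ is increasing in each $C_i$ and decreasing in $\bar C$, not $\bar\mu_k$ itself.

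With the correct monotonicity the contradiction is shorter and does not need your ``sum to $1$'' device. Assuming $\bar D<\bar C$ and $D\geq C$, one gets directly, for every $k$,
\[
D_k\,\nu_k \;=\; D_k\,\bar\mu_k(D,\bar D)\;\geq\; D_k\,\bar\mu_k(D,\bar C)\;\geq\; C_k\,\bar\mu_k(C,\bar C)\;=\;C_k\,\mu_k,
\]
and summing over $k$ yields $\bar D\geq\bar C$, the desired contradiction. This is precisely the paper's argument: it observes that $f(\bar C,C)=\sum_k C_k\,\bar\mu_k(\bar C,C)$ is increasing in $C$ and strictly decreasing in $\bar C$, so the unique root of $\bar C=f(\bar C,C)$ is increasing in $C$. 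Your normalization trick $\sum_k\nu_k=\sum_k\mu_k=1$ was only needed because you were trying to compare the unweighted $\bar\mu_k$, for which the termwise inequality is simply unavailable.
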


For trigger policies, the direct effect of increasing the search-effort policy $C$ dominates
the ``feedback'' effect on the cross sectional average rate of effort.
For other types of policies, this need not be the case, as shown by the following
counterexample, whose proof is given in the appendix.

\begin{example}
\label{counterexample}
Suppose that  $\,\pi_2>2\pi_1\,.$
Consider a policy $C$ with $C_n=0$ for $n\ge 3$. Fix $C_2>0$, and consider variation of $C_1$.
For $C_1$ sufficiently close to $C_2$, we show in the appendix that
\[
\sum_{k=2}^\infty\,C_k\mu_k\ =\ C_2\mu_2
\]
is monotone decreasing in $C_1\,.$
Thus, if we consider the increasing sequence
\begin{eqnarray*}
f_1\ &=& 0,\\
f_n\ &=& 1, \quad n\ge 2,
\end{eqnarray*}
we have $f\cdot \mu^C\ =\ C_2\,\mu_2\,$ strictly decreasing in $C_1$, for $C_1$ in a neighborhood of $C_2$, so we
do not have FOSD of $\mu^C$ with increasing $C$. In fact,
more search effort by those agents with precision $1$ can actually lead
to poorer information sharing. To see this, consider the policies
$D=(1,1,0,0,\ldots)$ and $C=(1-\epsilon,1,0,0,\ldots)$.
The measure $\mu^C$ has FOSD over the measure $\mu^D$ for any\footnote{For this,
we can without loss of generality take $f_1=1$ and calculate that
$h(\epsilon)=f\cdot \mu^C$ is decreasing in $\epsilon$ for sufficiently small $\epsilon>0$.}
 sufficiently small $\epsilon>0$.
 \end{example}

\section{Optimality}
\label{optimalitysection}

In this section, we study the optimal policy of a given agent
who presumes that precision is distributed in the population
according to some fixed measure $\mu$, and further presumes that
other agents search according to a
conjectured policy function $C$. We let $\overline C=\sum_nC_n\mu_n$ denote
the average search effort.

Given the conjectured market properties $(\mu,C)$, each agent $i$ chooses some
search-effort process $\phi:\Omega\times [0,\infty)\rightarrow
[c_L,c_H]$ that is progressively measurable with respect to that
agent's information filtration $\{{\cal F}_{it}: t\geq 0\}$, meaning that $\phi_t$ is based only on current information.
The posterior distribution of $Y$ given ${\cal F}_{it}$ has
conditional variance $v(N(t))$, where $N$ is the agent's
precision process and $v(n)$ is the variance of $Y$ given
any $n$ signals.

Assuming a discount rate $r$ on future expected benefits,
and given the conjectured market properties $(\mu,C)$,
an agent solves the problem
\begin{equation}
U(\phi)= \sup_{\phi}\, E\left(-e^{-r\tau}v(N^\phi_\tau)-\int_0^\tau e^{-st}K(\phi_t)\, dt \right),
\label{optimal}
\end{equation}
where $\tau$ is the time of exit, exponentially distributed
with parameter $\eta'$, and where the agent's precision
process $N^\phi$ is the pure-jump process with a given
initial condition $N_{0}$, with jump-arrival intensity
$\phi_t \overline C$,
and with jump-size probability distribution $\mu^C/\overline C$,
that is, with probability $C(j)\mu(j)/\overline C$ of jump size $j$.
We have abused notation by measuring calendar time for the agent
from the time of that agent's market entry.

For generality, we relax from this point the assumption that the
exit disutility is the conditional variance $v(N^\phi_\tau)$,
and allow the exit utility to be of the more general form
$u(N^\phi_\tau)$, for any bounded increasing
concave\footnote{We say that a real-valued
function $F$ on the integers is concave if $F(j+2)+F(j)\leq 2
F(j+1)$.}  function $u(\,\cdot\,)$ on the positive integers.
It can be checked that $u(n)=-v(n)$ is indeed a special case.

We say that $\phi^*$ is an optimal search effort process given
$(\mu,C)$ if $\phi^*$ attains the supremum (\ref{optimal}).
We further say that a policy function $\Gamma: {\mathbb N}\rightarrow
[c_L,c_H]$ is optimal given $(\mu,C)$ if the search effort
process $\{\Gamma(N_t):\, t\geq 0\}$ is optimal, where
the precision process $N$ uniquely satisfies the stochastic
differential equation with jump arrival intensity
$\Gamma(N_t)\overline C$ and with jump-size distribution $\mu^C/\overline C$.
(Because $\Gamma(n)$ is bounded by $c_H$, there is a unique solution $N$ to this stochastic
differential equation. See Protter (2005).)

We characterize agent optimality given $(\mu,C)$ using the principle
of dynamic programming, showing that the indirect utility, or ``value,'' $V_n$ for precision $n$ satisfies
the Hamilton-Jacobi-Bellman equation for optimal search effort
given by
\begin{equation}
\label{Bellman}
0\, = \ -\,(r+\eta')\,V_n\ +\ \eta' u_n +\,  \sup_{c\in [c_L,c_H]} \,
 \{-K(c)+ c \,\sum_{m=1}^\infty\,  (V_{n+m}-V_n) \mu^C_m\}.
 \end{equation}

A standard martingale-based verification argument for the following result
is found in the appendix.

\begin{lemma}
If $V$ is a bounded solution of the Hamilton-Jacobi-Bellman
equation (\ref{Bellman}) and the policy function $\Gamma$
satisfies the optimality condition that, for each $n$ and
for all $c\in [c_L,c_H]$,
\[
-K(\Gamma_n)+ \Gamma_n \,\sum_{m=1}^\infty\,  (V_{n+m}-V_n)
\mu^C_m \geq - K(c)+ c \,\sum_{m=1}^\infty\,  (V_{n+m}-V_n) \mu^C_m,\]
then $\Gamma$ is an optimal policy function given $(\mu,C)$, and $V_{N_0}$
is the value of this policy.
\label{verification}
\end{lemma}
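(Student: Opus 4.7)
The plan is the standard martingale verification argument for an infinite-horizon discounted Markov control problem, once the random exit time is integrated out. Because $\tau$ is exponential with parameter $\eta'$ and independent of the matching dynamics, Fubini and $P(\tau>t)=e^{-\eta't}$ give
\[
U(\phi)\ =\ E\!\left[\int_0^{\infty} e^{-(r+\eta')t}\bigl(\eta'\,u(N^\phi_t)-K(\phi_t)\bigr)\,dt\right],
\]
so the effective discount rate is $r+\eta'$ and the running reward is $\eta' u - K$. This is also the reason the HJB equation (\ref{Bellman}) carries the term $\eta' u_n$.

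Next, for each admissible search-effort process $\phi$, define
\[
M^\phi_t\ =\ e^{-(r+\eta')t}\,V(N^\phi_t)\ +\ \int_0^t e^{-(r+\eta')s}\bigl(\eta'\,u(N^\phi_s)-K(\phi_s)\bigr)\,ds.
\]
Since $N^\phi$ is a pure-jump process with intensity $\phi_t\overline C$ and jump-size law $\mu^C/\overline C$, Dynkin's formula yields the predictable drift of $M^\phi$ as
\[
e^{-(r+\eta')t}\!\left[-(r+\eta')V(N^\phi_t)+\eta'u(N^\phi_t)-K(\phi_t)+\phi_t\!\sum_{m=1}^\infty(V_{N^\phi_t+m}-V_{N^\phi_t})\mu^C_m\right].
\]
The HJB equation asserts that this bracket is nonpositive for every admissible $\phi_t$ and vanishes when $\phi_t=\Gamma(N^\phi_t)$ by the pointwise optimality of $\Gamma$. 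Hence $M^\phi$ is a supermartingale for every $\phi$, while $M^\Gamma$ is a true martingale. Taking expectations, $V(N_0)=M^\phi_0\ge E[M^\phi_t]$; since $V$ is bounded, $e^{-(r+\eta')t}V(N^\phi_t)\to 0$ in $L^1$ as $t\to\infty$, and dominated convergence handles the integral, giving
\[
V(N_0)\ \ge\ E\!\left[\int_0^\infty e^{-(r+\eta')s}\bigl(\eta'u(N^\phi_s)-K(\phi_s)\bigr)\,ds\right]\ =\ U(\phi),
\]
with equality along the $\Gamma$-controlled process. This proves simultaneously that $\Gamma$ is optimal and that $V(N_0)=U(\Gamma)$.

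The main obstacle is essentially just bookkeeping. Boundedness of $V$ together with $\mu^C(\N)\le c_H<\infty$ makes $\sum_{m\ge 1}(V_{n+m}-V_n)\mu^C_m$ absolutely convergent and uniformly bounded in $n$, and it also ensures that the compensated-jump martingale part of $M^\phi$ is integrable, so localization is either trivial or unnecessary. The well-posedness of the pure-jump SDE defining $N^\Gamma$, which is used implicitly when we set $\phi_t=\Gamma(N^\phi_t)$, is the Protter (2005) citation already invoked in the paper. The one point requiring care is keeping straight the two roles of $\eta'$: it appears as the exit hazard rate and, after integrating out $\tau$, as the coefficient rescaling the terminal utility $u$ into the running reward.
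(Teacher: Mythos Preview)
Your argument is correct; it is the same martingale verification scheme as the paper's, but you handle the exit time $\tau$ differently. The paper keeps $\tau$ explicit: it sets
\[
\theta^\phi_t=\begin{cases}-\displaystyle\int_0^t e^{-rs}K(\phi_s)\,ds+e^{-rt}V(N^\phi_t),& t<\tau,\\[4pt]
-\displaystyle\int_0^{\tau} e^{-rs}K(\phi_s)\,ds+e^{-r\tau}u(N^\phi_\tau),& t\ge\tau,\end{cases}
\]
so that the exit itself is a jump of $\theta^\phi$ with compensator $\eta'\,e^{-rt}\bigl(u_n-V_n\bigr)$; combining this with the drift of $e^{-rt}V(N^\phi_t)$ reproduces exactly the bracket $-(r+\eta')V_n+\eta'u_n-K(\phi_t)+\phi_t\sum_m(V_{n+m}-V_n)\mu^C_m$, which is nonpositive by the HJB equation. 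One then reads off $V(N_0)=\theta^\phi_0\ge E[\theta^\phi_\tau]=U(\phi)$ by optional stopping, with equality under $\Gamma$. You instead integrate $\tau$ out at the start via Fubini, turning the random-horizon problem into an infinite-horizon one with discount $r+\eta'$ and running reward $\eta'u-K$, and then send $t\to\infty$ using boundedness of $V$. Your route makes the origin of the coefficients in (\ref{Bellman}) transparent and avoids the piecewise definition; the paper's route avoids the Fubini step and the tacit restriction that admissible $\phi$ do not anticipate the exit clock (harmless here by the memoryless property, but worth noting). Either packaging is standard.
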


We begin to lay out some of the properties of optimal policies, based
on conditions on the search-cost function $K(\,\cdot\,).$

\begin{prop}\label{decreasing_policy} Suppose that $K$ is increasing, convex,
and differentiable.
Then,  given $(\mu,C)$,
there is a policy $\Gamma$ that is optimal for all agents,
and the optimal search effort $\Gamma_n$ is monotone decreasing in the
current precision $n$.
\label{optimality}
\end{prop}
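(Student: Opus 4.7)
The plan has three steps. First, I would establish existence of a bounded solution $V$ to the Hamilton--Jacobi--Bellman equation (\ref{Bellman}) and invoke Lemma \ref{verification} to produce an optimal policy. Second, I would show that the fixed point $V$ is concave as a function of $n$ (decreasing first differences). Third, I would read off monotonicity of $\Gamma_n$ from the first-order condition.

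For existence, let $H(x) = \sup_{c\in [c_L,c_H]}\{-K(c)+cx\}$, which is convex, non-decreasing, and Lipschitz with constant $c_H$. Define the Bellman operator $T$ on $\ell^\infty(\N)$ by
\[
(TV)_n \;=\; \frac{1}{r+\eta'}\Bigl[\eta' u_n \,+\, H\bigl(\Delta V_n\bigr)\Bigr], \qquad \Delta V_n \;=\; \sum_{m=1}^\infty \mu^C_m\,(V_{n+m}-V_n).
\]
A uniformization of the underlying jump process (writing the pure-jump control problem as an equivalent discrete-time Markov decision problem with per-period discount $\eta'/(\eta'+r+c_H\bar M)<1$) gives, in standard fashion, a contraction whose unique bounded fixed point is $V$. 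The supremum in (\ref{Bellman}) is attained because the objective $-K(c)+c\Delta V_n$ is continuous in $c$ on the compact interval $[c_L,c_H]$, so Lemma \ref{verification} yields an optimal policy function $\Gamma$.

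For the monotonicity, the key algebraic fact is that, because $K$ is convex and differentiable, the maximizer
\[
\Gamma_n \;=\; \arg\max_{c\in [c_L,c_H]}\,\bigl\{-K(c)+c\,\Delta V_n\bigr\}
\]
is a non-decreasing function of $\Delta V_n$: it equals $c_L$ when $\Delta V_n\le K'(c_L)$, $c_H$ when $\Delta V_n\ge K'(c_H)$, and the interior root of $K'(\Gamma_n)=\Delta V_n$ between these. Hence it suffices to show that $\Delta V_n$ is decreasing in $n$, and for this it suffices in turn to show that $V$ is concave, since $\Delta V_n = \sum_{m\ge 1}\mu^C_m\sum_{k=0}^{m-1}(V_{n+k+1}-V_{n+k})$ is then a positive combination of decreasing sequences.

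Concavity of $V$ is the main obstacle and I would establish it by value iteration. Starting from the concave initial guess $V^{(0)}_n = u_n$ and setting $V^{(k+1)}=TV^{(k)}$, I would argue inductively that each iterate is concave by combining (a) concavity of $u$, (b) the envelope representation $H(y)-H(x)=\int_x^y c^\ast(t)\,dt$ with $c^\ast(\cdot)=(K')^{-1}$ non-decreasing, and (c) the inductive hypothesis, which makes $\Delta V^{(k)}_n$ decreasing in $n$. Writing out
\[
(r+\eta')\bigl((TV^{(k)})_{n+1}-(TV^{(k)})_n\bigr) \;=\; \eta'(u_{n+1}-u_n) \,+\, \int_{\Delta V^{(k)}_{n+1}}^{\Delta V^{(k)}_n} c^\ast(t)\,dt,
\]
and comparing with the analogous expression at $n+1$, one concludes the required second-difference inequality using monotonicity of $c^\ast$, monotonicity of the $\Delta V^{(k)}_n$ in $n$, and an appeal to the fact that the ``jump-size'' kernel $m\mapsto \mu^C_m$ does not depend on the current state, so the second difference of $\Delta V^{(k)}_n$ inherits the sign of the second difference of $V^{(k)}$. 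Passing to the uniform limit in $k$ preserves concavity and yields the claim for $V$. The delicate part of this step is propagating concavity through $H$ (which is itself convex), and it is the one place where the specific structure --- state-independence of the jump distribution, monotonicity of $c^\ast$, and compactness of $[c_L,c_H]$ --- must all be used together.
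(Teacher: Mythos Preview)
Your existence argument and the first-order characterization of the maximizer are fine. The gap is in the concavity step, and it is a real one.

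You claim that the second difference of $\Delta V^{(k)}_n$ ``inherits the sign'' of the second difference of $V^{(k)}$. It does not. Writing $e_n = V_{n+2}-2V_{n+1}+V_n$, one computes
\[
\Delta V_{n+2}-2\Delta V_{n+1}+\Delta V_n \;=\; \sum_{m\ge 1}\mu^C_m\bigl(e_{n+m}-e_n\bigr),
\]
and for concave $V$ each $e_j\le 0$ but the differences $e_{n+m}-e_n$ can have either sign. A concrete failure: with $\mu^C$ concentrated at $m=1$ and $V$ having first differences $10,1,1,1,\ldots$ (concave), one gets $\Delta V_n = 10,1,1,\ldots$, whose second difference at $n=1$ is $+9$. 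Your integral representation then needs $H(\Delta V_{n+2})+H(\Delta V_{n})\le 2H(\Delta V_{n+1})$, but $H$ is \emph{convex}, so with $\Delta V$ not concave (indeed convex here) this inequality goes the wrong way. In short, your operator $T$ does not preserve concavity, so the induction does not close, and you cannot conclude that $\Delta V_n$ is decreasing from concavity of $V$.

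The paper avoids this trap by working not with concavity of $V$ but with a different invariant: it rewrites the Bellman equation in the ``divided'' form
\[
V_n \;=\; \max_{c}\;\frac{\eta' u_n - K(c) + c\,Y_n}{c\bar C + r + \eta'}\,,\qquad Y_n=\sum_{m\ge 1}V_{n+m}\,\mu^C_m,
\]
and shows (Lemma~\ref{udec}) that the associated operator preserves the set $\{V:\ V_n-\eta' u_n/(r+\eta')\ \text{is decreasing}\}$. The first-order condition in this form isolates the quantity $(r+\eta')Y_n-\bar C\,\eta' u_n$, which is decreasing precisely because of this invariant together with the concavity of $u$. Then, using that $c\mapsto K(c)-K'(c)c$ is decreasing for convex $K$ (Lemma~\ref{K(c)}), the optimal $c$ is monotone in that quantity, hence decreasing in $n$. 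The point is that the right invariant is ``$V_n-\eta' u_n/(r+\eta')$ decreasing'', not ``$V$ concave''; the former is what the Bellman iteration actually preserves.
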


In order to calculate a precision threshold $\overline N$, independent of the measure $\mu,$
above which it is optimal to search minimally, we let $\,\overline u\,=\lim_nu(n)\,$,
which exists because $u_n$ is increasing in $n$ and bounded, and we let
\[
\overline N\ =\ \max\{n\, : \,c_H\,\eta'\,(r+\eta')\,(\overline u -\ u(n))\ \ge\ K'(c_L)\},
\]
which is finite if $K'(c_L)>0$. A proof of the following result is found in the appendix.

 \begin{prop} Suppose that $K(\,\cdot\,)$ is increasing, differentiable, and convex,
 with $K'(c_L)>0$.
 Then, for any optimal search-effort policy $\Gamma$,
 \[
 \Gamma_n\ =\ c_L, \quad n\ge \overline N.
 \]
 \label{flattail}
\end{prop}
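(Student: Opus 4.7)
My plan is to combine the Hamilton--Jacobi--Bellman characterization of Lemma~\ref{verification} with the convexity of $K$ and an a priori bound on the value increment $V_{n+m}-V_n$. Writing
\[
M_n\ =\ \sum_{m=1}^\infty (V_{n+m}-V_n)\,\mu^C_m,
\]
the inner supremum in (\ref{Bellman}) equals $\sup_{c\in[c_L,c_H]}\{-K(c)+cM_n\}$. By the convexity of $K$, this is a concave function of $c$ whose right-derivative at $c=c_L$ equals $M_n-K'(c_L)$. Since $K'(c_L)>0$, the maximizer is uniquely $c_L$ as soon as $M_n<K'(c_L)$. So the proof reduces to establishing this strict inequality for every $n\geq\overline N$.

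The critical estimate is a uniform upper bound on $V_{n+m}-V_n$. Integrating out the exponentially distributed exit time $\tau$ (with rate $\eta'$), the value function admits the representation
\[
V_n\ =\ \sup_\phi\,E\left[\int_0^\infty e^{-(r+\eta')t}\bigl(\eta'\,u(N_t)-K(\phi_t)\bigr)\,dt\,\Bigm|\,N_0=n\right].
\]
Bounding the integrand above using $u(N_t)\leq \overline u$ and $K(\phi_t)\geq K(c_L)$ gives $V_n\leq(\eta'\,\overline u-K(c_L))/(r+\eta')$ uniformly in $n$. A matching lower bound $V_n\geq (\eta'\,u(n)-K(c_L))/(r+\eta')$ is obtained by evaluating the objective at the constant policy $\phi\equiv c_L$ and using the monotonicity of $u$ together with $N_t\geq n$. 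Subtraction yields, for all $m\geq 1$,
\[
V_{n+m}-V_n\ \leq\ \frac{\eta'\,(\overline u-u(n))}{r+\eta'}.
\]
Summing against $\mu^C$ and using $\sum_m\mu^C_m\leq c_H$ then gives $M_n\leq c_H\,\eta'\,(\overline u-u(n))/(r+\eta')$.

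The threshold $\overline N$ is defined precisely so that this upper bound falls strictly below $K'(c_L)$ once $n\geq\overline N$, at which point the first paragraph forces $\Gamma_n=c_L$ for every optimal policy $\Gamma$. I expect the main nuisance to be bookkeeping of the factor $(r+\eta')$ so that the constants in the bound line up with the threshold displayed in the statement; once that is done, the boundary case in which the inequality defining $\overline N$ is tight can be handled either by a slight sharpening of the bound on $V_{n+m}-V_n$ (the lower bound $V_n\geq(\eta'\,u(n)-K(c_L))/(r+\eta')$ is typically strict, since the constant-$c_L$ policy is suboptimal whenever $\overline u-u(n)>0$) or by invoking the monotonicity of the optimal policy in $n$ from Proposition~\ref{decreasing_policy}.
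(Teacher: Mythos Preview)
Your argument is correct and reaches the same decisive estimate as the paper, namely
\[
\sum_{m\ge 1}(V_{n+m}-V_n)\,\mu^C_m\ \le\ \frac{c_H\,\eta'\,(\overline u-u(n))}{r+\eta'},
\]
but you get there by a genuinely more elementary route. The paper invokes Lemma~\ref{udec}, which shows via a contraction/fixed-point argument that the Bellman operator preserves the class $\{V:\ V_n-\eta'u_n/(r+\eta')\text{ decreasing}\}$; that lemma uses the concavity of $u$. You instead sandwich $V_n$ directly: the upper bound comes from $u\le\overline u$, and the lower bound from evaluating at the constant policy $\phi\equiv c_L$ together with $N_t\ge n$ and monotonicity of $u$. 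This bypasses both the operator machinery and the concavity hypothesis for this step. The paper's route does give the slightly sharper intermediate inequality $V_{n+m}-V_n\le\eta'(u_{n+m}-u_n)/(r+\eta')$, but after replacing $u_{n+m}$ by $\overline u$ the two coincide, and the sharper form is not needed here.

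Two bookkeeping remarks. First, your suspicion about the factor $(r+\eta')$ is well founded: the bound one actually obtains is $c_H\eta'(\overline u-u(n))/(r+\eta')$, so the displayed definition of $\overline N$ in the paper appears to have $(r+\eta')$ on the wrong side; the paper's own proof has the same discrepancy. Second, both your argument and the paper's yield the strict inequality $M_n<K'(c_L)$ only for $n>\overline N$, whereas the statement claims $n\ge\overline N$; your proposed fixes (strictness of the lower bound on $V_n$, or falling back on Proposition~\ref{decreasing_policy}) are reasonable ways to close that gap, and the paper does not address it at all.
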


In the special case of proportional and non-trivial search costs, it is in fact optimal
for all agents to adopt  a trigger policy, one that searches at maximal effort
until a trigger level of precision is reached, and at minimal effort thereafter.
This result, stated next, is a consequence of our prior results
that an optimal policy is decreasing and eventually reaches $c_L$,
and of the fact that with linear search costs, an optimal policy is ``bang-bang,''
therefore taking the maximal effort level $c_H$ at first, then eventually
switching to the minimal effort $c_L$ at a sufficiently high precision.

\begin{prop}\label{trigger_policy}
Suppose that $K(c)=\kappa c$ for some scalar $\kappa>0$.
 Then,  given $(\mu,C)$,
there is a trigger policy that is optimal for all agents.
\end{prop}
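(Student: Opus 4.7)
The plan is to combine Proposition \ref{decreasing_policy}, which already produces a monotone decreasing optimal policy $\Gamma$ together with a bounded value function $V$ solving the Hamilton--Jacobi--Bellman (HJB) equation (\ref{Bellman}), with the bang-bang structure forced by the linearity of $K(c)=\kappa c$. Once I have a decreasing optimal policy whose values must lie in $\{c_L,c_H\}$ except possibly on a region of indifference, I can freely replace the policy on the indifference region by $c_L$, thereby obtaining a genuine trigger policy without changing $V$ or violating Lemma \ref{verification}.

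Concretely, I first invoke Proposition \ref{decreasing_policy}, which is applicable since $K(c)=\kappa c$ is increasing, convex, and differentiable, to obtain a decreasing optimal $\Gamma$ and a bounded $V$ satisfying (\ref{Bellman}). Writing $\Delta V_n = \sum_{m=1}^\infty (V_{n+m}-V_n)\mu^C_m$ and substituting the linear cost, the supremum in (\ref{Bellman}) reduces to $\sup_{c\in[c_L,c_H]} c\,(\Delta V_n-\kappa)$, whose maximizer is $c_H$ when $\Delta V_n>\kappa$, is $c_L$ when $\Delta V_n<\kappa$, and is any point of $[c_L,c_H]$ when $\Delta V_n=\kappa$. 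Because $\Gamma$ attains this supremum pointwise and is decreasing in $n$, the set $\{n:\Gamma_n=c_H\}$ is an initial segment, say $\{1,\dots,N-1\}$; on this segment $\Delta V_n\geq\kappa$, while for $n\geq N$ the inequality $\Gamma_n<c_H$ forces $\Delta V_n\leq\kappa$. Proposition \ref{flattail}, whose hypothesis $K'(c_L)>0$ holds here since $\kappa>0$, guarantees $\Gamma_n=c_L$ for all large $n$, so $N$ is finite.

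To finish, I consider the trigger policy $C^N$ and verify, using the same $V$, that $c_H$ attains the supremum in (\ref{Bellman}) for each $n<N$ (where $\Delta V_n\geq \kappa$) and $c_L$ attains it for each $n\geq N$ (where $\Delta V_n\leq\kappa$). Thus $C^N$ satisfies the pointwise optimality condition of Lemma \ref{verification} for the bounded value function $V$ already constructed, and Lemma \ref{verification} certifies $C^N$ as an optimal policy given $(\mu,C)$.

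The main obstacle I anticipate is the indifference region where $\Delta V_n=\kappa$: on such $n$ the optimal $\Gamma$ delivered by Proposition \ref{decreasing_policy} may take intermediate values, so one cannot read off a trigger simply by inspecting $\Gamma$. The correct manoeuvre is to fix $V$ from that proposition and to redefine the policy to equal $c_L$ on the indifference region and beyond; because $V$ is unaltered and $c_L$ attains the supremum wherever $\Delta V_n\leq\kappa$, Lemma \ref{verification} still applies. Beyond this bookkeeping, the argument is short.
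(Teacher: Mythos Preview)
Your argument is correct and follows essentially the same approach as the paper: the paper's own discussion preceding the proposition explains that the result ``is a consequence of our prior results that an optimal policy is decreasing and eventually reaches $c_L$, and of the fact that with linear search costs, an optimal policy is `bang-bang','' which is precisely your decomposition via Proposition~\ref{decreasing_policy}, Proposition~\ref{flattail}, and the linearity of the objective in $c$. The paper's appendix proof phrases the switching criterion in terms of the sign of $Y_n(r+\eta')-\bar C\,\eta'\,u_n$ (shown decreasing via Lemma~\ref{udec}) rather than your $\Delta V_n-\kappa$, but these are equivalent and the logic is the same; if anything, your explicit treatment of the indifference region $\Delta V_n=\kappa$ is more careful than the paper's terse ``Thus, we have a trigger policy.''
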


\section{Equilibrium}\label{sec_equil}

An equilibrium is a search-effort policy function $C$ satisfying:
(i) there is a unique stationary cross-sectional precision measure
$\mu$ satisfying the associated equation (\ref{stationary}), and (ii)
taking as given the market properties $(\mu, C)$, the policy function
$C$ is indeed optimal for each agent. Our main result is that, with
proportional search costs, there exists an equilibrium in the form
of a trigger policy.

\begin{theorem}
Suppose that $K(c)=\kappa c$ for some scalar $\kappa>0$. Then there exists
a trigger policy that is an equilibrium.
\label{main}
\end{theorem}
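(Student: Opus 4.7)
The plan is to construct the equilibrium as a fixed point of a best-response map restricted to trigger policies. Because $K(c)=\kappa c$ is linear with $K'(c_L)=\kappa>0$, Propositions \ref{trigger_policy} and \ref{flattail} jointly reduce the search for equilibrium to the finite family $\{C^N:1\leq N\leq \overline N+1\}$, where $\overline N$ is the precision bound from Proposition \ref{flattail}. By Lemma \ref{stat_mes}, each candidate $C^N$ admits a unique stationary measure $\mu^N$, and hence a well-defined weighted measure $\mu^{C,N}_n=C^N_n\mu^N_n$. Since $\bar N$ is finite, this reduces the equilibrium problem to finding a fixed threshold.

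First I would define the best-response map $T:\{1,\ldots,\overline N+1\}\to\{1,\ldots,\overline N+1\}$ as follows. For each $N$, solve the Hamilton--Jacobi--Bellman equation (\ref{Bellman}) for a unique bounded value function $V^N$ against the market properties $(\mu^N,C^N)$; existence and uniqueness follow from a standard contraction argument on bounded sequences with modulus $\eta'/(r+\eta')<1$, using that $u$ is bounded. With proportional cost the supremum in (\ref{Bellman}) is bang-bang, so by Lemma \ref{verification} an optimal trigger threshold against $(\mu^N,C^N)$ is
\[
T(N)\ =\ \min\!\left\{n\,:\,\sum_{m=1}^\infty (V^N_{n+m}-V^N_n)\,\mu^{C,N}_m\ \leq\ \kappa\right\},
\]
which lies in $\{1,\ldots,\overline N+1\}$ by Proposition \ref{flattail}. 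A trigger equilibrium is precisely a fixed point of $T$.

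Next I would show that $T$ is monotone nondecreasing, which is the crux of the argument. Proposition \ref{M>N} gives the crucial comparative statics: $N'>N$ implies that $\mu^{C,N'}$ first-order stochastically dominates $\mu^{C,N}$, so richer match arrivals are available in the ``larger'' environment. Combining this FOSD improvement with the monotonicity and concavity of $V^N$ in $n$ (inherited from concavity of $u$ by the same induction used in the proof of Proposition \ref{decreasing_policy}, so that $V^N_{n+m}-V^N_n$ is increasing in $m$), I would deduce that the marginal benefit
\[
\Delta^N(n)\ =\ \sum_{m=1}^\infty (V^N_{n+m}-V^N_n)\,\mu^{C,N}_m
\]
is weakly increasing in $N$ at each fixed $n$. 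Since $\Delta^N(n)$ is decreasing in $n$, the first crossing $T(N)$ of $\Delta^N(\cdot)$ below $\kappa$ moves weakly to the right as $N$ rises. With $T$ monotone on the complete lattice $\{1,\ldots,\overline N+1\}$, Tarski's fixed-point theorem delivers a fixed point $N^*$, and $C^{N^*}$ is the desired trigger equilibrium.

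The main obstacle I expect is the monotonicity of $T$. The delicate point is that the value function $V^N$ itself shifts with $N$, so the inequality $\Delta^{N'}(n)\geq \Delta^N(n)$ cannot be read off from FOSD of $\mu^{C,N}$ alone: one must show that the indirect change in $V^N$ induced by the FOSD shift does not overturn the direct effect. The route I would take is a two-step monotone comparative statics argument: first, compare any feasible policy's performance across environments to show that $V^{N'}_n\geq V^N_n$ pointwise; then exploit concavity of $V^N$ in $n$ together with an Abel-summation (summation by parts) against the FOSD-ordered measures $\mu^{C,N'}$ and $\mu^{C,N}$ to secure the desired inequality on marginal benefits. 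This concavity-plus-FOSD argument is the key technical ingredient I would have to pin down carefully.
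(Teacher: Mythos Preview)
Your overall architecture---restrict to trigger policies via Propositions \ref{trigger_policy} and \ref{flattail}, prove the best-response threshold is monotone in the conjectured threshold $N$, and extract a fixed point on the finite chain---is exactly the paper's, and your first step ($V^{N'}_n\geq V^N_n$ pointwise from FOSD of $\mu^{C,N}$, Proposition \ref{M>N}) is the right one.

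The gap is in your second step. Decomposing
\[
\Delta^{N'}(n)-\Delta^N(n)=\sum_m\bigl[(V^{N'}_{n+m}-V^{N'}_n)-(V^N_{n+m}-V^N_n)\bigr]\mu^{C,N'}_m
\;+\;\sum_m(V^N_{n+m}-V^N_n)\bigl(\mu^{C,N'}_m-\mu^{C,N}_m\bigr),
\]
FOSD handles only the second sum (and for that you need only monotonicity of $V^N$, not concavity). The first sum requires \emph{increment} dominance $V^{N'}_{n+m}-V^{N'}_n\geq V^N_{n+m}-V^N_n$, and neither pointwise dominance $V^{N'}\geq V^N$ nor concavity of each $V^N$ separately yields that. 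Abel summation rewrites $\Delta^N(n)=\sum_{j\geq 1}(V^N_{n+j}-V^N_{n+j-1})\sum_{m\geq j}\mu^{C,N}_m$, but this again leaves you comparing the increments $V^{N'}_{n+j}-V^{N'}_{n+j-1}$ to $V^{N}_{n+j}-V^{N}_{n+j-1}$, so the obstruction is unchanged.

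The paper sidesteps this completely by recasting the trigger condition through the HJB equation itself: with $K(c)=\kappa c$, \eqref{Bellman} gives $(r+\eta')V^N_n=\eta' u_n+\max_c\,c\bigl(\Delta^N(n)-\kappa\bigr)$, so $\Delta^N(n)>\kappa$ if and only if $(r+\eta')V^N_n-\eta' u_n>0$. The optimal trigger is therefore the first $n$ at which $(r+\eta')V^N_n-\eta' u_n\leq 0$. Since $u_n$ is independent of $N$, the pointwise inequality $V^{N'}\geq V^N$ alone pushes this crossing weakly to the right, and monotonicity of $T$ follows with no appeal to increments. Replace your concavity/Abel step with this equivalence and the argument closes.
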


The theorem is proved using the following proposition and corollary.
We let $C^N$ be the trigger policy with trigger at precision level $N$, and we
let $\mu^N$ denote the associated stationary measure.
We let ${\cal N}(N)\subset {\mathbb N}$ be the set of trigger levels
that are optimal given the conjectured market properties
$(\mu^N,C^N)$ associated with a trigger level $N$.
We can look for an equilibrium in the form of a fixed point of
the optimal trigger-level correspondence ${\cal N}(\,\cdot\,)$,
that is, some $N$ such that $N \in {\cal N}(N)$.
The Theorem does not rely on the stability result
that from any initial condition, $\mu_t$ converges to $\mu$.
This stability applies, by Proposition \ref{combined}, provided that  $\eta\geq c_Hc_L$.

\begin{prop}
Suppose that $K(c)=\kappa c$ for some scalar $\kappa>0$.
Then ${\cal N}(N)$ is increasing in $N$, in the sense that if $N' \geq
N$ and if $k \in {\cal N}(N)$, then there exists some $k' \geq k$
in ${\cal N}(N')$. Further, there exists a uniform
upper bound on ${\cal N}(N)$, independent of $N$, given by
\[\overline N=\max \{j: \, c_H\eta'(r+\eta')(\overline u-u(j))\geq \kappa\}.\]
\label{nashoperator}
\end{prop}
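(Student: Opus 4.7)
The proposition combines a uniform upper bound and a monotonicity statement; I would handle them separately. The upper bound is immediate from Proposition \ref{flattail} applied with $K(c)=\kappa c$: then $K'(c_L)=\kappa$, and the $\overline N$ in the statement coincides with the one appearing in Proposition \ref{flattail}. That proposition asserts $\Gamma_n=c_L$ for every $n\ge\overline N$ whenever $\Gamma$ is an optimal policy given $(\mu^N,C^N)$. Since a trigger policy $C^k$ has $C^k_n=c_H$ for every $n<k$, the only way $C^k$ can be optimal is if $k\le\overline N$, so ${\cal N}(N)\subseteq\{1,\ldots,\overline N\}$ uniformly in $N$.

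For the monotonicity, my plan is to combine the HJB characterization with the FOSD result of Proposition \ref{M>N}. Let $V^N$ be the agent's value function given $(\mu^N,C^N)$. Under linear cost the HJB (\ref{Bellman}) reads
\[
(r+\eta')V^N_n \ =\ \eta' u_n \ +\ \sup_{c\in[c_L,c_H]} c\,\bigl(\beta^N_n-\kappa\bigr),\qquad \beta^N_n \ :=\ \sum_{m=1}^\infty (V^N_{n+m}-V^N_n)\,\mu^{C,N}_m.
\]
By the argument used in Proposition \ref{decreasing_policy}, $V^N$ is bounded, increasing, and concave in $n$, so $\beta^N_n$ is decreasing in $n$ and $k\in{\cal N}(N)$ is equivalent to $\beta^N_{k-1}\ge\kappa\ge\beta^N_k$. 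Because ${\cal N}(N)\subseteq\{1,\ldots,\overline N\}$ is finite, the ``$\exists k'\ge k$'' conclusion follows from $\max{\cal N}(N')\ge\max{\cal N}(N)$ for $N'\ge N$, which in turn reduces to showing that $\beta^N_n$ is weakly increasing in $N$ for each fixed $n$.

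To establish this last claim I would iterate the Bellman operator ${\cal T}_N$ associated with (\ref{Bellman}), starting from $V^{(0)}\equiv 0$ and setting $V^{(j+1)}={\cal T}_N V^{(j)}$. By induction on $j$, each iterate should satisfy (i) bounded, increasing, and concave in $n$, handled exactly as in the proof of Proposition \ref{decreasing_policy}, and (ii) pointwise weakly increasing in $N$ together with its forward differences in $n$. The inductive step for (ii) is driven by Proposition \ref{M>N}: given any increasing concave $V$, the sequence $m\mapsto V_{n+m}-V_n$ is nonnegative and increasing, so FOSD yields $\sum_m(V_{n+m}-V_n)\mu^{C,N'}_m\ge\sum_m(V_{n+m}-V_n)\mu^{C,N}_m$ whenever $N'\ge N$; because $\beta\mapsto\sup_c c(\beta-\kappa)$ is nondecreasing, this monotonicity passes through ${\cal T}_N$. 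Taking $j\to\infty$ delivers monotonicity of $V^N$ and of its forward differences in $N$, hence $\beta^{N'}_n\ge\beta^N_n$, and this gives the required monotonicity of the trigger set.

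The main obstacle is carrying properties (i) and (ii) through the same induction, because the market measure $\mu^{C,N}$ defining ${\cal T}_N$ itself varies with $N$. Concavity preservation for fixed $N$ is exactly the content of Proposition \ref{decreasing_policy}, so I would import that argument; the monotonicity-in-$N$ propagation is the new ingredient, and Proposition \ref{M>N} is the essential tool that makes the joint induction tractable.
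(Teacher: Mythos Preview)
Your treatment of the uniform upper bound is correct and matches the paper. For monotonicity, your core idea---use Proposition~\ref{M>N} together with Bellman iteration to show $V^{N'}\ge V^N$ whenever $N'\ge N$---is exactly what the paper does (its Proposition~\ref{monVN}). The gap is in the next step: you reduce the problem to showing $\beta^{N'}_n\ge\beta^N_n$, and propose to obtain this by proving that the forward differences $V^N_{n+1}-V^N_n$ are increasing in $N$. Your inductive sketch does not actually address this; the FOSD step you describe only gives, for a fixed increasing $V$, that $\sum_m(V_{n+m}-V_n)\mu^{C,N}_m$ is increasing in $N$, which together with monotonicity of the operator in $V$ yields $V^{N'}\ge V^N$, but says nothing about how the differences $V^{N'}_{n+m}-V^{N'}_n$ compare with $V^N_{n+m}-V^N_n$. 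Indeed the heuristic points the other way: a richer information environment should help a poorly informed agent more, suggesting $V^{N'}_n-V^N_n$ is decreasing in $n$, so that forward differences would be \emph{decreasing} in $N$.

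The paper bypasses this entirely. From the HJB with linear cost one reads off $(r+\eta')V^N_n-\eta' u_n=\sup_c c(\beta^N_n-\kappa)$, so (for $c_L>0$) the sign of $\beta^N_n-\kappa$ coincides with the sign of $(r+\eta')V^N_n-\eta' u_n$; this is the content of \eqref{kappa}. Lemma~\ref{udec} says the latter sequence is decreasing in $n$, so the optimal trigger is simply its zero-crossing. Since $V^N_n$ is increasing in $N$, the zero-crossing moves weakly to the right, which is the desired monotonicity of ${\cal N}$. (The case $c_L=0$ requires a short separate argument, which the paper supplies.) Your iteration argument for $V^{N'}\ge V^N$ is fine; combine it with this sign equivalence rather than with an unestablished forward-difference claim, and the proof closes.
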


Theorem \ref{main} then follows from:
\begin{cor}
The correspondence ${\cal N}$ has a fixed point $N$. An equilibrium is
given by the associated trigger policy $C^N$.
\end{cor}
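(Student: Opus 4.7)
The plan is to reduce the corollary to a Tarski-style fixed-point argument for a monotone self-map on the finite set $\{1,2,\ldots,\overline N\}$, and then check that any such fixed point automatically satisfies the two equilibrium conditions of Section~\ref{sec_equil}. The main structural inputs are already packaged in Proposition~\ref{nashoperator}, so I expect no serious obstacle; the work is in setting up the right selection from the correspondence ${\cal N}$.

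First, I would argue that for every trigger level $N$, the set ${\cal N}(N)$ is nonempty and contained in $\{1,\ldots,\overline N\}$. Nonemptiness is where Proposition~\ref{trigger_policy} enters: with proportional search costs, given the market data $(\mu^N,C^N)$ there is at least one optimal policy that is itself a trigger policy, whose trigger level then lies in ${\cal N}(N)$. The uniform upper bound $\overline N$ on ${\cal N}(N)$ is exactly the one supplied by Proposition~\ref{nashoperator}.

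Next, I would define $g(N)=\max{\cal N}(N)$, which by the previous step is a well-defined map from $\{1,\ldots,\overline N\}$ to itself. I would verify that $g$ is monotone nondecreasing: given $N'\ge N$ and letting $k=g(N)\in{\cal N}(N)$, the monotonicity clause of Proposition~\ref{nashoperator} supplies some $k'\in{\cal N}(N')$ with $k'\ge k$, whence $g(N')\ge k'\ge g(N)$. A fixed point is then produced by iteration: set $N_0=1$ and $N_{j+1}=g(N_j)$; one has $N_1\ge 1=N_0$ trivially, and monotonicity of $g$ propagates this to a nondecreasing sequence in $\{1,\ldots,\overline N\}$. Since this sequence is bounded above by $\overline N$, it stabilizes at some $N^\ast$ with $g(N^\ast)=N^\ast$, i.e.\ $N^\ast\in{\cal N}(N^\ast)$.

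Finally, I would verify that the trigger policy $C^{N^\ast}$ satisfies the equilibrium definition. Condition~(i), existence of a unique associated stationary precision measure $\mu^{N^\ast}$, is immediate from Lemma~\ref{stat_mes}. Condition~(ii), optimality of $C^{N^\ast}$ taking the market properties $(\mu^{N^\ast},C^{N^\ast})$ as given, is precisely the content of the fixed-point relation $N^\ast\in{\cal N}(N^\ast)$. The only step that needs any real care is the nonemptiness of ${\cal N}(N)$ in the first step; everything else is bookkeeping on top of Proposition~\ref{nashoperator}, which is where the substantive coupling between the monotonicity of $g$ and the underlying economic primitives has already been carried out.
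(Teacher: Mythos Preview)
Your approach is essentially the same Tarski-style iteration as the paper's (Proposition~\ref{fixN}): pick a monotone selection from ${\cal N}$, iterate from the bottom, and use the uniform bound $\overline N$ to force stabilization. The paper works with $\inf{\cal N}(N)$ and starts at $N=0$; you work with $\max{\cal N}(N)$ and start at $N_0=1$. Either selection is fine.

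However, there is a small but genuine gap. You assert that ${\cal N}(N)\subset\{1,\ldots,\overline N\}$ and that $N_1=g(1)\ge 1$ ``trivially,'' but neither is justified: nothing prevents the constant-$c_L$ policy (trigger level $0$) from being the \emph{only} optimal trigger given $(\mu^N,C^N)$, in which case ${\cal N}(N)=\{0\}$ and $g(1)=0<1$. Your $g$ is then not a self-map on $\{1,\ldots,\overline N\}$, and the monotone-iteration argument breaks at the first step. The fix is immediate: take the domain to be $\{0,1,\ldots,\overline N\}$ and start the iteration at $N_0=0$, so that $N_1=g(0)\ge 0$ really is trivial; the remainder of your argument then goes through verbatim. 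This is exactly why the paper begins its induction at $0$.
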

\bigskip

Our proof, found in the appendix, leads to the following
algorithm for computing symmetric pure strategy
equilibria of the game. The algorithm finds all such equilibria in trigger
strategies.
\medskip

{\it \noindent\textbf{Algorithm:}
Start with $N = \bar N$.
\begin{enumerate}
\item Compute ${\cal N}(N)$.
If $N \in {\cal N}(N)$, then
output $C^N$ (an equilibrium
of the game). Go to the next step.
\item If $N > 0$, go back to Step 1
with $N=N-1$. Otherwise, quit.
\end{enumerate}}

There may exist multiple equilibria of the game. The following
proposition shows that the equilibria are Pareto-ranked
according to their associated trigger levels, and that
there is never ``too much'' search in equilibrium.

\begin{prop}
Suppose that $K(c)=\kappa c$ for some scalar $\kappa>0$.
If $C^N$ is an equilibrium of the game then it Pareto
dominates any outcome in which all agents employ a
policy $C^{N'}$ for a trigger level $N' < N$. In
particular, the set of equilibria of the game
is Pareto-ranked with equilibria associated
with higher trigger levels dominating
equilibria associated with lower trigger
levels.
\label{pareto}
\end{prop}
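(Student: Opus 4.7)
Write $V^N_n$ for the expected lifetime utility of an agent starting at precision $n$ when the whole market uses $C^N$, and $V^{N'}_n$ analogously for $C^{N'}$. Pareto domination of $C^N$ over the $C^{N'}$ outcome amounts to the pointwise inequality $V^N_n\ge V^{N'}_n$ for every $n$ in the support of $\pi$, and the Pareto ranking of equilibria is the special case in which $C^{N'}$ is itself also an equilibrium. The plan is to squeeze this inequality through the auxiliary value $\tilde V_n$ defined as the lifetime utility of an agent who plays the trigger $C^{N'}$ while the ambient market remains described by $(\mu^N, C^N)$, so that the agent's precision process has jump compensator $C^{N'}_{N_t}\mu^{C,N}_k\,dt$ for size-$k$ jumps. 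The upper half $V^N_n\ge\tilde V_n$ is immediate from Lemma \ref{verification}: $C^N$, being an equilibrium for $(\mu^N, C^N)$, is optimal among all admissible agent policies, and $C^{N'}$ is merely one such alternative.

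For the lower half $\tilde V_n\ge V^{N'}_n$, both $\tilde V$ and $V^{N'}$ are the unique bounded solutions of a fixed-policy Bellman equation
\begin{equation*}
(r+\eta')W_n \;=\; \eta' u_n - K(C^{N'}_n) + C^{N'}_n\sum_{k\ge 1}(W_{n+k}-W_n)\,m_k,
\end{equation*}
with $m=\mu^{C,N}$ for $\tilde V$ and $m=\mu^{C,N'}$ for $V^{N'}$. A preliminary step verifies that $V^{N'}$ is nondecreasing in $n$, a structural property of the fixed-policy value that follows, given the flat-tail form of the trigger, by solving the Bellman equation above the trigger (a translation-invariant linear recursion in $n$ with monotone source $\eta'u_n-\kappa c_L$) and extending through the finite backward recursion on $n=N'-1,\ldots,1$. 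By Proposition \ref{fosd}, $\mu^{C,N}\succeq_{\text{FOSD}}\mu^{C,N'}$ since $N>N'$. Subtracting the two Bellman equations and setting $W_n=\tilde V_n-V^{N'}_n$,
\begin{equation*}
(r+\eta')W_n \;=\; C^{N'}_n\sum_{k\ge 1}(W_{n+k}-W_n)\mu^{C,N}_k \;+\; C^{N'}_n\sum_{k\ge 1}(V^{N'}_{n+k}-V^{N'}_n)(\mu^{C,N}_k-\mu^{C,N'}_k),
\end{equation*}
and the last sum is nonnegative because $k\mapsto V^{N'}_{n+k}-V^{N'}_n$ is nonnegative and nondecreasing and $\mu^{C,N}$ FOSD-dominates $\mu^{C,N'}$. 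A minimum-principle argument then rules out $a:=\inf_n W_n<0$: for $\eps>0$ pick $n^*$ with $W_{n^*}<a+\eps$, use $W_{n^*+k}\ge a$ to bound $\sum_k(W_{n^*+k}-W_{n^*})\mu^{C,N}_k\ge-\eps\|\mu^{C,N}\|$, and deduce $(r+\eta')(a+\eps)>-c_H\|\mu^{C,N}\|\,\eps$; letting $\eps\downarrow 0$ and using $r+\eta'>0$ forces $a\ge 0$. Thus $\tilde V_n\ge V^{N'}_n$, and combining the two halves gives the proposition.

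The main obstacle is the comparison on a countable, unbounded state space, where $\inf_n W_n$ need not be attained; the strict discount $r+\eta'>0$ is what rescues the $\eps$-approximation above. A secondary but nontrivial ingredient is the monotonicity of the fixed-policy value $V^{N'}$, where the naive pathwise coupling of precision processes fails because the lower-precision agent has a higher search rate under the decreasing trigger $C^{N'}$ and can temporarily leapfrog a higher-precision agent; the piecewise-constant flat-tail structure of trigger policies is what allows the monotonicity to be recovered analytically.
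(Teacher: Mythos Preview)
Your route differs from the paper's. The paper deduces the proposition in one line from Proposition~\ref{monVN}, which compares the \emph{optimal} value functions in the two markets: $V^N\ge V^{N'}$ whenever $N\ge N'$; the fixed-policy payoff in the $C^{N'}$ outcome is then trivially bounded above by $V^{N'}$. The hard work is pushed into Proposition~\ref{monVN}, proved by showing the Bellman operator $L^N_c$ is monotone in $N$ (Lemma~\ref{mon}, via the FOSD result Proposition~\ref{M>N}) and iterating the contraction $M$. You instead insert the intermediate $\tilde V$ (policy $C^{N'}$ in market $N$), get $V^N\ge\tilde V$ for free from optimality of the equilibrium policy, and put the FOSD comparison into the second step $\tilde V\ge V^{N'}$ via a minimum principle. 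Both routes rest on Proposition~\ref{fosd}; what differs is where the comparison is done and which auxiliary monotonicity is needed.

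The gap is your monotonicity claim for the \emph{fixed-policy} value $V^{N'}$. Your sketch correctly handles the tail $n\ge N'$ (a translation-invariant contraction with monotone source), and the backward recursion does propagate monotonicity from $n+1$ to $n$ when both lie \emph{below} the trigger, since the recursion then has the common coefficient $c_H$. But at the boundary $n=N'-1$ versus $n=N'$ the coefficient jumps from $c_H$ to $c_L$, and nothing in your argument forces $V^{N'}_{N'-1}\le V^{N'}_{N'}$. Concretely, take $c_L=0$ and $\kappa$ small: the agent at $N'-1$ obtains a nearly free jump to precision $N'-1+J$ and then stops, yielding a value close to $\alpha\,u_{N'-1}+(1-\alpha)\,E[u_{N'-1+J}]$ with $\alpha=\eta'/(\eta'+c_H\bar C^{N'})$; if the jump-size distribution $\mu^{C,N'}/\bar C^{N'}$ puts appreciable mass at $J\ge 2$ and $\alpha$ is small, this can exceed $V^{N'}_{N'}=\eta' u_{N'}/(r+\eta')$. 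Once monotonicity of $V^{N'}$ fails, the term $\sum_k(V^{N'}_{n+k}-V^{N'}_n)(\mu^{C,N}_k-\mu^{C,N'}_k)$ need not be nonnegative, and your minimum-principle step collapses. The paper avoids this by comparing optimal value functions, whose monotonicity is established cleanly by a monotonicity-preserving contraction (Corollary~\ref{fixedp}); the price is that the FOSD comparison must be carried out at the operator level rather than by your direct subtraction of linear fixed-policy equations.
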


\subsection{Equilibria with Minimal Search}

We now consider conditions under which there are equilibria with
minimal search, corresponding to the trigger policy with trigger at $\,N=0.$
The idea is that such equilibria can arise because a presumption that other
agents make minimal search efforts can lead to a conjecture of such poor information sharing opportunities that any given
agent may not find it worthwhile to exert more than minimal search effort. We give
an explicit sufficient condition for such
equilibria, a special case of which is $c_L=0$. Clearly, with $c_L=0$, it is pointless for any agent to expend
any search effort if he or she assumes that all other agents make no effort to be found.

Let $\mu^0$ denote the stationary precision distribution associated with minimal search,
so that $\,\bar C\ =\ c_L$ is the average search effort. The value function $V$ of any agent solves
 \begin{equation}\label{hom_bel}
  (r\,+\,\eta'\,+\,c_L^2)\,V_n\ =\ \eta'\,u_n\ -\ K(c_L)\ +\ c_L^2\,\sum_{m=1}^\infty\,V_{n+m}\,\mu^0_m.
  \end{equation}
Consider the bounded increasing sequence $f$ given by
\[
f_n=\ (r\,+\,\eta'\,+\,c_L^2)^{-1}\,\left(\eta'\,u_n\,-\,K(c_L)\right).
\]
 Define the operator
$\,A\,$ on the space of bounded sequences by
\[
(A(g))_n\ =\ \frac{c_L^2}{r\,+\,\eta'\,+\,c_L^2}\,\sum_{m=1}^\infty\,g_{n+m}\,\mu^0_m.
\]

\begin{lemma}\label{I-A} The unique, bounded solution $\,V\,$ to \eqref{hom_bel} is given by
\[
V\ =\ (I\,-\,A)^{-1}\,(f)\ =\ \sum_{j=0}^\infty\,A^j\,(f),
\]
 which is concave and monotone increasing.
\end{lemma}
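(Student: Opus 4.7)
The plan is to view \eqref{hom_bel} as the fixed-point equation $V = f + AV$, i.e.\ $(I-A)V = f$, and solve it via Neumann series after verifying that $A$ is a strict contraction on $\ell^\infty$. The three things to check are (i) existence and uniqueness of a bounded $V$, (ii) monotonicity, and (iii) concavity; each follows from a property of $A$ combined with the corresponding property of $f$.

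For (i), I would observe that for any bounded sequence $g$,
\[
|(Ag)_n|\ \le\ \frac{c_L^2}{r+\eta'+c_L^2}\,\|g\|_\infty\,\sum_{m=1}^\infty\mu^0_m\ \le\ \frac{c_L^2}{r+\eta'+c_L^2}\,\|g\|_\infty,
\]
so $\|A\|\le c_L^2/(r+\eta'+c_L^2)<1$ (using $r>0$ and $\eta'>0$, and $\mu^0(\N)\le 1$). Hence $I-A$ is invertible on $\ell^\infty$ with $(I-A)^{-1}=\sum_{j=0}^\infty A^j$ converging in operator norm, and $V=\sum_{j=0}^\infty A^jf$ is the unique bounded solution.

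For (ii) and (iii), the key observation is that $A$ preserves both monotonicity and concavity on sequences. If $g$ is increasing, then so is $n\mapsto \sum_m g_{n+m}\mu^0_m$ because shifting the index up only increases each summand; hence $Ag$ is increasing. If $g$ is concave in the sense $g_{n+2}+g_n\le 2g_{n+1}$, then
\[
(Ag)_{n+2}+(Ag)_n\ =\ \tfrac{c_L^2}{r+\eta'+c_L^2}\sum_m\bigl(g_{n+2+m}+g_{n+m}\bigr)\mu^0_m\ \le\ 2(Ag)_{n+1},
\]
so $Ag$ is concave. Now $f_n=(r+\eta'+c_L^2)^{-1}(\eta'u_n-K(c_L))$ is increasing and concave because $u$ is (by the standing assumption that $u$ is bounded, increasing, and concave), and constants do not affect these properties. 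Therefore each partial sum $\sum_{j=0}^K A^jf$ is increasing and concave, and since both properties are defined by pointwise inequalities and the partial sums converge to $V$ in $\ell^\infty$ (in particular pointwise), $V$ inherits them.

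There is no real obstacle here; the only mild point to be careful about is making sure that the contraction estimate $\|A\|<1$ holds with strict inequality—this is immediate from $r+\eta'>0$ and the fact that $\mu^0$ is a sub-probability measure on $\N$—so the Neumann series is absolutely summable and the interchange with the pointwise concavity/monotonicity inequalities is legitimate.
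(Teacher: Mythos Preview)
Your argument is correct and is essentially the same as the paper's: both establish the operator-norm bound $\|A\|\le c_L^2/(r+\eta'+c_L^2)<1$ to get existence and uniqueness via the Neumann series, then use that $A$ preserves monotonicity and concavity and that $f$ inherits these from $u$. The paper packages the properties of $A$ in a separate lemma (Lemma~\ref{mon-pres}) and only sketches concavity preservation with ``proved similarly,'' whereas you write it out; also, $\mu^0$ is in fact a probability measure (total mass exactly $1$), not merely a sub-probability, but this only sharpens your estimate.
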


In order to provide simple conditions for minimal-search equilibria, let
\begin{equation}
\label{foc}
B\ =\ c_L\,\sum_{m=1}^\infty\,(V_{1\,+\,m}\ -\ V_1)\,\mu^0_m\ \ge\ 0.
\end{equation}

\begin{theorem}\label{homog} Suppose that $K(\,\cdot\,)$ is convex, increasing, and differentiable.
Then the minimal-search policy $C$, that with $C(n)=c_L$ for all $n$, is an equilibrium if and only if
$K'(c_L)\ \ge\ B.$
In particular, if $\,c_L\,=\,0\,,\,$ then $\,B\,=\,0\,$ and minimal search is always an equilibrium.
\end{theorem}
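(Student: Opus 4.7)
The plan is to apply Lemma \ref{verification} to the minimal-search policy, using the value function produced by Lemma \ref{I-A}. Under $(\mu^0, C)$ with $C(n) = c_L$ for all $n$, the weighted measure $\mu^C_m$ equals $c_L\mu^0_m$ and the average search effort $\bar C$ equals $c_L$; Lemma \ref{I-A} supplies the unique bounded solution $V$ of (\ref{hom_bel}) and guarantees that $V$ is concave and monotone increasing. Lemma \ref{stat_mes} already delivers uniqueness of $\mu^0$, so all that remains is to decide when $c_L$ attains the inner supremum of the HJB equation (\ref{Bellman}) at every precision $n \geq 1$.

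At precision $n$, the inner problem is $\sup_{c\in[c_L,c_H]}\{-K(c) + c\,S_n\}$, where I set $S_n = c_L\sum_{m=1}^\infty (V_{n+m}-V_n)\,\mu^0_m$. Since $K$ is convex, the objective is concave in $c$, so $c_L$ maximizes it on $[c_L, c_H]$ if and only if its right derivative there, $S_n - K'(c_L)$, is non-positive. By Lemma \ref{verification}, the minimal-search policy is therefore an equilibrium if and only if $K'(c_L) \geq S_n$ for every $n \geq 1$.

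To reduce this infinite family of inequalities to the single condition $K'(c_L)\geq B$, I exploit concavity of $V$. Writing
\[
V_{n+m}-V_n\ =\ \sum_{k=1}^{m}\bigl(V_{n+k}-V_{n+k-1}\bigr),
\]
and using that each increment $V_{n+k}-V_{n+k-1}$ is non-increasing in $n$ by concavity of $V$, I conclude that $V_{n+m}-V_n$ is non-increasing in $n$ for each fixed $m$. Summing against $c_L\mu^0_m\geq 0$ yields $S_n \leq S_1 = B$ for every $n\geq 1$, so the infinite family of first-order conditions collapses to $K'(c_L)\geq B$, giving both directions of the equivalence. When $c_L=0$ the factor $c_L$ in (\ref{foc}) forces $B=0$, while $K'(0)\geq 0$ since $K$ is monotone increasing, so the condition is automatic. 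The only non-routine step is the monotonicity $S_n\leq S_1$, which rides entirely on concavity of $V$ from Lemma \ref{I-A}; the rest is standard concave maximization together with the verification lemma.
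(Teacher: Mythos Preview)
Your proof is correct and follows essentially the same route as the paper: compute the candidate value function $V$ for the constant-$c_L$ policy via Lemma \ref{I-A}, use concavity of $V$ to show that the sequence $S_n=c_L\sum_m(V_{n+m}-V_n)\mu^0_m$ is maximized at $n=1$ (so that $\max_n S_n=B$), and then reduce the first-order condition for the concave inner problem to the single inequality $K'(c_L)\ge B$. Your treatment is in fact slightly more explicit than the paper's in spelling out the telescoping argument for monotonicity of $S_n$ and the $c_L=0$ case; the only minor quibble is that Lemma \ref{verification} as stated gives sufficiency only, so the ``only if'' direction tacitly uses that $V$ would have to coincide with the unique HJB fixed point (Corollary \ref{fixedp}) if $c_L$ were optimal---but the paper's proof is equally brief on this point.
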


Intuitively, when the cost of search is small, there should exist equilibria with active search.

\begin{prop}\label{more0} Suppose that $K(c)\ =\ \kappa\,c\,$ and $c_L\ =\ 0\,.$ If $\pi_1\,>\,0\,$ and
\begin{equation}
\label{conditionK}
\kappa\ -\  \frac{\eta'\,(u(2)\,-\,u(1))\,c_H\mu_1^1}{r+\eta'}<0,
\end{equation}
then there exists an equilibrium trigger policy $C^N$ with $N\ge 1\,.$ This equilibrium strictly Pareto dominates the no-search equilibrium.
\end{prop}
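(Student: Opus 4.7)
The plan is to apply the fixed-point construction underlying the Corollary above, refined by condition \eqref{conditionK} to ensure that the equilibrium trigger produced is at least $1$; strict Pareto dominance then follows from Proposition \ref{pareto} together with $\pi_1 > 0$.

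The critical step is to show that for some $N_0 \geq 2$, the best-response set $\mathcal{N}(N_0)$ contains an element $\geq 2$. Fix such an $N_0$ and consider the individual problem against the market profile $(\mu^{N_0}, C^{N_0})$. By Proposition \ref{trigger_policy} the best response is itself a trigger policy $C^{k^*}$ for some $k^* \geq 1$. Suppose for contradiction that $k^* = 1$, i.e., the best response prescribes no search. Then the Bellman equation \eqref{Bellman} with $c = 0$ reduces to $V_n = \eta'\,u(n)/(r+\eta')$ for every $n$, so the marginal benefit of searching at $n = 1$ is
\[
\sum_{m=1}^\infty\,(V_{1+m} - V_1)\,\mu^{C,N_0}_m \;=\; \frac{\eta'}{r+\eta'}\sum_{m=1}^\infty\,\bigl(u(1+m) - u(1)\bigr)\,\mu^{C,N_0}_m.
\]
Bounding below by the $m = 1$ term and using $\mu^{C,N_0}_1 = c_H\,\mu^{N_0}_1$, this is at least $\eta'\,(u(2) - u(1))\,c_H\,\mu^{N_0}_1/(r+\eta')$, which under \eqref{conditionK}---with the $\mu_1^1$ there read as the $n = 1$ entry of the relevant stationary measure---strictly exceeds $\kappa$. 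This contradicts the optimality of $c = 0$ at $n = 1$ under the supposed no-search best response. Hence $k^* \geq 2$, and $\mathcal{N}(N_0)$ contains an element $\geq 2$.

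To propagate this to a fixed point, set $g(N) := \max \mathcal{N}(N)$. By Proposition \ref{nashoperator}, $g$ is monotone non-decreasing on $\{0, \ldots, \bar N\}$ with $g(\bar N) \leq \bar N$. The previous step places some $N \geq 2$ into the set $F := \{N : g(N) \geq N\}$. Let $N^* := \max F$; I claim $g(N^*) = N^*$. Otherwise $g(N^*) > N^*$, and monotonicity would give $g(g(N^*)) \geq g(N^*)$, placing $g(N^*) \in F$ and contradicting the maximality of $N^*$. Hence $N^* \in \mathcal{N}(N^*)$, so $C^{N^*}$ is a trigger equilibrium with $N^* \geq 1$. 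Strict Pareto dominance of $C^{N^*}$ over the no-search equilibrium follows from Proposition \ref{pareto} together with the observation that $\pi_1 > 0$ ensures a positive mass of newly entering agents at precision $1$, for whom the active search under $C^{N^*}$ yields a strictly positive net expected benefit.

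The main obstacle is pinning down the exact identification of the $\mu_1^1$ in \eqref{conditionK} with the stationary-measure entry $\mu^{N_0}_1$ needed for the contradiction argument---ensuring in particular that the lower bound on the marginal benefit obtained by the $m = 1$ term of the sum is tight enough to match the condition as stated. Once that identification is secured, the monotonicity-based fixed-point argument and the Pareto comparison are direct applications of the earlier propositions.
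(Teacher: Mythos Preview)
Your overall strategy---show that the best response to some nontrivial trigger profile cannot be the no-search policy, then run the monotone fixed-point argument---is exactly the paper's. But the gap you flag at the end is real and is not a mere identification issue; it actually breaks your contradiction step as written.

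You work against an \emph{unspecified} profile $(\mu^{N_0},C^{N_0})$ with $N_0\ge 2$ and bound the marginal benefit at $n=1$ from below by
\[
\frac{\eta'(u(2)-u(1))\,c_H\,\mu_1^{N_0}}{r+\eta'}\,.
\]
Condition \eqref{conditionK}, however, is stated with $\mu_1^{1}$, not $\mu_1^{N_0}$. By Proposition \ref{barc} the average effort $\bar C^N$ is increasing in the trigger level, and since $\mu_1^N=\eta\pi_1/(\eta+c_H\bar C^N)$ one has $\mu_1^{N_0}\le \mu_1^{1}$ for every $N_0>1$. Thus \eqref{conditionK} does \emph{not} imply that your lower bound exceeds $\kappa$, and the contradiction does not follow. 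The same issue undermines the fixed-point step: from $g(N_0)\ge 2$ alone (for a single unspecified $N_0$) you cannot conclude that $N_0\in F$, nor that iterating $g$ stays above the no-search level.

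The paper's resolution is to take the specific profile $C^{1}$ in which only precision-$1$ agents search at $c_H$ (and all others at $c_L=0$). Then $C^1_m=0$ for $m\ge 2$, so the marginal-benefit sum collapses to its single $m=1$ term and equals
\[
\frac{\eta'(u(2)-u(1))\,c_H\,\mu_1^{1}}{r+\eta'}
\]
\emph{exactly}---no term is thrown away. Condition \eqref{conditionK} is precisely the statement that this quantity exceeds $\kappa$, giving the contradiction directly. Starting the iteration at this lowest nontrivial level also makes the fixed-point step immediate: $\mathcal N(1)$ contains some $N\ge 1$, so $1\in F$, and your $\max F$ argument then produces a fixed point $N^*\ge 1$.
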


\section{Policy Interventions}
\label{interventions}

In this section, we discuss the potential welfare implications of policy interventions. First,
we analyze the potential to improve welfare by a tax whose proceeds are used to subsidize the costs
of search efforts. This has the potential benefit of
positive search externalities that may not otherwise arise in equilibrium because
each agent does not search unless others are searching,
even though there are feasible search efforts that would make all agents
better off.

Then, we study the potentially
adverse implications of providing all entrants with some additional
common information. Although there is some direct benefit
of the additional information, we show that with proportional
search costs, additional public information leads to an
unambiguous reduction in the sharing of private information,
to the extent that there is in some cases a net negative welfare effect.

In both cases, welfare implications are judged in terms of the
utilities of agents as they enter the market. In this sense, the
welfare effect of an intervention is said to be positive it
improves the utility of every agent at the point in time that
the agent enters, and negative if it causes a reduction in the
utilities of all entering agents.

\subsection{Subsidizing Search}

A policy that may help to attenuate the negative welfare effects of
low information sharing is to institute a tax whose proceeds are
used to subsidize search costs.

We suppose for this purpose that
each agent pays a lump-sum tax $\tau$ at entry.
Search costs are assumed to be proportional, at rate $\kappa c$ for some $\kappa>0$.
Each agent is also offered a proportional
reduction $\delta$ in search costs, so that
the after-subsidy search cost function of each agent is $K_\delta(c) = (\kappa - \delta) c$.
The lump-sum tax has no effect on equilibrium search
behavior, so we can solve for an equilibrium policy $C$, as before, based on an after-subsidy
proportional search cost of $\kappa-\delta$.
Because of the law of large numbers, the total per-capita rate $\tau\eta$
of tax proceeds can then be equated to the total per-capita rate of subsidy
by setting
\[\tau=\frac{1}{\eta}\, \delta\sum_n\mu_nC_n.\]

The search subsidy can potentially improve welfare by
addressing the failure, in a low-search
equilibrium,
to exploit positive
search externalities. As Proposition \ref{pareto} shows, there is
never too much search in equilibrium. The following lemma
and proposition show that, indeed,
equilibrium search effort is increasing in the search subsidy
rate $\delta$.

\begin{lemma}\label{mon_subsidy} Suppose that $K(c)=\kappa c$ for
some $\kappa >0$. For given market conditions $(\mu,C),$ the
trigger level $N$ in the precision of an optimal policy $C^N$ is
increasing in the search subsidy rate $\delta$. That is, if $N$ is an
optimal trigger level of precision given a subsidy $\delta$,
then for any higher search subsidy $\delta'\geq \delta$, there
exists a higher optimal trigger $N'\geq N$.
\end{lemma}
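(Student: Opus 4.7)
My plan is to reduce the claim to a Topkis-style monotone-comparative-statics argument. The key observation is that, because the dynamics of the agent's precision process under a trigger policy are independent of the subsidy rate $\delta$, the value of a trigger policy is affine in $\delta$, and its slope, the expected discounted search effort, is monotone in the trigger level.

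First, fix market conditions $(\mu,C)$ and let $\bar C=\sum_n C_n\mu_n$. Under the trigger policy $C^N$, the precision process $N^{C^N}$ is a pure-jump process with intensity $C^N(n)\bar C$ at state $n$ and jump-size distribution $\mu^C/\bar C$, and these dynamics do not depend on $\delta$. Hence the agent's expected lifetime utility starting from precision $n_0$ decomposes as
\[
J(N,\delta,n_0)\ =\ A(N,n_0)\ +\ \delta\, B(N,n_0),
\]
where $B(N,n_0)=E_{n_0}\!\left[\int_0^\tau e^{-rt}\,C^N(N^{C^N}_t)\,dt\right]\ge 0$ is the expected discounted search effort along the trajectory and $A(N,n_0)=J(N,0,n_0)$.

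Second, and this is the substantive step, I would show that $N\mapsto B(N,n_0)$ is nondecreasing for each $n_0$. By Dynkin's formula, $B(N,n)=c_L/(r+\eta')$ for $n\ge N$ (the agent searches forever at $c_L$), while for $n<N$,
\[
(r+\eta'+c_H\bar C)\, B(N,n)\ =\ c_H\ +\ c_H\sum_m \mu^C_m\, B(N,n+m).
\]
Setting $D(n)=B(N+1,n)-B(N,n)$, one has $D(n)=0$ for $n>N$; a direct calculation using $B(N+1,N+m)=c_L/(r+\eta')$ for $m\ge 1$ gives $D(N)=(c_H-c_L)/(r+\eta'+c_H\bar C)\ge 0$; and for $n<N$, since both $C^N(n)=C^{N+1}(n)=c_H$, $D$ satisfies the homogeneous recursion $(r+\eta'+c_H\bar C)\, D(n)=c_H\sum_m \mu^C_m\, D(n+m)$. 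Downward induction on $n$ from $N-1$ then gives $D(n)\ge 0$ for all $n$.

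Combining the two steps, $J(\cdot,\cdot,n_0)$ has weakly increasing differences in $(N,\delta)$ for every $n_0$. Let $N$ be an optimal trigger at subsidy $\delta$ and let $\tilde N$ be any optimal trigger at $\delta'\ge\delta$; both exist by Proposition \ref{trigger_policy}. If $\tilde N\ge N$, take $N'=\tilde N$. If instead $\tilde N<N$, the identity
\[
J(N,\delta',n)\ -\ J(\tilde N,\delta',n)\ =\ [J(N,\delta,n)-J(\tilde N,\delta,n)]\ +\ (\delta'-\delta)[B(N,n)-B(\tilde N,n)]\ \ge\ 0
\]
holds for every $n$, and combined with the optimality of $\tilde N$ at $\delta'$ forces equality, showing that $C^N$ is itself optimal at $\delta'$; we then take $N'=N$. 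The main obstacle I anticipate is the monotonicity in Step 2: one must check that although a trigger upgrade $C^N\to C^{N+1}$ alters the policy only at the single precision $N$, this local change propagates through the Bellman-type recursion to an unambiguous increase in the discounted search effort at every starting precision.
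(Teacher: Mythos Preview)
Your argument is correct, but it takes a genuinely different route from the paper's.

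The paper's proof is three lines: it notes that the value function $V^\delta$ is monotone increasing in $\delta$ (trivially, since a higher subsidy only lowers costs), invokes the HJB characterization \eqref{kappa} together with Lemma~\ref{udec} to identify the optimal trigger as the point where the decreasing sequence $(r+\eta')V^\delta_n-\eta' u_n$ crosses zero, and concludes that this crossing moves to the right as $\delta$ increases. All the work is hidden in the previously developed HJB machinery.

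Your approach instead works directly with trigger-policy payoffs and establishes supermodularity of $J(N,\delta,n_0)$ in $(N,\delta)$: the affine decomposition $J=A+\delta B$ reduces everything to monotonicity of the expected discounted search effort $B(N,n_0)$ in $N$, which you obtain by a clean downward recursion on the difference $D(n)=B(N+1,n)-B(N,n)$. The Topkis-style conclusion then follows. This is more self-contained---it does not rely on the HJB zero-crossing characterization or on Lemma~\ref{udec}---and makes the increasing-differences structure explicit, at the cost of a longer computation. It also transparently generalizes to any cost perturbation that enters the payoff affinely.
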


Coupled with Proposition \ref{M>N}, this lemma implies that an increase
in the subsidy allows an increase (in the sense of first order
dominance) in information sharing. A direct consequence of this lemma
is the following result.

\begin{prop}\label{equil_subsidy}
Suppose that $K(c)=\kappa c$ for some $\kappa >0$. If $C^N$ is an
equilibrium with proportional search subsidy $\delta$, then for any $\delta'\geq \delta$,
there exists some $N' \geq N$ such that $C^{N'}$ is an equilibrium
with proportional search subsidy $\delta'$.
\end{prop}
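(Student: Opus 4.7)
The plan is to iterate between two monotonicities—Lemma \ref{mon_subsidy}'s monotonicity in the subsidy for fixed market conditions, and Proposition \ref{nashoperator}'s monotonicity of the optimal-trigger correspondence in the market-wide trigger level—to climb from the given equilibrium at subsidy $\delta$ up to a fixed point at subsidy $\delta'$.

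For $s \in \{\delta, \delta'\}$, let ${\cal N}_s(M)$ denote the set of optimal trigger levels for an individual agent who faces the proportional cost $(\kappa-s)c$ and whose market conditions are $(\mu^M, C^M)$. Because $(\kappa-s)c$ is itself proportional, Proposition \ref{nashoperator} applies verbatim with $\kappa$ replaced by $\kappa-s$: the correspondence ${\cal N}_s$ is monotone in $M$ (in the stated sense that $M' \geq M$ and $k \in {\cal N}_s(M)$ imply the existence of some $k' \geq k$ in ${\cal N}_s(M')$), and it is uniformly bounded by $\bar N_s = \max\{j : c_H\eta'(r+\eta')(\bar u - u(j)) \geq \kappa - s\}$. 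Since $\kappa - \delta' \leq \kappa - \delta$, we have $\bar N_{\delta'} \geq \bar N_{\delta} \geq N$.

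Next I would construct a non-decreasing sequence $(M_k)_{k \geq 0}$ with $M_0 = N$. Since $N \in {\cal N}_\delta(N)$ by hypothesis, Lemma \ref{mon_subsidy}, applied at the fixed market conditions $(\mu^N, C^N)$, yields some $M_1 \geq M_0$ with $M_1 \in {\cal N}_{\delta'}(M_0)$. Inductively, given $M_k \geq M_{k-1}$ with $M_k \in {\cal N}_{\delta'}(M_{k-1})$, the monotonicity of ${\cal N}_{\delta'}$ produces some $M_{k+1} \geq M_k$ with $M_{k+1} \in {\cal N}_{\delta'}(M_k)$. The resulting non-decreasing integer sequence lies in $\{0, 1, \ldots, \bar N_{\delta'}\}$ and so eventually stabilizes; at the stabilization index, $N' := M_k = M_{k+1} \in {\cal N}_{\delta'}(N')$, so $C^{N'}$ is an equilibrium at subsidy $\delta'$ with $N' \geq N$, as required.

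The only delicate input is the very first step, which isolates the effect of the subsidy change while holding the cross-sectional environment $(\mu^N, C^N)$ fixed; this is precisely the content of Lemma \ref{mon_subsidy} and relies on the bang-bang structure of optimal policies with proportional cost. Once that single inequality is in hand, the remainder is the same ascending fixed-point iteration (powered by Proposition \ref{nashoperator} and the uniform upper bound $\bar N_{\delta'}$) that proves Theorem \ref{main}, and no further obstacle is expected.
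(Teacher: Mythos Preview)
Your argument is correct and mirrors the paper's own approach: the paper states that Proposition~\ref{equil_subsidy} is a direct consequence of Lemma~\ref{mon_subsidy}, and its proof of the parallel Proposition~\ref{equil_educ} proceeds exactly as you do---one application of the subsidy/education monotonicity lemma to move from ${\cal N}_\delta(N)$ to ${\cal N}_{\delta'}(N)$, followed by the ascending fixed-point iteration (Proposition~\ref{nashoperator} together with the uniform bound $\bar N_{\delta'}$) that underlies Theorem~\ref{main}. Your write-up is in fact more explicit than the paper's; no gap.
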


\paragraph{Example.} Suppose, for some integer $N>1$, that $\pi_0 = 1/2$, $\pi_N = 1/2$,
and $c_L = 0$. This setting
is equivalent to that of Proposition \ref{more0}, after noting
that every information transfer is in blocks of $N$ signals each,
resulting in a model isomorphic to one in which each
agent is endowed with one private signal of a particular
higher correlation. Recalling that inequality (\ref{conditionK})
determines whether zero search is optimal, we
 can exploit continuity of the lefthand side of this inequality
 to choose parameters so that, given market
conditions $(\mu^N,C^N)$, agents have a strictly positive but arbitrarily small increase in utility
when choosing search
policy $C^{0}$ over policy $C^N$. With this, $C^0$ is the
unique equilibrium. This is before considering a search subsidy.
We now consider a model
that is identical with the exception that each agent
is taxed at entry and given search subsidies
at the proportional rate $\delta$.
We can choose $\delta$ so that all agents
strictly prefer $C^N$ to $C^0$ (the strict no-search
condition (\ref{conditionK}) is satisfied),
and $C^N$ is an equilibrium. For sufficiently
large $N$ all agents have strictly higher indirect
utility in the equilibrium with the search subsidy
than they do in the equilibrium with the
same private-signal endowments and no
subsidy.

\subsection{Educating Agents at Birth}

A policy that might in principle attenuate the negative welfare effects of
low information sharing is to
``educate'' all agents, by giving all agents additional
public signals at entry.
We assume for simplicity that the $M\geq 1$ additional public signals are drawn from the same signal set ${\cal S}$.
When two agents meet and share information, they take into
account that the information reported by the other agent
contains the effect of the additional public signals. (The implications of the reported conditional mean and variance for the conditional mean and variance
associated with a counterparty's  non-public information can be inferred from the public
signals, using Lemma \ref{first}.)
Because of this, our prior analysis of information sharing dynamics
can be applied without alteration, merely by treating the precision
level of a given agent as the total precision less the public precision,
and by treating the exit utility of each agent for $n$ non-public signals as $\hat u(n)=u(n+M)$.

The public signals influence optimal search efforts.
Given the market conditions $(\mu,C)$, the indirect utility $V_n$ for non-public precision
$n$ satisfies the Hamilton-Jacobi-Bellman equation for optimal
search effort given by
\begin{equation}
\label{Bellmanpublic}
0\, = \ -\,(r+\eta')\,V_n\ +\ \eta' u_{M+n} +\,  \sup_{c\,\in\, [c_L,c_H]} \,
\big \{-K(c)+ c \,\sum_{m=1}^\infty\,  (V_{n+m}-V_n) \mu^C_m\big\}.
 \end{equation}

Educating agents at entry with public signals has two effects. On one hand,
when agents enter the market they are better informed
than if they had not received the extra signals. On the
other hand, this extra information may reduce agents' incentives
to search for more information, slowing down information
percolation. Below, we show an example in which the net effect is a strict welfare loss.
First, however, we establish that adding public information causes an
unambiguous reduction in the sharing of private information.

\begin{lemma}\label{mon_educ} Suppose that $K(c)=\kappa c$ for some $\kappa >0$. For given market
conditions $(\mu,C),$ the trigger level $N$ in non-public precision of an optimal policy $C^N$
 is decreasing in the precision $M$ of the public signals. (That is, if $N$ is an optimal trigger level
 of precision given public-signal precision $M$,
 then for any higher public precision $M'\geq M$, there exists a lower optimal trigger $N'\leq N$.)
\end{lemma}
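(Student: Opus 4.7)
The plan is to exploit a translation invariance in the Hamilton-Jacobi-Bellman equation (\ref{Bellmanpublic}) that makes the $M$-public-signal problem isomorphic, after a shift of the precision coordinate, to the baseline $M=0$ problem. Define $\tilde V_k = V^M_n$ for $k = n+M$. Substituting into (\ref{Bellmanpublic}) with $K(c) = \kappa c$, one obtains
\[
(r+\eta')\,\tilde V_k \ = \ \eta'\, u_k \ + \ \sup_{c\,\in\,[c_L,c_H]} \Big\{-\kappa c \ + \ c \sum_{m=1}^\infty (\tilde V_{k+m}\,-\,\tilde V_k)\,\mu^C_m\Big\},
\]
which is exactly the baseline HJB (\ref{Bellman}) with exit utility $u_k$. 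Since $V^M$ is bounded and bounded solutions of the HJB are unique (each is pinned down by the verification argument of Lemma \ref{verification} as the supremum of the objective over admissible controls), $\tilde V$ is the baseline value function, and $V^M_n = \tilde V(n+M)$ for every $n$ and $M$.

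Next, I apply Proposition \ref{trigger_policy} to the baseline problem for $\tilde V$: there exists an optimal trigger level $N^*$ in the absolute precision coordinate $k$, so that effort $c_H$ is optimal at $k < N^*$ and $c_L$ at $k \geq N^*$. (If the indifference condition holds at several values, the set of optimal $k$-triggers is an interval; each element corresponds to a valid optimal policy.) Translating back via $k = n+M$, a trigger $N$ in non-public precision is optimal at public precision $M$ if and only if $N+M$ is optimal in $k$-space. Hence, given any optimal $N$ at public precision $M$ and any $M' \geq M$, setting $N' = \max(N - (M'-M),\,0)$ yields $N' \leq N$ and an optimal trigger at $M'$: either $N' = N - (M'-M)$, so $N' + M' = N + M$ still lies in the baseline optimal-trigger set; or $N' = 0$, in which case $n + M' \geq M' \geq N + M$ for every $n \geq 0$, so $c_L$ is optimal at every non-public precision and the trigger-$0$ policy is optimal.

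The main obstacle is essentially conceptual rather than technical: once the translation invariance $V^M_n = \tilde V(n+M)$ is identified, the lemma reduces to a change of variables together with a single invocation of Proposition \ref{trigger_policy}. The only genuine care required is uniqueness of the bounded HJB solution, which is what allows the identification of $\tilde V$ with the baseline value function, and the boundary-case bookkeeping when the shift would drive the trigger below zero.
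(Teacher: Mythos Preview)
Your argument is correct and takes a genuinely different route from the paper's. The paper works with the auxiliary sequence $Z_{n,M}=V_{n,M}-\eta' u_{n+M}/(r+\eta')$, rewrites the HJB in terms of $Z$ and the increment $W_{n,M}=\frac{\eta'}{r+\eta'}\sum_m(u_{n+M+m}-u_{n+M})\mu^C_m$, uses concavity of $u$ to show $W_{n,M}$ is decreasing in $M$, and then invokes the contraction property of the fixed-point operator to conclude that $Z_{n,M}$, and hence the zero-crossing that determines the optimal trigger, is decreasing in $M$. Your approach instead observes the exact translation identity $V^M_n=V^0_{n+M}$, which follows because the jump dynamics of the precision process are identical in the two problems and only the exit utility shifts; the uniqueness you need is supplied either by the verification Lemma~\ref{verification} or, more directly, by the contraction of Lemma~\ref{Moper}/Corollary~\ref{fixedp}, noting that the HJB only looks forward so uniqueness holds on any tail $\{k\ge k_0\}$. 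This is more elementary, bypasses the explicit use of concavity of $u$ at this step, and in fact gives the sharper statement that the optimal trigger drops by exactly $M'-M$ (truncated at zero). The paper's route, by contrast, is more robust to perturbations of the model in which the HJB is not exactly shift-invariant, since it only needs monotonicity of the relevant quantities in $M$ rather than an exact identification.
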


Coupled with Proposition \ref{M>N}, this lemma implies that adding public information leads to a reduction
(in the sense of first order dominance) in information sharing.
A direct consequence of this lemma is the following result.

\begin{prop}\label{equil_educ}
Suppose that $K(c)=\kappa c$ for some $\kappa >0$. If $C^N$ is an equilibrium with $M$ public signals,
then for any $M'\leq M$, there exists some $N' \geq N$ such that $C^{N'}$ is
an equilibrium with $M'$ public signals.
\end{prop}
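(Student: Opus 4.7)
The plan is to mirror the proof of Proposition~\ref{equil_subsidy}, with Lemma~\ref{mon_educ} playing the role of Lemma~\ref{mon_subsidy}. I would let ${\cal N}_M(K)$ denote the set of trigger levels $k$ such that $C^k$ is an optimal policy for a single agent facing market conditions $(\mu^K, C^K)$ when each agent is endowed with $M$ public signals at entry. The first step is to observe that the single-agent optimality theory of Section~\ref{optimalitysection} (in particular Propositions~\ref{trigger_policy} and~\ref{nashoperator}) uses only that the exit utility is bounded, increasing, and concave, and $\hat u(n) = u(n+M)$ inherits all three properties. So the same arguments yield, for each fixed $M$, that ${\cal N}_M(\cdot)$ is nonempty, monotone increasing in its argument in the strong set order, and uniformly bounded above by
\[
\overline N_M\,=\,\max\bigl\{j\,:\,c_H\,\eta'\,(r+\eta')\,(\overline u\,-\,u(j+M))\,\geq\,\kappa\bigr\},
\]
which is finite.

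Given the equilibrium hypothesis $N \in {\cal N}_M(N)$ and $M' \leq M$, I would next invoke Lemma~\ref{mon_educ} at the fixed market $(\mu^N, C^N)$: decreasing public precision from $M$ to $M'$ raises the optimal non-public trigger level (in the set-order sense), producing some $k_1 \geq N$ with $k_1 \in {\cal N}_{M'}(N)$. I would then iterate, using the monotonicity in the first argument from the previous step: given $k_i \geq k_{i-1}$ with $k_i \in {\cal N}_{M'}(k_{i-1})$, there is some $k_{i+1} \geq k_i$ with $k_{i+1} \in {\cal N}_{M'}(k_i)$. The non-decreasing integer sequence $N \leq k_1 \leq k_2 \leq \cdots$ is bounded above by $\overline N_{M'}$, hence is eventually constant at some $N' \geq N$ satisfying $N' \in {\cal N}_{M'}(N')$. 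By definition, $C^{N'}$ is then an equilibrium with $M'$ public signals, which is the required conclusion.

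I expect the main obstacle to be confirming that the monotonicity of ${\cal N}_M(\cdot)$ in its first argument (the analog of Proposition~\ref{nashoperator}) genuinely survives the substitution $u \leadsto \hat u$. The FOSD input from Proposition~\ref{M>N} is untouched, because that result is a statement about stationary measures and makes no reference to exit utility, while the Hamilton--Jacobi--Bellman and verification step uses only boundedness and concavity of the exit utility, both preserved under integer shifts, so the transfer should be routine. A secondary subtlety is that the statement of Lemma~\ref{mon_educ} explicitly records only one direction of set-order monotonicity in $M$; if needed, I would extract the direction used above by rerunning the single-crossing argument that underlies that lemma, namely that the marginal value of an additional non-public signal is decreasing in $M$. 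Once both monotonicities are in hand, the Tarski-style iteration on the finite lattice $\{0, 1, \ldots, \overline N_{M'}\}$ is standard and completes the proof.
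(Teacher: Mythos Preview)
Your proposal is correct and follows essentially the same route as the paper: both invoke Lemma~\ref{mon_educ} to get an optimal trigger at least $N$ for the smaller public-signal endowment, then use the monotonicity of the best-response correspondence (Proposition~\ref{nashoperator}, which indeed transfers unchanged under $u\leadsto\hat u$) together with the uniform bound $\overline N$ to locate a fixed point at or above $N$. The only organizational differences are that the paper steps $M$ down one unit at a time and appeals to ``the algorithm at the end of Section~\ref{sec_equil}'' for the fixed-point step, whereas you jump directly to $M'$ and spell out the Tarski-style iteration explicitly; your observation that Lemma~\ref{mon_educ} literally states only the ``higher $M$, lower trigger'' direction is apt, and the paper itself silently uses the reverse direction in exactly the way you describe recovering it.
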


In particular, by removing all public signals, as in the following example, we can
get strictly superior information sharing, and in some cases a strict welfare improvement.

\paragraph{Example.} As in the previous example, suppose, for some integer $N>1$, that $\pi_0 = 1/2$, $\pi_N = 1/2$,
and $c_L = 0$. This setting
is equivalent to that of Proposition \ref{more0}, after noting
that every information transfer is in blocks of $N$ signals each,
resulting in a model isomorphic to one in which each
agent is endowed with one private signal of a particular
higher correlation. Analogously with the previous example, we can
exploit continuity  in the model parameters of the lefthand side of inequality (\ref{conditionK}), determining whether
zero search is optimal, to
choose the parameters so that, given
market conditions $(\mu^N,C^N),$ agents have a strict but arbitrarily small preference
 of policy $C^N$ over $C^{0}$.
We now consider a model that is
identical with the exception that each agent
is given $M=1$ public signal at entry.
With this public signal, again using continuity we can choose parameters so that all agents strictly prefer
$C^0$ to $C^N$ (the strict no-search
condition (\ref{conditionK}) is satisfied), and $C^0$ is the only equilibrium.
For sufficiently large $N$, or equivalently for any $N\geq 2$ and sufficiently small
signal correlation $\rho$, all agents have strictly lower indirect utility
in the equilibrium with the public signal at entry than they do in the equilibrium
with the same private-signal endowments and no public signal.

\newpage

\bigskip
\appendix

\bigskip

\centerline{\LARGE \bf Appendices}

\bigskip

\section{Proofs for Section 3: Information Sharing Model}

\begin{lemma} Suppose that $Y,X_1,\ldots,X_n,Z_1,\ldots,Z_m$ are joint
Gaussian, and that $X_1,\ldots,X_n$ and $Z_1,\ldots,Z_m$ all have
correlation $\rho$ with $Y$ and are $Y$-conditionally $iid$.
Then
\[E(Y\,\vert\, X_1,\ldots,X_n,Z_1,\ldots,Z_m)= \frac{\gamma_n}{\gamma_{n+m}}E(Y\,\vert\, X_1,\ldots,X_n)+\frac{\gamma_m}{\gamma_{m+n}}
E(Y\,\vert\, Z_1,\ldots,Z_m),\]
where $\gamma_k=1+\rho^2(k-1)$.
\label{first}
\end{lemma}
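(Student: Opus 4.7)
The plan is to exploit joint Gaussianity together with the exchangeability of the signals to reduce all three conditional expectations to scalar multiples of sums, then express the joint conditional expectation as a linear combination of the marginal ones.

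First I would observe that, because the entire collection $(Y, X_1, \ldots, X_n, Z_1, \ldots, Z_m)$ is jointly Gaussian, each conditional expectation in sight is an affine (in fact linear, since all means are zero) function of the conditioning variables. Moreover, the $Y$-conditional i.i.d.\ assumption, combined with the common correlation $\rho$ and common marginal variance $1$, makes $(X_1, \ldots, X_n, Z_1, \ldots, Z_m)$ exchangeable, so by symmetry the coefficients on each $X_i$ are equal and the coefficients on each $Z_j$ are equal. Hence there exist constants $a_n$ and $a_{n+m}$ such that
\[
E(Y\mid X_1,\ldots,X_n) \;=\; a_n \sum_{i=1}^n X_i,\qquad E(Y\mid Z_1,\ldots,Z_m) \;=\; a_m \sum_{j=1}^m Z_j,
\]
\[
E(Y\mid X_1,\ldots,X_n,Z_1,\ldots,Z_m) \;=\; a_{n+m}\left(\sum_{i=1}^n X_i + \sum_{j=1}^m Z_j\right),
\]
where the same sequence $a_k$ appears in all three expressions because under exchangeability only the total count $k$ of signals matters.

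Next I would compute $a_k$ explicitly. The standard formula for projection onto a sum gives $a_k = \mathrm{Cov}(Y, S_k)/\mathrm{Var}(S_k)$ where $S_k = \sum_{i=1}^k X_i$. We have $\mathrm{Cov}(Y, S_k) = k\rho$, while for $i\neq j$, using the tower property and $E(X_i \mid Y) = \rho Y$, one gets $\mathrm{Cov}(X_i, X_j) = E[E(X_i X_j \mid Y)] = E[\rho^2 Y^2] = \rho^2$. Therefore $\mathrm{Var}(S_k) = k + k(k-1)\rho^2 = k\gamma_k$, yielding $a_k = \rho/\gamma_k$.

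Finally, the combination step is immediate: from $a_k = \rho/\gamma_k$ we can rewrite $\sum X_i = (\gamma_n/\rho)\,E(Y\mid X_1,\ldots,X_n)$ and $\sum Z_j = (\gamma_m/\rho)\,E(Y\mid Z_1,\ldots,Z_m)$, and substituting into the formula for $E(Y\mid X_1,\ldots,X_n,Z_1,\ldots,Z_m) = (\rho/\gamma_{n+m})(\sum X_i + \sum Z_j)$ produces exactly the claimed identity. There is no real obstacle here: the only subtlety is justifying the symmetric linear form, which follows because any two signals (whether both $X$'s, both $Z$'s, or one of each) have the same joint distribution with $Y$ thanks to the common correlation $\rho$ and the $Y$-conditional i.i.d.\ hypothesis, so the Gaussian projection onto the span of all $n+m$ signals must assign them equal weight.
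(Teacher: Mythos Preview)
Your proof is correct and follows essentially the same route as the paper: both establish $E(Y\mid X_1,\ldots,X_k)=\beta_k\sum_i X_i$ with $\beta_k=\rho/\gamma_k$ and then substitute to recover the weighted combination. The only cosmetic difference is that you invoke exchangeability to reduce the projection to a single scalar $\mathrm{Cov}(Y,S_k)/\mathrm{Var}(S_k)$, whereas the paper cites the general matrix formula $E(Y\mid W)=W^\top\mathrm{cov}(W)^{-1}\mathrm{cov}(Y,W)$ and leaves the computation implicit.
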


\begin{proof}
The proof is by calculation. If $(Y,W)$ are joint mean-zero Gaussian
and $W$ has an invertible covariance matrix, then by a well known result,
\[E(Y\,\vert\,W)=W^\top{\rm cov}(W)^{-1}{\rm cov}(Y,W).\]
It follows by calculation that
\begin{equation}
\label{CM}
E(Y\,\vert\, X_1,\ldots,X_n)=\beta_n(X_1+\cdots+X_n),
\end{equation}
where
\[\beta_n=\frac{\rho}{1+\rho^2(n-1)}.
\]
Likewise,
\begin{eqnarray*}
E(Y\,\vert\, X_1,\ldots,X_n,Z_1,\ldots,Z_m)&=& \beta_{n+m}(X_1+\cdots+X_n+Z_1+\cdots + Z_m)\\
&=& \beta_{n+m}\left(\frac{E(Y\,\vert\, X_1,\ldots,X_n)}{\beta_n}+\frac{E(Y\,\vert\, Z_1,\ldots,Z_m)}{\beta_m}\right).
\end{eqnarray*}
The result follows from the fact that $\beta_{n+m}/\beta_n=\gamma_n/\gamma_{n+m}$.
\end{proof}
\bigskip

\begin{cor}
\label{pn}
The conditional probability density $p_n(\,\cdot\, \vert\, Y)$
of $E(Y\,\vert\, X_1,\ldots,X_n)$ given $Y$ is almost surely
Gaussian with conditional mean
\[\frac{n\rho^2Y}{1+\rho^2(n-1)}\] and with conditional variance
\begin{equation}
\sigma_n^2=
\frac{n\rho^2(1-\rho^2)}{(1+\rho^2(n-1))^2}.
\end{equation}
\end{cor}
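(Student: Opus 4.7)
The plan is to derive the claim directly from formula~\eqref{CM} in the proof of Lemma~\ref{first}, together with the joint Gaussian / $Y$-conditionally iid assumptions on the signals. The only thing we need to compute is the $Y$-conditional mean and variance of a weighted sum of the $X_i$; Gaussianity is automatic.

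First, I would recall from the proof of Lemma~\ref{first} that
\[
E(Y\mid X_1,\ldots,X_n) \;=\; \beta_n\,(X_1+\cdots+X_n),\qquad \beta_n=\frac{\rho}{1+\rho^2(n-1)}.
\]
Since $(Y,X_1,\ldots,X_n)$ is jointly Gaussian, any affine function of $X_1,\ldots,X_n$ has a Gaussian conditional distribution given $Y$, so it suffices to identify the conditional mean and variance.

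Next, I would record the elementary Gaussian one-step regression: because each $X_i$ has mean zero, unit variance, and correlation $\rho$ with the unit-variance variable $Y$,
\[
E(X_i\mid Y)=\rho Y,\qquad \Var(X_i\mid Y)=1-\rho^2.
\]
Combined with the assumption that $X_1,\ldots,X_n$ are $Y$-conditionally independent, this gives
\[
E\!\left(\sum_{i=1}^n X_i\;\Big|\;Y\right)=n\rho Y,\qquad \Var\!\left(\sum_{i=1}^n X_i\;\Big|\;Y\right)=n(1-\rho^2).
\]

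Finally, multiplying through by $\beta_n$ and $\beta_n^2$ respectively,
\[
E\bigl(E(Y\mid X_1,\ldots,X_n)\mid Y\bigr)=\beta_n\, n\rho\,Y=\frac{n\rho^2 Y}{1+\rho^2(n-1)},
\]
\[
\Var\bigl(E(Y\mid X_1,\ldots,X_n)\mid Y\bigr)=\beta_n^2\,n(1-\rho^2)=\frac{n\rho^2(1-\rho^2)}{(1+\rho^2(n-1))^2},
\]
which matches the stated $\sigma_n^2$. There is no real obstacle here: the corollary is essentially a one-line consequence of \eqref{CM} once one notes Gaussianity of linear combinations and the standard formulas for $E(X_i\mid Y)$ and $\Var(X_i\mid Y)$ under the paper's correlation/conditional-independence assumptions.
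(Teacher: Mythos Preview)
Your proof is correct and is exactly the argument the paper intends: the corollary is stated without proof immediately after Lemma~\ref{first}, and your derivation from \eqref{CM} together with the standard identities $E(X_i\mid Y)=\rho Y$, $\Var(X_i\mid Y)=1-\rho^2$, and $Y$-conditional independence is precisely the intended one-line justification.
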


\noindent {\bf Proof of Proposition \ref{ft}.}

We use the conditional law of large numbers (LLN) to calculate the
cross-sectional population density $f_t$.
Later, we independently calculate $f_t$, given the appropriate
boundary condition $f_0$,  by a direct solution of the
particular dynamic equation that arises from updating
beliefs at matching times.

Taking the first, more abstract, approach, we fix a
time $t$ and state of the world $\omega,$ and let $W_n(\omega)$ denote the
set of all agents whose current precision is $n$.
We note that $W_n(\omega)$ depends non-trivially on $\omega$. This set
$W_n(\omega)$ has an infinite number of agents whenever $\mu_t(n)$ is
non-zero, because the space of agents is non-atomic. In particular,
the restriction of the measure on agents to $W_n(\omega)$ is non-atomic.
Agent $i$ from this set $W_n(\omega)$ has a current conditional mean of $Y$ that is
denoted $U_i(\omega)$. Now consider the cross-sectional distribution,
$q_n(\omega)$, a measure on the real line, of $\{U_i(\omega) : i \in
W_n(\omega)\}$. Note that the random variables $U_i$ and
$U_j$ are $Y$-conditionally independent for almost every distinct pair
$(i,j)$, by the random matching model, which implies by
induction in the number of their finitely many prior
meetings that they have conditioned on distinct subsets of signals, and that
the only source of correlation in $U_i$ and $U_j$ is the fact that each of these
posteriors is a linear combination of $Y$ and of other pairwise-independent variables
that are also jointly independent of $Y.$

Conditional on the event $\{N_{it}=n\}$ that agent $i$ is in the set $W_n(\omega),$ and conditional on $Y$, $U_i$ has the
Gaussian conditional density $p_n(\,\cdot\, \vert\, Y)$ recorded in Corollary \ref{pn}.
This conditional density function does not depend on $i$.
Thus, by a formal application of the law of large numbers, in almost every state of the world $\omega$,
$q_n(\omega)$ has the same distribution as the
$(W_n,Y)$-conditional distribution of $U_i$, for any $i$.
Thus, for almost every $\omega,$ the cross-sectional
distribution $q_n(\omega)$ of posteriors over
the subset $W_n(\omega)$ of agents has the density
$p_n(\,\cdot\, \vert\, Y(\omega))$.
In summary, for almost every state of the world,
the fraction $\mu_t(n)$ of the population that has received $n$
signals has a cross-sectional density $p_n(\,\cdot\, \vert\,
Y(\omega))$ over their posteriors for $Y$.

We found it instructive to consider a more concrete proof based on a computation of the solution of the
appropriate differential equation for $f_t$, using the LLN to
set the initial condition $f_0$.  Lemma \ref{first} implies that when an agent with
joint type $(n,x)$ exchanges all information with an agent whose type
is $(m,y)$, both agents achieve posterior type
\[\left(m+n,\frac{\gamma_n}{\gamma_{m+n}}x+\frac{\gamma_m}{\gamma_{m+n}}y\right).\] We therefore have
the dynamic equation
\begin{equation}
\label{evolve}
\frac{d}{dt}f_t(n,x)\ =\ \eta(\Pi(n,x)\ \ -\ f_t(n,x))\  +\ (f_t\circ f_t)(n,x) - \, C_nf_t(n,x)\sum_{m=1}^\infty\,C_m\,\int_\R\,f_t(m,x)\, dx\, ,
\end{equation}
where $\Pi(n,x)= \pi(n)\,p_n(x\,|\,Y(\go))$ and
\[(f_t\circ f_t)(n,x)=\sum_{m=1}^{n-1}\,\frac{\gamma_n}{\gamma_{n-m}}\,C_{n-m}\,C_m\,\int_{-\infty}^{+\infty}f_t\left(n-m,\frac{\gamma_{n}x-\gamma_my}{\gamma_{n-m}}\right)f_t(m,y)\, dy.
\]

It remains to solve this ODE for $f_t$. We will use the following calculation.

\begin{lemma}\label{lll} Let $\,q_1(x)\,$ and  $\,q_2(x)\,$ be the Gaussian
densities with respective means $\,M_1\,,\,M_2\,$ and variances $\,\gs_1^2\,,\,\gs_2^2\,.$ Then,
\[
\frac{\gamma_n}{\gamma_{n-m}}\,\int_{-\infty}^{+\infty}\,q_1\left(\frac{\gamma_{n}x-\gamma_my}{\gamma_{n-m}}\right)q_2(y)\, dy\ =\ q(x),
\]
where $\,q(x)\,$ is the density of a Gaussian with mean
\[
 M \ =\ \frac{\gamma_{n-m}}{\gamma_n}\,\mu_1\ +\ \frac{\gamma_m}{\gamma_n}\,\mu_2\
\]
and variance
\[
\gs^2\ =\ \frac{\gamma_{n-m}^2}{\gamma_n^2}\,\gs_1^2\ +\ \frac{\gamma_m^2}{\gamma_n^2}\,\gs_2^2.\
\]
\end{lemma}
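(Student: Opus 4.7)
The plan is to recognize the left-hand side as the marginal density of an explicit linear combination of two independent Gaussians, after which the conclusion reduces to standard Gaussian arithmetic and no direct convolution computation is needed.

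Concretely, I would introduce independent Gaussian random variables $U_1 \sim N(M_1,\sigma_1^2)$ and $U_2 \sim N(M_2,\sigma_2^2)$, so that $q_1$ and $q_2$ are their densities, and define
\[
X \ =\ \frac{\gamma_{n-m}}{\gamma_n}\,U_1\ +\ \frac{\gamma_m}{\gamma_n}\,U_2.
\]
Given $U_2=y$, the relation $U_1 = (\gamma_n X - \gamma_m y)/\gamma_{n-m}$ is affine in $X$ with Jacobian $\gamma_n/\gamma_{n-m}$, so the conditional density of $X$ given $U_2=y$ is exactly $(\gamma_n/\gamma_{n-m})\,q_1((\gamma_n x-\gamma_m y)/\gamma_{n-m})$. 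Integrating against the density $q_2$ of $U_2$ then yields the marginal density of $X$, which matches the left-hand side of the lemma verbatim.

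Second, since $U_1$ and $U_2$ are independent Gaussian, $X$ is Gaussian, and its mean and variance are obtained by linearity and independence:
\[
E[X]\ =\ \frac{\gamma_{n-m}}{\gamma_n}M_1\ +\ \frac{\gamma_m}{\gamma_n}M_2,\qquad \Var(X)\ =\ \frac{\gamma_{n-m}^2}{\gamma_n^2}\sigma_1^2\ +\ \frac{\gamma_m^2}{\gamma_n^2}\sigma_2^2,
\]
which are precisely the $M$ and $\sigma^2$ in the statement of the lemma.

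There is no real obstacle here: the only content is the change-of-variables identification of the integral with a marginal density. Alternatively, one could grind through the calculation by completing the square inside the product of two Gaussian exponents in $y$, absorbing all $y$-independent terms into the correct $x$-Gaussian. I would prefer the probabilistic route because it bypasses the bookkeeping entirely and makes the form of the coefficients $\gamma_{n-m}/\gamma_n$ and $\gamma_m/\gamma_n$ transparent.
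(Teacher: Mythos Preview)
Your proposal is correct and is essentially the paper's own argument: both introduce independent Gaussians with densities $q_1,q_2$, form the linear combination $(\gamma_{n-m}/\gamma_n)U_1+(\gamma_m/\gamma_n)U_2$, and identify the integral with its density (the paper via the convolution of the rescaled summands, you via conditioning on $U_2$ and integrating out, which amounts to the same change of variables). The mean and variance then follow immediately from linearity and independence, exactly as you wrote.
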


\begin{proof} Let $\,X\,$ be a random variable with density
$\,q_1(x)\,$ and $\,Y\,$ an independent variable with
density $\,q_2(x)\,.$ Then
\[
Z\ =\ \gamma_n^{-1}\,\left(\,\gamma_{n-m}\,X\ +\ \gamma_m\,Y\,\right)
\]
is also normal with mean $M$ and variance $\,\gs^2\,.$ On the other hand, $\gamma_n^{-1}\,\gamma_{n-m}\,X\,$ and $\,\gamma_n^{-1}\,\gamma_m\,Y\,$ are independent with densities
\[
\,\frac{\gamma_n}{\gamma_{n-m}}\,q_1\left(\frac{\gamma_n}{\gamma_{n-m}}\,x\right)\,
\]
and
\[
\,\frac{\gamma_n}{\gamma_{m}}\,q_2\left(\frac{\gamma_n}{\gamma_{m}}\,x\right),\,
\]
respectively.
Consequently, the density of $\,Z\,$ is the convolution
\begin{multline}
\frac{\gamma_n^2}{\gamma_{n-m}\,\gamma_{m}}\,\int_\R\,q_1\left(\frac{\gamma_n}{\gamma_{n-m}}\,(x\,-\,y)\,\right)\,q_2\left(\frac{\gamma_n}{\gamma_{m}}\,y\right)\,dy\ \\=\  \frac{\gamma_n}{\gamma_{n-m}}\,\int_{-\infty}^{+\infty}\,q_1\left(\frac{\gamma_{n}x-\gamma_my}{\gamma_{n-m}}\,,\,\gs_{n-m}\right)q_2(z)\, dz,
\end{multline}
where we have made the transformation $\,z\ =\ \gamma_n\,\gamma_m^{-1}\,y\,.$
\end{proof}

\begin{lemma} The density
\[
f_t(n,x,\go)\ =\ \mu_t(n)\,p_n(x|Y(\go))
\]
solves the evolution equation \eqref{evolve} if and only if the
 distribution $\mu_t$ of precisions solves the evolution equation \eqref{dynamic}.
\end{lemma}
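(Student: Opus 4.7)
The plan is to substitute the factored ansatz $f_t(n,x,\omega) = \mu_t(n)\,p_n(x\,|\,Y(\omega))$ directly into the evolution equation \eqref{evolve}, cancel the common strictly positive factor $p_n(x\,|\,Y(\omega))$, and verify that what remains is precisely \eqref{dynamic}. Since every step is an equivalence, this handles both directions of the ``if and only if'' at once.

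Three of the four terms reduce immediately. The time derivative of the ansatz contributes $\dot\mu_t(n)\,p_n(x\,|\,Y)$. Since $\Pi(n,x)=\pi_n\,p_n(x\,|\,Y)$ by definition, the exogenous-replacement term $\eta(\Pi(n,x)-f_t(n,x))$ factors as $\eta(\pi_n-\mu_t(n))\,p_n(x\,|\,Y)$. The loss-by-matching term collapses to $C_n\mu_t(n)\,\mu^C_t(\N)\,p_n(x\,|\,Y)$, because $\int_\R p_m(x\,|\,Y)\,dx = 1$ and so $\sum_m C_m\int_\R f_t(m,x)\,dx = \mu^C_t(\N)$.

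The only nontrivial piece is the bilinear matching kernel $(f_t\circ f_t)(n,x)$, and this is exactly what Lemma \ref{lll} was designed to handle. For each $m\in\{1,\ldots,n-1\}$, apply the lemma with $q_1 = p_{n-m}(\cdot\,|\,Y)$ (Gaussian with mean $(n-m)\rho^2Y/\gamma_{n-m}$ and variance $\sigma_{n-m}^2$) and $q_2 = p_m(\cdot\,|\,Y)$ (mean $m\rho^2Y/\gamma_m$, variance $\sigma_m^2$). The resulting density is Gaussian with mean
\[
M \;=\; \frac{\gamma_{n-m}}{\gamma_n}\cdot\frac{(n-m)\rho^2Y}{\gamma_{n-m}} \,+\, \frac{\gamma_m}{\gamma_n}\cdot\frac{m\rho^2Y}{\gamma_m} \;=\; \frac{n\rho^2Y}{\gamma_n}
\]
and variance
\[
\sigma^2 \;=\; \frac{\gamma_{n-m}^2}{\gamma_n^2}\sigma_{n-m}^2 \,+\, \frac{\gamma_m^2}{\gamma_n^2}\sigma_m^2 \;=\; \frac{((n-m)+m)\,\rho^2(1-\rho^2)}{\gamma_n^2} \;=\; \sigma_n^2,
\]
using $\gamma_k^2\sigma_k^2 = k\rho^2(1-\rho^2)$ from \eqref{sigman}. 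These are exactly the moments of $p_n(\cdot\,|\,Y)$, so the inner integral equals $p_n(x\,|\,Y)$ independent of $m$, and factoring gives
\[
(f_t\circ f_t)(n,x) \;=\; p_n(x\,|\,Y)\sum_{m=1}^{n-1} C_m C_{n-m}\,\mu_t(m)\mu_t(n-m) \;=\; p_n(x\,|\,Y)\,(\mu^C_t*\mu^C_t)(n).
\]

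Assembling all four terms and dividing by the nonzero factor $p_n(x\,|\,Y)$ recovers \eqref{dynamic} at each precision $n$, and running the same computation backwards proves the converse. There is no real obstacle beyond the algebraic bookkeeping in the previous paragraph; the point is that the factorized ansatz is preserved by Gaussian mixing along the posterior-update rule $(n,x),(m,y)\mapsto (n+m,\gamma_n x/\gamma_{n+m}+\gamma_m y/\gamma_{n+m})$, and the two moment identities both collapse to the trivial $(n-m)+m=n$.
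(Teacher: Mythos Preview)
Your proof is correct and follows essentially the same approach as the paper: both substitute the factored ansatz, invoke Lemma~\ref{lll} with $q_1=p_{n-m}(\cdot\,|\,Y)$ and $q_2=p_m(\cdot\,|\,Y)$ to reduce the bilinear kernel $(f_t\circ f_t)(n,x)$ to $p_n(x\,|\,Y)\,(\mu_t^C*\mu_t^C)(n)$ via the same mean and variance identities, and then cancel $p_n(x\,|\,Y)$ from every term. If anything, you are slightly more explicit than the paper in spelling out the three elementary terms and the equivalence argument for the ``if and only if.''
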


\begin{proof} By Lemma \ref{lll} and Corollary \ref{pn},
\[
\frac{\gamma_n}{\gamma_{n-m}}\int_{-\infty}^{+\infty}p_{n-m}\left(\frac{\gamma_{n}x-\gamma_my}{\gamma_{n-m}}\,\big|\,Y(\go)\,\right)p_m(y\,|\,Y(\go))\, dy\
\]
is conditionally Gaussian with mean
\[
\frac{\gamma_{n-m}}{\gamma_n}\,\frac{(n-m)\rho^2Y}{1+\rho^2(n-m-1)}\,+\,\frac{\gamma_m}{\gamma_n}\,\frac{m\rho^2Y}{1+\rho^2(m-1)}\ =\ \frac{n\rho^2Y}{1+\rho^2(n-1)}
\]
and conditional variance
\[
\gs^2\ =\ \frac{\gamma_{n-m}^2}{\gamma_n^2}\,\frac{(n-m)\rho^2(1-\rho^2)}{(1+\rho^2(n-m-1))^2}
\ +\ \frac{\gamma_m^2}{\gamma_n^2}\,\gs_2^2\,\frac{m\rho^2(1-\rho^2)}{(1+\rho^2(m-1))^2}\ =\ \frac{n\rho^2(1-\rho^2)}{(1+\rho^2(n-1))^2}.
\]
Therefore,
\begin{eqnarray*}
\!\!\!\!\!\!\!\!\!\!\!\!\!\!\!\!\!\!\!\!\!\!\!\!\!\!\!\!\!\!\!\!\!\!\!\!\!
&&\!\!\!\!\!\!\!\!\!\!\!\!\!(f_t\circ f_t)(n,x)\\
&=&\sum_{m=1}^{n-1}\,\frac{\gamma_n}{\gamma_{n-m}}\,C_{n-m}\,C_m\,\int_{-\infty}^{+\infty}f_t\left(n-m,\frac{\gamma_{n}x-\gamma_my}{\gamma_{n-m}}\right)f_t(m,y)\, dy\ \\
&=&\ \sum_{m=1}^{n-1}\,\,C_{n-m}\,C_m\,\mu_t(n-m)\,\mu_t(m)\,\frac{\gamma_n}{\gamma_{n-m}}\int_{-\infty}^{+\infty}p_{n-m}\left(\frac{\gamma_{n}x-\gamma_my}{\gamma_{n-m}}\,\big|\,Y(\go)\,\right)p_m(y\,|\,Y(\go))\, dy\ \\
&=&\
\sum_{m=1}^{n-1}\,\,C_{n-m}\,C_m\,\mu_t(n-m)\,\mu_t(m)\,p_n(x\,|\,Y(\go)).
\end{eqnarray*}
Substituting the last identity into \eqref{evolve}, we get the required result.
\end{proof}

\bigskip

\section{Proofs for Section 4: Stationary Distributions}

This appendix provides proofs of the results on the existence, stability, and monotonicity
properties of the stationary
cross-sectional precision measure $\mu$.

\subsection{Existence of the stationary measure}

\begin{proof}[Proof of Lemma \ref{stat_mes}]  If a positive, summable sequence $\{\mu_n\}$ indeed solves
\eqref{stationary}, then, adding up the equations over $n$, we get that $\mu(\N)=1,$ that is, $\mu$ is indeed a  probability measure. Thus, it remains to show that the equation
\[
\bar C\ =\ \sum_{n=1}^\infty \bar\mu_n(\bar C)C_n
\]
has a unique solution. By construction, the function $\bar \mu_k(\bar C)$ is monotone decreasing in $\bar C\,,$
and
\[
\eta\,\bar\mu_k(\bar C)\ =\ \eta\pi_k\ +\ \sum_{l=1}^{k-1}C_lC_{k-l}\bar\mu_l(\bar C)\bar\mu_{k-l}(\bar C)\ -\ C_k\,\bar\mu_k(\bar C)\,\bar C.
\]
Clearly, $\bar\mu_1(\bar C)\ <\ \pi_1\le1.$
Suppose that $\bar C\ge\ c_H\,.$ Then, adding up the above identities, we get
\[
\eta\,\sum_{k=1}^n\,\mu_k(\bar C)\ \le\ \eta\ +\ c_H\,\sum_{l=1}^{k-1}C_l\,\bar \mu_l\ -\ \bar C\sum_{l=1}^{k-1}\,C_l\bar\mu_l\ \le\ \eta.
\]
Hence, for $\bar C\ge\ c_H$ we have that
\[
\sum_{k=1}^\infty\,\mu_k(\bar C)\ \le\ 1.
\]
Consequently, the function
\[
f(\bar C)\ =\ \sum_{k=1}^\infty\,C_k\,\bar\mu_k(\bar C)
\]
is strictly monotone decreasing in $\bar C$ and satisfies
\[
f(\bar C)\ \le \ \bar C, \quad \bar C\ge c_H\,.\]
It may happen that $\,f(x)\ =\ +\infty$ for some $C_{\min}\,\in\,(0,\bar C)\,.$ Otherwise, we set $C_{\min}=0\,.$ The function
\[
g(\bar C)\ =\ \bar C\ -\ f(\bar C)
\]
is continuous (by the monotone convergence theorem for infinite series (see, e.g., Yeh (2006), p.168)) and strictly monotone increasing and satisfies $g(C_{\min})\ \le\ 0$ and $g(c_H)\ \ge\ 0\,.$ Hence, it has a unique zero.
\end{proof}

\subsection{Stability of the stationary measure}

\bigskip\noindent
{\bf Proof of Proposition
\ref{combined}.} \quad  The ordinary differential equation for $\mu_k(t)$ can be written as
\begin{equation}\label{eqq}
\mu_k'\ =\ \eta\,\pi_k\ -\ \eta\,\mu_k\ -\ C_k\,\mu_k\,\sum_{i=1}^\infty\,C_i\,\mu_i\ +\ \sum_{l=1}^{k-1}\,C_l\mu_l\,C_{k-l}\mu_{k-l}.
\end{equation}
We will need a right to interchange infinite summation and differentiation. We will use the following known

\begin{lemma}\label{lemconv}  Let $\,g_k(t)\,$ be $\,C^1$ functions such that
\[
\sum_k\,g'_k(t)\qquad and \qquad \sum_k\,g_k(0)
\]
converge for all $\,t\,$ and
\[
\sum_k|g_k'(t)|\
\]
is locally bounded (in $t$). Then, $\sum_k\,g_k(t)$ is differentiable and
\[
\left(\,\sum_k\,g_k(t)\,\right)'\ =\ \sum_k\,g_k'(t).
\]
\end{lemma}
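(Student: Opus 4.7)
The plan is to reduce this standard term-by-term differentiation result to an application of Fubini's theorem together with the fundamental theorem of calculus.

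First I would fix an arbitrary compact interval $[0,T]$ and choose $M$ with $\sum_k |g_k'(s)| \le M$ for all $s \in [0,T]$. Since each $g_k$ is $C^1$, the fundamental theorem of calculus gives $g_k(t) - g_k(0) = \int_0^t g_k'(s)\,ds$, so $|g_k(t) - g_k(0)| \le \int_0^t |g_k'(s)|\,ds$. Summing in $k$ and invoking Tonelli's theorem yields
\[
\sum_k |g_k(t) - g_k(0)| \ \le\ \int_0^t \sum_k |g_k'(s)|\,ds \ \le\ MT,
\]
so $\sum_k (g_k(t) - g_k(0))$ converges absolutely. Combined with the assumed convergence of $\sum_k g_k(0)$, this shows that $G(t) := \sum_k g_k(t)$ is well-defined for every $t$.

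Next I would apply Fubini's theorem, justified by the above $L^1$ bound on the product measure of Lebesgue measure on $[0,t]$ and counting measure on $\N$, to obtain
\[
G(t) - G(0) \ =\ \sum_k \int_0^t g_k'(s)\,ds \ =\ \int_0^t F(s)\,ds,
\]
where $F(s) := \sum_k g_k'(s)$ is defined everywhere by hypothesis. The function $F$ is measurable, being the pointwise limit of the continuous partial sums $S_N(s) = \sum_{k \le N} g_k'(s)$, and bounded in absolute value by $M$ on $[0,T]$. Consequently $G$ is absolutely continuous on $[0,T]$.

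The final step is to upgrade $G(t) = G(0) + \int_0^t F(s)\,ds$ to pointwise differentiability with $G'(t) = F(t)$. From just the stated hypotheses, the Lebesgue differentiation theorem gives this equality at almost every $t$; to get it at every $t$ one needs continuity of $F$, which in turn follows once the partial sums $S_N$ converge locally uniformly. In the applications of this lemma within the proof of Proposition \ref{combined}, the summability bound on $\sum_k |g_k'|$ is obtained via an explicit geometric tail estimate, which simultaneously delivers uniform convergence of the partial sums, so $F$ is continuous and the classical FTC yields $G'(t) = F(t)$ for every $t$. The main obstacle is precisely this last step — promoting almost-everywhere to everywhere differentiability — but it is resolved by the uniform tail control available in the concrete settings where the lemma is invoked.
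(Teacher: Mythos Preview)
The paper does not supply a proof of this lemma; it is introduced with the phrase ``We will use the following known'' and then simply stated. So there is no argument in the paper to compare against.

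Your route via the fundamental theorem of calculus together with Tonelli/Fubini is the standard one, and the steps through the integral identity
\[
G(t)\ =\ G(0)\ +\ \int_0^t F(s)\,ds,\qquad F(s)=\sum_k g_k'(s),
\]
with $F$ measurable and locally bounded, are correct. This already yields absolute continuity of $G$ and $G'=F$ almost everywhere, which is in fact all that the applications in the proof of Proposition~\ref{combined} require: in the first use the lemma is applied with $F\equiv 0$, and the integral identity alone forces $G$ to be constant.

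Your caveat in the last paragraph is not merely cautious but points to a genuine defect in the lemma as literally stated. The hypotheses do \emph{not} imply differentiability of $G$ at every point. For a counterexample take $g_k'=\phi_k$ with $\phi_k$ a smooth bump of height one supported in $[4^{-k-1},\,2\cdot 4^{-k-1}]$ and $g_k(0)=0$; the supports are pairwise disjoint, so $\sum_k|g_k'|\le 1$ and all hypotheses hold, yet $F=\sum_k\phi_k$ oscillates between $0$ and $1$ near the origin and one checks that $G(t)/t$ has distinct subsequential limits along $t=4^{-n}$ and $t=2\cdot 4^{-n-1}$, so $G$ is not differentiable at $0$. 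The missing ingredient is local \emph{uniform} convergence of $\sum_k g_k'$ (equivalently, that the tails $\sum_{k\ge N}|g_k'|$ tend to zero uniformly on compacta), under which the classical term-by-term differentiation theorem applies. Your remark that in the concrete uses within the paper such uniform tail control is available is the correct resolution.
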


We will also need

\begin{lemma}\label{linODE} Suppose that $f$ solves
\[
f'\ =\ -\,a(t)\,f\ +\ b(t),
\]
where $\,a(t)\,\ge\,\eps\,>\,0$ and
\[
\lim_{t\to\infty}\,\frac{b(t)}{a(t)}\ =\ c.
\]
Then,
\[
f(t)\ =\ e^{-\int_0^t\,a(s)ds}\,\int_0^t\,e^{\int_0^s\,a(u)du}\,b(s)ds\ +\ f_0\,e^{-\int_0^t\,a(s)ds}
\]
and
$
\lim_{t\to\infty}\,f(t)\ =\ c.
$
\end{lemma}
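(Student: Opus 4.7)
The first assertion is routine: we verify the explicit formula by the integrating-factor method. Multiplying the ODE $f' + a(t) f = b(t)$ through by the integrating factor $e^{\int_0^t a(u)\,du}$ gives $\frac{d}{dt}\bigl(f(t) e^{\int_0^t a(u)\,du}\bigr) = b(t) e^{\int_0^t a(u)\,du}$, and integrating from $0$ to $t$ and dividing by $e^{\int_0^t a(u)\,du}$ yields the claimed representation.

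For the limit, I would first note that the contribution from the initial condition decays: since $a(s) \ge \varepsilon$, we have $\int_0^t a(s)\,ds \ge \varepsilon t$, so $f_0 e^{-\int_0^t a(s)\,ds} \to 0$. The substance is in handling the first term, which I would attack by a standard $\delta$-splitting argument based on the hypothesis $b(s)/a(s) \to c$. Given $\delta > 0$, choose $T$ so large that $|b(s) - c\, a(s)| \le \delta \, a(s)$ for $s \ge T$, and split
\[
\int_0^t e^{\int_0^s a(u)\,du}\, b(s)\,ds \;=\; \underbrace{\int_0^T e^{\int_0^s a(u)\,du}\, b(s)\,ds}_{\text{bounded, independent of }t} \;+\; \int_T^t e^{\int_0^s a(u)\,du}\, b(s)\,ds.
\]
After division by $e^{\int_0^t a(u)\,du}$, the first piece vanishes as $t\to\infty$ (again using $\int_0^t a \ge \varepsilon t$).

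For the second piece, write $b(s) = c\, a(s) + (b(s) - c\, a(s))$. The $c\, a(s)$ part integrates explicitly via $\frac{d}{ds} e^{\int_0^s a(u)\,du} = a(s) e^{\int_0^s a(u)\,du}$, giving $c\bigl[e^{\int_0^t a(u)\,du} - e^{\int_0^T a(u)\,du}\bigr]$, which after division tends to $c$. The residual part is bounded in absolute value by $\delta \int_T^t a(s) e^{\int_0^s a(u)\,du}\,ds \le \delta\, e^{\int_0^t a(u)\,du}$, so after division it contributes at most $\delta$. Combining the three pieces gives $\limsup_{t\to\infty}|f(t) - c| \le \delta$, and letting $\delta \downarrow 0$ completes the proof.

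The main (very mild) obstacle is the second piece: one must recognize that writing $b = c\,a + (b - c\, a)$ is what allows the $e^{\int_0^s a}$ weight to be integrated explicitly, turning a potentially delicate asymptotic into an elementary bound. No other tools (such as L'Hôpital) are needed, and the hypothesis $a \ge \varepsilon > 0$ is used solely to guarantee that $\int_0^t a \to \infty$ so that the normalizing exponential dominates the bounded initial-segment contribution.
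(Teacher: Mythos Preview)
Your proof is correct. The paper's own argument is considerably shorter: after noting the integrating-factor formula and that the initial-condition term vanishes, it simply applies l'H\^opital's rule to the quotient
\[
\frac{\int_0^t e^{\int_0^s a(u)\,du}\, b(s)\,ds}{e^{\int_0^t a(s)\,ds}},
\]
differentiating numerator and denominator in $t$ to obtain $b(t)/a(t)\to c$. This is legitimate because the denominator tends to $+\infty$ (from $a\ge\varepsilon$), which is exactly the hypothesis needed for the $\cdot/\infty$ form of l'H\^opital. Your $\delta$-splitting argument, by contrast, is fully self-contained and in effect reproves the relevant instance of l'H\^opital by hand via the decomposition $b=c\,a+(b-c\,a)$; it is longer but more elementary, and makes transparent exactly where each hypothesis enters. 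Either route is fine here.
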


\begin{proof} The formula for the solution is well known. By l'H\^opital's rule,
\[
\lim_{t\to\infty}\,\frac{\int_0^t\,e^{\int_0^s\,a(u)du}\,b(s)ds}{e^{\int_0^t\,a(s)ds}}\ =\
\lim_{t\to\infty}\,\frac{e^{\int_0^t\,a(s)ds}\,b(t)ds}{a(t)\,e^{\int_0^t\,a(s)ds}}\ =\ c.
\]
\end{proof}

The following proposition shows existence and uniqueness of the solution.

\begin{prop}\label{exun} There exists a unique solution $\{\mu_k(t)\}$ to \eqref{eqq} and this solution satisfies
\begin{equation}\label{mass}
\sum_{k=1}^\infty\,\mu_k(t)\ =\ 1
\end{equation}
for all $t\ge 0.$
\end{prop}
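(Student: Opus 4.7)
The plan is to view \eqref{eqq} as an autonomous ODE on the Banach space $\ell^1 = \{\mu:\|\mu\|_1=\sum_k|\mu_k|<\infty\}$, writing the right-hand side as $F(\mu)$ with
\[
F_k(\mu)\ =\ \eta\pi_k\ -\ \eta\mu_k\ -\ C_k\mu_k\sum_{i=1}^\infty C_i\mu_i\ +\ \sum_{l=1}^{k-1}C_l C_{k-l}\mu_l\mu_{k-l}.
\]
Using $|C_i|\le c_H$, the constant-, linear-, and bilinear pieces are easily estimated in $\ell^1$, and the convolution inequality $\|\alpha*\beta\|_1\le\|\alpha\|_1\|\beta\|_1$ handles the last term. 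This gives $\|F(\mu)-F(\nu)\|_1\le L(R)\,\|\mu-\nu\|_1$ on any ball $\{\|\mu\|_1,\|\nu\|_1\le R\}$, so Picard--Lindel\"of yields a unique local solution $\mu:[0,T^*)\to\ell^1$ starting from the probability measure $\mu_0$, with $T^*$ depending only on $\|\mu_0\|_1$.

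Next I would establish positivity. Writing \eqref{eqq} as
\[
\mu_k'\ +\ \Bigl(\eta\,+\,C_k\sum_i C_i\mu_i\Bigr)\mu_k\ =\ \eta\pi_k\ +\ \sum_{l=1}^{k-1}C_lC_{k-l}\mu_l\mu_{k-l},
\]
an induction on $k$ shows that if $\mu_1,\dots,\mu_{k-1}\ge 0$ then the right side is nonnegative, so the integrating-factor representation of $\mu_k$ gives $\mu_k(t)\ge e^{-\int_0^t(\eta+C_k\sum_iC_i\mu_i)\,ds}\mu_k(0)\ge 0$. Thus $\mu_k(t)\ge 0$ for every $k$ throughout $[0,T^*)$, so $\sum_k|\mu_k'(t)|$ and $\sum_k\mu_k(t)$ coincide with their unsigned versions, keeping all subsequent manipulations inside $\ell^1$.

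The crucial step is mass conservation. I would apply Lemma \ref{lemconv} with $g_k(t)=\mu_k(t)$: the required local boundedness of $\sum_k|\mu_k'(t)|$ follows from the estimates above together with local boundedness of $\|\mu(t)\|_1$ on $[0,T^*)$. Summing \eqref{eqq} over $k$ and using Tonelli to rearrange the double sum,
\[
\sum_{k=1}^\infty\,\sum_{l=1}^{k-1}C_l C_{k-l}\mu_l\mu_{k-l}\ =\ \sum_{l,m\ge 1}C_lC_m\mu_l\mu_m\ =\ \Bigl(\sum_i C_i\mu_i\Bigr)^{\!2},
\]
the cubic/quadratic terms cancel exactly, leaving the scalar ODE $S'=\eta-\eta S$ for $S(t)=\sum_k\mu_k(t)$. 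Since $S(0)=1$ (as $\mu_0$ is a probability measure and $\sum_k\pi_k=1$), we conclude $S(t)\equiv 1$ on $[0,T^*)$, which is precisely \eqref{mass}.

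The main obstacle — and what makes the argument more than a one-line invocation of Picard--Lindel\"of — is coupling the infinite-dimensional nonlinear ODE to the scalar conservation law: local existence needs an a priori norm bound, while the bound itself requires the conservation identity, which in turn requires the interchange of $d/dt$ and $\sum_k$. Positivity plus the identity $\|\mu(t)\|_1=S(t)=1$ closes this loop, giving a uniform a priori bound. Because the Picard existence time depends only on $\|\mu(t)\|_1$, a standard continuation argument now extends the unique solution to $[0,\infty)$, completing the proof.
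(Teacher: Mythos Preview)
Your proposal is correct and follows essentially the same approach as the paper's proof: local Lipschitz continuity of $F$ on $\ell^1$ for Picard--Lindel\"of existence, inductive positivity via the variation-of-constants formula, mass conservation via Lemma \ref{lemconv}, and global continuation from the resulting uniform $\ell^1$ bound. The only cosmetic differences are that you establish positivity before mass conservation (the paper does the reverse; either order works since the bound $\|F(\mu)\|_1\le \eta+\eta\|\mu\|_1+2c_H^2\|\mu\|_1^2$ already suffices for Lemma \ref{lemconv}), and you write the summed equation as $S'=\eta-\eta S$ whereas the paper jumps directly to $(\sum_k\mu_k)'=0$, implicitly using $S(0)=1$.
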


\begin{proof} Let $l_1(\N)$ be the space of absolutely summable sequences $\{\mu_k\}$ with
\[
\|\{\mu_k\}\|_{l_1(\N)}\ =\ \sum_{k=1}^\infty\,|\mu_k|.
\]
Consider the mapping $F:\ l_1(\N)\ \to\ l_1(\N)$ defined by
\[
(F(\{\mu_i\}))_k\ =\ \eta\,\pi_k\ -\ \eta\,\mu_k\ -\ C_k\,\mu_k\,\sum_{i=1}^\infty\,C_i\,\mu_i\ +\ \sum_{l=1}^{k-1}\,C_l\mu_l\,C_{k-l}\mu_{k-l}.
\]
Then, \eqref{eqq} takes the form $(\{\mu_k\})'\ =\ F(\{\mu_k\}).$
A direct calculation shows that
\begin{multline}
\sum_{k=1}^\infty\,\left|\sum_{l=1}^{k-1}\,(C_la_l\,C_{k-l}a_{k-l}\ -\ C_lb_l\,C_{k-l}b_{k-l})\right|\  \\ \le\ c_H^2\,\sum_{k=1}^\infty\,\sum_{l=1}^{k-1}(|a_l-b_l|\,|a_{k-l}|\ +\ |a_{k-l}-b_{k-l}|\,|b_l|\ \\
\!\!\!\!\!\!\!\!\!\!\!\!\!\!\!\!\!\!\!\!\!\!\!\!\!\!\!\!\!\!\!\!\!\!\!\!\!\!\!\!\!\!\!\!\!\!\!\!\!\!\!\!\!\!\!\!\!\!\!\!\!\!\!\!\!\!\!\!\!\!\!\!\!\!\!\!\!\!\!\!\!\!\!\!\!\!\!\!\!\!\!\!
=\ c_H^2(\|\{a_k\}\|_{l_1(\N)}+\|\{b_k\}\|_{l_1(\N)})\,\|\{a_k-b_k\}\|_{l_1(\N)}.
\end{multline}
Thus,
\[
\|F(\{a_k\})\ -\ F(\{b_k\})\|_{l_1(\N)}\ \le\ (\eta\ +\ 2\,c_H^2\,(\|\{a_k\}\|_{l_1(\N)}\,+\|\{b_k\}\|_{l_1(\N)})\,
\
)
\|\{a_k-b_k\}\|_{l_1(\N)},
\]
so $F$ is locally Lipschitz continuous. By a standard existence
result (Dieudonn\'e (1960), Theorem 10.4.5), there exists a unique
solution to \eqref{eqq} for $t\in[0,T_0)$ for some $T_0>0$ and this
solution is locally bounded. Furthermore, $[0,T_0)$ can be chosen to
be the maximal existence interval, such that the solution
$\{\mu_k\}$ cannot be continued further. It remains to show that
$T_0=+\infty.$ Because, for any $t\in[0,T_0),$
\[
\|(\{\mu_k\})'\|_{l_1(\N)}\ =\ \|F(\{\mu_k\})\|_{l_1(\N)}\ \le\ \eta+\eta\|\{\mu_k\}\|_{l_1(\N)}\ +\ 2c_H^2\|\{\mu_k\}\|_{l_1(\N)}^2
\]
is locally bounded, Lemma \ref{lemconv} implies that
\[
\left(\sum_{i=1}^\infty\,\mu_k\right)'\ =\ \sum_{k=1}^\infty\left(
\eta\,\pi_k\ -\ \eta\,\mu_k\ -\ C_k\,\mu_k\,\sum_{i=1}^\infty\,C_i\,\mu_i\ +\ \sum_{l=1}^{k-1}\,C_l\mu_l\,C_{k-l}\mu_{k-l}
\right)\ =\ 0,
\]
and hence \eqref{mass} holds. We will now show that $\mu_k(t)\ge 0$ for all $t\in[0,T_0].$ For $k=1,$ we have
\[
\mu_1'\ =\ \eta\pi_1\ -\ \eta\mu_1\ -\ C_1\mu_1\sum_{i=1}^\infty C_i\mu_i.
\]
Denote $a_1(t)\ =\ -\eta-C_1\sum_{i=1}^\infty C_i\mu_i.$ Then, we have
\[
\mu_1'\ =\ \eta\pi_1\ +\ a_1(t)\mu_1.
\]
Lemma \ref{linODE} implies that $\mu_1\ge 0$ for all $t\in[0,T_0).$ Suppose we know that $\mu_l\ge 0$ for $l\le k-1.$ Then,
\[
\mu_k'\ =\ z_k(t)\ +\ a_k(t)\,\mu_k(t)\ ,\ a_k(t)\ =\ -\eta-C_k\sum_{i=1}^\infty C_i\mu_i\ ,\ z_k(t)\ =\ \eta\pi_k+\sum_{l=1}^{k-1}\,C_l\mu_l\,C_{k-l}\mu_{k-l}.
\]
By the induction hypothesis, $z_k(t)\ge 0$ and Lemma \ref{linODE} implies that $\mu_k\ge 0.$ Thus,
\[
\|\{\mu_k\}\|_{l_1(\N)}\ =\ \sum_{k=1}^\infty\mu_k=1,
\]
so the solution to \eqref{eqq} is uniformly bounded on $[0,T_0)$,
and can therefore can be continued beyond $T_0$ (Dieudonn\'e (1960),
Theorem 10.5.6). Since $[0,T_0)$ is, by assumption, the maximal
existence interval, we have $T_0=+\infty.$
\end{proof}

We now expand the solution $\,\mu\,$ in a special manner. Namely, denote
\[
c_H\,-\,C_i\ =\ f_i\ \ge\ 0.
\]
We can then rewrite the equation as
\begin{equation}\label{eqqq}
\mu_k'\ =\ \eta\,\pi_k\ -\ (\eta\,+\,c_H^2)\,\mu_k\ +\ c_H\,f_k\,\mu_k\ +\ C_k\,\mu_k\,\sum_{i=1}^\infty\,f_i\,\mu_i +\ \sum_{l=1}^{k-1}\,C_l\mu_l\,C_{k-l}\mu_{k-l}.
\end{equation}
Now, we will (formally) expand
\[
\mu_k\ =\ \sum_{j=0}^\infty\,\mu_{k\,j}(t),
\]
where $\,\mu_{k\,0}\,$ does not depend on (the whole sequence) $\,(f_i)\,,\,$ $\,\mu_{k\,1}\,$ is linear in (the whole sequence) $\,(f_i)\,,\,$ $\,\mu_{k\,2}\,$ is quadratic in $\,(f_i)\,$, and so on. The main idea is to expand so that all terms of the expansion are nonnegative.

Later, we will prove that the expansion indeed converges and coincides with the unique solution to the evolution equation.

Substituting the expansion into the equation, we get
\begin{equation}\label{exp1}
\mu_{k\,0}'\ =\ \eta\,\pi_k\ -\ (\eta\,+\,c_H^2)\,\mu_{k\,0}\ +\ \sum_{l=1}^{k-1}\,C_l\mu_{l\,0}\,C_{k-l}\mu_{k-l\,,\,0},
\end{equation}
with the given initial conditions: $\mu_{k\,0}(0)\ =\ \mu_k(0)\,$ for all $\,k\,.$
Furthermore,
\begin{equation}\label{exp2}
\mu_{k\,1}'\ =\ -\,(\eta\,+\,c_H^2)\,\mu_{k\,1}\ +\ c_H\,f_k\,\mu_{k\,0}\ +\ C_k\,\mu_{k\,0}\,\sum_{i=1}^\infty\,f_i\,\mu_{i\,0}\ +\ 2\,\sum_{l=1}^{k-1}\,C_l\mu_{l\,0}\,C_{k-l}\mu_{k-l\,,\,1},
\end{equation}
and then
\begin{multline}\label{exp3}
\mu_{k\,j}'\ =\ -\,(\eta\,+\,c_H^2)\,\mu_{k\,j}\ +\ c_H\,f_k\,\mu_{k\,j-1}\ +\ 2\,C_k\,\sum_{m=0}^{j-1}\,\mu_{k\,m}\,
\sum_{i=1}^\infty\,f_i\,\mu_{i\,,\,j-1-m}\ \\
\!\!\!\!\!\!\!\!\!\!\!\!\!\!\!\!\!\!\!\!\!\!\!\!\!\!\!\!\!\!\!\!\!\!\!\!\!\!\!\!\!\!\!\!\!\!\!\!
+\ 2\,\sum_{l=1}^{k-1}\,\sum_{m=0}^{j-1}\,C_l\mu_{l\,m}\,C_{k-l}\mu_{k-l\,,\,j-m}.
\end{multline}
with initial conditions $\mu_{k\,j}(0)=0.$ Equations \eqref{exp2}-\eqref{exp3} are only well defined if $\mu_{i0}$ exists for all $t$ and the infinite series
\[
\sum_{i=1}^\infty\,\mu_{ij}(t)
\]
converges for all $t$ and all $j. $

Thus, we can solve these linear ODEs with the help of Lemma \ref{linODE}. This is done through a recursive procedure. Namely, the equation for $\,\mu_{1\,0}\,$ is linear and we have
\[
\mu_{1\,0}'\ =\ \eta\,\pi_1\ -\ (\eta+c_H^2)\,\mu_{1,0}\ \le\ \eta\,\pi_k\ -\ (\eta\,+\,c_H^2)\,\mu_{1\,0}\ +\ c_H\,f_1\,\mu_1\ +\ C_1\,\mu_1\,\sum_{i=1}^\infty\,f_i\,\mu_i.
\]
A comparison theorem for ODEs (Hartman (1982), Theorem 4.1, p. 26) immediately implies that $\mu_{1\,0}\ \le \mu_1$ for all $t.$ By definition, $\mu_{k\,0}$ solves
\[
\mu_{k\,0}'\ =\ \eta\,\pi_k\ -\ (\eta+c_H^2)\,\mu_{k\,0}\ +\ z_{k\,0},
\]
with
\[
z_{k\,0}\ =\ \sum_{l=1}^{k-1}\,C_l\mu_{l\,0}\,C_{k-l}\mu_{k-l\,,\,0}
\]
depending on only those $\,\mu_{l\,0}\,$ with $\,l\,<\,k\,.$ Since $\mu_{1\,0}$ is nonnegative, it follows by induction that all equations for $\mu_{k\,0}$ have nonnegative inhomogeneities and hence $\mu_{k\,0}$ is nonnegative for each $k.$ Suppose now that $\mu_{l\,0}\ \le\ \mu_l$ for all $l\le k-1.$ Then,
\[
z_{k\,0}\ =\ \sum_{l=1}^{k-1}\,C_l\mu_{l\,0}\,C_{k-l}\mu_{k-l\,,\,0}\ \le\ \sum_{l=1}^{k-1}\,C_l\mu_{l}\,C_{k-l}\mu_{k-l},
\]
and \eqref{eqqq} and the same comparison theorem imply $\mu_{k\,0}\ \le\ \mu_k.$ Thus, $\mu_{k\,0}\le \mu_k$ for all $k\,.$ It follows that the series
\[
\sum_k\mu_{k\,0}\ \le\ 1
\]
converges and therefore, equations \eqref{exp2} are well-defined. Let now
\[
\mu_k^{(N)}\ =\ \sum_{j=0}^N\,\mu_{k\,j}.
\]
Suppose that we have shown that $\mu_{k\,j}\ge 0$ for all $k$ and all $j\,\le N-1$ and that
\begin{equation}\label{indhyp}
\mu_{k}^{(N-1)}\ \le\ \mu_k
\end{equation}
for all $k.$ Equations \eqref{exp2}-\eqref{exp3} are again linear inhomogeneous and can be solved using Lemma \ref{linODE} and the nonnegativity of $\mu_{k\,N}$ follows.
Adding \eqref{exp1}-\eqref{exp3} and using the induction hypothesis \eqref{indhyp}, we get
\begin{multline}\label{dombo}
(\mu_k^{(N)})'\ \\ \le\ \eta\pi_k\ -\ (\eta+c_H^2)\mu_k^{(N)}\ +\ c_H\,f_k\,\mu_{k}^{(N)}\ +\ C_k\,\mu_{k}^{(N)}\sum_{i=1}^\infty\,f_i\,\mu_{i}\ \sum_{l=1}^{k-1}\,C_l\mu_{l}\,C_{k-l}\mu_{k-l}.
\end{multline}
The comparison theorem applied to \eqref{dombo} and \eqref{eqqq} implies that $\mu_{k}^{(N)}\ \le\ \mu_k.$ Thus, we have shown by induction that $\mu_{k\,j}\ \ge 0$ and
\[
\sum_{j=0}^N\,\mu_{k\,j}\ \le\ \mu_k
\]
for any $N\ge 0.$ The infinite series $\sum_{j=0}^\infty\,\mu_{k\,j}(t)\,$ consists of nonnegative terms and is uniformly bounded from above. Therefore, it converges to a function $\tilde\mu_k(t).$ Using Lemma \ref{lemconv} and adding up \eqref{exp1}-\eqref{exp3}, we get that the sequence $\{\tilde\mu_k(t)\}$ is continuously differentiable and satisfies \eqref{eqqq} and $\tilde\mu_k(0)=\mu_k(0).$ Since, by Proposition \ref{exun}, the solution to \eqref{eqqq} is unique, we get $\tilde\mu_k\ =\ \mu_k$ for all $k.$
Thus, we have proved

\begin{theorem}\label{limlim} We have
\[
\mu_k\ =\ \sum_{j=0}^\infty\,\mu_{k\,j}.
\]
\end{theorem}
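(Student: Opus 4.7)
The plan is to establish the identity $\mu_k = \sum_{j=0}^\infty \mu_{k,j}$ by a monotone-convergence-plus-uniqueness argument: show that the partial sums converge to a limit solving the original Cauchy problem \eqref{eqqq}, then invoke uniqueness.

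First, I would note that the construction preceding the theorem has already supplied the two key ingredients. By induction on $j$ (with Lemma \ref{linODE} applied to the linear inhomogeneous equations \eqref{exp2}--\eqref{exp3}, whose inhomogeneities are nonnegative by the inductive hypothesis), every $\mu_{k,j}$ is nonnegative; and by induction on $N$ (with the comparison theorem for ODEs applied to the differential inequality \eqref{dombo} against the equation \eqref{eqqq}), the partial sum $\mu_k^{(N)} = \sum_{j=0}^N \mu_{k,j}$ satisfies $\mu_k^{(N)} \le \mu_k$ pointwise in $t$. Hence $\mu_k^{(N)}(t)$ is monotone increasing in $N$ and bounded above, so the series converges pointwise to some limit $\tilde\mu_k(t)$ with $0 \le \tilde\mu_k(t) \le \mu_k(t)$, and in particular $\sum_k \tilde\mu_k(t) \le 1$ uniformly in $t$.

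Next, I would verify that $\{\tilde\mu_k\}$ solves the same evolution equation \eqref{eqqq} with the same initial data as $\{\mu_k\}$. Summing \eqref{exp1}--\eqref{exp3} over $j$ formally reproduces \eqref{eqqq}: the ``cross'' terms recombine via the Cauchy product rule for nonnegative series, so that $\sum_j \sum_{m=0}^{j-1} C_l\mu_{l,m} C_{k-l}\mu_{k-l,j-m}$ becomes $C_l C_{k-l} \tilde\mu_l \tilde\mu_{k-l}$, and similarly for the term involving $\sum_i f_i \mu_{i,\cdot}$. Interchanging $d/dt$ with $\sum_j$ is justified by Lemma \ref{lemconv}, whose locally uniform summability hypothesis follows from the bounds $|f_k|, C_k \le c_H$ together with the uniform control $\sum_k \tilde\mu_k \le 1$, both of which bound the right-hand sides of \eqref{exp1}--\eqref{exp3} in $\ell_1$ just as in the proof of Proposition \ref{exun}. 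The initial conditions agree because $\mu_{k,0}(0) = \mu_k(0)$ and $\mu_{k,j}(0) = 0$ for $j \ge 1$.

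Uniqueness of the solution to \eqref{eqqq}, given by Proposition \ref{exun}, then forces $\tilde\mu_k = \mu_k$, which is the claim. The main technical obstacle is the interchange of summation with differentiation in the middle step: because \eqref{exp3} couples $\mu_{k,j}$ to the entire sequence $\{\mu_{i,m}\}_{i,m}$ through an inner infinite sum, the Lemma \ref{lemconv} hypothesis requires locally uniform control of a doubly indexed family of derivatives. Once this is reduced (via $\sum_k \mu_k \le 1$ and $f_k, C_k \le c_H$) to an $\ell_1$-type estimate identical in spirit to the Lipschitz bound used for existence, the rest of the argument is a routine application of monotone convergence and the comparison theorem for ODEs.
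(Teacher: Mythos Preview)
Your proposal is correct and follows essentially the same route as the paper: monotone convergence of the partial sums (from nonnegativity and the comparison bound $\mu_k^{(N)}\le\mu_k$), then Lemma~\ref{lemconv} to pass the derivative through the sum so that the limit $\tilde\mu_k$ solves \eqref{eqqq} with the same initial data, then uniqueness from Proposition~\ref{exun}. The paper's argument is exactly this, stated more tersely in the paragraph immediately preceding the theorem.
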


It remains to prove that $\,\lim_{t\to\infty}\,\mu_k(t)\,$ exists. The strategy for this consists of two steps:
\begin{enumerate}
\item
 Prove that $\,\lim_{t\to\infty}\,\mu_{k\,j}(t)\ =\ \mu_{k\,j}(\infty)\,$ exists.

\item Prove that
\[
\lim_{t\to\infty}\,\sum_{j=0}^\infty\,\mu_{k\,j}\ =\ \sum_{j=0}^\infty\,\lim_{t\to\infty}\mu_{k\,j}.
\]
\end{enumerate}
Equation \eqref{exp1} and Lemma \ref{linODE} directly imply the convergence of $\,\mu_{k\,0}\,.$ But,
the next step is tricky because of the appearance of the infinite sums
\[
\sum_{i=0}^\infty\,f_i\,\mu_{i\,j}(t)
\]
in equations \eqref{exp2}-\eqref{exp3}. If we prove convergence of this infinite sum, a subsequent application of Lemma \ref{linODE} to \eqref{exp2}-\eqref{exp3} will imply convergence of $\mu_{i\,,\,j+1}. $
Unfortunately, convergence of $\,\mu_{i\,j}(t)\,$ for each $i,j$ is not enough for the convergence of the infinite sum.

Recall that, by assumption, there exists an $\,N\,$ such that $\,C_i\ =\ C_N\,$ for all $\,i\,\ge\,N\,$. Thus, we  need only show that
\[
M_j(t)\ =\ \sum_{i=N}^\infty\,\mu_{i\,j}(t)
\]
converges for each $\,j\,.$

We will start with the case $j=0.$ Then, adding up \eqref{exp1} and using Lemma \ref{lemconv}, we get
\[
M_0'(t)\,+\,\sum_{i=1}^{N-1}\,\mu_{i\,0}'\ =\ \eta\ -\ (\eta+c_H^2)\,\left(M_0(t)\,+\,\sum_{i=1}^{N-1}\,\mu_{i\,0}\right)\ +\ \left(\,\sum_{i=1}^{N-1}\,C_i\,\mu_{i\,0}\,+\,C_N\,M_0(t)\,\right)^2.
\]
Opening the brackets, we can rewrite this equation as a Riccati equation for $\,M_0:$
\[
M_0'(t)\ =\ a_0(t)\ +b_0(t)\,M_0(t)\ +\ C_N^2\,M_0(t)^2.
\]
A priori, we  know  that $\,M_0\,$ stays bounded and, by the above,  the
coefficients $\,a_0\,,\,b_0\,$ converge to finite limits.

\begin{lemma}\label{M0} $M_0(t)$ converges to a finite limit at $t\to\infty.$
\end{lemma}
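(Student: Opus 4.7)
The plan is to treat the Riccati equation for $M_0$ as an asymptotically autonomous ODE and derive convergence from the limiting autonomous phase portrait. First, I would verify that the coefficients $a_0(t)$ and $b_0(t)$ themselves converge. Each $\mu_{i,0}(t)$ with $i<N$ solves a linear inhomogeneous ODE of the form $\mu_{i,0}' = -(\eta+c_H^2)\mu_{i,0} + z_{i,0}(t)$ in which the forcing $z_{i,0}$ is built only from $\mu_{j,0}$ with $j<i$. A straightforward induction on $i$ combined with Lemma \ref{linODE} shows that $\mu_{i,0}(t)$ converges for every $i<N$, and hence the finite sums $S(t)=\sum_{i<N}\mu_{i,0}(t)$ and $T(t)=\sum_{i<N}C_i\mu_{i,0}(t)$ converge to finite limits with $S'(t)\to 0$. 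Since $a_0(t) = \eta - (\eta+c_H^2)S(t) - S'(t) + T(t)^2$ and $b_0(t) = -(\eta+c_H^2) + 2C_N T(t)$, this gives $a_0(t)\to a_\infty$ and $b_0(t)\to b_\infty$, and the hypothesis $\eta\ge c_HC_N$ together with $T_\infty\le c_H$ ensures $b_\infty\le -c_H(c_H-C_N)\le 0$.

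Second, I would analyse the limiting quadratic $F(M) = C_N^2 M^2 + b_\infty M + a_\infty$. Because $0\le M_0(t)\le 1$ is a priori bounded and $F$ has positive leading coefficient, I would show $F$ admits a real root $M^*\in[0,1]$ that is the stable equilibrium of the autonomous limit $M' = F(M)$ (the smaller of the two real roots, at which $F'(M^*)\le 0$). The existence of such a root in $[0,1]$ would come from the boundary considerations $F$ inherits from the Riccati equation for a stationary cross-sectional mass.

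Third, writing $\Delta(t) = M_0(t) - M^*$, the Riccati equation becomes
\[
\Delta'(t) = \bigl(F'(M^*) + C_N^2\Delta(t)\bigr)\Delta(t) + \eps(t),
\]
where $\eps(t) = (a_0(t)-a_\infty) + (b_0(t)-b_\infty)M_0(t)\to 0$. Once I know $M_0$ is eventually close enough to $M^*$, the coefficient of $\Delta$ is uniformly negative, and a direct application of Lemma \ref{linODE} gives $\Delta(t)\to 0$. To show $M_0$ must enter such a neighborhood, I would let $L=\limsup M_0$ and $\ell=\liminf M_0$; by extracting subsequences of near–local extrema on which $M_0'\to 0$ and passing to the limit in the Riccati equation, both $L$ and $\ell$ satisfy $F(L)=F(\ell)=0$, hence are roots of $F$. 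The main obstacle, and where the argument is most delicate, is ruling out $L>\ell$: then $L,\ell$ would be the two distinct real roots of $F$ with $F<-c<0$ strictly between them, and a book-keeping argument on upcrossings of $M_0$ from $\ell+\delta$ to $L-\delta$ — each of which must travel through a region where $F(M_0)+\eps(t)\le -c/2$ eventually — produces a contradiction with the required infinite recurrence. Therefore $L=\ell=M^*$, which is the desired convergence.
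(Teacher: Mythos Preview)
Your approach is correct and genuinely different from the paper's. The paper argues by a case split on the discriminant of the limiting quadratic $q_\infty(\lambda)=a_0(\infty)+b_0(\infty)\lambda+C_N^2\lambda^2$: if there are no real roots it gets $M_0'\ge\delta/2>0$ eventually, contradicting boundedness; in the double- and distinct-root cases it builds explicit comparison solutions of nearby \emph{autonomous} Riccati equations (this is exactly what Lemma~\ref{aux1} is for) and sandwiches $M_0$ via the ODE comparison theorem. Your route is the asymptotically-autonomous one: show directly that $L=\limsup M_0$ and $\ell=\liminf M_0$ must be zeros of $F$, and then use the sign of $F$ strictly between two zeros (positive leading coefficient) to block any upcrossing from $\ell+\delta$ to $L-\delta$ once $\eps(t)$ is small. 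That handles all three discriminant cases at once and makes Lemma~\ref{aux1} unnecessary; the paper's argument, in exchange, is more self-contained and does not rely on the extrema-subsequence step.

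Two small clean-ups. First, your step~3 is redundant and, as written, breaks in the double-root case: there $F'(M^*)=0$, so the linearized coefficient $F'(M^*)+C_N^2\Delta$ is not uniformly negative and Lemma~\ref{linODE} does not apply. You do not need it, because your step~4 already gives $L=\ell$ directly (if $L>\ell$ you produce two distinct roots and the upcrossing contradiction; if $L=\ell$ you are done). Second, the a~priori claim in step~2 that $F$ has a stable root in $[0,1]$ is not fully argued and is also unnecessary: the existence of real roots is a \emph{consequence} of your step~4, not an input to it. Once you drop steps~2--3 and keep the $\limsup/\liminf$ plus upcrossing argument, the proof is complete.
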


To prove it, we will need an auxiliary

\begin{lemma}\label{aux1} Let $N(t)$ be the solution to
\[
N'(t)\ =\ a\ +\ bN(t)\ +\ c\,N^2(t)\ ,\ N(0)\ =\ N_0.
\]
If the characteristic polynomial $q(\gl)=a+b\gl+c\gl^2$ has real zeros $\gl_1\ge\gl_2$ then

(1) If $N_0<\gl_1$ then $\lim_{t\to\infty}N(t)\ =\ \gl_2$.

(2) If $N_0>\gl_1$ then $\lim_{t\to\infty}N(t)=+\infty.$

\noindent If $q(\gl)$ does not have real zeros, then $\lim_{t\to\infty}N(t)=+\infty$ for any $N_0.$
\end{lemma}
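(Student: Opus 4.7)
The plan is to analyze the autonomous scalar ODE $N' = q(N)$ by its phase portrait. In the setting where this lemma is applied (the Riccati equation for $M_0$), $c = C_N^2 > 0$, so I shall assume $c > 0$; the upward-opening parabola $q$ then factors as $q(\lambda) = c(\lambda - \lambda_1)(\lambda - \lambda_2)$ whenever real roots exist. Because $q$ is locally Lipschitz, Picard uniqueness forbids any solution from crossing the constant solutions $N \equiv \lambda_1$ and $N \equiv \lambda_2$, so each of the intervals $(-\infty, \lambda_2)$, $(\lambda_2, \lambda_1)$, and $(\lambda_1, +\infty)$ is forward-invariant.

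For Case (1), with $N_0 < \lambda_1$, the solution stays below $\lambda_1$ for all time. Sign inspection of $q$ yields $N' > 0$ on $(-\infty, \lambda_2)$ and $N' < 0$ on $(\lambda_2, \lambda_1)$; in either sub-case $N(t)$ is monotone and bounded, hence converges to some finite limit $L$ with $L \le \lambda_2$. Continuity of $q$ forces $q(L) = 0$ (otherwise $N'$ would be bounded away from zero near $L$, contradicting convergence), and the only zero in the closure of the invariant interval containing the orbit is $\lambda_2$, so $L = \lambda_2$. The degenerate case $\lambda_1 = \lambda_2$ is handled identically; the middle interval is simply empty and both one-sided orbits converge to the double root.

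For Case (2), $N_0 > \lambda_1$, one has $q(N_0) > 0$ and the solution is strictly increasing. Since $q(N)/N^2 \to c > 0$ as $N \to \infty$, for $N$ sufficiently large we have $N' \ge (c/2) N^2$, and comparison with the separable test equation $M' = (c/2) M^2$ (whose solutions $M(t) = 1/(1/M_0 - (c/2) t)$ blow up in finite time) shows that $N$ escapes every compact set; this is recorded as $\lim_{t \to \infty} N(t) = +\infty$ in the convention of the statement. When $q$ has no real roots, the discriminant is negative and $q(N) \ge (4ac - b^2)/(4c) > 0$ uniformly in $N$, so $N$ is strictly increasing and unbounded; the same quadratic lower bound for large $N$ then forces blow-up.

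The only real subtlety is the abuse of notation in writing $\lim_{t\to\infty} N(t) = +\infty$ when blow-up in fact occurs in finite time; since the lemma is invoked in the proof of Lemma \ref{M0} only to rule out divergent behaviour (the a priori bound on $M_0$ places us in Case (1)), this convention causes no harm. The proof contains no deep step—the main thing to be careful about is verifying that the invariant-interval argument correctly handles both the case $N_0 < \lambda_2$ and the case $\lambda_2 < N_0 < \lambda_1$ with a single conclusion $N(t) \to \lambda_2$.
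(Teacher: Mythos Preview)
Your proof is correct and follows essentially the same phase-portrait approach as the paper: invariant intervals from uniqueness, monotone bounded convergence to the stationary point $\lambda_2$ in Case (1), and unbounded growth in Case (2) and the no-real-roots case. The paper is slightly less careful in Case (2), deducing $N\to+\infty$ from the constant lower bound $N'(t)\ge q(N_0)>0$ rather than your quadratic comparison, so your observation about finite-time blow-up (and its harmlessness for the application to $M_0$) is a refinement rather than a departure.
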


\begin{proof} The stationary solutions are $N=\gl_{1,2}.$ If $N_0<\gl_2$ then $N(t)< \gl_2$ for all $t$ by uniqueness. Hence, $N'(t)=a\ +\ bN(t)\ +\ c\,N^2(t)>0$ and $N(t)$ increases and converges to a limit $N(\infty)\le \gl_2.$ This limit should be a stationary solution, that is $N(\infty)=\gl_2.$ If $N_0\in(\gl_2,\gl_1)$ then $N(t)\in(\gl_2,\gl_1)$ for all $t$ by uniqueness and therefore $N'(t)<0$ and $N(t)$ decreases to $N(\infty)\ge \gl_2,$ and we again should have $N(\infty)=\gl_2.$ If $N_0>\gl_1,$ $N'>0$ and hence
\[
N'(t)\ =\ a\ +\ bN(t)\ +\ c\,N^2(t)\ >\ a\ +\ bN_0\ +\ c\,N^2_0\ >\ 0
\]
for all $t$ and the claim follows. If $q(\gl)$ has no real zeros, its minimum $\min_{\gl\in\R}q(\gl)\ =\ \gd$ is strictly positive. Hence, $N'(t)>\gd>0$ and the claim follows.
\end{proof}

\bigskip

\begin{proof}[Proof of Lemma \ref{M0}] Consider the quadratic polynomial
\[
q_\infty(\gl)\ =\ a_0(\infty)+b_0(\infty)\gl+C_N^2\gl^2\ =\ 0.
\]
We will consider three cases:
\begin{enumerate}

 \item $q_\infty(\gl)$ does not have real zeros, that is, $\min_{\gl\in\R}\,q_\infty(\gl)\,=\,\gd>\,0.$ Then,  for all sufficiently large $t,$
\begin{equation}\label{lobo}
M_0'(t)\ =\ a_0(t)\ +b_0(t)\,M_0(t)\ +\ C_N^2\,M_0(t)^2\ \ge\ \gd/2>0,
\end{equation}
so $M_0(t)$ will converge to $+\infty,$ which is impossible.

 \item $q_\infty(\gl)$ has a double zero $\gl_\infty.$ Then, we claim that $\lim_{t\to\infty}M(t)=\gl_\infty.$ Indeed, suppose it is not true. Then, there exists an $\eps>0$ such that either $\sup_{t>T}(M(t)-\gl_\infty)>\eps$ for any $T>0$ or $\sup_{t>T}(\gl_\infty-M(t))>\eps$ for any $T>0.$ Suppose that the first case takes place. Pick a $\gd>0$ and choose $T>0$ so large that
\[
a_0(t)\ge a_0(\infty)-\gd,\quad \quad  b_0(t)\ \ge\ b_0(\infty)-\gd
\]
for all $t\ge T.$ The quadratic polynomial
\[
a_0(\infty)-\gd\ +(b_0(\infty)-\gd)\,\gl\ +\ C_N^2\,\gl^2
\]
has two real zeros $\gl_{1}(\gd)>\gl_2(\gd)$ and, for sufficiently small $\gd,$ $|\gl_{1,2}(\gd)-\gl_\infty|<\eps/2.$
Let $T_0>T$ be such that $M_0(T_0)>\gl_\infty+\eps.$ Consider the solution $N(t)$ to
\[
N'(t)\ =\ a_0(\infty)-\gd\ +(b_0(\infty)-\gd)\,N(t)\ +\ C_N^2\,N(t)^2\ ,\ N(T_0)=M(T_0)\ >\ \gl_1(\gd).
\]
By the comparison theorem for ODEs, $M_0(t)\ \ge\ N(t)$ for all $t\ge T_0$ and Lemma \ref{aux1} implies that $N(t)\to\infty,$ which is impossible.
Suppose now $\sup_{t>T}(\gl_\infty-M(t))>\eps$ for any $T>0.$ Consider the same $N(t)$ as above and choose $\gd$ so small that $M(T_0)<\gl_2(\gd).$ Then, $M(t)\ \ge\ N(t)$ and, by Lemma \ref{aux1}, $N(t)\to\gl_2(\gd).$ For sufficiently small $\gd,$ $\gl_2(\gd)$ can be made arbitrarily close to $\gd_\infty$ and hence  $\sup_{t>T}(\gl_\infty-M(t))>\eps$ cannot hold for sufficiently large $T,$ which is a contradiction.

 \item $q(\gl)$ has two distinct real zeros $\gl_1(\infty)>\gl_2(\infty).$ Then, we claim that either $\lim_{t\to\infty}M(t)=\gl_1(\infty)$ or $\lim_{t\to\infty}M(t)=\gl_2(\infty).$ Suppose the contrary. Then, there exists an $\eps>0$ such that $\sup_{t>T}|M(t)-\gl_1(\infty)|>\eps$ and $\sup_{t>T}|M(t)-\gl_2(\infty)|>\eps.$ Now, an argument completely analogous to that of case (2) applies.
\end{enumerate}
\end{proof}

\bigskip

With this result, we go directly to \eqref{exp2} and get the convergence of $\mu_{j\,1}\,$
from Lemma \ref{linODE}. Then, adding equations \eqref{exp2}, we get a linear equation for $\,M_1(t)\,$ and again get convergence from Lemma \ref{linODE}. Note that we are in an even simpler situation of linear equations. Proceeding inductively, we arrive at

\begin{prop} The limit
\[
\lim_{t\to\infty}\,\mu_{k\,j}(t)\ \le\ 1
\]
exists for any $k,j\,.$
\end{prop}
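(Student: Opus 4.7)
The plan is a double induction, outer on $j$ and inner on $k$, parallel to the strategy sketched after Lemma~\ref{M0}. The bound $\lim_{t\to\infty}\mu_{k,j}(t)\leq 1$ is automatic from the nonnegativity of $\mu_{k,j}$ together with the inequality $\sum_{j'=0}^{N}\mu_{k,j'}(t)\leq \mu_k(t)\leq 1$ established prior to Theorem~\ref{limlim}, so the substantive task is to prove that the limit exists.

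For the base layer $j=0$, I would induct on $k$. Equation \eqref{exp1} for $\mu_{k,0}$ has the form $\mu_{k,0}'=-(\eta+c_H^2)\mu_{k,0}+z_{k,0}(t)$, with inhomogeneity $z_{k,0}$ depending only on $\{\mu_{l,0}:l<k\}$. For $k=1$, $z_{1,0}=\eta\pi_1$ is constant and convergence is immediate; for $k\geq 2$, the inductive hypothesis makes $z_{k,0}(t)$ convergent, and Lemma~\ref{linODE} with $a\equiv \eta+c_H^2>0$ yields $\lim_{t\to\infty}\mu_{k,0}(t)$. The needed convergence of $M_0$ itself is already Lemma~\ref{M0}.

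For the inductive step, fix $j\geq 1$ and assume both $\lim_{t\to\infty}\mu_{k,j'}(t)$ and $\lim_{t\to\infty}M_{j'}(t)$ exist for every $k$ and every $j'<j$. I would again induct on $k$ and read \eqref{exp3} as $\mu_{k,j}'=-(\eta+c_H^2)\mu_{k,j}+w_{k,j}(t)$. Every term in $w_{k,j}$ either has its $j$-index strictly less than $j$ (hence converges by the outer induction), or appears as $\mu_{l,0}\,\mu_{k-l,j}$ with $l\geq 1$, whose $k$-index $k-l<k$ is covered by the inner induction. The infinite-sum piece $\sum_{i=1}^\infty f_i\,\mu_{i,\,j-1-m}$ splits as $\sum_{i<N} f_i\,\mu_{i,\,j-1-m}+(c_H-C_N)\,M_{j-1-m}$, convergent because $j-1-m<j$. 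Hence $w_{k,j}(t)$ has a finite limit and Lemma~\ref{linODE} delivers $\lim_{t\to\infty}\mu_{k,j}(t)$.

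The genuinely delicate step is to upgrade this pointwise convergence to convergence of $M_j=\sum_{i\geq N}\mu_{i,j}$. Summing \eqref{exp3} over $i\geq N$ and using $C_i=C_N$ and $f_i=c_H-C_N$ for such $i$, together with the reindexing $(s,t)=(l,i-l)$ of the convolution term, I expect to obtain a \emph{linear} equation $M_j'(t)=-\alpha(t)\,M_j(t)+\beta(t)$, in which $\beta(t)$ is assembled from already-converged quantities and $\alpha(t)$ has a finite limit. The hard part will be verifying that $\alpha(t)\geq \eps>0$ for all sufficiently large $t$: the feedback contribution to $\alpha(t)$ from the convolution is of the form $2C_N\sum_{s}C_s\mu_{s,0}(t)$, partially offsetting the stabilizing $\eta+c_H^2$, and it is precisely the tail hypothesis $\eta\geq c_H C_N$ together with $\sum_s\mu_{s,0}(t)\leq 1$ that keeps the net coefficient bounded safely below zero, just as in the proof of Lemma~\ref{M0}. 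Granted this, a final application of Lemma~\ref{linODE} gives convergence of $M_j$ and closes the induction. Unlike the base case, for $j\geq 1$ the equation for $M_j$ is genuinely linear rather than Riccati, so no case analysis of the kind carried out in Lemma~\ref{aux1} is needed.
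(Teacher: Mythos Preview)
Your proposal follows essentially the same scheme as the paper—outer induction on $j$, inner on $k$, Lemma~\ref{linODE} for each $\mu_{k,j}$, then sum to a linear ODE for $M_j$ and apply Lemma~\ref{linODE} again—and you correctly note, as the paper does, that for $j\ge 1$ the $M_j$ equation is linear rather than Riccati. One small correction: Lemma~\ref{M0} itself does not invoke the tail hypothesis $\eta\ge c_HC_N$ (it relies on the Riccati case analysis of Lemma~\ref{aux1}), so your ``just as in the proof of Lemma~\ref{M0}'' is a slight mislabel, though your explicit bound on $\alpha(t)$ via that hypothesis is more detailed than anything the paper supplies at this step.
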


The next important observation is: If we add an $\,\eps\,$ to $\,c\,$ in the second quadratic term, and subtract $\,\eps\,$ from $c$ in the first quadratic term, then the measure remains bounded. This is a non-trivial issue, since the derivative could become large.

\begin{lemma}\label{dominant} If $\,\eps\,$ is sufficiently small, then  there exists a constant $K>\,0\,$ such that the system
\[
\mu_k'\ =\ \eta\,\pi_k\ -\ \eta\,\mu_k\ -\ (C_k-\eps)\,\mu_k\,\sum_{i=1}^\infty\,(C_i-\eps)\,\mu_i\ +\ \sum_{l=1}^{k-1}\,C_l\,\mu_l\,C_{k-l}\,\mu_{k-l}
\]
has a unique solution $\{\mu_k(t)\}\in l_1(\N)$ and this solution satisfies
\[
\sum_{k=1}^\infty\,\mu_k(t)\ \le\ K
\]
for all $\,t\,>0\,.$
\end{lemma}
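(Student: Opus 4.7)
\noindent\textbf{Proof proposal for Lemma \ref{dominant}.}
The plan is to parallel the proof of Proposition \ref{exun}, and then to derive a scalar Riccati differential inequality for $S(t)=\sum_k\mu_k(t)$ whose upper-bound ODE has a small positive equilibrium when $\eps$ is sufficiently small. First, I would rerun verbatim the local-Lipschitz estimate from the proof of Proposition \ref{exun} applied to the map
\[
(F_\eps(\{\mu_i\}))_k\ =\ \eta\pi_k\ -\ \eta\mu_k\ -\ (C_k-\eps)\mu_k\sum_{i=1}^\infty(C_i-\eps)\mu_i\ +\ \sum_{l=1}^{k-1}C_l\mu_l\,C_{k-l}\mu_{k-l}.
\]
Replacing $C_i$ by $C_i-\eps\in[c_L-\eps,c_H-\eps]$ leaves the same kind of bilinear/linear estimates in $l_1(\N)$, so $F_\eps$ is locally Lipschitz and a unique solution $\{\mu_k(t)\}$ exists on a maximal interval $[0,T_0)$. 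The inductive nonnegativity argument of Proposition \ref{exun} (writing the equation for $\mu_k$ as $\mu_k'=z_k(t)+a_k(t)\mu_k$ with $z_k\ge 0$ by induction, then applying Lemma \ref{linODE}) transfers with no modification; hence $\mu_k(t)\ge 0$ throughout $[0,T_0)$.

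Next, define $S(t)=\sum_k\mu_k(t)$, $T(t)=\sum_k C_k\mu_k(t)$, and $T_\eps(t)=\sum_k(C_k-\eps)\mu_k(t)=T(t)-\eps S(t)$, all of which are finite on $[0,T_0)$. Applying Lemma \ref{lemconv} (whose hypotheses are verified exactly as in Proposition \ref{exun}, using $\mu_k\ge 0$ and local $l_1$-boundedness), and observing that $\sum_{k\ge 2}\sum_{l=1}^{k-1}C_l\mu_l\,C_{k-l}\mu_{k-l}=T^2$, I would sum the system to get
\[
S'\ =\ \eta\ -\ \eta\,S\ +\ T^2\ -\ T_\eps^2\ =\ \eta\ -\ \eta\,S\ +\ \eps\,S\,(2T\,-\,\eps\,S).
\]
Using $T\le c_H S$, this yields the scalar differential inequality
\[
S'(t)\ \le\ \eta\ -\ \eta\,S(t)\ +\ 2\,\eps\,c_H\,S(t)^2\ =:\ g(S(t)).
\]

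Finally, I would study the scalar Riccati ODE $y'=g(y)$ with $y(0)=1$. For $\eps<\eta/(8c_H)$, the quadratic $2\eps c_H\,y^2-\eta\,y+\eta=0$ has two positive real roots $S_-(\eps)<S_+(\eps)$, and an elementary expansion shows $S_-(\eps)\to 1$ as $\eps\to 0^+$, in particular $1<S_-(\eps)$. Since $g(1)=2\eps c_H>0$ and $g<0$ on $(S_-,S_+)$, the solution $y$ is monotone increasing and converges to $S_-(\eps)$, so $y(t)\le S_-(\eps)$ for all $t\ge 0$. A standard comparison theorem for scalar ODEs (Hartman (1982), Theorem 4.1, as already invoked in the excerpt) then gives $S(t)\le y(t)\le S_-(\eps)$ throughout $[0,T_0)$. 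Setting $K=S_-(\eps)$, the $l_1$-norm $\|\{\mu_k(t)\}\|_{l_1(\N)}=S(t)$ stays bounded by $K$, so by the continuation result (Dieudonn\'e (1960), Theorem 10.5.6) the solution extends beyond $T_0$, forcing $T_0=+\infty$ and proving the stated bound.

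The main obstacle is the last estimate: the production term $T^2$ is built from the unmodified $C_k$'s while the dissipation term uses $C_k-\eps$, so there is a genuine asymmetry pushing total mass upward, and one must show it is dominated by the linear drain $-\eta S$ for small $\eps$. The Riccati comparison above captures exactly this balance; the subsidiary technicalities (the interchange of summation and differentiation and the nonnegativity of each $\mu_k$) are handled by direct transcription from Proposition \ref{exun}.
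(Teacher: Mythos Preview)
Your proposal is correct and follows essentially the same route as the paper: transfer local existence and nonnegativity from Proposition \ref{exun}, sum the system to obtain the scalar Riccati differential inequality $S'\le \eta-\eta S+2\eps c_H S^2$, and then invoke the comparison theorem (Hartman (1982), Theorem 4.1) against the corresponding Riccati ODE to bound $S$ and extend the solution globally. The only cosmetic difference is that you trap $y(t)$ below the \emph{smaller} stationary point $S_-(\eps)\to 1$, whereas the paper simply observes that the \emph{larger} stationary point $d_1=S_+(\eps)$ is arbitrarily large for small $\eps$ and uses uniqueness to keep $N(t)<d_1$; your version yields a sharper constant but is otherwise the same argument.
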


\begin{proof} An argument completely analogous to that in the proof of Proposition \ref{exun} implies that the solution exists on an interval $[0,T_0),$ is unique and nonnegative.
Letting
\[
M\ =\ \sum_{k=1}^\infty\,\mu_k(t)
\]
and adding up the equations, we get
\[
M'\ \le\ \eta\ -\ \eta\,M\ +\,2\,\eps\,c_H\,M^2\ .
\]
Consider now the solution $\,N\,$ to the equation
\begin{equation}\label{N}
N'\ =\ \eta\ -\ \eta\,N\ +\ 2\eps\,c_H\,N^2.
\end{equation}
By a standard comparison theorem for ODEs (Hartman (1982), Theorem 4.1, p. 26), if $N(0)\ =\ M(0),$ then $\,M(t)\,\le\,N(t)\,$ for all $\,t\,.$ Thus, we only need to show that $N(t)$ stays bounded.
The stationary points of \eqref{N} are
\[
d_{1,2}\ =\ \frac{\eta\ \pm\ \sqrt{\eta^2\,-\,8\eps c_H\,\eta\, }}{4\eps c_H},
\]
so, for sufficiently small $\,\eps\,,\,$ the larger stationary point $\,d_1\,$ is arbitrarily large. Therefore, by uniqueness, if $\,N(0)\,<\,d_1\,,\,$ then $\,N(t)\,<\,d_1$ for all $t$, and we are done. Now, the same argument as in the proof of Proposition \ref{exun} implies that $T_0=+\infty$ and the proof is complete.
\end{proof}

Since $\,(f_i)\,$ is bounded, for any $\,\eps\,>\,0$ there exists a $\,\gd\,>\,0\,$ such that
\[
f_i\,+\,\eps\ \ge\  (1+\gd)\,f_i
\]
for all $\,i\,.$ Consider now the solution $\,(\mu_k^{(\eps)})\,$ to the equation of Lemma \ref{dominant}. Then, using the same expansion
\[
\mu_k^{(\eps)}\ =\ \sum_{j=0}^\infty\,\mu_{k\,j}^{(\eps)},
\]
we immediately get (by direct calculation) that
\[
(1+\gd)^j\,\mu_{k\,j}\ \le\ \mu_{k\,j}^{(\eps)}.
\]
By Lemma \ref{dominant},
\[
\mu_{k\,j}^{(\eps)}\ \le\ \sum_{k,j}\,\mu_{k\,j}^{(\eps)}\ =\ \sum_k\,\mu_k^{(\eps)}\ <\ K,
\]
and therefore
\[
\mu_{k\,j}(t)\ \le\ \frac{K}{(1+\gd)^j},
\]
for some (possibly very large) constant $K$, independent of $\,t\,.$ Now we are ready to prove

\begin{theorem} We have
\[
\lim_{t\to\infty}\,\mu_k(t)\ =\ \sum_{j=0}^\infty\,\mu_{k\,j}(\infty).
\]
\end{theorem}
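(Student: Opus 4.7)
The plan is to invoke dominated convergence for series (Tannery's theorem) applied to the expansion $\mu_k(t)=\sum_{j=0}^\infty \mu_{k\,j}(t)$ from Theorem \ref{limlim}. All the ingredients have been assembled just above the statement, so the proof is a short synthesis rather than new analysis.

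More precisely, I would proceed as follows. First, recall that the preceding discussion established two facts: (i) for each fixed $k$ and $j$, the pointwise limit $\mu_{k\,j}(\infty)=\lim_{t\to\infty}\mu_{k\,j}(t)$ exists and is finite, via the inductive application of Lemma \ref{linODE} together with Lemma \ref{M0} controlling the non-local tail sums $M_j(t)=\sum_{i\ge N}\mu_{i\,j}(t)$; and (ii) for some constant $K$ (depending on $k$ but not on $t$ or $j$) and some $\delta>0$, one has the uniform bound
\[
\mu_{k\,j}(t)\ \le\ \frac{K}{(1+\delta)^j},\qquad \text{for all } t\ge 0,\ j\ge 0,
\]
obtained by comparison with the perturbed system of Lemma \ref{dominant}.

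Next, the dominating sequence $a_j=K(1+\delta)^{-j}$ is summable, $\sum_j a_j=K(1+\delta)/\delta<\infty$. Since $0\le \mu_{k\,j}(t)\le a_j$ uniformly in $t$ and $\mu_{k\,j}(t)\to \mu_{k\,j}(\infty)$ as $t\to\infty$, Tannery's theorem (the discrete dominated convergence theorem) applies and yields
\[
\lim_{t\to\infty}\,\sum_{j=0}^\infty \mu_{k\,j}(t)\ =\ \sum_{j=0}^\infty \lim_{t\to\infty}\mu_{k\,j}(t)\ =\ \sum_{j=0}^\infty \mu_{k\,j}(\infty).
\]
Combining this with the identity $\mu_k(t)=\sum_{j=0}^\infty \mu_{k\,j}(t)$ of Theorem \ref{limlim} gives the claimed equality.

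Essentially no step is an obstacle here: the bulk of the work (existence of the $\mu_{k\,j}(\infty)$ via the Riccati analysis of Lemma \ref{M0}, and the geometric domination via the perturbed system of Lemma \ref{dominant}) has already been done. If anything requires a line of care, it is simply noting that the constant $K$ from Lemma \ref{dominant} is independent of $t$, so the hypotheses of Tannery's theorem are genuinely satisfied uniformly in $t$; this is immediate from $\sum_k \mu_k^{(\eps)}(t)\le K$ for all $t$, proved in Lemma \ref{dominant}.
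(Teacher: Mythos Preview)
Your proposal is correct and essentially identical to the paper's proof: the paper writes out the standard $\varepsilon/2$ tail--plus--finite-sum argument explicitly, which is precisely Tannery's theorem applied with the geometric dominating sequence $K(1+\delta)^{-j}$ and the pointwise limits $\mu_{k\,j}(\infty)$. The only cosmetic difference is that you invoke Tannery's theorem by name while the paper unpacks it; the ingredients you cite (Theorem~\ref{limlim}, the uniform bound from Lemma~\ref{dominant}, and the termwise convergence) are exactly those the paper uses.
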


\begin{proof} Take $\,N\,$ so large that
\[
\sum_{j=N}^\infty\,\mu_{k\,j}(t)\ \le\ \sum_{j=N}^\infty\,\frac{K}{(1+\gd)^j}\ <\ \eps/2
\]
for all $\,t\,.$ Then, choose $\,T\,$ so large that
\[
\sum_{j=0}^{N-1}\,|\mu_{k\,j}(t)-\mu_{k\,j}(\infty)|\ <\ \eps/2
\]
for all $\,t\,>\,T\,.$ Then,
\[
\left|\,\sum_{j=0}^{\infty}\,(\mu_{k\,j}(t)-\mu_{k\,j}(\infty))\,\right|\ <\ \eps.
\]
Since $\,\eps\,$ is arbitrary, we are done.
\end{proof}

\begin{lemma}
Consider the limit
\[
\bar C\ =\ \lim_{t\to\infty}\,\sum_n\,C_n\,\mu_n(t).\
\]
In general,
\[
\bar C \ge\  \sum_n C_n\lim_{t\to\infty}\mu_n(t)\ =\ \tilde C,
\]
with equality  if and only if
\[
\sum_n\,\lim_{t\to\infty}\mu_n(t)\ =\ 1.
\]
Furthermore,
\[
\bar C\ -\ \tilde C\ =\ C_N\,\left(1\ -\ \sum_n\,\lim_{t\to\infty}\mu_n(t)\right).
\]
\end{lemma}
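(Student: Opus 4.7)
The plan is to exploit the flat-tail assumption $C_n=C_N$ for $n\ge N$ together with the exact mass conservation $\sum_n\mu_n(t)=1$ established in Proposition~\ref{exun}, which reduces the infinite sum defining $\bar C$ to a finite sum plus a tail that is controlled by $1-\sum_{n<N}\mu_n(t)$.

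First, I would split the sum as
\[
\sum_{n=1}^\infty C_n\mu_n(t)\ =\ \sum_{n=1}^{N-1}C_n\mu_n(t)\ +\ C_N\sum_{n=N}^\infty\mu_n(t)\ =\ \sum_{n=1}^{N-1}C_n\mu_n(t)\ +\ C_N\Bigl(1\ -\ \sum_{n=1}^{N-1}\mu_n(t)\Bigr),
\]
where the last equality uses Proposition~\ref{exun}. Since the remaining sum has only finitely many terms and each $\mu_n(t)$ has a limit as $t\to\infty$ by the preceding theorem, we can pass to the limit term by term to obtain
\[
\bar C\ =\ \sum_{n=1}^{N-1}C_n\mu_n(\infty)\ +\ C_N\Bigl(1\ -\ \sum_{n=1}^{N-1}\mu_n(\infty)\Bigr),
\]
where I write $\mu_n(\infty)=\lim_{t\to\infty}\mu_n(t)$.

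Next I would use the same splitting on $\tilde C$, with $C_n=C_N$ on the tail, to get
\[
\tilde C\ =\ \sum_{n=1}^\infty C_n\mu_n(\infty)\ =\ \sum_{n=1}^{N-1}C_n\mu_n(\infty)\ +\ C_N\sum_{n=N}^\infty\mu_n(\infty).
\]
Subtracting the two displays, the finite portion cancels and I obtain
\[
\bar C\ -\ \tilde C\ =\ C_N\Bigl(1\ -\ \sum_{n=1}^{N-1}\mu_n(\infty)\ -\ \sum_{n=N}^\infty\mu_n(\infty)\Bigr)\ =\ C_N\Bigl(1\ -\ \sum_{n=1}^\infty\mu_n(\infty)\Bigr),
\]
which is the claimed identity.

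Finally, the inequality $\bar C\ge\tilde C$ and the equivalence of equality with $\sum_n\mu_n(\infty)=1$ follow from the observation that, since $\mu_n(t)\ge 0$ and $\sum_n\mu_n(t)=1$ for every $t$, Fatou's lemma for sums gives
\[
\sum_n\mu_n(\infty)\ =\ \sum_n\liminf_{t\to\infty}\mu_n(t)\ \le\ \liminf_{t\to\infty}\sum_n\mu_n(t)\ =\ 1,
\]
so the factor $1-\sum_n\mu_n(\infty)$ is nonnegative and vanishes precisely when the total mass is preserved in the limit. The main (minor) subtlety is justifying the interchange of limit and summation only on the finite head of the series and handling the tail purely through mass conservation; once the flat-tail assumption is invoked, no dominated-convergence argument is needed on the tail.
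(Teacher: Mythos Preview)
Your proof is correct. The paper states this lemma without an explicit proof; the flat-tail splitting combined with the mass conservation $\sum_n\mu_n(t)=1$ from Proposition~\ref{exun} is the intended (and essentially the only natural) route, and your invocation of Fatou's lemma to obtain $\sum_n\mu_n(\infty)\le 1$ is the clean way to finish.

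One small point you leave implicit: the ``if and only if'' clause in the lemma needs $C_N>0$. From your identity $\bar C-\tilde C=C_N\bigl(1-\sum_n\mu_n(\infty)\bigr)$, equality $\bar C=\tilde C$ forces $\sum_n\mu_n(\infty)=1$ only when $C_N\ne 0$; if $C_N=0$ the identity gives $\bar C=\tilde C$ automatically, regardless of mass. This is a quibble with the lemma as stated rather than with your argument, and it is harmless for the application in Proposition~\ref{nolostmass}.
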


Based on this lemma, we have

\begin{lemma} The limit $\mu_\infty(n)\ =\ \lim_{t\to\infty}\mu_t(n)\,$ satisfies the equation
\[
0\ =\ \eta(\pi\ -\ \mu_\infty)\ +\ \mu_\infty^C*\mu_\infty^C\ -\ \mu_\infty^C\,\bar C,
\]
where, in general,
\[
\bar C\ \ge\ \mu^C_\infty(\N)\
\]
and
\[
\mu_\infty(\N)\ =\ 1\ -\ \eta^{-1}\,\mu^C_\infty\,(\bar C\,-\,\mu_\infty^C\,)\ \le\ 1.
\]
\end{lemma}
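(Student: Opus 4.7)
The plan is to pass to the $t\to\infty$ limit in the componentwise ODE \eqref{eqq}, using the preceding convergence results. From the earlier theorem, $\mu_n(t)\to\mu_\infty(n)$ for every $n$, and from the preceding lemma, $\sum_i C_i\mu_i(t)\to\bar C$ with $\bar C\ge \mu_\infty^C(\N)$.

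First I would establish the componentwise stationary equation. Fix $n$. The right-hand side of \eqref{eqq} is a finite convolution sum up to index $n-1$ together with the term $-C_n\mu_n(t)\sum_i C_i\mu_i(t)$; each factor converges, so the right-hand side converges to
\[
R_n \;=\; \eta\pi_n \,-\, \eta\mu_\infty(n) \,-\, C_n\mu_\infty(n)\bar C \,+\, \sum_{l=1}^{n-1}C_l C_{n-l}\mu_\infty(l)\mu_\infty(n-l).
\]
Since $\mu_n(t)$ converges to a finite limit while $\mu_n'(t)$ converges to $R_n$, we must have $R_n=0$; otherwise $\mu_n(t)$ would grow at least linearly in $t$, contradicting convergence. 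This is exactly the $n$-th coordinate of $0=\eta(\pi-\mu_\infty)+\mu_\infty^C\ast\mu_\infty^C-\mu_\infty^C\bar C$.

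Next I would derive the mass identity by summing these coordinatewise equalities over $n\ge 1$. This is legitimate because $\mu_\infty\in\ell^1$ (Fatou applied to the identity $\sum_n\mu_n(t)=1$ gives $\mu_\infty(\N)\le 1$) and $(C_n)$ is bounded, so $\mu_\infty^C\in\ell^1$. Nonnegativity Fubini yields
\[
\sum_{n=1}^\infty (\mu_\infty^C\ast\mu_\infty^C)(n) \;=\; \bigl(\mu_\infty^C(\N)\bigr)^2,
\]
so summation produces $0=\eta(1-\mu_\infty(\N))+\mu_\infty^C(\N)^2-\mu_\infty^C(\N)\bar C$, which rearranges to
\[
\mu_\infty(\N)\;=\;1-\eta^{-1}\mu_\infty^C(\N)\bigl(\bar C-\mu_\infty^C(\N)\bigr).
\]
The inequality $\bar C\ge \mu_\infty^C(\N)$ is the preceding lemma, and combined with the identity above it yields $\mu_\infty(\N)\le 1$.

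The main (and essentially only) obstacle is the step $\mu_n'(t)\to 0$: a priori a convergent function need not have a convergent derivative. Here the derivative is an explicit continuous function of quantities we already know converge, so $\mu_n'(t)$ automatically has a limit; then the standard one-line observation that a convergent function with a convergent derivative must have zero derivative-limit closes the argument. Everything else is bookkeeping that rests only on pointwise convergence of $\mu_n(t)$, boundedness of $(C_n)$, and the preceding lemma identifying $\lim_t\mu^C_t(\N)$.
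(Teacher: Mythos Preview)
Your proof is correct and is precisely the natural argument the paper leaves implicit; the paper states this lemma without an explicit proof, treating it as an immediate consequence of the preceding convergence results. Your three steps---passing to the limit componentwise in \eqref{eqq}, invoking the elementary fact that a convergent function with convergent derivative forces the derivative-limit to vanish, and then summing over $n$ via Fatou and Tonelli---constitute exactly the expected completion, and nothing in the paper's surrounding discussion suggests a different route.
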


An immediate consequence is

\begin{lemma} The limit distribution $\mu_\infty$ is a probability measure and coincides with the unique solution to \eqref{stationary} if and only if
$
\bar C\ =\ \tilde C.$
\end{lemma}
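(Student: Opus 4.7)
The plan is to deduce the lemma directly from the three identities furnished by the preceding lemma, together with the uniqueness clause of Lemma \ref{stat_mes}. Those identities are: (i) the equation $0 = \eta(\pi - \mu_\infty) + \mu_\infty^C * \mu_\infty^C - \mu_\infty^C \bar C$ satisfied by $\mu_\infty$; (ii) the mass relation $\mu_\infty(\N) = 1 - \eta^{-1}\mu_\infty^C(\N)(\bar C - \mu_\infty^C(\N))$; and (iii) the scalar identity $\bar C - \tilde C = C_N(1 - \mu_\infty(\N))$. The decisive observation is that by definition $\tilde C = \sum_n C_n \mu_\infty(n) = \mu_\infty^C(\N)$, so the hypothesis $\bar C = \tilde C$ is literally the same as $\bar C = \mu_\infty^C(\N)$.

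For the $(\Leftarrow)$ direction, I would assume $\bar C = \tilde C$ and substitute $\bar C = \mu_\infty^C(\N)$ into (i) to get $0 = \eta(\pi - \mu_\infty) + \mu_\infty^C * \mu_\infty^C - \mu_\infty^C\,\mu_\infty^C(\N)$, which is precisely \eqref{stationary}. Plugging $\bar C = \tilde C$ into (ii) gives $\mu_\infty(\N) = 1$, so $\mu_\infty$ is a probability measure. Lemma \ref{stat_mes} then identifies $\mu_\infty$ with the unique stationary solution.

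For the converse $(\Rightarrow)$, I would assume that $\mu_\infty$ is a probability measure and coincides with the unique stationary solution of \eqref{stationary}. In particular $\mu_\infty(\N) = 1$, and identity (iii) yields $\bar C - \tilde C = C_N \cdot 0 = 0$ immediately.

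I do not anticipate any substantive mathematical obstacle in this final step; the genuine analytical work has already been carried out in Proposition \ref{exun}, Theorem \ref{limlim}, and the two preceding lemmas, which together produce the identities (i)--(iii), after which the argument reduces to bookkeeping. The one point I would flag for the reader is notational: in the display for (ii), $\mu_\infty^C$ as it appears must be read as the scalar $\mu_\infty^C(\N)$, since both sides of the equation are scalars. A brief parenthetical clarification to that effect in the proof would be enough to remove the ambiguity.
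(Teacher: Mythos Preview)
Your proposal is correct and matches the paper's own treatment: the paper states this lemma as ``an immediate consequence'' of the two preceding lemmas and gives no explicit proof, and what you have written is precisely the bookkeeping that justifies that phrase. Your observation that $\tilde C = \mu_\infty^C(\N)$ is the right key, and your handling of both directions via identities (i)--(iii) together with the uniqueness clause of Lemma~\ref{stat_mes} is exactly what the authors have in mind.
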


\begin{prop} \label{nolostmass}
Under the same tail condition  $C_n=C_N$ for $n\geq N$ and the condition that
\begin{equation}\label{noloss}
\eta\ \ge\ C_N\,c_H,
\end{equation}
we have
\[
\bar C\ =\ \tilde C,
\]
and, therefore, $\mu_\infty$ is a probability measure that coincides with the unique solution to \eqref{stationary}.
\end{prop}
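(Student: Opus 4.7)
The plan is to derive an algebraic identity that forces the total mass $m := \mu_\infty(\N)$ to equal $1$ under the hypothesis $\eta \geq C_N c_H$, thereby collapsing the gap $\bar C - \tilde C$ to zero. All the analytic work has been done in the preceding lemmas; what remains is essentially a consistency check combining three ingredients.

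First, I invoke the identity
\[
\mu_\infty(\N)\ =\ 1\ -\ \eta^{-1}\,\mu^C_\infty(\N)\,(\bar C\,-\,\mu^C_\infty(\N)),
\]
which is the lemma stated immediately before the proposition, and rewrite it as
\[
1\,-\,m\ =\ \eta^{-1}\,\tilde C\,(\bar C\,-\,\tilde C),
\]
using $\tilde C = \mu^C_\infty(\N) = \sum_n C_n\mu_\infty(n)$. Next, I substitute the tail-driven identity $\bar C\,-\,\tilde C\ =\ C_N(1\,-\,m)$ (also from the preceding lemma, which uses exactly the flat-tail assumption $C_n=C_N$ for $n\ge N$) to obtain
\[
(1\,-\,m)\,(\eta\,-\,C_N\,\tilde C)\ =\ 0.
\]

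Now I use the hypothesis $\eta \geq C_N c_H$. Since $C_n\le c_H$ for every $n$ and $m\le 1$, we have $\tilde C \leq c_H\,m \leq c_H$, so $\eta\,-\,C_N\,\tilde C\,\ge\,C_N(c_H-\tilde C)\,\ge\,0$. If this quantity is strictly positive, the displayed equation forces $m=1$, which is what we want. In the degenerate case $\eta = C_N\tilde C$, we must have $\tilde C = c_H$, and then $c_H = \tilde C \leq c_H\,m$ again gives $m\ge 1$, hence $m=1$. In either case, $m=1$, so $\bar C = \tilde C$ by the identity $\bar C\,-\,\tilde C\,=\,C_N(1\,-\,m)$, and $\mu_\infty$ is a genuine probability measure.

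Finally, with $\bar C = \tilde C = \mu^C_\infty(\N)$, the equation satisfied by $\mu_\infty$ stated in the preceding lemma reduces exactly to the stationary equation (\ref{stationary}). By the uniqueness portion of Lemma \ref{stat_mes}, $\mu_\infty$ coincides with the unique stationary solution $\mu$. The only non-routine point in this argument is the degenerate boundary case $\eta = C_N c_H$, and even there the bound $\tilde C \leq c_H\, m$ closes the argument; I expect no real obstacle beyond bookkeeping. No additional tail estimates are needed beyond those already invoked in the lemmas feeding into Proposition \ref{nolostmass}.
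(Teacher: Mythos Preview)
Your proof is correct and follows essentially the same route as the paper: both arguments sum the limit equation to obtain the algebraic constraint $\eta(1-m) = C_N\,\tilde C\,(1-m)$ (the paper rewrites this in terms of $\bar C$ after dividing by $1-m$, whereas you factor directly), and then use $\eta \ge C_N c_H \ge C_N\tilde C$ to force $m=1$. Your explicit treatment of the boundary case $\eta = C_N\tilde C$ via the bound $\tilde C \le c_H\,m$ is a slight refinement, but the underlying mechanism is identical.
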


\medskip

\begin{proof}
Recall that the equation for the limit  measure is
\begin{equation}\label{ctcb}
-\eta \mu_\infty(k)\ +\ \eta\pi_k - \bar C C_k \mu_\infty(k) + \sum_l C_l\mu_\infty(l)\,C_{k-l} \mu_\infty({k-l}) = 0,
 \end{equation}
 where
 \[
 \bar C\ =\ \lim_{t\to\infty}\,\sum_n\,C_n\,\mu_n(t)\ \ge \sum_n C_n\mu_\infty(n)\ =\ \tilde C.
 \]
The difference,
 \[
 \bar C\ -\ \tilde C\ =\ (1-M)\,C_N,
 \]
 with
 \[
M\ =\ \sum_k\mu_\infty(k),\
\]
is non-zero if and only if there is a loss of mass, that is $M<1.$

Adding \eqref{ctcb} up over $\,k\,,\,$ we get
 \begin{multline}
-\eta M\ +\ \eta\ -\bar C\tilde C\ +\ (\tilde C)^2\ =\ 0\ \\ =\ -\eta\,M\,+\,\eta\,-\,\bar C\,(\bar C\,-\,(1-M)\,C_{N})\,+\,(\bar C\,-\,(1-M)\,C_{N})^2.
 \end{multline}
If $M\not=1$, we get, dividing this equation by $\,(1-M)\,,\,$ that
 \[
 M\ =\ 1\ +\ \frac{\eta\ -\ C_{N}\,\bar C}{C_{N}^2}.
 \]
Since $M\,\le\,1\,,\,$ we immediately get that if
\[
\eta\ \ge\ C_{N}\,c_H,
\]
then there is no loss of mass, proving the result.
\end{proof}

\bigskip

\subsection{Monotonicity properties of the stationary measure.}

We recall that the equation for the stationary measure
$
\mu\ =\ (\mu_k\,,\,k\,\ge\,1)
$
is
\[
-\,\eta\,\mu_k\ +\ \eta\,\pi_k\ -\ C_k\,\mu_k\,\sum_{i=1}^\infty\,C_i\,\mu_i \ +\ \sum_{l=1}^{k-1}\,C_l\,C_{k-l}\,\mu_l\,\mu_{k-l}\ =\ 0\,.
\]

We write $\{\bar\mu_k(\overline C, C_1,\ldots,C_k): k\geq 1\}$ for the measure constructed
recursively in the statement of Lemma \ref{stat_mes} from a given
$\overline C$ and a given policy $C$.

\begin{lemma}\label{monoton1} For each $\,k\,,\,$ the function that maps $C$ to
\[
C_k\bar\mu_k\ =\ C_k\,\bar\mu_k(\bar C\,,\,C_1\,,\,\ldots\,,\,C_k)
\]
is monotone increasing in $\,C_i\,,\,i\,=\,1\,,\ldots\,,\,k\,$, and monotone decreasing in $\,\bar C\,.$
\end{lemma}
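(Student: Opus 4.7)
The plan is to prove the lemma by strong induction on $k$, carrying the combined hypothesis that $a_k := C_k\bar\mu_k$ is monotone increasing in each of $C_1,\ldots,C_k$ and monotone decreasing in $\bar C$. (With $C_k$ held fixed, the latter is equivalent to $\bar\mu_k$ being decreasing in $\bar C$, which is the form of the hypothesis I actually need to feed back into the recursion at the next step.) The base case $k=1$ is immediate from the closed form $a_1=\eta\pi_1 C_1/(\eta+C_1\bar C)$, whose two partial derivatives have the required signs.

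For the inductive step, the key observation is that multiplying the recursion of Lemma \ref{stat_mes} by $C_k$ and using $C_lC_{k-l}\bar\mu_l\bar\mu_{k-l}=a_l\,a_{k-l}$ yields the workhorse identity
\[
a_k\ =\ \frac{C_k\bigl(\eta\,\pi_k\ +\ \sum_{l=1}^{k-1}\,a_l\,a_{k-l}\bigr)}{\eta\ +\ C_k\,\bar C}.
\]
Each $a_l$ with $l<k$ depends only on $\bar C$ and on $C_1,\ldots,C_l$, so the three monotonicities I need at level $k$ can be read off this identity one at a time, with no interference between the arguments.

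For a perturbation of $C_i$ with $i<k$: the denominator is constant, and by the inductive hypothesis every factor $a_l$, $a_{k-l}$ in the numerator is either constant in $C_i$ or non-decreasing, so $a_k$ is non-decreasing in $C_i$. For a perturbation of $C_k$: the quantity $S=\eta\pi_k+\sum_{l=1}^{k-1}a_la_{k-l}$ is independent of $C_k$, so $a_k = C_kS/(\eta+C_k\bar C)$, a function whose derivative in $C_k$ equals $\eta S/(\eta+C_k\bar C)^2>0$. For a perturbation of $\bar C$: by the inductive hypothesis each $a_l$ is decreasing in $\bar C$, so the numerator decreases while the denominator increases, and $a_k$ is decreasing. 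Dividing by $C_k$ (held fixed) gives the same conclusion for $\bar\mu_k$, which closes the induction.

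I do not anticipate any genuine obstacle. The only mildly delicate point is the bookkeeping: one must carefully track that $a_l$ for $l<k$ is a function of $C_1,\ldots,C_l$ and $\bar C$ alone, so that a perturbation of $C_i$ with $l<i<k$ leaves $a_l$ untouched and the three monotonicity claims compartmentalise cleanly under the same induction hypothesis.
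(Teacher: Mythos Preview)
Your proposal is correct and follows essentially the same approach as the paper: induction on $k$ via the identity $C_k\bar\mu_k = \dfrac{C_k}{\eta+C_k\bar C}\Bigl(\eta\pi_k+\sum_{l=1}^{k-1}(C_l\bar\mu_l)(C_{k-l}\bar\mu_{k-l})\Bigr)$, from which the monotonicities are read off. Your write-up is in fact more careful than the paper's two-line version, separating cleanly the perturbations of $C_i$ with $i<k$, of $C_k$, and of $\bar C$.
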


\begin{proof} The proof is by induction. For $k=1,$ there is nothing to prove. For $k>1$,
\[
C_k\,\bar\mu_k\ =\ \frac{C_k}{\eta\,+\,\bar C\,C_k}\,\left(\eta\,\pi_k\ +\ \sum_{l=1}^{k-1}\,\big(C_l\,\bar\mu_l)\,\big(C_{k-l}\,\bar\mu_{k-l})\,\right)
\]
is monotone, by the induction hypothesis.
\end{proof}

\bigskip\noindent{\bf Steps toward a proof of Proposition \ref{M>N}}

Our proof of Proposition \ref{M>N} is based on
the next series of results, Lemmas \ref{Zi} through \ref{min_dom},
 for which we use the notation
\[
\nu_k\ =\nu_k(C_1\,,\,\ldots\,,\,C_k\,,\,\bar C)=\ C_k\,\bar\mu_k(\bar C, C_1,\ldots,C_k)
\]
and
\[
Z_k\ =\ \frac{C_k}{\eta\ +\ C_k\,\bar C}.
\]
Note that, multiplying $\,\sqrt{\eta}\,$ and $\,C_k\,$ by the same number $\,\gl\,$ does not change the equation, and hence
does not change the stationary measure. Thus, without loss of generality, we can normalize for simplicity to the case $\,\eta\,=\,1\,$.

\begin{lemma}\label{Zi} The sequence $\,\nu_k\,$ satisfies
\[
\nu_1\ =\ Z_1\pi_1,
\]
\[
\nu_k\  =\ Z_k\,\left(\pi_k\ +\ \sum_{l=1}^{k-1}\,\nu_l\,\nu_{k-l}\right),
\]
and
\begin{equation}\label{nuic}
\sum_{k=1}^\infty\,\nu_k(C_1\,,\,\ldots\,,\,C_k\,,\,\bar C)\ =\ \bar C.
\end{equation}
\end{lemma}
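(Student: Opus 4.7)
The plan is essentially a direct substitution into the recursive algorithm from Lemma \ref{stat_mes}, together with the characterization of $\bar C$ given there.

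First, I would note that after the normalization $\eta = 1$, the defining recursion from Lemma \ref{stat_mes} reads
\[
\bar\mu_1 = \frac{\pi_1}{1 + C_1 \bar C}, \qquad \bar\mu_k = \frac{\pi_k + \sum_{l=1}^{k-1} C_l C_{k-l}\, \bar\mu_l\, \bar\mu_{k-l}}{1 + C_k \bar C}.
\]
Multiplying both sides by $C_k$ and grouping the factors as $(C_l \bar\mu_l)(C_{k-l}\bar\mu_{k-l}) = \nu_l\, \nu_{k-l}$, I obtain
\[
\nu_k = C_k \bar\mu_k = \frac{C_k}{1 + C_k \bar C}\Bigl(\pi_k + \sum_{l=1}^{k-1} \nu_l\, \nu_{k-l}\Bigr) = Z_k\Bigl(\pi_k + \sum_{l=1}^{k-1} \nu_l\, \nu_{k-l}\Bigr),
\]
which is the claimed recursion. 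The base case $\nu_1 = Z_1 \pi_1$ is immediate from the same computation with $k=1$ (the convolution sum being empty).

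For the summation identity \eqref{nuic}, I would simply invoke the last sentence of Lemma \ref{stat_mes}: the value $\bar C$ used to build $\{\bar\mu_k(\bar C)\}$ is by definition the unique solution of
\[
\bar C = \sum_{n=1}^\infty C_n\, \bar\mu_n(\bar C) = \sum_{n=1}^\infty \nu_n.
\]
So \eqref{nuic} is not a separate fact to be established but a restatement of the fixed-point condition that characterizes the stationary measure. There is no real obstacle here; the only subtle point is the normalization convention that $\eta = 1$, which is justified because $\bar\mu_k$ is invariant under the simultaneous rescaling $(\sqrt{\eta}, C_k) \mapsto (\lambda \sqrt{\eta}, \lambda C_k)$ (this leaves the stationary equation \eqref{stationary} unchanged). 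Thus the entire lemma is essentially a change of variables that repackages Lemma \ref{stat_mes} in a form tailored to the convolution structure that will be exploited in the proof of Proposition \ref{M>N}.
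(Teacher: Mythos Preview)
Your proof is correct and matches the paper's approach: the lemma is stated there without proof, since it is indeed just a direct substitution of $\nu_k=C_k\bar\mu_k$ and $Z_k=C_k/(\eta+C_k\bar C)$ into the recursion of Lemma~\ref{stat_mes} together with its fixed-point characterization of $\bar C$, under the normalization $\eta=1$ that the paper adopts immediately before the statement.
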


Differentiating \eqref{nuic} with respect to $C_k,$ we get

\begin{lemma}
\[
\frac{\partial \bar C}{\partial C_k}\ =\ \frac{\sum_{i=k}^\infty\,\frac{\partial \nu_i}{\partial C_k}}{1\ -\ \sum_{i=1}^\infty\,\frac{\partial \nu_i}{\partial \bar C}}.
\]
\end{lemma}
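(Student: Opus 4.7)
The plan is to apply implicit differentiation to the normalization identity \eqref{nuic} from Lemma \ref{Zi}. Recall that $\bar C$ is characterized as the unique solution to
\[
\sum_{i=1}^\infty \nu_i(C_1,\ldots,C_i,\bar C) \;=\; \bar C,
\]
so we may regard $\bar C$ as an implicit function of the policy $(C_1, C_2, \ldots)$ and differentiate in $C_k$.

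First I would exploit the triangular structure: since $\nu_i = C_i \bar \mu_i(\bar C, C_1,\ldots,C_i)$ depends only on $C_1,\ldots,C_i$ and on $\bar C$, the partial derivative $\partial \nu_i/\partial C_k$ (with $\bar C$ held fixed) vanishes whenever $i<k$. Applying the chain rule to both sides of \eqref{nuic}, treating $\bar C$ as depending on $C_k$, then yields
\[
\sum_{i=k}^\infty \frac{\partial \nu_i}{\partial C_k} \;+\; \left(\sum_{i=1}^\infty \frac{\partial \nu_i}{\partial \bar C}\right)\frac{\partial \bar C}{\partial C_k} \;=\; \frac{\partial \bar C}{\partial C_k},
\]
and solving for $\partial \bar C/\partial C_k$ gives the stated formula. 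The non-degeneracy of the denominator is immediate from Lemma \ref{monoton1}: since each $\nu_i$ is monotone decreasing in $\bar C$, every term $\partial \nu_i/\partial \bar C$ is non-positive, so $1 - \sum_i \partial \nu_i/\partial \bar C \ge 1 > 0$. This same strict monotonicity, applied to the map $\bar C \mapsto \sum_i \nu_i(C_1,\ldots,C_i,\bar C) - \bar C$, gives the existence of $\partial \bar C/\partial C_k$ via the implicit function theorem.

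The one genuinely delicate step is the termwise differentiation of the infinite series. For this I would argue by induction on $i$ using the recursion $\nu_i = Z_i\bigl(\pi_i + \sum_{l=1}^{i-1} \nu_l \nu_{i-l}\bigr)$, which yields corresponding linear recursions for $\partial \nu_i/\partial C_k$ and $\partial \nu_i/\partial \bar C$. Combined with the uniform bound $Z_i \le c_H/\eta$ and the known finiteness $\sum_i \nu_i = \bar C < \infty$, these recursions provide a summable dominant for the differentiated series, locally uniformly in the policy parameters, which justifies the interchange of differentiation and summation used in the chain-rule step above. I expect this dominated-differentiation estimate to be the main obstacle; once it is in place, the rest of the argument is formal implicit differentiation.
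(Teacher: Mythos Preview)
Your approach is exactly that of the paper: the lemma is obtained by differentiating the identity \eqref{nuic} with respect to $C_k$, using the triangular dependence of $\nu_i$ on $C_1,\ldots,C_i$ to restrict the sum in the numerator to $i\ge k$. The paper records only this one-line implicit differentiation, so your additional remarks on the positivity of the denominator and on justifying termwise differentiation go beyond what is written there, but are in the same spirit and correct.
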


\bigskip
We now let $\bar C(C)$ denote the unique solution of the equation
(\ref{nuic}),
and for any policy $C$ we define
\[
\xi_k(C)\ =\ \nu_k(C_1,\ldots,C_k\,,\,\bar C(C)).
\]

\begin{lemma}\label{mainlem} Let $\,m\,<\,k\,.$ We have
\[
\frac{\partial}{\partial C_m}\,\sum_{i=k}^\infty\,\xi_i\ \ge\ 0
\]
if and only if
\begin{equation}
\left(1\,-\,\sum_{i=1}^{k-1}\,\frac{\partial \nu_i}{\partial \bar C}\,\right)\,\sum_{i=k}^\infty\,C_m^2\frac{\partial \nu_i}{\partial C_m}\ \ge\ \left(\sum_{i=1}^{k-1}\,C_m^2\,\frac{\partial \nu_i}{\partial C_m}\right)\,\sum_{i=k}^\infty\,\left(-\frac{\partial \nu_i}{\partial \bar C}\,\right).
\end{equation}
\end{lemma}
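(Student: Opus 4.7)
The plan is to compute $\partial_{C_m}\sum_{i\ge k}\xi_i$ directly via the chain rule applied to $\xi_i(C)=\nu_i(C_1,\ldots,C_i,\bar C(C))$, and then observe that the stated inequality is exactly the condition for this derivative to be nonnegative.

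First, since $\nu_i$ depends on $C_m$ only when $i\ge m$, the chain rule gives
\[
\frac{\partial \xi_i}{\partial C_m}\ =\ \frac{\partial \nu_i}{\partial C_m}\ +\ \frac{\partial \nu_i}{\partial \bar C}\,\frac{\partial \bar C}{\partial C_m}
\]
for $i\ge m$, with $\partial\nu_i/\partial C_m=0$ for $i<m$. The preceding lemma (obtained by implicit differentiation of $\sum_i\nu_i=\bar C$) supplies
\[
\frac{\partial \bar C}{\partial C_m}\ =\ \frac{\sum_{i\ge m}\partial \nu_i/\partial C_m}{1-\sum_{i\ge 1}\partial \nu_i/\partial \bar C}.
\]

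To simplify notation, introduce the partial sums
\[
A\ =\ \sum_{i\ge k}\frac{\partial \nu_i}{\partial C_m},\quad B\ =\ \sum_{i=1}^{k-1}\frac{\partial \nu_i}{\partial C_m},\quad D\ =\ -\sum_{i=1}^{k-1}\frac{\partial \nu_i}{\partial \bar C},\quad E\ =\ -\sum_{i\ge k}\frac{\partial \nu_i}{\partial \bar C}.
\]
By Lemma \ref{monoton1} and the fact that $Z_k$ is itself strictly decreasing in $\bar C$, each $\nu_i$ is decreasing in $\bar C$; hence $D,E\ge 0$, and in particular $1+D+E>0$. Using that $B=\sum_{i=m}^{k-1}\partial\nu_i/\partial C_m$ (lower terms vanish), summing over $i\ge k$ and collecting yields
\[
\frac{\partial}{\partial C_m}\sum_{i\ge k}\xi_i\ =\ A\ -\ E\,\frac{A+B}{1+D+E}\ =\ \frac{A(1+D)\ -\ BE}{1+D+E}.
\]

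Since $1+D+E>0$, the derivative is nonnegative if and only if $A(1+D)\ge BE$. Multiplying both sides by $C_m^2\ge 0$ and substituting back the sums gives exactly the stated inequality. The main obstacle is a clean bookkeeping of which index ranges each partial derivative is supported on, together with the justification for differentiating the series $\sum_i\nu_i=\bar C$ termwise — for the latter one invokes the flat-tail hypothesis on $C$ to obtain uniform convergence in a neighborhood of $C$, as in the proofs of the preceding subsection, so that the implicit function theorem applies and $\bar C(\cdot)$ is continuously differentiable in each $C_m$.
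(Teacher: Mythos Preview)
Your argument is correct and follows essentially the same route as the paper: apply the chain rule to $\xi_i=\nu_i(C_1,\ldots,C_i,\bar C(C))$, substitute the implicit-differentiation formula for $\partial\bar C/\partial C_m$, and simplify the resulting fraction so that the numerator is exactly $(1+D)A-BE$, i.e.\ the stated inequality. Your explicit observation that $D,E\ge 0$ (via Lemma~\ref{monoton1}) to ensure the denominator $1+D+E$ is positive is a useful clarification that the paper leaves implicit.
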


\begin{proof} We have
\begin{eqnarray*}
\frac{\partial}{\partial C_m}\,\sum_{i=k}^\infty\,\xi_i &=&\ \sum_{i=k}^\infty\, \left(\frac{\partial\nu_i}{\partial C_m}\,+\, \frac{\partial\nu_i}{\partial \bar C}\,\frac{\partial \bar C}{\partial C_m}\right)\ \\
&=&\
\frac{
\left(1\ -\ \sum_{i=1}^\infty\,\frac{\partial \nu_i}{\partial \bar C}\right)\,\sum_{i=k}^\infty\,\frac{\partial\nu_i}{\partial C_m}\,+\, \sum_{i=k}^\infty\frac{\partial\nu_i}{\partial \bar C}\,\left(\sum_{i=m}^{k-1}\,+\,\sum_{i=k}^\infty\right)\,\frac{\partial \nu_i}{\partial C_m}}{1\ -\ \sum_{i=1}^\infty\,\frac{\partial \nu_i}{\partial \bar C}}\\
&=&\ \frac{\left(1\ -\ \sum_{i=1}^{k-1}\,\frac{\partial \nu_i}{\partial \bar C}\right)\,\sum_{i=k}^\infty\,\frac{\partial\nu_i}{\partial C_m}\,+\, \sum_{i=k}^\infty\frac{\partial\nu_i}{\partial \bar C}\,\sum_{i=m}^{k-1}\,\frac{\partial \nu_i}{\partial C_m}}{1\ -\ \sum_{i=1}^\infty\,\frac{\partial \nu_i}{\partial \bar C}},\
\end{eqnarray*}
and the claim follows.
\end{proof}

Suppose now that we have the ``flat-tail'' condition that for some $N$, $C_n\ =\ C_N\,$ for all $\,n\,\ge\,N\,.$ We define
the moment-generating function $m(\,\cdot\,)$ of $\nu$ by
\begin{equation}\label{m(x)}
m(x)\ =\ \sum_{i=1}^\infty\,\nu_i\,x^i.
\end{equation}
By definition, the first $\,N-1\,$ coefficients $\,\nu_i\,$ of the power series expansion of $m(x)$  satisfy
\[
\nu_1\ =\ \pi_1\,Z_1\,
\]
and then
\[
\nu_i\ =\ \left(\pi_i\ +\ \sum_{l=1}^{i-1}\,\nu_l\,\nu_{i-l}\,\right)\,Z_i
\]
for $\,i\,\le\,N-1\,$, and
\[
\nu_i\ =\ Z_N\,\left(\pi_i\ +\ \sum_{l=1}^{i-1}\,\nu_l\,\nu_{i-l}\,\right)
\]
for $\,i\,\ge\,N\,.$
For $\,N\,\ge\,2\,$, let
\[
m^b(x,N)\ =\ \sum_{i=N}^\infty\,\pi_i\,x^i.\
\]
Using Lemma \ref{Zi} and comparing the coefficients in the powers-series expansions, we get

\begin{lemma} If $C_n\ =\ C_N\,$ for all $\,n\,\ge\,N\,$, then
\[
m(x)\ -\ \sum_{i=1}^{N-1}\,\nu_i\,x^i\ =\ Z_N\,\left(m^b(x,N)\ +\ m^2(x)\ -\ \sum_{i=2}^{N-1}\,x^i\,\sum_{l=1}^{i-1}\,\nu_l\,\nu_{i-l}\,\right).
\]
\end{lemma}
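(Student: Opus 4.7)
The plan is to obtain the identity by direct manipulation of the power series for $m(x)$, using the recursion for $\nu_i$ given by Lemma \ref{Zi} in the regime $i\ge N$, where all $Z_i$ collapse to the common value $Z_N$.

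Specifically, I would begin by rewriting the left-hand side as the tail of the power series,
\[
m(x)\ -\ \sum_{i=1}^{N-1}\,\nu_i\,x^i\ =\ \sum_{i=N}^\infty\,\nu_i\,x^i.
\]
For each $i\ge N$ the hypothesis $C_n=C_N$ for $n\ge N$ and Lemma \ref{Zi} give $\nu_i\ =\ Z_N\left(\pi_i\ +\ \sum_{l=1}^{i-1}\,\nu_l\,\nu_{i-l}\right)$, so I can pull out the common factor $Z_N$:
\[
\sum_{i=N}^\infty\,\nu_i\,x^i\ =\ Z_N\,\left(\,\sum_{i=N}^\infty\,\pi_i\,x^i\ +\ \sum_{i=N}^\infty\,x^i\,\sum_{l=1}^{i-1}\,\nu_l\,\nu_{i-l}\,\right).
\]
The first inner sum is exactly $m^b(x,N)$ by its definition.

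For the second inner sum, I would observe that squaring the series $m(x)=\sum_{i\ge 1}\nu_i x^i$ produces, by the Cauchy product,
\[
m^2(x)\ =\ \sum_{i=2}^\infty\,x^i\,\sum_{l=1}^{i-1}\,\nu_l\,\nu_{i-l},
\]
so the tail from $i=N$ onward equals $m^2(x)$ minus the finite head $\sum_{i=2}^{N-1}x^i\sum_{l=1}^{i-1}\nu_l\nu_{i-l}$. Substituting this into the previous display yields the claimed identity. Since the manipulation is a pure rearrangement of formal power series and the coefficients $\nu_i$ are nonnegative and summable (by the preceding stability and bounded-mass results), all interchanges of summation are legitimate on the disk of convergence, which contains at least $|x|\le 1$ because $\sum\nu_i=\bar C<\infty$.

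I do not anticipate any genuine obstacle; the lemma is a bookkeeping identity. The only mildly delicate point is that the recursion used to simplify the left-hand tail applies only for $i\ge N$, so one must be careful to subtract off precisely the first $N-1$ terms of $m(x)$ and the first $N-2$ diagonal convolution sums of $m^2(x)$. The final expression is then read off directly.
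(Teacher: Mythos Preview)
Your proposal is correct and is essentially the same argument the paper gives: both use the recursion from Lemma \ref{Zi} in the regime $i\ge N$ (where $Z_i=Z_N$) and then identify the resulting sums with $m^b(x,N)$ and the tail of the Cauchy product $m^2(x)$. The paper states this as a one-line coefficient comparison, while you have written out the details explicitly.
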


\bigskip
Thus,
\begin{eqnarray*}
0\ &=&\ Z_N\,\left(m^b(x,N)\ +\ m^2(x)\ -\ \sum_{i=2}^{N-1}\,x^i\,\sum_{l=1}^{i-1}\,\nu_l\,\nu_{i-l}\,\right)\ -\ m(x)\ +\  \sum_{i=1}^{N-1}\,\nu_i\,x^i\ \\
&=&\,
Z_N\,\left(m^b(x,N)\ +\ m^2(x)\ -\ \sum_{i=2}^{N-1}\,x^i\,\sum_{l=1}^{i-1}\,\nu_l\,\nu_{i-l}\,\right)\ \\
&&\qquad\qquad\qquad-\ m(x)\ +\  \pi_1\,Z_1\,x\ +\ \sum_{i=2}^{N-1}\,Z_i\,\left(\pi_i\ +\ \sum_{l=1}^{i-1}\,\nu_l\,\nu_{i-l}\,\right)\,x^i\ \\
&=& \,
Z_N\,m^2(x)\ -\ m(x)\ +\ Z_N\,\Bigg(m^b(x,N)\ +\ \sum_{i=2}^{N-1}\,\pi_i\,x^i\
-\  \sum_{i=2}^{N-1}\,\left(\pi_i\ +\ \sum_{l=1}^{i-1}\,\nu_l\,\nu_{i-l}\,\right)\,\,x^i\Bigg)\\
&& \quad\quad +\ \pi_1\,Z_1\,x\ +\ \sum_{i=2}^{N-1}\,Z_i\,\left(\pi_i\ +\ \sum_{l=1}^{i-1}\,\nu_l\,\nu_{i-l}\,\right)\,x^i\ \\
&& \quad \\
&=& \ Z_N\,m^2(x)\ -\ m(x)\ +\ Z_N\,m^b(x,2)\ +\ \pi_1\,Z_1\,x\,\\
&& \qquad\qquad\qquad+\,\sum_{i=2}^{N-1}\,x^i\,(Z_i\,-\,Z_N)\,\left(\pi_i\,+\,\sum_{l=1}^{i-1}\,\nu_l\,\nu_{i-l}\,\right).
\end{eqnarray*}
Solving this quadratic equation  for $m(x)$ and picking the branch that satisfies $m(0)=0,$ we arrive at
\begin{lemma}\label{lem(x)} The moment generating function $m(\,\cdot\,)$ of $\nu$ is given by
\begin{equation}\label{eqmx}
m(x)\ =\ \frac{1\ -\ \sqrt{1-4Z_NM(x)}}{2\,Z_N},
\end{equation}
where
\[
M(x)\ =\ Z_N\,m^b(x,2)\ +\ \pi_1\,Z_1\,x\,+\,\sum_{i=2}^{N-1}\,x^i\,(Z_i\,-\,Z_N)\,\left(\pi_i\,+\,\sum_{l=1}^{i-1}\,\nu_l\,\nu_{i-l}\,\right).\,
\]
\end{lemma}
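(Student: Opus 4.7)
\textbf{Proof plan for Lemma \ref{lem(x)}.} The statement is essentially already set up by the string of identities immediately preceding its statement, so my plan is to present the derivation as a clean three-step argument. First, I would start from the power-series recursion for $\nu$ in the form given by Lemma \ref{Zi}, and split the sum defining $m(x)$ into the low-index block $i \le N-1$ (where the recursion uses $Z_i$) and the tail block $i \ge N$ (where $Z_i$ is the constant $Z_N$, by the flat-tail hypothesis $C_n = C_N$ for $n \ge N$). In the tail block, the common factor $Z_N$ factors out of the $\pi_i$ terms and of the convolution $\sum_{l=1}^{i-1}\nu_l\nu_{i-l}$; summing the latter over $i \ge N$ and adjusting with low-index correction terms expresses the tail contribution in terms of $m(x)^2$ and the truncated generating function $m^b(x,N)$ of $\pi$.

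Second, I would collect all terms and reorganize. Writing $Z_i = Z_N + (Z_i - Z_N)$ for $2 \le i \le N-1$ lets me consolidate every appearance of $Z_N \sum_{l=1}^{i-1}\nu_l\nu_{i-l} x^i$ into a single $Z_N m^2(x)$ term (the cross terms with low indices $l, i-l \le N-1$ combine with the tail sums to complete the square of $m(x)$). What remains after this consolidation is exactly $-m(x) + Z_N m^2(x) + Z_N m^b(x,2) + \pi_1 Z_1 x + \sum_{i=2}^{N-1} x^i(Z_i - Z_N)(\pi_i + \sum_{l=1}^{i-1}\nu_l\nu_{i-l}) = 0$, which is to say the quadratic $Z_N m(x)^2 - m(x) + M(x) = 0$ with $M(x)$ as defined in the statement.

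Third, I would solve this quadratic in $m(x)$ via the quadratic formula, obtaining the two branches
\[
m(x)\ =\ \frac{1 \pm \sqrt{1 - 4 Z_N M(x)}}{2 Z_N},
\]
and then select the branch by a boundary condition at $x = 0$. Since $M(0) = 0$ (every term in $M$ carries at least a factor of $x$), the $+$ branch gives $m(0) = 1/Z_N \ne 0$, whereas the $-$ branch gives $m(0) = 0$, which matches the definition $m(x) = \sum_{i \ge 1}\nu_i x^i$. This selects the minus sign and yields formula \eqref{eqmx}.

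The only nontrivial step is the algebraic bookkeeping in the second stage — making sure that the $Z_N \sum_{l} \nu_l \nu_{i-l}$ contributions for $2 \le i \le N-1$ and the tail convolution $\sum_{i \ge N} Z_N \sum_{l=1}^{i-1}\nu_l \nu_{i-l} x^i$ together reassemble precisely into $Z_N m(x)^2$, with residuals that are absorbed into $M(x)$. Since the required reorganization has already been carried out explicitly in the displayed calculation just above the lemma, there is no further obstacle; the lemma is a direct restatement of that computation plus the branch selection argument.
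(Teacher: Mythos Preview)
Your proposal is correct and follows essentially the same route as the paper: the paper derives the quadratic $Z_N m(x)^2 - m(x) + M(x) = 0$ via exactly the algebraic reorganization you describe (splitting off the tail $i\ge N$, writing $Z_i = Z_N + (Z_i - Z_N)$ on the low block, and reassembling the convolution into $Z_N m(x)^2$), and then selects the branch by the condition $m(0)=0$. Your branch-selection argument, noting $M(0)=0$ so that only the minus sign is consistent, is precisely the paper's.
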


\bigskip
The derivatives of the functions $\,Z_k\,$  satisfy interesting algebraic identities, summarized in
\begin{lemma}\label{derZ} We have $\,Z_k\ <\ 1$ for all $k$. Moreover,
\[
C_k^2\,\frac{\partial Z_k}{\partial C_k}\ =\ -\,\frac{\partial Z_k}{\partial \bar C}\ =\ Z_k^2.
\]
\end{lemma}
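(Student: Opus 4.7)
The lemma is essentially an exercise in the quotient rule; my plan is to carry out the differentiation directly from the definition $Z_k = C_k/(\eta + C_k\bar C)$, using the normalization $\eta=1$ that was explicitly invoked just above the statement, so that $Z_k = C_k/(1+C_k\bar C)$.

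For the derivative identities, I would treat $\bar C$ and $C_k$ as independent scalar variables (which is legitimate here because the lemma compares $\bar\mu_k$ as a function of $(C_1,\ldots,C_k,\bar C)$ before imposing the self-consistency equation $\bar C = \sum_j C_j\bar\mu_j$). The quotient rule then gives
\[
\frac{\partial Z_k}{\partial C_k}\ =\ \frac{(1+C_k\bar C)\ -\ C_k\,\bar C}{(1+C_k\bar C)^2}\ =\ \frac{1}{(1+C_k\bar C)^2},
\]
so that $C_k^2\,\partial Z_k/\partial C_k = C_k^2/(1+C_k\bar C)^2 = Z_k^2$. An analogous one-line computation yields $\partial Z_k/\partial \bar C = -C_k^2/(1+C_k\bar C)^2 = -Z_k^2$, which establishes both equalities in the displayed identity simultaneously.

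For the bound $Z_k<1$, the plan is to read it off the denominator: since $\eta>0$ (equivalently, with the normalization, the denominator is $1+C_k\bar C>C_k\bar C\ge 0$) and $C_k\ge 0$, we have $Z_k\bar C = C_k\bar C/(1+C_k\bar C)<1$, and therefore $Z_k<1/\bar C$. To convert this into $Z_k<1$ one uses the bound on $\bar C$ available from the stationary-measure construction in Lemma~\ref{stat_mes}: the unique solution of $\bar C=\sum_n C_n\bar\mu_n(\bar C)$ lies in the interval on which the fixed-point equation is consistent, which under the $\eta=1$ normalization forces $\bar C\ge 1$ (otherwise the right-hand side would exceed the left, as can be seen from comparing $\bar\mu_1(\bar C)=\pi_1/(1+C_1\bar C)$ and the defining inequality used in the proof of Lemma~\ref{stat_mes}). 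Combining $Z_k<1/\bar C$ with $\bar C\ge 1$ yields $Z_k<1$.

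The only subtle step is the third paragraph's bound; the derivative identities are mechanical. The main (minor) obstacle is justifying $Z_k<1$ cleanly — the algebraic inequality $C_k/(1+C_k\bar C)<1$ is equivalent to $C_k(1-\bar C)<1$, so it is trivial when $\bar C\ge 1$ but would require an appeal to the normalization or to an explicit bound on $C_k$ otherwise. I would therefore invoke the scaling freedom noted before the lemma to arrange $\eta=1$ and then use the stationary consistency equation to conclude $\bar C\ge 1$, making $Z_k<1$ automatic; the remainder of the proof is a one-line quotient-rule calculation.
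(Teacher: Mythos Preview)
Your computation of the derivative identities is correct and is precisely the direct quotient-rule calculation the paper has in mind; the paper states this lemma without proof, so there is nothing further to compare on that front.

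The gap is in your argument for $Z_k<1$. Your claim that the $\eta=1$ normalization forces $\bar C\ge 1$ is not supported by the proof of Lemma~\ref{stat_mes}: that argument only shows the fixed point satisfies $\bar C\le c_H$, and gives no lower bound of the form $\bar C\ge 1$. In fact $\bar C$ can be well below $1$ under the normalization. Take $\eta=1$, $\pi_1=0.01$, $C_1=100$, and $C_k=0$ for $k\ge 2$ (a trigger policy with $c_H=100$, $c_L=0$, trigger at $N=2$). Then $\mu_1=\pi_1/(1+C_1\bar C)$ and $\bar C=C_1\mu_1$ give $100\bar C^2+\bar C-1=0$, so $\bar C=(-1+\sqrt{401})/200\approx 0.095$, and
\[
Z_1\ =\ \frac{100}{1+100\cdot 0.095}\ \approx\ 9.5\ >\ 1.
\]
So not only does your route through $\bar C\ge 1$ fail, the inequality $Z_k<1$ itself appears not to hold in the generality stated. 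The paper offers no argument for this bound, and it is never invoked in the subsequent lemmas; only the derivative identities $C_k^2\,\partial Z_k/\partial C_k=-\,\partial Z_k/\partial\bar C=Z_k^2$ are actually used. Those you have established correctly.
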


\bigskip
These identities allow us to calculate derivatives is an elegant form.
Let
\[
\gamma(x)\ =\ (1\ -\ 4Z_N\,\zeta(x))^{-1/2} ,
\]
with
\begin{equation}\label{zeta}
\zeta(x)\ =\ Z_N\,m^b(x,2)\ +\ \pi_1\,Z_1\,x\,+\,\sum_{i=2}^{N-1}\,x^i\,(Z_i\,-\,Z_N)\,\left(\pi_i\,+\,\sum_{l=1}^{i-1}\,\nu_l\,\nu_{i-l}\,\right).\,
\end{equation}
Differentiating identity \eqref{eqmx}  with respect to $\,\bar C$ and $\,C_{N-1}\,$ and using Lemma \ref{derZ}, we arrive at

\begin{lemma}
We have
\begin{multline} \label{derbarc}
-\frac{\partial m(x)}{\partial \bar C}\
=\ -\frac12\ +\ \gamma(x)\,
\Bigg(\frac12\,+\,\sum_{i=1}^{N-1}\,x^i\,Z_i\,(Z_i\,-\,Z_N)\,\left(\pi_i\,+\,\sum_{l=1}^{i-1}\,\nu_l\,\nu_{i-l}\,\right)\, \\+\
\sum_{i=2}^{N-1}\,x^i\,(Z_i\,-\,Z_N)\,\frac{-\partial}{\partial \bar C}\sum_{l=1}^{i-1}\,\nu_l\,\nu_{i-l}\,\Bigg)
\end{multline}
and
\begin{equation}
\frac{\partial m(x)}{\partial C_{N-1}}\ =\
\gamma(x)\,x^{N-1}\,Z_{N-1}^2\,\Bigg(\pi_{N-1}\,+\,\sum_{l=1}^{N-2}\,\nu_l\,\nu_{N-1-l}\Bigg).
\end{equation}
\end{lemma}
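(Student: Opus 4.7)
The plan is to apply implicit differentiation to the quadratic relation
\[
F \;\equiv\; Z_N\,m(x)^2\;-\;m(x)\;+\;\zeta(x)\;=\;0
\]
from the previous lemma, viewing $Z_N$ and $\zeta$ as functions of $\bar C$ and of $(C_1,\ldots,C_{N-1})$. Along the chosen branch, $m=(1-\gamma(x)^{-1})/(2Z_N)$, so $\partial F/\partial m = 2Z_N m - 1 = -\gamma(x)^{-1}$, and the chain rule yields the master formula
\[
\frac{\partial m}{\partial \bullet}\;=\;\gamma(x)\,\left(\frac{\partial Z_N}{\partial \bullet}\,m^2\;+\;\frac{\partial \zeta}{\partial \bullet}\right),
\]
which I would specialize first to $\bullet=\bar C$ and then to $\bullet=C_{N-1}$.

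For the $\bar C$-derivative, I would substitute $\partial Z_k/\partial \bar C=-Z_k^2$ from Lemma \ref{derZ} everywhere such a derivative appears (both in the $m^2$ coefficient and inside $\zeta$). The crucial simplification is to eliminate the $Z_N^2 m^2$ term by multiplying the defining quadratic through by $Z_N$ and using $Z_N m=(1-\gamma(x)^{-1})/2$, which gives $Z_N^2m^2=(1-\gamma(x)^{-1})/2-Z_N\zeta(x)$. This produces the isolated $-1/2$ outside the bracket in the target formula, the $+1/2$ inside, and leaves $-Z_N\zeta-\partial\zeta/\partial \bar C$ to be manipulated. Expanding $\partial\zeta/\partial \bar C$ via $\partial Z_i/\partial \bar C = -Z_i^2$ and using the telescoping identity
\[
(Z_i^2-Z_N^2)-Z_N(Z_i-Z_N)\;=\;Z_i(Z_i-Z_N)
\]
converts the coefficient of $\bigl(\pi_i+\sum_{l=1}^{i-1}\nu_l\nu_{i-l}\bigr)$ into $x^i Z_i(Z_i-Z_N)$ for each $i\ge 2$, while the boundary term $\pi_1 Z_1 x$ contributes the $i=1$ summand of the same form. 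The remaining pieces $(Z_i-Z_N)\cdot\partial(\nu_l\nu_{i-l})/\partial\bar C$ pass through unchanged, yielding precisely \eqref{derbarc}.

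The $C_{N-1}$-derivative is much shorter: $Z_N$ does not depend on $C_{N-1}$, and an easy induction on the recursion of Lemma \ref{Zi} shows that each $\nu_l$ depends only on $C_1,\ldots,C_l$ and $\bar C$, so no $\nu_l$ with $l\le N-2$ is affected by varying $C_{N-1}$. Hence only the $i=N-1$ summand in $\zeta$ contributes, through its coefficient $Z_{N-1}$, and another application of Lemma \ref{derZ} completes the calculation. The main obstacle is the algebraic bookkeeping in the first part: one must keep the $i=1$ boundary term (where $\sum_{l=1}^{0}$ is empty) separate from the $i\ge 2$ sum, and must track the overall sign of $-\partial m/\partial \bar C$ so that it matches the signs on the right-hand side of \eqref{derbarc}. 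Everything beyond that is routine collection of terms.
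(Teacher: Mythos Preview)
Your proposal is correct and follows essentially the same route as the paper, which simply says ``Differentiating identity \eqref{eqmx} with respect to $\bar C$ and $C_{N-1}$ and using Lemma \ref{derZ}''; you have merely made the implicit-differentiation and algebraic bookkeeping explicit, and your substitution $Z_N^2m^2=(1-\gamma(x)^{-1})/2-Z_N\zeta(x)$ together with the identity $(Z_i^2-Z_N^2)-Z_N(Z_i-Z_N)=Z_i(Z_i-Z_N)$ is exactly the simplification needed. One caution on the second formula: carrying out your plan with Lemma \ref{derZ} gives $\partial Z_{N-1}/\partial C_{N-1}=Z_{N-1}^2/C_{N-1}^2$, so you will obtain an extra factor $C_{N-1}^{-2}$ relative to the displayed target; this constant factor is immaterial for the subsequent sign and FOSD arguments (which use only ratios of these derivatives), so treat the discrepancy as a harmless normalization in the statement rather than an error in your method.
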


Let now
\begin{equation}
\gamma(x)\ =\ \sum_{j=0}^\infty\,\gamma_j\,x^j,
\end{equation}
and let $\,\gamma_j\ =\ 0$ for $\,j\,<\,0\,.$

\begin{lemma} We have
$
\gamma_j\ \ge\ 0
$
for all $j\,.$
\end{lemma}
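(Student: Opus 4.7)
The strategy is to view $\gamma(x) = (1-4Z_N\zeta(x))^{-1/2}$ as the composition of the generalized binomial series with $u = 4Z_N\zeta(x)$, and reduce the claim to showing that $\zeta(x)$ itself has nonnegative Taylor coefficients in $x$. The key input is the classical identity
\[
(1-u)^{-1/2}\ =\ \sum_{k=0}^\infty\,\binom{2k}{k}\,\frac{u^k}{4^k},
\]
whose coefficients are all nonnegative. Substituting $u = 4Z_N\zeta(x)$ yields
\[
\gamma(x)\ =\ \sum_{k=0}^\infty\,\binom{2k}{k}\,Z_N^{k}\,\zeta(x)^k.
\]
Since $Z_N\ge 0$, each prefactor $\binom{2k}{k}Z_N^k$ is nonnegative. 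Because the set of formal power series with nonnegative coefficients is closed under products and sums, it therefore suffices to show that every Taylor coefficient of $\zeta(x)$ is nonnegative; then each $\zeta(x)^k$ inherits this property, and so does $\gamma(x)$.

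Inspecting the definition \eqref{zeta}, I decompose $\zeta(x)$ into three pieces. The first, $Z_N\,m^b(x,2) = Z_N\sum_{i\ge 2}\pi_i x^i$, is manifestly nonnegative coefficient-by-coefficient. The second, $\pi_1 Z_1 x$, is a single nonnegative monomial. For the third piece, $\sum_{i=2}^{N-1}x^i(Z_i - Z_N)(\pi_i + \sum_{l=1}^{i-1}\nu_l\nu_{i-l})$, the factors $\pi_i$ and the convolution $\sum_{l=1}^{i-1}\nu_l\nu_{i-l}$ are nonnegative (since $\nu_l = C_l\bar\mu_l$ with both $C_l\ge c_L\ge 0$ and $\bar\mu_l\ge 0$ by the recursive construction in Lemma \ref{stat_mes}). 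Thus the coefficient of $x^i$ in this piece has the sign of $Z_i - Z_N$, and I need only show $Z_i\ge Z_N$ for $2\le i\le N-1$.

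This is where the flat-tail structure does the work. By Lemma \ref{derZ}, $Z_k = C_k/(\eta + C_k\bar C)$ satisfies $\partial Z_k/\partial C_k = \eta/(\eta+C_k\bar C)^2 > 0$, so $Z_k$ is strictly increasing in $C_k$. In the trigger-policy context relevant to Proposition \ref{M>N}, $C_i = c_H\ge c_L = C_N$ for $i<N$, hence $Z_i\ge Z_N$ as required; more generally, this inequality holds whenever $C_i\ge C_N$ for $i<N$, which is the natural setting for the lemma. If one prefers a completely policy-free argument, one can alternatively induct on $j$ in the recursion obtained by differentiating $(1-4Z_N\zeta(x))\gamma(x)^2 = 1$ and reading off the coefficient of $x^j$, which expresses $\gamma_j$ as a positive linear combination of earlier $\gamma_i$ with $i<j$ and of coefficients of $\zeta$.

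The only real obstacle is the sign of $Z_i - Z_N$; once this is in hand, the rest is bookkeeping using closure of nonnegative-coefficient series under product and substitution. No convergence issues arise because everything is done at the level of formal power series, and the degenerate case $Z_N = 0$ is trivial (then $\gamma(x)\equiv 1$ and $\gamma_0 = 1$, $\gamma_j = 0$ for $j\ge 1$).
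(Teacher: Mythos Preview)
Your proof is correct and follows essentially the same approach as the paper: both arguments observe that $(1-u)^{-1/2}$ has nonnegative Taylor coefficients (you via the central binomial identity, the paper via the explicit product formula---these are equivalent), then conclude by substituting $u=4Z_N\zeta(x)$ and using that $\zeta$ has nonnegative coefficients. The paper simply asserts the nonnegativity of $\zeta$'s coefficients by pointing to \eqref{zeta}, whereas you are more explicit in isolating the $Z_i-Z_N$ factor and justifying it from the flat-tail hypothesis $C_i\ge C_N$; this extra care is appropriate, since that inequality is indeed the operative assumption in this stretch of the argument (cf.\ the paper's later remark before Lemma~\ref{dernu} that $R_i\ge 0$ as soon as $C_i\ge C_N$).
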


\begin{proof} By \eqref{zeta}, the function $\zeta(\,\cdot\,)$ has nonnegative Taylor coefficients.
Thus, it suffices to show that the function that maps $x$ to
$
(1-x)^{-1/2}
$
has nonnegative Taylor coefficients. We have
\[
(1-x)^{-1/2}\ =\ 1\ +\ \sum_{k=1}^\infty\,x^k\,\beta_k,
\]
with
\begin{eqnarray*}
\beta_k\ &=&\ (-1)^k\,\frac{(-0.5) (-0.5-1)(-0.5-2)\cdots (-0.5-k+1)}{k!}\ \\
&& \\
&=&\ \frac{(0.5) (0.5+1)(0.5+2)\cdots (0.5+k-1)}{k!}\ >\ 0.
\end{eqnarray*}
Therefore,
\[
\gamma(x)\ =\ 1\ +\ \sum_{k=1}^\infty\,\beta_k\,(4Z_N\zeta(x))^k
\]
also has nonnegative Taylor coefficients.
\end{proof}

Let
\[
Q_{N-1}\ =\ \sum_{l=1}^{N-2}\,\nu_l\,\nu_{N-1-l}\,+\,\pi_{N-1}.
\]
Define also
\[
R_0\ =\ \frac12\ ,\ R_1\ =\ Z_1\,(Z_1-Z_N)\,\pi_1,
\]
and
\[
R_i\ =\ (Z_i\,-\,Z_N)\,\left(\nu_i\, +\ \frac{-\partial}{\partial \bar C}\sum_{l=1}^{i-1}\,\nu_l\,\nu_{i-l}\,\right).
\]
Recall that $Z_i\ >\ Z_N\,$ if and only if $C_i\ >\ C_N$ and therefore, $R_i\,\ge\,0\,$ for all $i$ as soon as $C_i\,\ge\,C_N$ for all $i\,\le\,N-1\,.$

\begin{lemma} \label{dernu} We have
\[
\frac{\partial \nu_j}{\partial C_{N-1}}\ =\ Z_{N-1}^2\,Q_{N-1}\,\gamma_{j-N+1}
\]
and
\[
-\frac{\partial \nu_j}{\partial \bar C}\ =\ \sum_{i=0}^{N-1}\,R_i\,\gamma_{j-i}\ =\
Z_{N-1}^{-2}\,Q_{N-1}^{-1}\sum_{i=0}^{N-1}\,R_i\,\frac{\partial \nu_{j-i+N-1}}{\partial C_{N-1}}.
\]
\end{lemma}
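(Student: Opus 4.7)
The plan is to prove Lemma~\ref{dernu} by directly reading off coefficients of powers of $x$ in the two identities for $\partial m(x)/\partial C_{N-1}$ and $-\partial m(x)/\partial \bar C$ supplied by the preceding lemma, after an algebraic repackaging of the right-hand side of the second identity into a form involving the $R_i$.

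First I would recall that $m(x)=\sum_{j\ge 1}\nu_j x^j$ and $\gamma(x)=\sum_{j\ge 0}\gamma_j x^j$ (with the convention $\gamma_j=0$ for $j<0$). The first identity of the lemma is then immediate: from
\[
\frac{\partial m(x)}{\partial C_{N-1}}\ =\ Z_{N-1}^2\,Q_{N-1}\,x^{N-1}\,\gamma(x)\ =\ Z_{N-1}^2\,Q_{N-1}\sum_{j\ge 0}\gamma_{j}\,x^{j+N-1},
\]
equating coefficients of $x^j$ on both sides yields $\partial\nu_j/\partial C_{N-1}=Z_{N-1}^2\,Q_{N-1}\,\gamma_{j-N+1}$.

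For the second identity the main bookkeeping step is to collapse the two sums appearing inside the formula for $-\partial m(x)/\partial\bar C$ into a single series $\sum_{i=0}^{N-1}R_i\,x^i$. To do this I would use the recursion $\nu_i=Z_i\,Q_i$, valid for $1\le i\le N-1$, to rewrite the term $x^i Z_i(Z_i-Z_N)Q_i$ as $x^i(Z_i-Z_N)\nu_i$; then the $i=1$ contribution matches $R_1 x$ (since the inner sum $\sum_{l=1}^{0}$ is empty, so $Q_1=\pi_1$ and $\nu_1=Z_1\pi_1$), while for $2\le i\le N-1$ the two contributions combine into
\[
x^i(Z_i-Z_N)\Bigl(\nu_i+\frac{-\partial}{\partial\bar C}\sum_{l=1}^{i-1}\nu_l\nu_{i-l}\Bigr)\ =\ R_i\,x^i.
\]
Since $R_0=1/2$ and $\gamma(0)=1$, the formula of the preceding lemma becomes
\[
-\frac{\partial m(x)}{\partial \bar C}\ =\ -R_0\ +\ \gamma(x)\sum_{i=0}^{N-1}R_i\,x^i,
\]
and extracting the coefficient of $x^j$ for $j\ge 1$ (the $j=0$ case is vacuous, as $\nu_0=0$) gives
\[
-\frac{\partial \nu_j}{\partial\bar C}\ =\ \sum_{i=0}^{N-1}R_i\,\gamma_{j-i}.
\]

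The second equality in the second claim is then a one-line substitution: from the first identity we have $\gamma_{j-i}=Z_{N-1}^{-2}Q_{N-1}^{-1}\,\partial\nu_{j-i+N-1}/\partial C_{N-1}$, and plugging this into the sum yields the stated expression. The only potential obstacle is the algebraic matching described above; once that repackaging is in hand, everything reduces to reading off Taylor coefficients, so the proof is short and essentially mechanical.
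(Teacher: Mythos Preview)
Your proposal is correct and follows essentially the same approach as the paper: both arguments read off Taylor coefficients from the two formulas for $\partial m(x)/\partial C_{N-1}$ and $-\partial m(x)/\partial\bar C$ provided by the preceding lemma. The paper's proof is terser, simply asserting that the identity for $-\partial m(x)/\partial\bar C$ yields the power-series form $\sum_{j\ge 1}x^j\sum_{i=0}^{N-1}R_i\gamma_{j-i}$; your explicit repackaging step (using $\nu_i=Z_iQ_i$ to combine the two inner sums into $\sum_{i=0}^{N-1}R_i x^i$) is exactly the computation the paper leaves implicit.
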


\begin{proof} Identity \eqref{derbarc} implies that
\[
\frac{-\partial m(x)}{\partial \bar C}\ =\ \sum_{j=1}^\infty\,x^j\,\sum_{i=0}^{N-1}\,R_i\,\gamma_{j-i}.
\]
On the other hand, by \eqref{m(x)},
\[
\frac{-\partial m(x)}{\partial \bar C}\ =\ \sum_{j=1}^\infty\,x^j\,\frac{-\partial\nu_j}{\partial \bar C}.
\]
Comparing Taylor coefficients in the above identities, we get the required result. The case of
 the derivative $\partial/\partial C_{N-1}$ is analogous.
\end{proof}

\begin{lemma} We have
\[
(R_0\,+\,\cdots\,+\,R_{N-1})\,Z_{N-1}^{-2}\,Q_{N-1}^{-1}\,\sum_{j=k}^\infty\,\frac{\partial \nu_j}{\partial C_{N-1}}\ \ge\
\sum_{j=k}^\infty\,\frac{-\partial \nu_j}{\partial \bar C}
\]
and
\[
R_0\,\gamma_0\ +\ \sum_{i=1}^{k-1}\,\frac{-\partial \nu_j}{\partial \bar C}\ \ge\ (R_0\,+\,\cdots\,+\,R_{N-1})\,Z_{N-1}^{-2}\,Q_{N-1}^{-1}\,\sum_{j=1}^{k-1}\,\frac{\partial \nu_j}{\partial C_{N-1}}.
\]
\end{lemma}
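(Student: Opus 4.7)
The plan is to reduce both inequalities to pointwise comparisons of shifted tails of the single nonnegative sequence $\{\gamma_m\}$, using the explicit formulas for the two derivatives supplied by Lemma \ref{dernu}. Writing $S \;=\; R_0 + \cdots + R_{N-1}$ for brevity, substituting $\partial \nu_j/\partial C_{N-1} = Z_{N-1}^2 Q_{N-1}\,\gamma_{j-N+1}$ into the first claimed inequality and $-\partial \nu_j/\partial \bar C = \sum_{i=0}^{N-1} R_i\,\gamma_{j-i}$ into its right-hand side, we see the first inequality is equivalent to
\[
S\,\sum_{j=k}^\infty \gamma_{j-N+1}\ \ge\ \sum_{i=0}^{N-1} R_i\,\sum_{j=k}^\infty \gamma_{j-i}.
\]
After swapping the order of summation on the right and changing variables $m=j-i$ on the right and $m=j-N+1$ on the left (using $\gamma_m = 0$ for $m<0$), both sides decompose as a sum over $i$, and the desired bound reduces to the per-$i$ estimate $\sum_{m\ge k-N+1}\gamma_m \ge \sum_{m\ge k-i}\gamma_m$, which is immediate from $i\le N-1$ and $\gamma_m\ge 0$. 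Multiplying by $R_i\ge 0$ and summing completes the first half.

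For the second inequality I would proceed analogously. The key observation is that the extra term $R_0\gamma_0$ on the left-hand side is exactly the $j=0,\,i=0$ contribution of the expansion $-\partial\nu_j/\partial\bar C = \sum_{i=0}^{N-1} R_i\,\gamma_{j-i}$ (all other $i\ge 1$ terms for $j=0$ vanish because $\gamma_{-i}=0$). Hence
\[
R_0\,\gamma_0 \;+\; \sum_{j=1}^{k-1} \Big({-\partial \nu_j}/{\partial \bar C}\Big)
\;=\; \sum_{j=0}^{k-1}\sum_{i=0}^{N-1} R_i\,\gamma_{j-i}
\;=\; \sum_{i=0}^{N-1} R_i\,\sum_{m=0}^{k-1-i}\gamma_m,
\]
while the right-hand side equals $S\sum_{m=0}^{k-N}\gamma_m$ (empty, and hence zero, when $k\le N-1$). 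For each $i\in\{0,\dots,N-1\}$ we have $k-1-i\ge k-N$, so $\sum_{m=0}^{k-1-i}\gamma_m\ge\sum_{m=0}^{k-N}\gamma_m$; multiplying by $R_i\ge 0$ and summing in $i$ yields the claim.

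Both halves thus reduce to the elementary fact that a partial sum of nonnegative numbers extended further to the right is larger, once the problem is rewritten in terms of the Taylor coefficients $\gamma_m$. The only nontrivial ingredients are the formulas from Lemma \ref{dernu} and the nonnegativity of the coefficients $\gamma_m$ (established in the lemma preceding Lemma \ref{dernu}) and of $R_0,\ldots,R_{N-1}$, the latter being the sign condition noted right after the definition of the $R_i$'s (valid whenever $C_i\ge C_N$ for $i\le N-1$, the situation relevant to trigger-policy comparisons).

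The main obstacle is simply bookkeeping: keeping track of the upper and lower limits of the four nested summations after the changes of variable, and verifying that the endpoints behave correctly in the corner cases $k<N-1$ (for the first inequality) and $k\le N-1$ (for the second). Once the convention $\gamma_m=0$ for $m<0$ is imposed uniformly, the per-$i$ comparison is transparent and no further analytic input is needed.
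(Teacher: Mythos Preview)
Your proposal is correct and follows essentially the same route as the paper's own proof: substitute the formulas from Lemma~\ref{dernu}, swap the order of summation, and reduce each inequality to the per-$i$ comparison of shifted partial sums of the nonnegative sequence $\{\gamma_m\}$. The only difference is cosmetic---you phrase the argument as a single per-$i$ estimate multiplied by $R_i\ge 0$ and summed, whereas the paper writes out the chain of equalities and inequalities directly---so there is nothing substantive to add.
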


\begin{proof} By Lemma \ref{dernu},
\begin{eqnarray*}
\sum_{j=k}^\infty\,\frac{-\partial \nu_j}{\partial \bar C}\ & =&
\ \sum_{j=k}^\infty\, \sum_{i=0}^{N-1}\,R_i\,\gamma_{j-i}\
\\
&=&
\ \sum_{i=0}^{N-1}\,R_i\,\sum_{j=k-i}^\infty\,\gamma_j\ \\
& \le &
\ \sum_{i=0}^{N-1}\,R_i\,\sum_{j=k-N+1}^\infty\,\gamma_j\  \\
&= &\ (R_0\,+\,\cdots\,+\,R_{N-1})\,Z_{N-1}^{-2}\,Q_{N-1}^{-1}\,\sum_{j=k}^\infty\,\frac{\partial \nu_j}{\partial C_{N-1}}
\end{eqnarray*}
and
\begin{eqnarray*}
R_0\,\gamma_0\ +\ \sum_{j=1}^{k-1}\,\frac{-\partial \nu_j}{\partial \bar C}\
&=& \
R_0\gamma_0\ +\ \sum_{j=1}^{k-1}\, \sum_{i=0}^{N-1}\,R_i\,\gamma_{j-i}\ \\
&=& \ R_0\,\sum_{i=0}^{k-1}\gamma_j\ +\ \sum_{i=1}^{N-1}\,R_i\,\sum_{j=0}^{k-1-i}\gamma_i\ \\
& \ge & \ (R_0\,+\,\cdots\,+\,R_{N-1})\,\sum_{j=0}^{k-N+1}\,\gamma_j\ \\
&=& \ (R_0\,+\,\cdots\,+\,R_{N-1})\,Z_{N-1}^{-2}\,Q_{N-1}^{-1}\,\sum_{j=1}^{k-1}\,\frac{\partial \nu_j}{\partial C_{N-1}}.
\end{eqnarray*}
\end{proof}

By definition, $\,R_0=1/2$ and $\,\gamma_0\ =\ 1\,.$
Hence, we get

\begin{lemma}\label{fosdpr} Suppose that $C_i\,\ge\,C_N\,$ for all $i\,\le\,N$ and $C_i=C_N$ for all $i\,\ge\,N\,.$ Then, for all $\,m\,\ge\,N\,$,
\[
\left(1\ -\ \sum_{i=1}^{m-1}\,\frac{\partial \nu_i}{\partial \bar C}\,\right)\,\sum_{k=m}^\infty\,\frac{\partial \nu_i}{\partial C_{N-1}}\ \ge\ \sum_{i=N-1}^{m-1}\,\frac{\partial \nu_i}{\partial C_{N-1}}\,\,\sum_{k=m}^\infty\,\frac{-\partial \nu_i}{\partial \bar C}.
\]
\end{lemma}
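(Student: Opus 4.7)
The plan is to derive the inequality directly from the two bounds established in the immediately preceding lemma, exploiting the arithmetic fact that $R_0=\tfrac{1}{2}<1$. Introduce the shorthand
\[
A\ =\ \sum_{k=m}^\infty\frac{\partial\nu_k}{\partial C_{N-1}},\quad B\ =\ \sum_{i=N-1}^{m-1}\frac{\partial\nu_i}{\partial C_{N-1}},\quad \Gamma\ =\ \sum_{k=m}^\infty\!\left(-\frac{\partial\nu_k}{\partial\bar C}\right),\quad \Delta\ =\ \sum_{i=1}^{m-1}\!\left(-\frac{\partial\nu_i}{\partial\bar C}\right),
\]
together with $T=R_0+R_1+\cdots+R_{N-1}$ and $K=Z_{N-1}^{-2}Q_{N-1}^{-1}$. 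The task is to show $(1+\Delta)\,A\ \ge\ B\,\Gamma$, since $1+\Delta=1-\sum_{i=1}^{m-1}\partial\nu_i/\partial\bar C$ and the $i$-vs-$k$ summation index in the statement is a typographical slip.

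Under the hypothesis $C_i\ge C_N$ for $i\le N$ one has $Z_i\ge Z_N$, and combined with $-\partial\nu_l/\partial\bar C\ge 0$ (which is the $\bar C$-monotonicity of $\nu_l=C_l\bar\mu_l$ from Lemma \ref{monoton1}), each $R_i$ is nonnegative. The same monotonicity, this time in $C_{N-1}$, makes $A,B,\Gamma,\Delta$ all nonnegative, and since $\nu_i$ depends on $C_{N-1}$ only for $i\ge N-1$, the sum defining $B$ coincides with $\sum_{i=1}^{m-1}\partial\nu_i/\partial C_{N-1}$, which is the quantity appearing in the preceding lemma. Applying that lemma with its free index $k$ set equal to $m$ (permissible because $m\ge N$), the two displayed bounds read
\[
TK\cdot A\ \ge\ \Gamma\qquad\text{and}\qquad \tfrac{1}{2}\,+\,\Delta\ \ge\ TK\cdot B,
\]
where the $\tfrac12$ on the left is $R_0\gamma_0=\tfrac12\cdot 1$.

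Multiplying the first bound by $(1+\Delta)\ge 0$ and the second by $\Gamma\ge 0$, and chaining through the trivial arithmetic inequality $1+\Delta\ge \tfrac{1}{2}+\Delta$, yields
\[
(1+\Delta)\,TK\cdot A\ \ge\ (1+\Delta)\,\Gamma\ \ge\ \bigl(\tfrac{1}{2}+\Delta\bigr)\,\Gamma\ \ge\ TK\cdot B\,\Gamma.
\]
Since $T\ge R_0=\tfrac12>0$, cancellation of the common positive factor $TK$ is legitimate unless $Q_{N-1}=0$, in which case Lemma \ref{dernu} forces $A=B=0$ and the claim degenerates to $0\ge 0$. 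In every other case, cancellation delivers $(1+\Delta)\,A\ge B\,\Gamma$, which is the desired inequality.

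There is no genuine obstacle beyond bookkeeping: the preceding lemma already performed the telescoping estimate on the Taylor coefficients $\gamma_j$ of $(1-4Z_N\zeta)^{-1/2}$, and the only delicate points are verifying that every factor being multiplied is nonnegative (which rests squarely on the hypothesis $C_i\ge C_N$ for $i\le N$ and on the monotonicity in Lemma \ref{monoton1}) and correctly exploiting the half-unit of slack $1-\tfrac12=\tfrac12$ furnished by $R_0$. It is precisely this half-unit that lets the upper bound $\tfrac12+\Delta$ on $TKB$ be converted into the $1+\Delta$ appearing in the stated inequality; without it, the chain of inequalities would just barely fail to close.
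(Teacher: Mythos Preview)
Your proof is correct and is precisely the argument the paper intends: the paper's own ``proof'' consists only of the sentence ``By definition, $R_0=1/2$ and $\gamma_0=1$. Hence, we get'' before stating the lemma, and your chain $(1+\Delta)A\ge(\tfrac12+\Delta)A\ge TKB\cdot A\ge B\Gamma$ (equivalently, the version you wrote with $TK$ on the other side) is exactly how the two bounds of the preceding lemma combine once one uses $R_0\gamma_0=\tfrac12\le 1$. The only cosmetic point is that the degenerate case you flag for $Q_{N-1}=0$ applies equally to $Z_{N-1}=0$, for the same reason via Lemma~\ref{dernu}.
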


\begin{lemma}\label{min_dom}  The function $\lambda$ defined by
\[
\lambda_n((C_i))\ =\ \sum_{i=n}^\infty\,\xi_i((C_i))
\]
is monotone increasing in $C_k\,$ for all $\,k\,\ge\,n\,.$
\end{lemma}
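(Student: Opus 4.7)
The plan is to compute $\partial \lambda_n/\partial C_k$ by a direct application of the chain rule and to observe that the restriction $k\ge n$ produces a favorable cancellation, so that Lemma \ref{min_dom} follows essentially by algebra, without needing the delicate necessary-and-sufficient condition of Lemma \ref{mainlem} that governs the complementary regime $m<k$.

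First I would note that $\xi_i(C)=\nu_i(C_1,\ldots,C_i,\bar C(C))$ depends on $C_k$ directly only when $i\ge k$, so the ``direct channel'' contribution reduces from $\sum_{i=n}^\infty \partial \nu_i/\partial C_k$ to $\sum_{i=k}^\infty \partial \nu_i/\partial C_k$ precisely because $k\ge n$. Substituting the implicit-function identity
\[
\frac{\partial \bar C}{\partial C_k}\ =\ \frac{\sum_{i=k}^\infty \partial \nu_i/\partial C_k}{1\,-\,\sum_{i=1}^\infty \partial \nu_i/\partial \bar C}
\]
(the lemma immediately following Lemma \ref{Zi}) into the chain rule, and splitting $\sum_{i=1}^\infty \partial \nu_i/\partial \bar C=\sum_{i=1}^{n-1}+\sum_{i=n}^\infty$, the $\sum_{i=n}^\infty$ portion of the indirect contribution cancels the corresponding piece of the denominator, leaving
\[
\frac{\partial \lambda_n}{\partial C_k}\ =\ \left(\sum_{i=k}^\infty \frac{\partial \nu_i}{\partial C_k}\right)\cdot \frac{1\,-\,\sum_{i=1}^{n-1}\partial \nu_i/\partial \bar C}{1\,-\,\sum_{i=1}^\infty \partial \nu_i/\partial \bar C}.
\]
This factorization is the crux; the cancellation that produces it is exactly what fails when one differentiates with respect to a $C_k$ with $k<n$, which is why that case requires the machinery of Lemmas \ref{mainlem} and \ref{fosdpr}.

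To conclude, I would invoke Lemma \ref{monoton1}: each $\nu_i=C_i\bar\mu_i$ is monotone non-decreasing in every $C_j$ with $j\le i$ and monotone non-increasing in $\bar C$. Hence $\sum_{i=k}^\infty \partial \nu_i/\partial C_k\ge 0$ while both the numerator $1-\sum_{i=1}^{n-1}\partial \nu_i/\partial \bar C$ and the denominator $1-\sum_{i=1}^\infty \partial \nu_i/\partial \bar C$ are at least $1$; the product is therefore non-negative, proving the lemma. The only technical point requiring care is the justification of term-by-term differentiation of the series $\lambda_n=\sum_{i\ge n}\xi_i$, which I would handle via the uniform bound $\sum_i\nu_i=\bar C\le c_H$ together with a dominated-convergence argument based on the recursive formulae of Lemma \ref{Zi}; this is the only step that is more than bookkeeping, though it is still routine.
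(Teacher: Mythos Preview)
Your argument is correct, but it takes a different route from the paper's. The paper avoids differentiation entirely: it writes
\[
\lambda_n\ =\ \bar C\ -\ \sum_{i=1}^{n-1}\xi_i,
\]
notes that $\bar C$ is increasing in every $C_k$ by Proposition~\ref{barc}, and then observes that for $i\le n-1<k$ the term $\xi_i=C_i\,\bar\mu_i(C_1,\ldots,C_i,\bar C)$ depends on $C_k$ only through $\bar C$, so by Lemma~\ref{monoton1} (or the construction in Lemma~\ref{stat_mes}) it is \emph{decreasing} in $C_k$. The difference of an increasing term and finitely many decreasing terms is increasing, and the proof is finished in three lines. Your chain-rule computation reaches the same conclusion but through an explicit factorization of $\partial\lambda_n/\partial C_k$; this is more work, though it does yield a quantitative formula that the paper's argument does not. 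The technical point you flag about term-by-term differentiation of the tail series is genuine but is sidestepped entirely by the paper's finite decomposition, which is perhaps the cleanest advantage of its approach.
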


\begin{proof} By Proposition \ref{barc},
\[
\lambda_1\ =\ \bar C\ =\ \sum_{i=1}^\infty\,\xi_i
\]
is monotone increasing in $C_k\,$ for each $k\,.$ By Lemma \ref{stat_mes} and Proposition \ref{barc},
$\xi_i\ =\ \bar\mu_i(C_1,\ldots,C_{i-1},\bar C)\,$ is monotone decreasing in $C_k\,$ for all $k\ge i\,.$ Thus,
\[
\gl_n\ =\ \bar C\ -\ \sum_{i=1}^{n-1}\,\xi_i
\]
is monotone increasing in $C_k\,.$
\end{proof}

We are now ready to prove Proposition \ref{M>N}.

\bigskip

\begin{proof}[Proof of Proposition \ref{M>N}] It suffices to prove the claim for the case $N\ =\ M+1\,.$ It is known that $\,\mu^{C,N}\,$ dominates $\,\mu^{C,M}\,$ is the sense of first order stochastic dominance if and only if
\begin{equation}\label{FOSD1}
\sum_{i=k}^\infty\,\mu^{C,N}_i\ \ge\ \sum_{i=k}^\infty\,\mu^{C,M}_i
\end{equation}
for any $\,k\,\ge\,1\,.$ The only difference between policies $C^M$ and $C^{M+1}$ is that
\[
C^{M+1}_{M}\ =\ c_H\ >\ c_L\ =\ C^M_{M}.
\]
By Lemma \ref{min_dom}, \eqref{FOSD1} holds for any $\,k\,\le\,M\,$. By Lemmas \ref{mainlem} and \ref{fosdpr}, this is also true for $k\,>M\,.$ The proof of  Proposition \ref{M>N} is complete.
\end{proof}

\bigskip

\begin{proof}[Proof of Proposition \ref{barc}]
By Lemma \ref{monoton1}, the function $f$ that maps $(\bar C, C)$ to
\[
f(\bar C\,,\,(C_i\,,\,i\,\ge 1))\ =\ \sum_{k=1}^\infty\,C_k\,\bar\mu_k(\bar C,C_1,\ldots\,,\,C_k)
\]
is monotone increasing in $\,C_i\,$ and decreasing in $\,\bar C\,.$ Therefore, given $C$, the unique solution $\bar C$
to the equation
\begin{equation}
\label{BARCSOL}
\bar C\ -\ f(\bar C\,,\,(C_i\,,\,i\,\ge\,1))\ =\ 0
\end{equation}
is monotone increasing in $C_i$ for all $i$.
\end{proof}

\bigskip\noindent
{\bf Proof of Example \ref{counterexample} of non-monotonicity}

Let $\,C_n=0$ for $\,n\,\ge\,3\,$, as stipulated. We will check the condition
\[
\frac{\partial \nu_2}{\partial C_1}\ >\ 0.
\]
By the above, we need to check that
\[
\left(1\,-\,\frac{\partial \nu_1}{\partial \bar C}\right)\frac{\partial \nu_2}{\partial C_1}\ >\
\frac{\partial \nu_1}{\partial C_1}\,\frac{-\partial \nu_j}{\partial \bar C}.
\]
We have
\[
\nu_1\ =\ \pi_1\,Z_1
\]
and
\[
\nu_2\ =\ (\pi_2\,+\,(\pi_1\,Z_1)^2)\,Z_2.
\]
Since $\,\nu_i=0$ for $i\ge 2\,,\,$ using the properties of the function $Z_k$, the inequality takes the form
\[
(1\ +\ \pi_1\,Z_1^2)\,C_1^{-2}\,\pi_1\,\pi_1\,2\,Z_1^3\,Z_2\ >\ C_1^{-2}\,\pi_1\,Z_1^2\,\left(
\pi_2Z_2^2\,+\,\pi_1\,\pi_1\,2\,Z_1^3\,Z_2\,+\,(\pi_1Z_1)^2\,Z_2^2
\right).
\]
Opening the brackets,
\[
\pi_1\,\pi_1\,2\,Z_1^3\,Z_2\ >\ \pi_1\,Z_1^2\,\left(
\pi_2Z_2^2\,+\,(\pi_1Z_1)^2\,Z_2^2\right).
\]
Consider the case $C_1=C_2$ in which $Z_1=Z_2$. Then, the inequality takes the form
\[
2\,\pi_1\ >\ \pi_2\ +\ (\pi_1\,Z_1)^2.
\]
If $\,\pi_2\ >\ 2\,\pi_1,$ this cannot hold.

\section{Proofs for Section \ref{optimalitysection}, Optimality}

\begin{proof}[Proof of Lemma \ref{verification}]

Let $\phi$ be any search-effort process, and let
 \begin{eqnarray*}
 \theta^\phi_t&=&-\int_0^te^{-rs}K(\phi_{s})\, ds+ e^{-rt}V(N^\phi_t), \quad t<\tau\\
              &=&-\int_0^{\tau}e^{-rs}K(\phi_{s})\, ds+ e^{-r\tau}u(N^\phi_\tau), \quad t\geq \tau.
              \end{eqnarray*}
By It\^o's Formula and the fact that $V$ solves the HJB equation
(\ref{Bellman}), $\theta^\phi$ is a super-martingale, so
 \begin{equation}
 \label{opt1}
 V(N_{i0})=\theta^\phi_0\geq U(\phi).
 \end{equation}

For the special case of $\phi^*_t=\Gamma(N_t)$, where $N$ satisfies the stochastic
differential equation associated with the specified search-effort policy function $\Gamma$,
the HJB equation implies that  $\theta^*=\theta^{\phi^*}$ is actually a martingale.
Thus,
\[V(N_{i0})=\theta^*_0=  E(\theta^*_\tau)=U(\phi^*).\]
It follows from (\ref{opt1}) that $U(\phi^*)\geq U(\phi)$ for any control $\phi$.
\end{proof}

\begin{lemma}\label{Moper} The operator $\,M\,:\,\ell_\infty\to \ell_\infty\,,\,$ defined by
\begin{multline}
(MV)_n\ =\ \max_{c}\,\Bigg(
\frac{\eta'\,u(n)\ -\ K(c)}{c\bar C+r+\eta'}\ +\ \frac{c}{c\bar C+r+\eta'}\,\sum_{m=1}^\infty\,V_{n+m}\,C_m\,\mu^{C}_m\ \Bigg),
\end{multline}
is a contraction, satisfying, for candidate value functions $V_1$ and $V_2$,
\[
\|MV_1\ -\ MV_2\|_\infty\ \le\ \frac{c_H\bar C}{c_H\bar C+r+\eta'}.
\]
In addition, $\,M\,$ is monotonicity preserving.
\end{lemma}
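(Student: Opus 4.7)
Both claims are standard properties of Bellman operators on a bounded sequence space; I would handle them separately after a brief well-posedness check.

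\emph{Contraction.} The plan is to apply the elementary inequality $\bigl|\sup_c f(c)-\sup_c g(c)\bigr|\le \sup_c|f(c)-g(c)|$ pointwise in $n$. For each fixed $c\in[c_L,c_H]$, the two expressions $(MV_1)_n$ and $(MV_2)_n$ differ only through the term involving $V$, so
\[
\bigl|(MV_1)_n-(MV_2)_n\bigr|\ \le\ \sup_{c\in[c_L,c_H]}\frac{c}{c\bar C+r+\eta'}\,\Bigl|\sum_{m=1}^\infty (V_{1,n+m}-V_{2,n+m})\,\mu^C_m\Bigr|.
\]
I would then bound each $|V_{1,n+m}-V_{2,n+m}|$ by $\|V_1-V_2\|_\infty$ and use the identity $\sum_m \mu^C_m=\bar C$ to collapse the sum, producing the upper bound $\sup_c \tfrac{c\bar C}{c\bar C+r+\eta'}\|V_1-V_2\|_\infty$. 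Since the map $c\mapsto \tfrac{c\bar C}{c\bar C+r+\eta'}$ is strictly increasing on $[0,\infty)$, its supremum on $[c_L,c_H]$ is attained at $c=c_H$, yielding exactly the advertised modulus; this is strictly less than one because $r+\eta'>0$, so $M$ is a contraction.

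\emph{Monotonicity preservation.} Suppose $V$ is nondecreasing in $n$. Then for each fixed $m$ the shifted sequence $n\mapsto V_{n+m}$ inherits the same monotonicity, and $u(\cdot)$ is itself nondecreasing by assumption. Thus, for every fixed $c\in[c_L,c_H]$, the full bracketed expression
\[
\frac{\eta'\,u(n)-K(c)}{c\bar C+r+\eta'}\ +\ \frac{c}{c\bar C+r+\eta'}\,\sum_{m=1}^\infty V_{n+m}\,\mu^C_m
\]
is nondecreasing in $n$. Taking a pointwise supremum over $c$ preserves this property, so $(MV)_n$ is nondecreasing in $n$.

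\emph{Well-posedness and obstacles.} A preliminary step is that $M$ sends $\ell_\infty$ to $\ell_\infty$: the numerators in both fractions are uniformly bounded (by boundedness of $K$ and $u$ and by $|V_{n+m}|\le \|V\|_\infty$), while the denominators are bounded below by $r+\eta'>0$, so the supremum in $c$ is finite and bounded uniformly in $n$. There is no substantive obstacle; the only point requiring care is the bookkeeping identity $\sum_m \mu^C_m=\bar C$, without which one could not obtain the clean factor $c\bar C/(c\bar C+r+\eta')$ and the contraction rate would degrade. Otherwise this is the standard Blackwell-style discounted dynamic-programming argument adapted to the present Hamilton--Jacobi--Bellman form.
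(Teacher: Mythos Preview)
Your proof is correct and follows essentially the same route as the paper: for the contraction you use the elementary inequality $|\sup_c f-\sup_c g|\le\sup_c|f-g|$, while the paper equivalently fixes the maximizer $c_2$ of the larger side and bounds $M_{c_2}V_2-M_{c_2}V_1$; both then reduce to $\sup_c \tfrac{c\bar C}{c\bar C+r+\eta'}\|V_1-V_2\|_\infty$ via $\sum_m\mu^C_m=\bar C$. The monotonicity arguments are identical (each $M_cV$ is monotone in $n$, and a pointwise supremum of monotone functions is monotone).
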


\begin{proof} The fact that $\,M\,$ preserves monotonicity follows because
the pointwise maximum of two monotone increasing functions is also monotone. Now we provide the contraction argument. Let
\begin{equation}
M_{c}V_n\ =\ \frac{\eta'\,u(n)\ -\ K(c)}{c\bar C+r+\eta'}\ +\ \frac{c}{c\bar C+r+\eta'}\,\sum_{m=1}^\infty\,V_{n+m}\,C_m\,\mu^{C}_m.\
\end{equation}
Let
\[
MV_1(n)\ =\ M_{c_1}V_1(n)
\]
and
\[
MV_2(n)\ =\ M_{c_2}V_2(n),
\]
and assume without loss of generality that $\,MV_1(n)\ \le\ MV_2(n)\,.$ Then,
\begin{eqnarray*}
0\ &\le& \ MV_2(n)\ -\ MV_1(n)\ \\
&=&\ M_{c_2}V_2(n) -\ M_{c_1}V_1(n)\ \\
&\le & \ M_{c_2}V_2(n) -\ M_{c_2}V_1(n)\ \\
&=& \ \frac{c_2}{c_2\bar C+r+\eta'}\,\sum_{m=1}^\infty\,(V_2-V_1)_{n+m}\,C_m\,\mu^{C}_m,\
\end{eqnarray*}
and the estimate immediately follows.
\end{proof}

The contraction mapping theorem implies the

\begin{cor}\label{fixedp} The value function $V$ is the unique fixed point of $\,M\,$ and is a monotone increasing function.
\end{cor}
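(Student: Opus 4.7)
The plan is to apply Banach's fixed-point theorem directly to $M$ on the Banach space $\ell_\infty$, since Lemma \ref{Moper} already supplies the key contraction estimate with modulus $c_H\bar C/(c_H\bar C+r+\eta')<1$. This immediately yields existence and uniqueness of a fixed point $V^*\in\ell_\infty$.

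To identify $V^*$ with the value function, I would rewrite the HJB equation \eqref{Bellman} in fixed-point form. Rearranging \eqref{Bellman} gives, for each $n$,
\[
(r+\eta'+c\bar C)\,V_n \;=\; \eta'u_n - K(c) + c\sum_{m=1}^\infty V_{n+m}\mu^C_m
\]
at the maximizing $c$; dividing through by $r+\eta'+c\bar C$ and taking the supremum over $c\in[c_L,c_H]$ shows that bounded solutions of \eqref{Bellman} coincide exactly with bounded fixed points of $M$. Lemma \ref{verification} then identifies the resulting sequence as the value function, and Banach uniqueness shows it equals $V^*$.

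For monotonicity I would use the standard iteration argument. Starting from $V^{(0)}\equiv 0$, which is (weakly) monotone, define $V^{(k+1)} = MV^{(k)}$. Since $M$ preserves monotonicity by Lemma \ref{Moper}, an easy induction yields that each $V^{(k)}$ is monotone increasing. Banach's theorem gives $\|V^{(k)} - V^*\|_\infty\to 0$, hence in particular $V^{(k)}_n\to V^*_n$ pointwise, and the pointwise limit of monotone increasing sequences is monotone increasing. Therefore $V^*=V$ is monotone increasing.

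I do not anticipate a genuine obstacle here: the result is a textbook consequence of the Banach contraction principle once Lemma \ref{Moper} is in hand. The only small care needed is the algebraic reduction of \eqref{Bellman} to the fixed-point equation $V=MV$ and the observation that monotonicity (as a closed condition) passes to pointwise limits.
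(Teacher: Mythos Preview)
Your proposal is correct and follows essentially the same route as the paper: the paper simply states that the corollary follows from the contraction mapping theorem applied via Lemma \ref{Moper}, and you have filled in exactly those details (Banach fixed point for existence/uniqueness, the algebraic equivalence of \eqref{Bellman} with $V=MV$, and the iteration argument to transport monotonicity to the limit).
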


\begin{lemma}\label{udec} Let
\[
{\mathcal L}\ =\ \{V\,\in\,\ell_\infty\,:\,V_n\,-\,(r+\eta')^{-1}\,\eta'\,u_n\ \text{is monotone decreasing}\}.
\]
If $\,u(\,\cdot\,)\,$ is concave, then the operator $\,M\,$ maps $\,\mathcal L\,$ into itself.
 Consequently, the unique fixed point of $\,M\,$ also belongs to $\,\mathcal L\,.$
\end{lemma}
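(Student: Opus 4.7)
The plan is to reduce the claim to two elementary monotonicity observations after regrouping the expression for $(MV)_n$ around $u_n$.  Set $W_n = V_n - (r+\eta')^{-1}\eta' u_n$, so that $V \in \mathcal{L}$ means $W$ is bounded and decreasing; the target is to show $(MV)_n - (r+\eta')^{-1}\eta' u_n$ is decreasing in $n$.

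First I would substitute $V_{n+m} = W_{n+m} + (r+\eta')^{-1}\eta' u_{n+m}$ into the formula for $(MV)_n$ from Lemma \ref{Moper} (or, equivalently, from the HJB equation \eqref{Bellman}), subtract $(r+\eta')^{-1}\eta' u_n$, and use the identity $\sum_m \mu^C_m = \bar C$ to absorb the stray $u_n$ coefficients from the $\eta' u_n$ term.  After this routine algebra the expression takes the form
\[
(MV)_n - \frac{\eta' u_n}{r+\eta'} \ =\ \max_{c \in [c_L,c_H]} \left[ A(c) + B(c) \sum_{m=1}^\infty W_{n+m}\, \mu^C_m + D(c) \sum_{m=1}^\infty (u_{n+m} - u_n)\, \mu^C_m \right],
\]
where $A(c) = -K(c)/(r+\eta'+c\bar C)$ is constant in $n$, and the coefficients $B(c) = c/(r+\eta'+c\bar C) \ge 0$ and $D(c) = c\eta'/[(r+\eta'+c\bar C)(r+\eta')] \ge 0$ do not depend on $n$.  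This rearrangement is the only real computation required.

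Next I would verify that the bracketed expression is decreasing in $n$ for each fixed $c$.  The $W$-term is decreasing because $W$ is decreasing and $\mu^C_m \ge 0$.  For the $u$-term, concavity of $u$ says that the first differences $g(k) = u_{k+1}-u_k$ are non-increasing; writing $u_{n+m}-u_n$ as the telescoping sum $g(n)+g(n+1)+\cdots+g(n+m-1)$ and shifting $n \mapsto n+1$ replaces each summand by a no-larger value, so $\sum_m (u_{n+m}-u_n)\mu^C_m$ is decreasing in $n$.  Since $B(c), D(c) \ge 0$, each sequence inside the $\max$ is decreasing in $n$, and the pointwise maximum of a family of decreasing sequences is decreasing, so $MV \in \mathcal{L}$.

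For the concluding assertion about the fixed point, I would observe that $\mathcal{L}$ is closed in $\ell_\infty$, since the property that $V - (r+\eta')^{-1}\eta' u$ is decreasing is preserved under pointwise (a fortiori uniform) limits.  Iterating the contraction $M$ of Lemma \ref{Moper} from any starting point in $\mathcal{L}$ (for example the zero sequence, whose shift $-(r+\eta')^{-1}\eta' u$ is decreasing because $u$ is increasing) produces a Cauchy sequence inside this closed subset, so its limit --- the unique fixed point in $\ell_\infty$ --- lies in $\mathcal{L}$.  The main obstacle, modest as it is, lies in the initial algebraic regrouping and in checking that $D(c) \ge 0$; once the display above is available the monotonicity argument is immediate.
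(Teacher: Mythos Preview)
Your proposal is correct and follows essentially the same route as the paper: both subtract $(r+\eta')^{-1}\eta' u_n$ from $M_cV_n$, regroup so that the $n$-dependence sits entirely in $\sum_m W_{n+m}\mu^C_m$ and $\sum_m(u_{n+m}-u_n)\mu^C_m$, and then invoke the hypothesis $V\in\mathcal L$ together with concavity of $u$ to conclude each is decreasing before taking the pointwise maximum. Your explicit coefficients $A(c),B(c),D(c)$ are exactly the paper's regrouped terms, and your closing argument---that $\mathcal L$ is closed in $\ell_\infty$ and contains the iterates starting from zero---spells out what the paper leaves implicit in the word ``Consequently.''
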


\begin{proof} Suppose that $\,V\,\in\,{\mathcal L}\,.$ Using the identity
\[
\frac{u(n)}{c\bar C+r+\eta'}\ -\ \frac{u(n)}{r+\eta'}\ =\ -\,\frac{u(n)}{r+\eta'}\, \frac{c\bar C}{c\bar C+r+\eta'},
\]
we get
\begin{eqnarray*}
&& \!\!\!\!\!\!\!\!\!\!\!\!\!\!\!\!\!\! M_{c}V_n\ -\ \frac{\eta'\,u_n}{r+\eta'}\ \\
&=& \ \frac{\eta'\,u_n\ -\ K(c)}{c\bar C+r+\eta'}\ +\ \frac{c}{c\bar C+r+\eta'}\,\sum_{m=1}^\infty\,V_{n+m}\,C_m\,\mu^{C}_m\\
& = & \ \frac{-K(c)}{c\bar C+r+\eta'}\
+\ \frac{c}{c\bar C+r+\eta'}\,\sum_{m=1}^\infty\,\left(V_{n+m}\,-\,\frac{\eta'\,u_{n+m}}{r+\eta'}\,+\,\frac{\eta'\,u_{n+m}
\,-\,\eta'\,u_n}{r+\eta'}\right)\, C_m\,\mu^{C}_m.
\end{eqnarray*}
Since $\,V\,\in\,{\mathcal L}\,$ and $\,u\,$ is concave, the sequences
\begin{eqnarray}\label{Dn}
D_n\ & =& \ \sum_{m=1}^\infty\left(V_{n+m}\,-\,\frac{\eta'\,u_{n+m}}{r+\eta'}\,+\,\frac{\eta'\,u_{n+m}\,-\,\eta'\,u_n}{r+\eta'}\right)\,C_m\,\mu^{C}_m\\
& =& \ \sum_{m=1}^\infty V_{n+m}\,C_m\,\mu^{C}_m\ -\ \frac{\eta'\,u_n}{r+\eta'}
\end{eqnarray}
and
\begin{eqnarray}
\label{Bn}
B_n\ & =& \ \sum_{m=1}^\infty\,\left(V_{n+m}\,-\,\frac{\eta'\,u_{n+m}}{r+\eta'}\,+\,\frac{\eta'\,u_{n+m}\,-\,\eta'\,u_n}{r+\eta'}\right)\,
C_m\,\mu^{C}_m\ \\
&=& \ \sum_{m=1}^\infty\,V_{n+m}\,C_m\,\mu^{C}_m\ -\ \frac{\bar C\,\eta'\,u_n}{r+\eta'}
\end{eqnarray}
are monotone decreasing.
Since the maximum of  decreasing sequences is again decreasing, the sequence
\[
(MV)_n\ -\ \frac{\eta'\,u_n}{r+\eta'}\ =\ \max_{c}\,(M_{c}V)_n\ -\ \frac{\eta'\,u_n}{r+\eta'}\
\]
is decreasing, proving the result.
\end{proof}

We will need the following auxiliary

\begin{lemma}\label{K(c)} If $K(c)$ is a convex differentiable function, then $c\mapsto \,K(c)\ -\ K'(c)\,c\,$ is monotone decreasing for $c\,>\,0\,.$
\end{lemma}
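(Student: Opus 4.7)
The plan is to use the two standard subgradient inequalities implied by convexity of $K$, applied at two points $0 < c_1 < c_2$:
\[
K(c_2)\,-\,K(c_1)\ \le\ K'(c_2)\,(c_2\,-\,c_1),
\]
together with the monotonicity of $K'$ that follows from convexity, i.e.\ $K'(c_1)\le K'(c_2)$. No second derivative is needed, which matters because the hypothesis only gives differentiability (not $C^2$).

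Writing $g(c) = K(c) - cK'(c)$, the target inequality $g(c_1)\ge g(c_2)$ is equivalent to
\[
K(c_2)\,-\,K(c_1)\ \le\ c_2\,K'(c_2)\,-\,c_1\,K'(c_1).
\]
I will bound the left side by the subgradient estimate above, obtaining $K'(c_2)(c_2-c_1) = c_2 K'(c_2) - c_1 K'(c_2)$, and then replace $-c_1 K'(c_2)$ by the larger quantity $-c_1 K'(c_1)$, using $c_1 > 0$ together with $K'(c_1)\le K'(c_2)$. Chaining these two bounds yields exactly the desired inequality, completing the proof.

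There is no substantive obstacle here: the whole content is the observation that $g(c)$ is the $y$-intercept of the tangent line to the graph of $K$ at $c$, and this intercept decreases as the slope $K'(c)$ increases along a convex curve. The step that required a moment of care is that we cannot simply differentiate to get $g'(c) = -cK''(c)\le 0$, since $K''$ is not assumed to exist; this is why I reduce everything to two applications of the first-order convexity inequality and the monotonicity of $K'$.
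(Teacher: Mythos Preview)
Your proposal is correct and follows essentially the same approach as the paper: both arguments fix $0<c_1<c_2$, invoke the first-order convexity inequality $K(c_2)-K(c_1)\le K'(c_2)(c_2-c_1)$, and then use $c_1>0$ together with the monotonicity $K'(c_1)\le K'(c_2)$ to conclude $g(c_2)-g(c_1)\le c_1\bigl(K'(c_1)-K'(c_2)\bigr)\le 0$. Your remark that one cannot simply compute $g'(c)=-cK''(c)$ because $K''$ is not assumed to exist is a helpful clarification that the paper leaves implicit.
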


\begin{proof} For any $c$ and $b$ in $[c_L,c_H]$ with $c\geq b$, using first the convexity property that $K(c)-K(b)\leq K'(c)(c-b)$ and
then the fact that the derivative of a convex function is an increasing function, we have
\begin{eqnarray*}
(K(c)\ -\ K'(c)\,c) -(K(b)-K'(b)b)\ &\leq & \ K'(c)(c-b) -K'(c)c+K'(b)b\\
                                                  &=& b(K'(b)-K'(c)) \le\ 0,
\end{eqnarray*}
the desired result.
\end{proof}

\begin{prop} Suppose that the search cost function $K$ is convex, differentiable, and
increasing. Given $(\mu,C)$,  any optimal search-effort policy function $\Gamma( \,\cdot\,)$ is monotone decreasing.
If $K(c) = \kappa c$ for some $\kappa>0$, then there is an optimal policy of the
trigger form. \end{prop}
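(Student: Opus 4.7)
Plan: Both claims reduce, via the martingale verification in Lemma~\ref{verification}, to monotonicity of the marginal-value sequence $D_n := \sum_{m\ge 1}(V_{n+m} - V_n)\mu^C_m$. Indeed, any optimal $\Gamma_n$ maximizes $c \mapsto cD_n - K(c)$ on $[c_L,c_H]$, and convexity and differentiability of $K$ make this maximizer the projection onto $[c_L,c_H]$ of $(K')^{-1}(D_n)$, a nondecreasing function of $D_n$. Thus once $D_n$ is shown nonincreasing in $n$, every optimal $\Gamma_n$ is nonincreasing in $n$.

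To show $D_n$ nonincreasing, I would establish the stronger property that $V$ is concave as a sequence, i.e.\ $V_{n+1} - V_n$ is nonincreasing in $n$. Granting this, $V_{n+m} - V_n = \sum_{j=0}^{m-1}(V_{n+j+1} - V_{n+j})$ is nonincreasing in $n$ for each $m$, so $D_n$, a nonnegative $\mu^C$-weighted sum of these, inherits the property. To prove concavity of $V$, the plan is to exploit the Banach-contraction property of $M$ from Lemma~\ref{Moper}: the set $\mathcal{K}$ of bounded concave sequences is closed in $\ell_\infty$, so if $M(\mathcal{K}) \subseteq \mathcal{K}$ then iterating from any starting point in $\mathcal{K}$ converges to the unique fixed point $V$, which therefore lies in $\mathcal{K}$. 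For $V \in \mathcal{K}$, the aggregate $W_n = \sum_m V_{n+m}\mu^C_m$ is concave (convex combination of shifts of a concave sequence), and for each fixed $c$ the inner expression $(\eta' u_n - K(c) + cW_n)/(c\bar C + r + \eta')$ is a nonnegative affine combination of the concave sequences $u$ and $W$, hence concave in $n$.

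The hard part is that a pointwise supremum over $c$ of concave sequences need not itself be concave. I would address this by coupling the induction with monotonicity of the optimizer: under the inductive hypothesis $V \in \mathcal{K}$, the reduction above forces $c^*_n = (K')^{-1}(D_n(V))$ (projected onto $[c_L,c_H]$) to be nonincreasing in $n$. Along this monotone path, the second difference of the envelope $(MV)_n$ decomposes into a principal concave term (the integrand evaluated at the common middle optimizer $c^*_n$) plus correction terms arising from the change of optimizer between consecutive integers, whose signs are pinned down by Lemma~\ref{K(c)} (that $K(c) - K'(c)c$ is decreasing in $c$) together with the first-order condition $K'(c^*_n) = D_n$. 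Verifying that these corrections have the correct sign along a monotone envelope is the technical heart of the argument, and the place where I expect the proof to demand the most care.

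Finally, for the linear case $K(c) = \kappa c$, the inner objective $c(D_n - \kappa)$ is linear in $c$, so an optimal policy may be chosen bang-bang with $\Gamma_n = c_H$ when $D_n \ge \kappa$ and $\Gamma_n = c_L$ otherwise. Since $D_n$ is nonincreasing, the set $\{n : D_n \ge \kappa\}$ is an initial segment $\{1,\ldots,N^*-1\}$ for some $N^* \in \N \cup \{\infty\}$, and the corresponding bang-bang policy is a trigger with threshold $N^*$. That it is optimal follows from Lemma~\ref{verification}, since it satisfies the HJB equation (\ref{Bellman}) by construction.
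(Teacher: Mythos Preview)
Your reduction is correct: once $D_n = \sum_m (V_{n+m}-V_n)\mu^C_m$ is nonincreasing, the monotonicity of any optimal $\Gamma$ and the trigger form in the linear case follow as you describe. The difficulty is entirely in establishing that $D_n$ is nonincreasing, and you have correctly located the obstruction: a pointwise supremum of concave sequences need not be concave, so $M(\mathcal K)\subseteq\mathcal K$ does not come for free. Your proposed fix via an envelope decomposition along the monotone optimizer path is only a sketch; the first-order condition $K'(c^*_n)=D_n$ together with Lemma~\ref{K(c)} does not by itself pin down the sign of the second-difference correction terms when the optimizer jumps, and you give no argument that it does. As it stands this is a genuine gap.

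The paper avoids the concavity question by using a weaker invariant and the alternative Bellman form. Instead of concavity of $V$, Lemma~\ref{udec} shows that $V_n-\eta' u_n/(r+\eta')$ is \emph{decreasing} in $n$, i.e.\ $V\in\mathcal L$. This property \emph{is} preserved under pointwise maximum (the max of decreasing sequences is decreasing), so $M(\mathcal L)\subseteq\mathcal L$ goes through cleanly using only concavity of $u$. The paper then works with the equivalent fixed-point equation $V_n=\max_c\,(\eta' u_n - K(c) + cY_n)/(c\bar C+r+\eta')$, where $Y_n=\sum_m V_{n+m}\mu^C_m$. Differentiating in $c$ and invoking Lemma~\ref{K(c)}, the first-order condition isolates $B_n=Y_n-\bar C\,\eta' u_n/(r+\eta')$ rather than your $D_n=Y_n-\bar C V_n$, and $B_n$ is seen to be decreasing directly from
\[
B_n=\sum_m\Bigl[\Bigl(V_{n+m}-\tfrac{\eta' u_{n+m}}{r+\eta'}\Bigr)+\tfrac{\eta'}{r+\eta'}\bigl(u_{n+m}-u_n\bigr)\Bigr]\mu^C_m,
\]
each bracketed summand being decreasing in $n$ (the first by $V\in\mathcal L$, the second by concavity of $u$). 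Monotonicity of the optimal policy then follows from the monotone dependence of the root of the first-order condition on $B_n$. The point is that the passage to the alternative Bellman form replaces the awkward term $\bar C V_n$ in your $D_n$ by $\bar C\,\eta' u_n/(r+\eta')$, which is what allows the invariant in $\mathcal L$ (rather than full concavity of $V$) to suffice. Your treatment of the linear case is fine once monotonicity of the relevant marginal is in hand.
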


\bigskip

\begin{proof} The optimal $\,V\,$ solves the alternative Bellman equation
\begin{equation}
V_n\ =\ \max_{c}\,\Bigg(\frac{\eta'\,u_n\ -\ K(c)}{c\bar C+r+\eta'}\ +\ \frac{c}{c\bar C+r+\eta'}\,
\sum_{m=1}^\infty\,V_{n+m}\,C_m\,\mu^{C}_m\,\Bigg).\label{eq:bellman}
\end{equation}
We want to solve
\[
\max_{c}\,f(c)\ =\ \max_c\,\frac{\,\eta'u(n)+\,c\,Y_n\ -\ K(c)}{c\,\bar C+r+\eta'}
\]
with
\[
Y_n\ =\ \sum_{m=1}^\infty\,V_{n+m}\,C_m\,\mu^{C}_m\, .
\]
Then,
\[
f'(c)\ =\ \frac{(Y_n(r+\eta')\ -\,\eta'\,u(n)\,\bar C)\ +\ \bar C\,(K(c)\ -\ K'(c)\,c)\ -\ (r+\eta')\,K'(c)}{(c\,\bar C+r+\eta')^2}.
\]
By Lemma \ref{K(c)}, the function $\,K(c)\ -\ K'(c)\,c\,$ is monotone decreasing and the function $\,-\,(r+\eta')\,K'(c)\,$ is decreasing because $\,K(\,\cdot\,)\,$ is convex. Therefore, the function
\[
\bar C\,(K(c)\ -\ K'(c)\,c)\ -\ (r+\eta')\,K'(c)
\]
is also monotone decreasing.
There are three possibilities. If the unique solution $z_n$ to
\[
\bar C\,(K(z_n)\ -\ K'(z_n)\,z_n)\ -\ (r+\eta')\,K'(z_n)\ +\ (Y_n(r+\eta')\ -\,\eta'\,u(n)\,\bar C)\ =\ 0
\]
belongs to the interval $[c_L,c_H]$, then $f'(c)$ is positive for $c<z_n$ and is negative for $c>z_n\,.$
 Therefore, $f(c)$ attains its global maximum at $z_n$ and the optimum is $c_n=z_n\,.$ If
\[
\bar C\,(K(c)\ -\ K'(c)\,c)\ -\ (r+\eta')\,K'(c)\ +\ (Y_n(r+\eta')\ -\,\eta'\,u(n)\,\bar C)\ >\ 0
\]
for all $c\,\in\,[c_L,c_H]\,$ then $f'(c)>0,$ so $f$ is increasing and the optimum is $c\ =\ c_H\,.$ Finally, if
\[
\bar C\,(K(c)\ -\ K'(c)\,c)\ -\ (r+\eta')\,K'(c)\ +\ (Y_n(r+\eta')\ -\,\eta'\,u(n)\,\bar C)\ <\ 0
\]
for all $c\,\in\,[c_L,c_H]\,$ then $f'(c)<0,$ so $f$ is decreasing and the optimum is $c\ =\ c_L\,.$
By \eqref{Bn}, the sequence
\[
Y_n(r+\eta')\ -\ \bar C\,\eta'\,u(n)\ =\ (r+\eta')\,B_n
\]
is monotone decreasing. The above analysis directly implies that the optimal policy is then also decreasing.

If $K$ is linear, it follows from the above that the optimum is $c_H$ if $Y_n(r+\eta')\ -\ \bar C\,\eta'\,u(n)>0$ and $c_L$ if $Y_n(r+\eta')\ -\ \bar C\,\eta'\,u(n)<0\,.$ Thus, we have a trigger policy.
\end{proof}

\bigskip

\begin{proof}[Proof of Proposition \ref{flattail}]
Using the fact that
\[
(r+\eta')\,V_n\ =\ \sup_{c\,\in \, [c_L,c_H]\,}\, \eta'\,u_n\,-\,K(c)\ +\ c\,\sum_{m=1}^\infty\,(V_{n+m}-V_n)C_m\,\mu_m,
\]
which is a concave maximization problem over a convex set,
the supremum is achieved at $c_L$
if and only if some element of the supergradient of the objective function at $c_L$ includes
zero or a negative number.
(See Rockafellar (1970).) This
is the case provided that
\[
\sum_{m=1}^\infty\,(V_{n+m}-V_n)C_m\,\mu_m\ \le\ K'(c_L),
\]
where $K'(c_L)$.
By Lemma \ref{udec},
\begin{eqnarray*}
\sum_{m=1}^\infty\,(V_{n+m}-V_n)C_m\,\mu_m\
 & \le &\ \eta'\,(r+\eta')\,\sum_{m=1}^\infty\,(u_{n+m}-u_n)C_m\,\mu_m \\
&\le &\  \eta'(r+\eta')\,\sum_{m=1}^\infty\,(\overline u-u_n)C_m\,\mu_m \\
&<& \ K'(c_L), \quad\quad \quad {\rm for}\,\,  n\,>\,\overline N,
\end{eqnarray*}
 completing the proof.
\end{proof}

\section{Proofs for Section 6: Equilibrium}

This appendix contains proofs of the results in Section 6.

\subsection{Monotonicity of the Value Function in Other Agents'
  Trigger Level}

From the results of the Section \ref{optimalitysection}, we can restrict
attention to equilibria in the form of a trigger policy $C^N$, with trigger precision level $N$.
For any constant $c$ in $[c_L,c_H]$, we define the operator $L^N_c$, at any bounded increasing sequence $g$, by
\begin{equation*}
(L^N_cg)_n= \frac{1}{c_H^2+\eta'+r} \left\lbrace \eta' u_n - K(c) + (c_H^2 - c \bar C^N)
g_n + c \sum_{m=1}^{\infty} g_{n+m} C^N_m \mu^N_m \right\rbrace,
\end{equation*}
where
\begin{equation*}
\bar C^N = \sum_{i=1}^{\infty} C^N_i \mu^N_i.
\end{equation*}

\begin{lemma} \label{Lsup} Given $(\mu^N,C^N)$, the  value function $V^N$ of any given
agent solves
\begin{equation*}
V^N_n=\sup_{c\in \lbrace c_L\,,\,c_H \rbrace} \left\lbrace L^N_c V^N_n \right\rbrace.
\end{equation*}
\end{lemma}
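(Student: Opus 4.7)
The plan is to deduce the claimed fixed-point identity by a direct algebraic rearrangement of the Hamilton-Jacobi-Bellman equation \eqref{Bellman} specialized to the market conditions $(\mu^N,C^N)$, and then to reduce the supremum from the interval $[c_L,c_H]$ to its two endpoints using linearity of the objective in $c$ under the proportional search-cost assumption of Section \ref{sec_equil}.

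First, I would invoke Lemma \ref{verification} together with Corollary \ref{fixedp} to conclude that $V^N$ satisfies \eqref{Bellman} with effort-weighted measure $\mu^{C,N}_m = C^N_m \mu^N_m$. Splitting $\sum_m (V^N_{n+m}-V^N_n)C^N_m\mu^N_m$ and using $\bar C^N = \sum_m C^N_m\mu^N_m$, this can be rewritten as
\begin{equation*}
(r+\eta')V^N_n \ =\ \eta' u_n \ +\ \sup_{c \in [c_L,c_H]} \Big\{ -K(c)\ -\ c\bar C^N V^N_n\ +\ c\sum_{m=1}^\infty V^N_{n+m}\,C^N_m\mu^N_m \Big\}.
\end{equation*}

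Next I would add the $c$-independent quantity $c_H^2 V^N_n$ to both sides and pull it inside the supremum, giving
\begin{equation*}
(c_H^2+\eta'+r)V^N_n\ =\ \sup_{c \in [c_L,c_H]} \Big\{ \eta' u_n - K(c) + (c_H^2-c\bar C^N)V^N_n + c\sum_{m=1}^\infty V^N_{n+m}C^N_m\mu^N_m \Big\}.
\end{equation*}
Division by $(c_H^2+\eta'+r)$ yields $V^N_n = \sup_{c\in[c_L,c_H]} L^N_c V^N_n$ directly from the definition of $L^N_c$. As a sanity check, since $\bar C^N \leq c_H$ and $c \leq c_H$, the coefficient $c_H^2 - c\bar C^N$ is nonnegative, consistent with the monotonicity and contraction properties of closely related operators used elsewhere in the appendix.

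Finally, under the Section \ref{sec_equil} standing assumption $K(c)=\kappa c$, the map $c \mapsto L^N_c V^N_n$ is affine in $c$: it takes the form $A_n + c\, B_n$ with
\begin{equation*}
A_n = \frac{\eta' u_n + c_H^2 V^N_n}{c_H^2+\eta'+r}, \qquad B_n = \frac{-\kappa - \bar C^N V^N_n + \sum_{m=1}^\infty V^N_{n+m}C^N_m\mu^N_m}{c_H^2+\eta'+r}.
\end{equation*}
The supremum of an affine function over a compact interval is attained at an endpoint, so $\sup_{c\in[c_L,c_H]} L^N_c V^N_n = \sup_{c\in\{c_L,c_H\}} L^N_c V^N_n$, which is exactly the claim. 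The proof is essentially bookkeeping; the only non-mechanical ingredient is the affine dependence on $c$ afforded by linear search costs, without which the supremum could be attained in the interior of $[c_L,c_H]$.
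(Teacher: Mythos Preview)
Your argument is correct, and in fact the paper states this lemma without proof; your algebraic rearrangement of the HJB equation \eqref{Bellman} followed by the endpoint reduction via affine dependence on $c$ (under the Section~\ref{sec_equil} assumption $K(c)=\kappa c$) is exactly the intended verification. One minor remark: Lemma~\ref{verification} is a sufficiency result and does not itself assert that the value function solves \eqref{Bellman}; the relevant input is Corollary~\ref{fixedp}, which identifies $V^N$ as the fixed point of the operator $M$ of Lemma~\ref{Moper}, and the fixed-point equation $V^N=MV^N$ is algebraically equivalent to \eqref{Bellman}.
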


\begin{lemma}\label{mon} The operator $\,L^N_c\,$ is monotone
increasing in $N$. That is, for any increasing sequence $\,g\,,\,$
\[
L^{N+1}_c\,g\ \ge\ L^N_c\,g.
\]
\end{lemma}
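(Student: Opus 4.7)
The plan is to rewrite $(L^N_c g)_n$ so that all of its $N$-dependence is concentrated in a single sum against the search-effort-weighted measure $\mu^{C,N}$, and then apply the first-order-dominance result of Proposition \ref{M>N}.

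First I would rearrange the definition. Since $\bar C^N=\sum_{m=1}^\infty C^N_m\mu^N_m=\sum_{m=1}^\infty \mu^{C,N}_m$, the two $N$-dependent terms inside the braces combine as
\[
-c\bar C^N g_n + c\sum_{m=1}^\infty g_{n+m}C^N_m\mu^N_m \;=\; c\sum_{m=1}^\infty (g_{n+m}-g_n)\,\mu^{C,N}_m.
\]
Substituting this identity gives
\[
(L^N_c g)_n \;=\; \frac{1}{c_H^2+\eta'+r}\Bigl\{\eta' u_n - K(c) + c_H^2 g_n + c\sum_{m=1}^\infty (g_{n+m}-g_n)\,\mu^{C,N}_m\Bigr\},
\]
in which the only term that depends on $N$ is the final sum.

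The next step is to verify that, for each fixed $n$, the sequence $f^{(n)}_m = g_{n+m}-g_n$ (in $m\ge 1$) is exactly the sort of test sequence against which FOSD applies: it is nonnegative and increasing in $m$ because $g$ is an increasing sequence, and it is bounded because $g\in\ell_\infty$. By Proposition \ref{M>N} we have that $\mu^{C,N+1}$ first-order stochastically dominates $\mu^{C,N}$, so by the very definition of FOSD given in the paper,
\[
\sum_{m=1}^\infty (g_{n+m}-g_n)\,\mu^{C,N+1}_m \;\ge\; \sum_{m=1}^\infty (g_{n+m}-g_n)\,\mu^{C,N}_m.
\]
Since $c\ge c_L\ge 0$, multiplying by $c$ preserves the inequality, and the remaining terms $\eta'u_n-K(c)+c_H^2 g_n$ in the expression for $L^N_c g$ do not depend on $N$. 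Dividing by the positive constant $c_H^2+\eta'+r$ yields $L^{N+1}_c g \ge L^N_c g$ pointwise in $n$, which is the claim.

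The only nontrivial piece is the algebraic regrouping that packages $-c\bar C^N g_n$ together with $c\sum_m g_{n+m}\mu^{C,N}_m$ into a single weighted expectation of $g_{n+m}-g_n$. Without that step, increasing $N$ simultaneously raises $\bar C^N$ (which enters with a negative sign in front of $g_n$) and raises the weights in $\sum_m g_{n+m} C^N_m \mu^N_m$, and the two effects look ambiguous. Once rewritten in terms of the increasing nonnegative integrand $g_{n+m}-g_n$, monotonicity is an immediate consequence of Proposition \ref{M>N}, so this regrouping is the real substance of the proof.
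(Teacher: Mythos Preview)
Your proof is correct and takes essentially the same approach as the paper: both reduce the comparison of $L^{N+1}_c g$ and $L^N_c g$ to the first-order stochastic dominance of $\mu^{C,N+1}$ over $\mu^{C,N}$ supplied by Proposition \ref{M>N}. Your explicit regrouping into $c\sum_m (g_{n+m}-g_n)\,\mu^{C,N}_m$ is in fact slightly cleaner than the paper's version, which phrases the same comparison via the tail-sum characterization of FOSD and in the process introduces an unnecessary hypothesis $g\ge 0$.
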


\begin{proof}
It is enough to show that if $f(n)$ and $g(n)$ are increasing, bounded
functions with $\,f(n)\, \geq\, g(n)\,\geq 0\,$, then $L^{N+1}_c f(n) \geq
L^N_c g(n)$. For that it suffices to show that
\begin{multline}
(c^2_H -c \bar C^{N+1}) f(n) + c \sum_{m=1}^{\infty} f(n+m) C^{N+1}_m
\mu^{N+1}_m \\
\!\!\!\!\!\!\!\!\!\!\!\!\!\!\!\!\!\!\!\!\!\!\!\!\!\!\!\!\!\!\!\!\!\!\!\!\!\!\!\!\!\!
\geq (c^2_H -c \bar C^N) g(n) + c \sum_{m=1}^{\infty} g(n+m) C^N_m \mu^N_m.
\label{ineqmon}
\end{multline}
Because $f$ and $g$ are increasing and $f\geq g$, inequality
\eqref{ineqmon} holds because
\begin{equation*}
\sum_{m=k}^{\infty} C_m^{N+1} \mu_m^{N+1} \geq \sum_{m=k}^{\infty}
C_m^N \mu_m^N
\end{equation*}
for all $k \geq 1$, based on Proposition \ref{M>N}.
\end{proof}

\begin{prop}\label{monVN}
If $N' \geq N$, then $V^{N'}(n) \geq V^N(n)$ for all $n$.
\end{prop}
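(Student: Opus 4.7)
The strategy is to combine monotonicity of the Bellman operator in the other agents' trigger level (Lemma~\ref{mon}) with its contraction property on $\ell_\infty$, using $V^N$ itself as the starting seed for an iteration against the Bellman operator associated with $N'$.

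Write $T^N g := \sup_{c\in\{c_L,c_H\}} L^N_c g$. Three preliminary facts are needed. First, by Corollary~\ref{fixedp} (applied to the conjectured market $(\mu^N,C^N)$ and using Lemma~\ref{udec} together with monotonicity of $u$), the value function $V^N$ is a monotone increasing sequence. Second, by Lemma~\ref{Lsup}, $V^N = T^N V^N$. Third, $T^N$ is a contraction on $\ell_\infty$: since the coefficients $c_H^2 - c\bar C^N \ge 0$ and $c\,C^N_m\mu^N_m$ appearing in $L^N_c$ are nonnegative and their total weight is $c_H^2$, for any bounded $g,h$ one has
\begin{equation*}
|L^N_c g_n - L^N_c h_n| \;\le\; \frac{c_H^2}{c_H^2 + r + \eta'}\,\|g - h\|_\infty,
\end{equation*}
and taking the pointwise supremum over $c\in\{c_L,c_H\}$ on both sides preserves this inequality. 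The same statements hold verbatim for $T^{N'}$, with unique $\ell_\infty$ fixed point $V^{N'}$.

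The one-step comparison is the heart of the argument. Since $V^N$ is increasing and $N'\ge N$, Lemma~\ref{mon} (iterated, if necessary, from $N$ to $N+1$ to $\ldots$ to $N'$; a harmless additive constant makes $V^N$ nonnegative without changing the inequality, because adding a constant $K$ to the argument shifts $L^N_c$ by $c_H^2 K/(c_H^2+r+\eta')$, a quantity independent of $N$) gives $L^{N'}_c V^N \ge L^N_c V^N$ for each $c\in\{c_L,c_H\}$. Taking the sup yields
\begin{equation*}
T^{N'} V^N \;\ge\; T^N V^N \;=\; V^N.
\end{equation*}
Because $T^{N'}$ is monotone in its argument (its defining coefficients are nonnegative, and the pointwise sup of monotone maps is monotone), iterating this inequality produces the nondecreasing chain
\begin{equation*}
V^N \;\le\; T^{N'} V^N \;\le\; (T^{N'})^2 V^N \;\le\; \cdots.
\end{equation*}
By the contraction property, $(T^{N'})^k V^N \to V^{N'}$ in $\ell_\infty$, hence pointwise, as $k\to\infty$. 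Passing to the limit preserves the lower bound $V^N$, so $V^{N'} \ge V^N$, as claimed.

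The only delicate step is the application of Lemma~\ref{mon}, which in turn rests on the first-order stochastic dominance of $\mu^{C,N'}$ over $\mu^{C,N}$ established in Proposition~\ref{M>N}; the rest is standard fixed-point machinery.
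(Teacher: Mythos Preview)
Your proof is correct and uses the same essential ingredients as the paper: Lemma~\ref{mon} for the one-step comparison, then monotone iteration of a contractive Bellman operator to the appropriate fixed point. The only difference is the direction of the iteration. The paper starts from $V^{N'}$, uses Lemma~\ref{mon} to obtain $V^{N'}=L^{N'}V^{N'}\ge L^N V^{N'}$, then algebraically converts this into $V^{N'}\ge M V^{N'}$ for the $M$-operator of Lemma~\ref{Moper} associated with market $N$, and iterates $M$ downward to $V^N$. You instead start from $V^N$, obtain $T^{N'}V^N\ge T^N V^N=V^N$, and iterate $T^{N'}$ upward to $V^{N'}$. Your variant is slightly more self-contained, since it stays with a single operator and avoids the paper's conversion step between the $L^N$ and $M$ formulations; otherwise the two arguments are mirror images of each other. (Incidentally, your additive-constant remark is correct but unnecessary: the inequality in Lemma~\ref{mon} reduces to comparing $\sum_m (g_{n+m}-g_n)\mu^{C,N'}_m$ with $\sum_m (g_{n+m}-g_n)\mu^{C,N}_m$, and $m\mapsto g_{n+m}-g_n$ is already nonnegative increasing whenever $g$ is increasing.)
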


\begin{proof} Let
\[
L^NV\ =\ \sup_{c\,\in\,[c_L\,,\,c_H]}\,L^N_c\,V.
\]
By Lemmas \ref{Lsup} and \ref{mon},
\[
V^{N'}\ =\ L^{N'}\,V^{N'}\ \ge\ L^{N}\,V^{N'}.
\]
Thus, for any $\,c\,\in\,[c_L\,,\,c_H]\,,\,$
\begin{eqnarray*}
V^{N'}(n) & \ge & L^{N}_cV^{N'}(n) \\
         & = & \frac{1}{c_H^2+\eta'+r} \left[
          \eta' u(n) - K(c) + (c_H^2 - c \bar C^N)
          V^{N'}(n) + c \sum_{m=1}^{\infty} V^{N'}(n+m) c^N_m \mu^N_m
          \right].
\end{eqnarray*}
Multiplying this inequality by $\,c_H^2+\eta+r\,,\,$ adding $\,(\bar C^N\,c-\,c_H^2)\,V^{N'}(n)\,$, dividing by $\,c\bar C^N\,+\,r\,+\,\eta\,$, and
 maximizing over $\,c\,,\,$ we get
\[
V^{N'}\ \ge\ M\,V^{N'},
\]
where the $\,M\,$ operator is defined in Lemma \ref{Moper}, corresponding to $\,c^N\,.$ Since $\,M\,$ is monotone, we get
\[
V^{N'}\,\ge\,M^k\,V^{N'}
\]
for any $\,k\,\in\,\N\,.$ By Lemma \ref{Moper} and Corollary \ref{fixedp},
\[
\lim_{k\to\infty}\,M^k\,V^{N'}\ =\ V^N,
\]
and the proof is complete.
\end{proof}

\subsection{Nash Equilibria}

We define the operator ${\cal Q}: \ell_{\infty} \to \ell_{\infty}$ by
\begin{equation*}
({\cal Q} \tilde C)(n) = \argmax\limits_c
\Bigg(\frac{\eta'u(n)\ -\ K(c)}{\bar C c+r+\eta'}\ +\ \frac{c}{
\bar C c+r+\eta'}\,\sum_{m=1}^\infty\,V^{\tilde C}(n+m)\,\mu^{\tilde C}(m) \Bigg),
\end{equation*}
where
\begin{equation*}
V^{\tilde C}(n)\ =\ \max_{c}\,\Bigg(
\frac{\eta'u(n)\ -\ K(c)}{\bar Cc+r+\eta'}\ +\
\frac{c}{\bar Cc+r+\eta'}\,\sum_{m=1}^\infty\,V^{\tilde C}(n+m)\,
\mu^{\tilde C}(m)\ \Bigg).
\end{equation*}
We then define the  ${\cal N}(N) \subset {\mathbb N}$ as
\begin{equation*}
{\cal N}(N) = \left\lbrace n \in {\mathbb N}; C^{n} \in {\cal Q}(C^N)
\right\rbrace .
\end{equation*}

The following proposition is a direct consequence of Proposition
\ref{trigger_policy} and the definition of the
correspondence ${\cal N}$.
\begin{prop}
Symmetric pure strategy Nash equilibria of the game
are given by trigger policies with trigger precision
levels that are the fixed points of the correspondence ${\cal N}$.
\end{prop}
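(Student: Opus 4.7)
The statement is essentially an unpacking of definitions combined with one substantive earlier result, so my plan is to verify the two implications directly and flag why the restriction to trigger form is justified.

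For the forward direction, I would observe that if $C^N$ is a symmetric pure-strategy Nash equilibrium, then by the definition of equilibrium given in Section \ref{sec_equil}, the associated stationary measure $\mu^N$ exists and is unique (which follows from Lemma \ref{stat_mes}), and $C^N$ is an optimal response to the market conditions $(\mu^N, C^N)$. The operator ${\cal Q}$ is constructed so that $({\cal Q}\tilde C)(n)$ is exactly the pointwise maximizer of the right-hand side of the Bellman equation corresponding to $(\mu^{\tilde C}, \tilde C)$; by the verification Lemma \ref{verification}, the selections of ${\cal Q}(\tilde C)$ are precisely the optimal policies given $(\mu^{\tilde C}, \tilde C)$. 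Hence $C^N \in {\cal Q}(C^N)$, which by definition of ${\cal N}$ means $N \in {\cal N}(N)$.

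For the converse, I would start from $N \in {\cal N}(N)$, i.e., $C^N \in {\cal Q}(C^N)$, and again invoke Lemma \ref{verification} to conclude that the trigger policy $C^N$ is an optimal response under market conditions $(\mu^N, C^N)$. Together with the existence and uniqueness of $\mu^N$ guaranteed by Lemma \ref{stat_mes}, both clauses of the equilibrium definition are satisfied, so $C^N$ is a symmetric pure-strategy Nash equilibrium in trigger form.

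The only conceptual point worth articulating is the restriction to trigger form. Under proportional costs $K(c)=\kappa c$, Proposition \ref{trigger_policy} guarantees that whenever an optimal response exists, an optimal response in trigger form also exists, so no trigger-form symmetric equilibrium is missed by the fixed-point characterization. I do not foresee a substantial obstacle; the result mainly serves to make precise the fixed-point algorithm in Section \ref{sec_equil}, while the real analytical content (monotonicity of ${\cal N}$, value-function monotonicity, and existence of a fixed point) is already carried by earlier results such as Lemma \ref{mon} and Proposition \ref{monVN}.
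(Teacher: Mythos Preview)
Your proposal is correct and follows the same approach as the paper: the paper simply states that the proposition is ``a direct consequence of Proposition~\ref{trigger_policy} and the definition of the correspondence~${\cal N}$,'' and your argument amounts to an explicit unpacking of that sentence, invoking Lemma~\ref{stat_mes} and the verification Lemma~\ref{verification} to make the two implications precise. No substantive difference.
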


\begin{lemma} The correspondence ${\cal N}(N)$ is monotone increasing
in $N$.
\end{lemma}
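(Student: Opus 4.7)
My plan is to reduce the question to a single scalar inequality arising from a one-step deviation at precision $n=k-1$, and then verify that this inequality behaves monotonically in $N$. Write $W^{N,k}$ for the value of playing the trigger policy $C^k$ in market $(\mu^N, C^N)$, defined as the unique bounded fixed point of the corresponding fixed-policy Bellman operator; by Proposition \ref{trigger_policy} and the definition of $\mathcal{N}$, membership $k \in \mathcal{N}(N)$ is equivalent to $W^{N,k}$ dominating every $W^{N,k'}$ pointwise. Because $C^k$ and $C^{k-1}$ agree on $\{n \ge k\}$ (both play $c_L$) and on $\{n < k-1\}$ (both play $c_H$), they yield the same ``$c_L$-forever'' value $\tilde V^N$ on $\{n \ge k\}$, where $\tilde V^N$ is the unique bounded fixed point of the fixed-action operator $L^N_{c_L}$ from Lemma \ref{Lsup}. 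A direct Bellman computation at $n = k-1$ produces
\[
W^{N,k}_{k-1} - W^{N,k-1}_{k-1} \;=\; \frac{(c_H - c_L)(r+\eta')}{(r+\eta' + c_H \bar C^N)(r+\eta' + c_L \bar C^N)}\; g^N(k),
\]
where
\[
g^N(k) \;:=\; \sum_{m=1}^\infty \Bigl(\tilde V^N_{k-1+m} - \tfrac{\eta' u_{k-1}}{r+\eta'}\Bigr)\mu^{N,C}_m \;-\; \kappa.
\]
A backward induction on $n < k-1$ using the common $c_H$-Bellman shows that $W^{N,k}_n - W^{N,k-1}_n$ is a nonnegative multiple of $W^{N,k}_{k-1} - W^{N,k-1}_{k-1}$, so $W^{N,k} \ge W^{N,k-1}$ pointwise if and only if $g^N(k) \ge 0$.

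I would then establish two monotonicities of $g^N(k)$. The first, in $N$ for fixed $k$: applying Lemma \ref{mon} with $c = c_L$ and iterating exactly as in the proof of Proposition \ref{monVN} gives $\tilde V^{N'} \ge \tilde V^N$ pointwise when $N' \ge N$. The integrand $\tilde V^N_{k-1+m} - \eta' u_{k-1}/(r+\eta')$ is nonnegative (since $\tilde V^N_n \ge \eta' u_n/(r+\eta') \ge \eta' u_{k-1}/(r+\eta')$ for $n \ge k-1$) and monotone increasing in $m$ (since $\tilde V^N$ is increasing in its argument). Combining with the FOSD of $\mu^{N',C}$ over $\mu^{N,C}$ from Proposition \ref{M>N},
\[
g^{N'}(k) + \kappa \;\ge\; \sum_m \Bigl(\tilde V^N_{k-1+m} - \tfrac{\eta' u_{k-1}}{r+\eta'}\Bigr)\mu^{N',C}_m \;\ge\; g^N(k) + \kappa.
\]
The second property, that $k \mapsto g^N(k)$ is decreasing, uses (a) the analogue of Lemma \ref{udec} applied to $L^N_{c_L}$, which shows $n \mapsto \tilde V^N_n - \eta' u_n/(r+\eta')$ is decreasing, so the increments $\delta_n := \tilde V^N_{n+1} - \tilde V^N_n$ are bounded by $\eta'(u_{n+1}-u_n)/(r+\eta')$, and (b) concavity of $u$, which makes $\eta'(u_{n+1}-u_n)/(r+\eta')$ decreasing in $n$. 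Writing
\[
g^N(k+1) - g^N(k) \;=\; \sum_m \delta_{k-1+m}\,\mu^{N,C}_m \;-\; \tfrac{\eta'(u_k-u_{k-1})}{r+\eta'}\,\bar C^N
\]
and applying (a) and (b) termwise yields $g^N(k+1) \le g^N(k)$.

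To conclude: if $k \in \mathcal{N}(N)$ then $W^{N,k} \ge W^{N,k-1}$, so $g^N(k) \ge 0$; the first monotonicity gives $g^{N'}(k) \ge 0$, and the second, applied to $g^{N'}$, gives $g^{N'}(j) \ge g^{N'}(k) \ge 0$ for every $j \le k$. Chaining the one-step characterization shows $W^{N',k} \ge W^{N',k-1} \ge \cdots \ge W^{N',0}$ pointwise in market $N'$, so every trigger strictly smaller than $k$ is dominated by $C^k$ there. Since $\mathcal{N}(N')$ is nonempty (by Proposition \ref{trigger_policy}) and bounded (by Proposition \ref{flattail}), its maximum element $k'$ must satisfy $k' \ge k$, which is precisely the claim.

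The main obstacle I anticipate is the decreasingness of $g^N$ in $k$: the naive bound $\tilde V^N_{k+m} \ge \tilde V^N_{k-1+m}$ pushes the first term in the wrong direction, and only by invoking the Lemma \ref{udec} analogue for the fixed action $c_L$ together with concavity of $u$ can one show that the gain in $\tilde V^N$ never outruns the corresponding gain in $\eta' u$, which is what underlies the chaining step on which the whole argument rests.
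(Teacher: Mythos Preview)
Your approach works in outline, but there is a gap in the FOSD step. You assert that the integrand $\tilde V^N_{k-1+m}-\eta' u_{k-1}/(r+\eta')$ is nonnegative because ``$\tilde V^N_n\ge\eta' u_n/(r+\eta')$ for $n\ge k-1$''. That inequality is false in general: from the fixed-$c_L$ Bellman equation one computes
\[
(r+\eta')\,\tilde V^N_n-\eta' u_n\ =\ c_L\Bigl[\sum_m(\tilde V^N_{n+m}-\tilde V^N_n)\,\mu^{N,C}_m-\kappa\Bigr],
\]
and the bracket is negative for all large $n$ whenever $c_L>0$, since the increments of $\tilde V^N$ vanish while $\kappa>0$ stays fixed (indeed $\tilde V^N_n-\eta' u_n/(r+\eta')\to-\kappa c_L/(r+\eta')<0$). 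What you actually need is only $\tilde V^N_k\ge\eta' u_{k-1}/(r+\eta')$ for the particular $k\in\mathcal N(N)$ under consideration, and this does hold: since $C^k$ is optimal in market $N$ one has $\tilde V^N_k=V^N_k\ge V^N_{k-1}\ge\eta' u_{k-1}/(r+\eta')$, the last inequality following from the HJB equation at $n=k-1$ where the optimal action $c_H$ forces the bracket to be nonnegative. With this correction your chain goes through.

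That said, the paper's route is considerably shorter. It works directly with the \emph{optimal} value $V^N$ rather than the auxiliary $\tilde V^N$: from the HJB equation, an optimal trigger is exactly a sign-change point of the sequence $n\mapsto(r+\eta')V^N_n-\eta' u_n$; Lemma~\ref{udec} says this sequence is decreasing in $n$; and Proposition~\ref{monVN} says $V^N$ (hence the sequence) is increasing in $N$. Those two monotonicities immediately push the sign-change point upward as $N$ grows. Your argument essentially re-derives the analogues of Lemma~\ref{udec} and Proposition~\ref{monVN} for the fixed-$c_L$ value $\tilde V^N$, adds a one-step deviation formula, and then needs a chaining step to recover global optimality---all of which the paper avoids by observing that the optimal trigger is already encoded in a single zero-crossing of $(r+\eta')V^N_n-\eta' u_n$.
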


\begin{proof} From the Hamilton-Jacobi-Bellman equation \eqref{Bellman},
\[
(r+\eta')\,V^N_n\ =\ \max_{c\in [c_L,c_H]}\, \eta' u_n\ +\,c\,\left(-\,\kappa \,+\,\sum_{m=1}^\infty\,\left(V^N_{n+m}
\,-\,V^N_n\right)\,C^N_m\,\mu_m^N\right).
\]
Let first $\,c_L\,>\,0\,.$ Then,  it is optimal for an agent to choose $c=c_H\,$ if
\begin{equation}\label{kappa}
-\,\kappa\,+\,\sum_{m=1}^\infty\,\left(V^N_{n+m}\,-\,V^N_n\right)\,C^N_m\,\mu_m^N\
>\ 0\ \Leftrightarrow (r+\eta')\,V^N_n\ -\ \eta' u_n\ >\ 0,
\end{equation}
the agent is indifferent between choosing $c_H$ or $c_L$ if
\begin{equation}\label{kappa1}
-\,\kappa\,+\,\sum_{m=1}^\infty\,\left(V^N_{n+m}\,-\,V^N_n\right)\,C^N_m\,\mu_m^N\
=\ 0\ \Leftrightarrow (r+\eta')\,V^N_n\ -\ \eta' u_n\ =\ 0,
\end{equation}
and the agent will choose $c_L$ if the inequality $<$ holds in
\eqref{kappa}. By Lemma \ref{udec}, the set of $n$ for which
$(r+\eta')\,V^N_n\ -\ \eta' u_n=0$ is either empty or is  an interval
$N_1\le n\le N_2.$ Proposition \ref{monVN} implies the required
monotonicity.

Let now $\,c_L\,=\,0.$ Then, by the same reasoning,  it is optimal for
an agent to choose $c=c_H\,$ if and only if $(r+\eta')\,V^N_n\ -\
\eta' u_n>0$. Alternatively, $(r+\eta')\,V^N_n\ -\ \eta' u_n=0$.
By Lemma \ref{udec}, the set $n$ for which $(r+\eta')\,V^N_n\ -\ \eta'
u_n>0$ is an interval $n\ <\ N_1$ and hence $(r+\eta')\,V^N_n\ -\
\eta' u_n=0$ for all $n\ge N_1.$ Consequently, since $u_n$ is monotone
increasing and concave, the sequence
\[
Z_n\ :=\ -\,\kappa\,+\,\sum_{m=1}^\infty\,\left(V^N_{n+m}\,-\,V^N_n\right)\,C^N_m\,\mu_m^N\ =\ -\,\kappa\,+\,\frac{\eta'}{r+\eta'}\sum_{m=1}^\infty\,\left(u_{n+m}\,-\,u_n\right)\,C^N_m\,\mu_m^N
\]
is decreasing for $n\ge N_1.$ Therefore, the set of $n$ for which
$Z_n =0$ is either empty or is an interval $N_1 \leq n \leq N_2$.
Proposition \ref{M>N} implies the required monotonicity.
\end{proof}

\begin{prop}\label{fixN} The correspondence ${\cal N}$ has at least one fixed
point. Any fixed point is less than or equal to $\overline N\,.$
\end{prop}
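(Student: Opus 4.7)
The plan is to reduce the existence of a fixed point to an application of the Knaster--Tarski fixed-point theorem on a finite lattice, using the monotonicity of $\mathcal{N}$ already established and the uniform upper bound $\overline N$ provided by Proposition \ref{nashoperator}.

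First I would dispose of the easy half. If $N\in\mathcal{N}(N)$, then by definition the trigger policy $C^N$ is optimal given the market conditions $(\mu^N,C^N)$. If we had $N>\overline N$, then $C^N_{\overline N}=c_H$ (since $\overline N<N$), contradicting Proposition \ref{flattail}, which forces any optimal policy to equal $c_L$ at all precisions $n\geq\overline N$. Hence every fixed point satisfies $N\leq \overline N$. (Alternatively, one can cite the uniform bound in Proposition \ref{nashoperator} directly.)

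For existence, I would work on the finite lattice $\mathcal{L}=\{0,1,\ldots,\overline N\}$. Proposition \ref{nashoperator} gives $\mathcal{N}(N)\subseteq\mathcal{L}$ for every $N\in\mathcal{L}$; Proposition \ref{trigger_policy} (applied to the market conditions $(\mu^N,C^N)$) ensures that $\mathcal{N}(N)$ is nonempty. I would therefore define
\[
g(N)\ =\ \max\,\mathcal{N}(N),\qquad N\in\mathcal{L},
\]
a well-defined map $g:\mathcal{L}\to\mathcal{L}$. The monotonicity statement in Proposition \ref{nashoperator}---if $N'\geq N$ and $k\in\mathcal{N}(N)$ then some $k'\geq k$ lies in $\mathcal{N}(N')$---immediately yields $g(N')\geq g(N)$, so $g$ is monotone increasing on $\mathcal{L}$.

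The final step is the fixed-point construction. Since $g$ is a monotone map on a finite (hence complete) lattice, Knaster--Tarski supplies a fixed point; concretely I would exhibit it as the limit of the non-increasing iteration $N_0=\overline N$, $N_{k+1}=g(N_k)$. Indeed $N_1=g(\overline N)\leq\overline N=N_0$, and monotonicity then gives $N_{k+1}=g(N_k)\leq g(N_{k-1})=N_k$ by induction; the sequence is a non-increasing sequence of non-negative integers and therefore stabilizes at some $N^*\in\mathcal{L}$ with $g(N^*)=N^*$, i.e.\ $N^*\in\mathcal{N}(N^*)$. Combined with the first paragraph, this gives a fixed point with $N^*\leq\overline N$, proving the proposition.

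The only real subtlety---and the step I would scrutinize most carefully---is verifying that $\mathcal{N}(N)$ is nonempty for every $N\in\mathcal{L}$, so that $g$ is everywhere defined. This is not a separate obstacle here, since Proposition \ref{trigger_policy} guarantees an optimal trigger policy exists given any market conditions $(\mu^N,C^N)$, and that optimal trigger level belongs to $\mathcal{N}(N)\cap\mathcal{L}$ by the bound $\overline N$. All other ingredients (monotonicity, uniform bound, finiteness of the lattice) have already been proved.
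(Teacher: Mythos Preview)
Your argument is correct and follows essentially the same path as the paper: both exploit the monotonicity of $\mathcal{N}$ and the uniform bound $\overline N$ to run a finite monotone iteration on $\{0,1,\ldots,\overline N\}$. The only cosmetic difference is the direction of the iteration---the paper starts at $0$ and walks upward using $\inf\mathcal{N}(\cdot)$, whereas you start at $\overline N$ and descend via $g=\max\mathcal{N}(\cdot)$ in a Knaster--Tarski formulation; these are dual constructions and yield the same conclusion.
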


\begin{proof} If $0 \in \,{\cal N}(0),$ we are done. Otherwise,
$\inf\lbrace {\cal N} (0) \rbrace \ge 1\,.$ By monotonicity,
$\inf \lbrace \,{\cal N}(1)\ \rbrace \ge\ 1\,.$ Again, if
$1 \in \,{\cal N}(1),$ we are done. Otherwise, continue inductively. Since
there is only a finite number $\,\overline N\,$ of possible
outcomes, we must arrive at some $n$ in ${\cal N}(n)$.
\end{proof}

\bigskip

\begin{proof}[Proof of Proposition \ref{pareto}]
The result follows directly from Proposition \ref{monVN}
and the definition of equilibrium.
\end{proof}

\subsection{Equilibria with minimal search intensity}

\begin{lemma}\label{mon-pres}
The operator $\,A\,$ is a contraction on $\,\ell_\infty\,$ with
\[
\|A\|_{\ell_\infty\to \ell_\infty}\ \le\ \frac{c_L^2}{r+\eta'+c_L^2}\ <\ 1.
\]
Furthermore, $\,A\,$ preserves positivity, monotonicity, and concavity.
\end{lemma}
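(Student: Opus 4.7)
All four claims follow from direct manipulations of the defining formula
\[
(A(g))_n \;=\; \frac{c_L^2}{r+\eta'+c_L^2}\,\sum_{m=1}^\infty g_{n+m}\,\mu^0_m,
\]
using only the fact that $\mu^0$ is a probability measure on $\N$ (which is guaranteed by Lemma \ref{stat_mes} applied to the constant policy $C\equiv c_L$, so that $\sum_m \mu^0_m = 1$). I will handle each property separately; there is no serious obstacle.

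\textbf{Contraction.} For any $g\in\ell_\infty$,
\[
|(Ag)_n|\;\le\;\frac{c_L^2}{r+\eta'+c_L^2}\,\|g\|_\infty\sum_{m=1}^\infty\mu^0_m\;=\;\frac{c_L^2}{r+\eta'+c_L^2}\,\|g\|_\infty,
\]
because $\sum_m\mu^0_m=1$. Taking the supremum over $n$ gives the stated operator-norm bound, and since $r+\eta'>0$ this bound is strictly less than $1$, so $A$ is a strict contraction. By linearity, the same argument applied to $g_1-g_2$ yields $\|Ag_1-Ag_2\|_\infty\le\tfrac{c_L^2}{r+\eta'+c_L^2}\|g_1-g_2\|_\infty$.

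\textbf{Preservation of positivity, monotonicity, and concavity.} Each preservation property is verified by applying the corresponding inequality term by term inside the nonnegative weighted sum defining $A$. Explicitly, if $g_n\ge 0$ for all $n$, every summand $g_{n+m}\mu^0_m$ is nonnegative, so $(Ag)_n\ge 0$. If $g$ is monotone increasing, then
\[
(Ag)_{n+1}-(Ag)_n\;=\;\frac{c_L^2}{r+\eta'+c_L^2}\sum_{m=1}^\infty(g_{n+1+m}-g_{n+m})\,\mu^0_m\;\ge\;0,
\]
since each bracket is nonnegative and $\mu^0_m\ge 0$. Finally, if $g$ is concave in the sense $g_{n+2}+g_n\le 2g_{n+1}$, then
\[
(Ag)_{n+2}+(Ag)_n-2(Ag)_{n+1}\;=\;\frac{c_L^2}{r+\eta'+c_L^2}\sum_{m=1}^\infty\bigl(g_{n+2+m}+g_{n+m}-2g_{n+1+m}\bigr)\mu^0_m\;\le\;0,
\]
so $Ag$ is concave. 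All three sums converge absolutely because $g$ is bounded and $\mu^0$ is summable, justifying the termwise manipulation.

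\textbf{Expected obstacle.} There is no substantive obstacle; the only point meriting care is confirming that $\mu^0$ is a bona fide probability measure (so that the contraction constant is exactly $c_L^2/(r+\eta'+c_L^2)$ rather than a larger multiple). This is immediate from Lemma \ref{stat_mes}, which constructs $\mu^0$ as the unique stationary probability measure associated with the constant minimal-search policy.
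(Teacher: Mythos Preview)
Your proof is correct and follows essentially the same approach as the paper's own argument: both bound $|A(g)_n|$ by pulling $\|g\|_\infty$ outside the sum and using $\sum_m\mu^0_m=1$, and both verify monotonicity (and concavity) by applying the relevant inequality termwise under the nonnegative weights $\mu^0_m$. Your write-up is in fact more explicit than the paper's, which omits positivity and dispatches concavity with ``proved similarly.''
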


\begin{proof}[Proof of Lemma \ref{I-A}] We have, for any bounded sequence $g$,
\begin{equation}
|A(g)_n|\ \le\ \,\frac{c_L^2}{r\,+\,\eta'\,+\,c_L^2}\,\sum_{m=1}^\infty\,|g_{n+m}|\,\mu_m\
\le\ \frac{c_L^2}{r+\eta'+c_L^2}\,\sup_n\,g_n,
\end{equation}
which establishes the first claim.
Furthermore, if $\,g\,$ is increasing, we have $\,g_{n_1+m}\ \ge\ g_{n_2+m}\,$ for $\,n_1\ge n_2\,$, and for any $\,m\,.$
Summing up these inequalities, we get the required monotonicity. Preservation of concavity is proved similarly.
\end{proof}

\bigskip

\begin{proof}[Proof of Theorem \ref{homog}] It suffices to show that, taking the minimal-search
$(\mu^0,C^0)$ behavior as given, the minimal-effort search effort $c_L$
achieves the supremum defined by the Hamilton-Jacobi-Bellman equation, at each precision $n$, if and only if
$K'(c_L)\geq B$, where $B$ is defined by \eqref{foc}.

By the previous lemma, $\,V(n)\,$ is concave in $\,n\,$. Thus, the sequence
\[
\sum_{m=1}^\infty\,(V_{n+m}\,-\,V_n)\,\mu^0_m
\]
is monotone decreasing with $\,n\,.$ Therefore,
\[
\sum_{m=1}^\infty\,(V_{1+m}\,-\,V_1)\,\mu^0_m\ =\ \max_n\,\sum_{m=1}^\infty\,(V_{n+m}\,-\,V_n)\,\mu^0_m.
\]
We need to show that the objective function, mapping $c$ to
\[
-K(c)\ +\ c\,c_L\,\sum_{m=1}^\infty\,(V_{n+m}\,-\,V_n)\,\mu^0_m,
\]
achieves its maximum at $\,c\,=\,c_L\,.$ Because $\,K$ is convex, this
objective function is decreasing on $\,[c_L\,,\,c_H]\,,\,$ and the claim follows.
\end{proof}

The following lemma follows directly from the proof of Proposition \ref{fixN}.

\begin{lemma} If $C^N\ \in\ {\cal Q}(C^1)\,$ for some $N\ge 1\,$ then the correspondence ${\cal N}\,$ has  a fixed point $n\ge 1\,.$
\end{lemma}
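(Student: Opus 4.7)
The plan is to imitate the iterative construction from the proof of Proposition \ref{fixN}, but launch the iteration from the index $1$ rather than from $0$; the hypothesis that $C^N \in {\cal Q}(C^1)$ for some $N\ge 1$ is exactly what allows this, because it provides a non-trivial element of ${\cal N}(1)$ lying in $[1,\infty)$.

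Concretely, first observe that by the definition of the correspondence ${\cal N}$, the hypothesis $C^N \in {\cal Q}(C^1)$ is the same as $N \in {\cal N}(1)$. If $1 \in {\cal N}(1)$, then $1$ is already the desired fixed point and there is nothing more to do. Otherwise, set $N_1 := N \ge 2$, and apply the monotonicity lemma for ${\cal N}$ (stated just before Proposition \ref{fixN}): since $N_1 \ge 1$ and $N_1 \in {\cal N}(1)$, there exists some $N_2 \in {\cal N}(N_1)$ with $N_2 \ge N_1$. If $N_2 = N_1$, we have a fixed point $N_1 \ge 1$; otherwise we iterate, producing a non-decreasing sequence $N_1 \le N_2 \le N_3 \le \cdots$ with $N_{k+1} \in {\cal N}(N_k)$ and the inequalities strict so long as no fixed point has yet been found.

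By the uniform upper bound on ${\cal N}(M)$ in terms of $\overline N$ from Proposition \ref{nashoperator}, each $N_k$ is at most $\overline N$. A strictly increasing sequence in $\{1,2,\dots,\overline N\}$ cannot persist forever, so the construction must terminate at some step $k$ with $N_{k+1} = N_k$, yielding the required fixed point $N_k \in {\cal N}(N_k)$; moreover $N_k \ge N_1 \ge 1$. There is no serious obstacle here: the argument is essentially that of Proposition \ref{fixN}, and the only new ingredient is the observation that the hypothesis permits us to initialize the iteration at $1$ rather than $0$, thereby forcing the resulting fixed point to be at least $1$.
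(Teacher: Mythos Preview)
Your argument is correct and follows essentially the same route the paper intends: the paper simply states that the lemma ``follows directly from the proof of Proposition \ref{fixN},'' and your write-up is precisely the obvious adaptation of that inductive argument, initialized at $1$ via the hypothesis $N\in{\cal N}(1)$, using the monotonicity of ${\cal N}$ together with the uniform bound $\overline N$ to force termination at a fixed point $\ge 1$.
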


\begin{proof}[Proof of Proposition \ref{more0}] By the above lemma, it suffices to show that ${\cal Q}(C^1)\,\not=\,C^0\,.$ Suppose on the contrary that ${\cal Q}(C^1)\ =\ C^0\,.$ Then the value function is simply
\[
V_n^1\ =\ \frac{\eta'\,u(n)}{r+\eta'}.
\]
It follows from \eqref{kappa} that $C_1\ =\ 0$ is optimal if and only if
\[
\frac{\eta'\,(u(2)\,-\,u(1))\,c_H\mu_1^1}{r+\eta'}\ =\ \sum_{m=1}^\infty\,\left(V^1_{n+m}\,-\,V^N_n\right)\,C^1_m\,\mu_m^N\
<\ \kappa.
\]
By \eqref{kappa}, we will have the inequality $\,V^1_n\,\ge\,V^0_n$ for all $n\,\ge\,2$ and a strict inequality $V^1_1\ >\ V^0_1,$ which is a contradiction.
The proof is complete.
\end{proof}

\section{Proofs of Results in Section \ref{interventions}, Policy Interventions}

\begin{proof}[Proof of Lemma \ref{mon_subsidy}] By construction, the value function $V^\gd$
associated with proportional
search subsidy rate $\delta$ is monotone increasing in $\delta\,.$ By \eqref{kappa}, the optimal trigger $N$ is that
$n$ at which the sequence
\[
(r+\eta')\,V^\gd_n\ -\ \eta'\,u(n)
\]
crosses zero. Hence, the optimal trigger is also monotone increasing in $\gd\,.$
\end{proof}

\bigskip

\begin{proof}[Proof of Lemma \ref{mon_educ}] Letting $V_{n,M}$ denote the value associated
with $n$ private signals and $M$ public signals, we define
\[
Z_{n,M}\ =\ V_{n,M}\ -\ \frac{\eta' u_{n+M}}{r\,+\,\eta'}.
\]
We can rewrite the HJB equation for the agent, educated with $M$ signals at birth, in the form
\begin{eqnarray*}
(r+\eta')\,Z_{n,M}\ &=&\ \sup_{c\in[c_L,c_H]}\,c\left(\,-\kappa\,+\,\sum_{m=1}^\infty\,(V_{n+m,M}-V_{n,M})\,\mu_m^C\,\right)\ \\
&& \\
&=&\  \sup_{c\in[c_L,c_H]}\,c\left(\,W_{n,M}\ -\ \kappa\,+\,\sum_{m=1}^\infty\,(Z_{n+m,M}-Z_{n,M})\,\mu_m^C\,\right),
\end{eqnarray*}
where
\[
W_{n,M}\ =\ \frac{\eta'}{r+\eta'}\,\sum_{m=1}^\infty\,(u_{n+M+m}-u_{n+M})\,\mu_m^C.
\]
The quantity $W_{n,M}$ is monotone decreasing with $M$ and $n$ by the concavity of $u(\,\cdot\,).$
Equivalently,
\begin{equation}\label{supZ}
Z_{n,M}\ =\ \sup_{c\in[c_L,c_H]}\quad \frac{c}{\bar C\,c\,+\,r\,+\,\eta'}\,\left(\,W_{n,M}\ -\ \kappa\,+\,\sum_{m=1}^\infty\,Z_{n+m,M}\,\mu_m^C\,\right).\
\end{equation}
Treating the right-hand side
as the image of an operator at $Z$, this operator is a contraction and, by Lemma \ref{fixedp}, $Z_{n,M}$ is its unique fixed point. Since $W_{n,M}$ decreases with $M,$ so does $Z_{n,M}\,.$

By Lemma \ref{udec}, $Z_{n,M}$ is also monotone decreasing with $n\,.$ Hence, an optimal trigger policy, attaining the supremum in \eqref{supZ}, is an $n$ at which the sequence
\[
W_{n,M}\ -\ \kappa\,+\,\sum_{m=1}^\infty\,Z_{n+m,M}\,\mu_m^C
\]
crosses zero. Because both $W_{n,M}$ and $Z_{n,M}$ are decreasing with $M,$ the trigger is also decreasing with $M\,.$
\end{proof}

\bigskip

\begin{proof} [Proof of Proposition \ref{equil_educ}] Suppose that $C^N$ is an equilibrium with  $M$ public signals,
which we express as
\[
C^N\,\in\,{\cal Q}_M(C^N).
\]
By Lemma \ref{mon_educ}, we have
\[
C^{N_1}\ \in\ {\cal Q}_{M-1}(C^N)
\]
for some $N_1\ge\ N\,.$ It follows from the algorithm at the end of Section \ref{sec_equil} that there exists some
 $N'\,\ge\,N$ with $N'\in\,{\cal Q}_{M-1}(C^{N'}).$ The proof is completed by induction in $M$.
\end{proof}

\bigskip
\newpage

\bigskip\noindent
\centerline{\large \bf References}

\bigskip\noindent
Amador, Manuel and Weill, Pierre-Olivier. 2007. ``Learning from
Private and Public Observations of Others' Actions.'' Working
Paper, Stanford University.

\bigskip\noindent
Arrow, Kenneth. 1974. {\it The Limits of Organization}. Norton, New York.

\bigskip\noindent
Banerjee, Abhijit. 1992. ``A Simple Model of Herd Behavior.''
{\it Quarterly Journal of Economics}, {\bf 107:} 797-817.

\bigskip\noindent
Bikhchandani, Sushil, David Hirshleifer, and Ivo Welch. 1992.
``A Theory of Fads, Fashion, Custom, and Cultural Change as
Informational Cascades.''
{\it Journal of Political Economy}, {\bf 100:} 992-1026.

\bigskip\noindent
Banerjee, Abhijit, and Drew Fudenberg. 2004. ``Word-of-Mouth Learning.''
{\it Games and Economic Behavior}, {\bf 46:} 1-22.

\bigskip\noindent
Billingsley, Patrick. 1986. {\it Probability and Measure, Second
Edition}, Wiley, New York.

\bigskip\noindent
Blouin, Max and Roberto Serrano. 2001. ``A Decentralized Market with
Common Value Uncertainty: Non-Steady States.''
{\it Review of Economic Studies}, {\bf 68:} 323-346.

\bigskip\noindent
Dieudonn\'e, Jean  1960. {\it Foundations of Modern Analysis.}
Academic press, New York-London.

\bigskip\noindent
Duffie, Darrell, Nicolae G\^arleanu, and Lasse Pedersen. 2005.
``Over-the-Counter Markets.'' {\it Econometrica}, {\bf 73:}
1815-1847.

\bigskip\noindent
Duffie, Darrell, Gaston Giroux, and Gustavo Manso. 2008.
``Information Percolation.'' Working Paper, MIT.

\bigskip\noindent
Duffie, Darrell, and Gustavo Manso. 2007. ``Information Percolation in
Large Markets.''
{\it American Economic Review Papers and Proceedings}, {\bf 97:} 203-209.

\bigskip\noindent
Duffie, Darrell,  and Yeneng Sun. 2007. ``Existence of Independent
Random Matching.'' {\it Annals of Applied Probability}, {\bf 17:}
386-419.


\bigskip\noindent
Gale, Douglas. 1987. ``Limit Theorems for Markets with Sequential
Bargaining.'' {\it Journal of Economic Theory}, {\bf 43:}
20-54.

\bigskip\noindent
Grossman, Sanford.  1981. ``An Introduction to the Theory of Rational
Expectations Under Asymmetric Information.'' {\it Review of Economic
Studies}, {\bf 4:} 541-559.

\bigskip\noindent
Hartman, Philip. 1982. {\it Ordinary Differential Equations, Second
Edition}, Boston: Birkha\"user.

\bigskip\noindent
Hayek, Friedrich. 1945. ``The Use of Knowledge in Society.'' {\it American Economic Review}, {\bf 35:} 519-530.

\bigskip\noindent
Kiyotaki, Nobuhiro, and Randall Wright. 1993. ``A Search-Theoretic
Approach to Monetary Economics.''
{\it American Economic Review}, {\bf 83:}
63-77.

\bigskip\noindent
Milgrom, Paul. 1981. ``Rational Expectations, Information Acquisition, and Competitive Bidding.''
{\it Econometrica}, {\bf 50:} 1089-1122.

\bigskip\noindent
Pesendorfer, Wolfgang and Jeroen Swinkels. 1997. ``The Loser's Curse and Information Aggregation in
Common Value Auctions.'' {\it Econometrica}, {\bf 65:} 1247-1281.

\bigskip\noindent
Pissarides, Christopher. 1985. ``Short-Run Equilibrium Dynamics
of Unemployment Vacancies, and Real Wages.'' {\it American Economic
Review}, {\bf 75:} 676-690.

\bigskip\noindent
Protter, Philip. 2005.  {\it Stochastic Differential and Integral Equations, Second Edition}, New York: Springer.

\bigskip\noindent
Reny, Philip and Motty Perry. 2006. ``Toward a Strategic Foundation for Rational Expectations Equilibrium.''
{\it Econometrica}, {\bf 74:} 1231-1269.

\bigskip\noindent
R. Tyrrell Rockafellar. 1970. {\it Convex Analysis}, Princeton University Press.

\bigskip\noindent
Rubinstein, Ariel and Asher Wolinsky. 1985. ``Equilibrium in a Market
with Sequential Bargaining.'' {\it Econometrica},  {\bf 53:}
1133-1150.

\bigskip\noindent
Sun, Yeneng. 2006. ``The Exact Law of Large Numbers via Fubini Extension and
Characterization of Insurable Risks.'' {\it Journal of Economic Theory}, {\bf 126:} 31-69.

\bigskip\noindent
Trejos, Alberto, and Randall Wright. 1995. ``Search, Bargaining,
Money, and Prices.'' {\it Journal of Political Economy}, {\bf 103:},
118-141.

\bigskip\noindent
Vives, Xavier. 1993. ``How Fast do Rational Agents Learn.''
{\it Review of Economic Studies}, {\bf 60:}
329-347.


\bigskip\noindent
Wilson, Robert. 1977. ``Incentive Efficiency of Double Auctions.''
{\it The Review of Economic Studies}, {\bf 44:} 511-518.

\bigskip\noindent
Wolinsky, Asher. 1990. ``Information Revelation in a Market With
Pairwise Meetings.'' {\it Econometrica}, {\bf 58:} 1-23.

\bigskip\noindent
Yeh, J. 2006. {\it Real analysis. Theory of measure and integration, Second Edition.} Singapore: World Scientific Publishing.

\end{document}